\definecolor{ANDREW}{RGB}{255,127,0}
\theoremstyle{plain}
\newtheorem{proposition}{Proposition}[section]
\newtheorem{theorem}[proposition]{Theorem}
\newtheorem{lemma}[proposition]{Lemma}
\newtheorem{corollary}[proposition]{Corollary}
\theoremstyle{definition}
\newtheorem{definition}[proposition]{Definition}
\newtheorem{observation}[proposition]{Observation}
\theoremstyle{remark}
\newtheorem{remark}[proposition]{Remark}
\newtheorem{conjecture}[proposition]{Conjecture}
\newtheorem*{question}{Question}
\DeclareMathOperator{\Aut}{Aut}
\DeclareMathOperator{\Vol}{Vol}
\DeclareMathOperator{\Hol}{Hol}
\DeclareMathOperator{\Euc}{Euc} 
\DeclareMathOperator{\id}{id}
\DeclareMathOperator{\Cc}{\mathcal{C}}
\DeclareMathOperator{\Tc}{\mathcal{T}}
\DeclareMathOperator{\Bb}{\mathbb{B}}
\DeclareMathOperator{\Cb}{\mathbb{C}}
\DeclareMathOperator{\Db}{\mathbb{D}}
\DeclareMathOperator{\Nb}{\mathbb{N}}
\DeclareMathOperator{\Rb}{\mathbb{R}}
\newcommand{\abs}[1]{\left|#1\right|}
\newcommand{\norm}[1]{\left\|#1\right\|}
\newcommand{\wt}[1]{\widetilde{#1}}
\newcommand{\wh}[1]{\widehat{#1}}
\newcommand{\ip}[1]{\left\langle #1\right\rangle}
\begin{document}

\title[Boundary Rigidity Results]{Two boundary rigidity results for holomorphic maps}
\author{Andrew Zimmer}\address{Department of Mathematics, Louisiana State University, Baton Rouge, LA, USA}
\email{amzimmer@lsu.edu}
\date{\today}
\keywords{Holomorphic mapping, boundary rigidity, Schwarz lemma, Kobayashi metric, K{\"a}hler geometry, bounded geometry}
\subjclass[2010]{32H12, 	32F45, 32A40, 	53B35, 53C22}

\begin{abstract} In this paper we establish two boundary versions of the Schwarz lemma. The first is for general holomorphic self maps of bounded convex domains with $C^2$ boundary. This appears to be the first boundary Schwarz lemma for general holomorphic self maps that requires no strong pseudoconvexity or finite type assumptions. The second is for biholomorphisms of domains who have an invariant K{\"a}hler metric with bounded sectional curvature. This second result applies to holomorphic homogeneous regular domains and appears to be the first boundary Schwarz lemma that makes no assumptions on the regularity of the boundary. 
\end{abstract}

\maketitle

\section{Introduction}

In 1931 Cartan proved the following generalization of the Schwarz lemma.

\begin{theorem}[Cartan's Uniqueness Theorem]\label{thm:Cartan} If $\Omega \subset \Cb^d$ is a bounded domain, $f : \Omega \rightarrow \Omega$ is a holomorphic map, and  there exists $z_0 \in \Omega$ such that 
\begin{align*}
f(z) = z + { \rm o}\left( \norm{z-z_0}\right),
\end{align*}
then $f = \id$. 
\end{theorem}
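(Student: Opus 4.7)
The plan is to translate coordinates so that $z_0 = 0$ and then argue by induction on the order of the lowest nontrivial term of the Taylor expansion of $f$ at the origin, using iteration to force all higher-order terms to vanish.

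First, I would observe that the hypothesis $f(z) = z + {\rm o}(\|z - z_0\|)$ implies $f(z_0) = z_0$ and $df_{z_0} = \Id$, so after replacing $\Omega$ by $\Omega - z_0$ we may assume $z_0 = 0$. Expand $f$ in a convergent power series near $0$ as
\begin{align*}
f(z) = z + \sum_{k \geq 2} P_k(z),
\end{align*}
where each $P_k : \Cb^d \to \Cb^d$ is a homogeneous polynomial map of degree $k$. I would like to prove that every $P_k$ vanishes identically. Suppose for contradiction that some $P_k$ is nonzero and let $k \geq 2$ be the smallest such index.

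Next, I would study the iterates $f^n = f \circ \cdots \circ f$. A direct induction on $n$, expanding $f^n = f \circ f^{n-1}$ and collecting terms of degree at most $k$, shows that
\begin{align*}
f^n(z) = z + n\, P_k(z) + {\rm O}(\|z\|^{k+1}),
\end{align*}
where the implied constants may depend on $n$ but the coefficient of the degree-$k$ term grows linearly in $n$. The point is that all contributions from $P_k$ of lower-order iterates add up, while nothing of degree $<k$ appears because the lowest nontrivial term of $f$ is of degree $k$.

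The final step is a normal families / Cauchy estimates argument. Choose $r > 0$ small enough that the closed polydisc $\overline{\Delta(0,r)^d}$ lies in $\Omega$. Because $\Omega$ is bounded, there is a constant $M$ with $\|f^n(z)\| \leq M$ for all $n$ and all $z \in \Delta(0,r)^d$. The Cauchy integral formula then bounds the degree-$k$ Taylor coefficients of each $f^n$ at $0$ uniformly in $n$; but those coefficients are exactly $n$ times the coefficients of $P_k$, so letting $n \to \infty$ forces $P_k = 0$, a contradiction. Hence $P_k = 0$ for every $k \geq 2$, which gives $f = \id$.

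The only real obstacle is the bookkeeping in the iteration step—verifying carefully that the degree-$k$ term of $f^n$ is indeed $n P_k$ and that no cancellations occur from higher-order terms of $f$ composed with each other. Once this is established, the boundedness of $\Omega$ combined with Cauchy's estimates closes the argument cleanly.
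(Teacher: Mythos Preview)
Your argument is correct and is precisely the classical proof of Cartan's uniqueness theorem. However, note that the paper does not actually supply a proof of this statement: Theorem~\ref{thm:Cartan} is quoted in the introduction as a known 1931 result of Cartan, serving only as motivation for the boundary rigidity theorems that follow. There is therefore nothing in the paper to compare your argument against.

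For what it is worth, your proof is the standard one: reduce to $z_0=0$, write $f(z)=z+P_k(z)+\cdots$ with $P_k$ the first nonvanishing higher-order term, observe that $f^n(z)=z+nP_k(z)+(\text{higher order})$, and then invoke Cauchy estimates on the uniformly bounded family $\{f^n\}$ to force $P_k=0$. The bookkeeping you flag as the only obstacle is genuinely routine: composing $f$ with $f^{n-1}(z)=z+(n-1)P_k(z)+\cdots$ and noting that $P_k(z+(n-1)P_k(z)+\cdots)=P_k(z)+(\text{degree}\geq 2k-1)$ immediately gives the degree-$k$ term of $f^n$ as $(n-1)P_k+P_k=nP_k$, since $2k-1>k$ for $k\geq 2$.
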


It seems natural to ask if a similar result holds when $z_0 \in \partial \Omega$. In this case the problem is much harder and  already in the very special case of biholomorphisms of the unit disk a higher order error term is necessary for rigidity.  

In this paper we prove two new boundary versions of Theorem~\ref{thm:Cartan}. Our first main result, see Theorem~\ref{thm:main_convex} below, extends a well known theorem of Burns and Krantz to any bounded convex domain with $C^2$ boundary (assuming a slightly worse error term). This appears to be the first boundary Schwarz lemma for general holomorphic self maps that requires no strong pseudoconvexity or finite type assumptions.  Our second main result, see Theorem~\ref{thm:main_intro_biholo} below, establishes a boundary Schwarz lemma for biholomorphisms of domains which have an invariant K{\"a}hler metric with certain bounded geometry properties. This applies to  holomorphic homogeneous regular domains and appears to be the first boundary Schwarz lemma that makes no assumptions on the regularity of the boundary. 

\subsection{General holomorphic self maps}

The first boundary Schwarz lemma for general holomorphic self maps is due to Burns and Krantz who established the following.

\begin{theorem}[Burns-Krantz~\cite{BK1994}] \label{thm:BK} Suppose $\Omega \subset \Cb^d$ is a bounded strongly pseudoconvex domain with $C^6$ boundary. If $f : \Omega \rightarrow \Omega$ is a holomorphic map and there exists $\xi_0 \in \partial \Omega$ such that 
\begin{align*}
f(z) = z + { \rm o}\left( \norm{z-\xi_0}^3\right),
\end{align*}
then $f = \id$. 
\end{theorem}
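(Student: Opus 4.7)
The plan is to reduce to a one-dimensional boundary Schwarz lemma on the unit disk and then transfer the conclusion back to $\Omega$ using Lempert's theory of complex geodesics. As a first step I would establish the disk version: any holomorphic $g : \Db \to \Db$ with $g(\zeta) = \zeta + o((1-\zeta)^3)$ as $\zeta \to 1$ is the identity. Via the Cayley transform this becomes the assertion that a holomorphic self-map $\psi$ of the right half-plane $\Hb$ with $\psi(w) = w + o(\abs{w}^{-3})$ as $\Real w \to \infty$ must be the identity. Here one invokes Julia's lemma at the boundary fixed point $\infty$ to show that $\eta = \psi - \id$ is a holomorphic map from $\Hb$ to $\overline{\Hb}$, after which the Herglotz representation combined with the cubic boundary decay forces $\eta \equiv 0$.

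Next I would use $C^6$ strong pseudoconvexity to normalize $\Omega$ near $\xi_0$. Choose holomorphic coordinates centered at $\xi_0$ so that
\begin{equation*}
\Omega = \{\Imag z_1 > \norm{z'}^2 + O(\norm{z}^3)\}
\end{equation*}
in a neighborhood of the origin. For each unit complex tangent direction $v$ near a fixed direction $v_0$, Lempert's theorem produces a stationary disk $\phi_v : \overline{\Db} \to \overline{\Omega}$, smooth up to the boundary, with $\phi_v(1) = \xi_0$ and prescribed boundary tangent determined by $v$, together with a holomorphic retraction $\pi_v : \Omega \to \phi_v(\Db)$ that is likewise smooth up to $\partial \Omega$ at $\xi_0$. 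Form $g_v = \phi_v^{-1} \circ \pi_v \circ f \circ \phi_v$; using the boundary smoothness of $\phi_v$ and $\pi_v$ together with the hypothesis $f(z) = z + o(\norm{z-\xi_0}^3)$, a three-jet computation at $\zeta = 1$ yields $g_v(\zeta) = \zeta + o((1-\zeta)^3)$. By the one-dimensional statement, $g_v = \id_{\Db}$, so $\pi_v \circ f \circ \phi_v = \phi_v$.

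To conclude, I would upgrade this identity to $f \circ \phi_v = \phi_v$ using uniqueness of complex geodesics. The hypothesis gives $f(\xi_0) = \xi_0$ in the boundary limit and pins down the first-order boundary jet at $\xi_0$, so $f \circ \phi_v$ is itself a stationary disk terminating at $\xi_0$ with the same boundary tangent as $\phi_v$; by Lempert's uniqueness clause the two disks coincide. Varying $v$ in a small open family, the images $\phi_v(\Db)$ cover an open subset of $\Omega$ on which $f$ is the identity, and the identity principle for holomorphic maps extends this to $f = \id$ globally. The principal obstacle is the three-jet transfer in the middle stage: verifying that the little-$o$ vanishing of $f(z) - z$ survives composition with the Lempert retraction $\pi_v$ and pullback under $\phi_v$ (rather than degrading to a big-$O$ estimate) requires $\phi_v$ and $\pi_v$ to be $C^3$ up to the boundary at $\xi_0$, which is exactly what Lempert's regularity theorem supplies under the $C^6$ hypothesis on $\partial \Omega$. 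Executing this boundary jet comparison carefully, and in particular confirming that no tangential correction from $\pi_v$ contributes a spurious cubic term, is the technical heart of the argument.
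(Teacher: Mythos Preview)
The paper does not give its own proof of Theorem~\ref{thm:BK}; the result is quoted as a theorem of Burns and Krantz with citation to~\cite{BK1994} and is used only as background and motivation. The closest the paper comes is Section~\ref{sec:unit_disk}, which proves the \emph{disk} case $d=1$ by a different route than your Step~1: instead of Julia's lemma and the Herglotz representation, it applies the Christodoulou--Short quantitative estimate (Proposition~\ref{prop:quant_id}) to points $p_n = 1 - r_n \to 1$, bounding $K_{\Db}(f(w),w)$ on small Kobayashi balls around $p_n$ and showing the error term forces $f=\id$.

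Regarding your higher-dimensional outline, it tracks the spirit of the original Burns--Krantz argument but contains a real gap. Lempert's theory of stationary disks, smooth boundary extensions, holomorphic retractions $\pi_v$, and uniqueness of complex geodesics is a theorem about strongly \emph{convex} domains, not strongly pseudoconvex ones. Your local normalization $\Omega = \{\Imag z_1 > \norm{z'}^2 + O(\norm{z}^3)\}$ only makes $\Omega$ look convex in a neighborhood of $\xi_0$; it does not give you a global strongly convex domain to which Lempert's results apply, so you cannot simply ``produce a stationary disk $\phi_v:\overline{\Db}\to\overline{\Omega}$ with retraction $\pi_v$'' as stated. The actual Burns--Krantz proof handles this by a careful localization (embedding a neighborhood of $\xi_0$ into a strongly convex model, or reducing to the ball), and the analysis of how the Lempert disks, the retraction, and the little-$o$ hypothesis interact under that localization is precisely the nontrivial content that your proposal omits. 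Your final uniqueness step (``$f\circ\phi_v$ is itself a stationary disk with the same boundary data, hence equals $\phi_v$'') likewise leans on Lempert-type uniqueness that is unavailable without the convexity framework.
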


As observed by Burns and Krantz, the error term in Theorem~\ref{thm:BK} is already optimal when $\Omega$ is the unit disk (see Remark 1 in~\cite{BK1994}). 

A number of similar boundary Schwarz lemmas for holomorphic self maps have been established, see for instance~\cite{O2000, C2001, BZZ2006, B2008, LT2016, TLZ2017} and the survey article~\cite{K2011}. However most of these results either assume  that $d=1$ or that the domain is strongly pseudoconvex. For weakly pseudoconvex domains, the following conjecture has been attributed to Burns and Krantz (see~\cite[pg. 312]{H1993}).

\begin{conjecture}[Burns-Krantz] Let $\Omega \subset \Cb^d$ be a pseudoconvex domain of finite type and suppose that $\xi_0 \in \partial \Omega$. Then there exists some $m$ which depends on the geometry of $\partial \Omega$ at $\xi_0$ such that: if $f : \Omega \rightarrow \Omega$ is a holomorphic map and \begin{align*}
f(z) = z + { \rm o}\left( \norm{z-\xi_0}^{m}\right),
\end{align*}
then $f = \id$. 
\end{conjecture}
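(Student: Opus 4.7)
The plan is to combine sharp Kobayashi metric estimates at a finite type boundary point with an anisotropic scaling argument modeled on Pinchuk's construction. The exponent $m$ should be tied to the D'Angelo type $\tau = \tau(\xi_0)$; since the strongly pseudoconvex case $\tau=2$ gives $m=3$ in Theorem~\ref{thm:BK}, and since the model case of the complex ellipsoid $\{\abs{z_1}^2 + \abs{z_2}^{2\tau} < 1\}$ shows this exponent cannot be lowered, the natural guess is $m = 2\tau+1$, or more generally $m$ is the maximal Catlin multitype entry plus one.

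First I would replace $f$ by its iterates $f^k$ and observe that the hypothesis $f(z) = z + {\rm o}(\norm{z-\xi_0}^m)$ propagates: each $f^k$ also fixes $\xi_0$ to order $m$. After localizing near $\xi_0$ by a plurisubharmonic peak function of the type produced by Catlin at points of finite type, I would invoke the sharp lower bound on the Kobayashi infinitesimal metric due to Catlin and McNeal,
\begin{align*}
\kappa_\Omega(z;v) \gtrsim \frac{\abs{v_N}}{\delta_\Omega(z)} + \sum_j \frac{\abs{v_{T_j}}}{\delta_\Omega(z)^{1/\tau_j}},
\end{align*}
where $\delta_\Omega$ is the boundary distance and $\tau_j$ are the Catlin multitype entries. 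Distance-decreasing of the Kobayashi distance under $f$, combined with the high-order tangency, forces $f - \id$ to have anisotropically small increments along both normal and tangential directions.

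The next step is a rescaling argument. Fix points $p_k \to \xi_0$ along the inward normal and apply anisotropic dilations $\Lambda_k$ adapted to the multitype at $\xi_0$. Under h-extendibility at $\xi_0$ (which holds in the decoupled and convex finite type cases, by work of Yu and Gaussier), the rescaled domains $\Lambda_k(\Omega)$ converge, in the local Hausdorff sense, to a weighted homogeneous polynomial model $\Omega_\infty$ that is Kobayashi hyperbolic and taut. The rescaled self-maps $\Lambda_k \circ f \circ \Lambda_k^{-1}$ then form a normal family whose limit $F$ is an automorphism of $\Omega_\infty$ fixing the origin to order $m$. Because $\Aut(\Omega_\infty)$ has an explicit description (a semi-direct product of weighted dilations with a nilpotent tangential group), a Cartan-type rigidity argument on $\Omega_\infty$ forces $F = \id$, and unwinding the rescaling gives $f = \id$ by analyticity.

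The principal obstacle is constructing and controlling the model domain for a \emph{general} pseudoconvex finite type point. Outside the h-extendible regime, Kohn--Nirenberg type phenomena and non-convergence of anisotropic dilations make it unclear that the rescaled limit is taut or even pseudoconvex, so the exponent $m$ predicted by the multitype may not suffice. Addressing this will likely require either replacing explicit scaling by an intrinsic approach via Chern--Moser CR invariants and Catlin's multitype normal form, or proving a purely local rigidity statement for formal CR automorphisms at $\xi_0$ and then upgrading to a global statement using the tautness of $\Omega$; the second possibility seems more promising, but the step from formal to convergent rigidity is the principal technical obstruction.
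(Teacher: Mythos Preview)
The statement you are attempting to prove is a \emph{conjecture} in the paper, not a theorem: the paper explicitly labels it as the Burns--Krantz Conjecture and does not claim to prove it. There is therefore no ``paper's own proof'' to compare your proposal against. What the paper does prove are partial results in the convex setting (Theorems~\ref{thm:main_convex} and~\ref{thm:main_finite_type}), and the method there is entirely different from your outline: it relies on Lempert's theory of complex geodesics and their good left inverses, the Gromov product on $(\Omega,K_\Omega)$, and the Christodoulou--Short quantitative Schwarz lemma (Theorem~\ref{thm:CS}) --- there is no anisotropic scaling, no Catlin multitype, and no model-domain limit.

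Your proposal is not a proof but a research plan, and you say so yourself in the final paragraph. The principal gap you identify is real and remains open: outside the h-extendible regime there is no known construction of a taut model domain to which the rescaled maps converge, and Kohn--Nirenberg examples show the obvious candidates can fail. Even in the h-extendible case, the step ``a Cartan-type rigidity argument on $\Omega_\infty$ forces $F=\id$'' needs justification: the automorphism group of a weighted homogeneous model need not be small enough for Cartan's theorem to apply directly at a boundary point, and the passage from $F=\id$ on the model back to $f=\id$ on $\Omega$ requires more than ``unwinding the rescaling,'' since the rescaling is only defined along a single sequence $p_k\to\xi_0$. In short, this is a reasonable sketch of one possible line of attack on an open problem, but it is not close to a proof, and the paper does not purport to have one either.
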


Huang gave a positive answer to the above conjecture for convex domains of finite type. In his result the error term depends on the line type, denoted by $\ell(\xi_0)$, of the boundary point $\xi_0 \in \partial \Omega$ (see Section~\ref{sec:finite_type} for the definition). More precisely:

\begin{theorem}[{Huang~\cite[Theorem 0.4]{Huang1995}}]\label{thm:huang} Suppose that $\Omega \subset \Cb^d$ is a bounded convex domain of finite type. If $f : \Omega \rightarrow \Omega$ is a holomorphic map and there exists $\xi_0 \in \partial \Omega$ such that 
\begin{align*}
f(z) = z + { \rm o}\left( \norm{z-\xi_0}^{m}\right)
\end{align*}
for some $m > 5 \ell(\xi_0)$, then $f = \id$. 
\end{theorem}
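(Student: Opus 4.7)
I would follow the Burns--Krantz iteration strategy combined with sharp estimates on the Kobayashi metric near a convex finite type boundary point, which are available through the work of McNeal and Mercer. First, I would localize: translate so that $\xi_0 = 0$ and, using convexity, choose affine coordinates in which $\Omega \subset \{\Real z_1 < 0\}$ and the complex tangent space $T_0^{\Cb} \partial \Omega = \{z_1 = 0\}$. Next I would introduce the McNeal polydiscs $P(0,\epsilon)$ of radius $\epsilon$ in the $z_1$ direction and radius $\asymp \epsilon^{1/\ell_j}$ in the tangential directions, where each directional type $\ell_j$ satisfies $\ell_j \leq \ell := \ell(\xi_0)$. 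The key input is that Kobayashi balls based at $z_\epsilon := -\epsilon e_1$ of fixed Kobayashi radius are comparable to $P(0,\epsilon)$, so that the Kobayashi infinitesimal metric at $z_\epsilon$ satisfies $K_\Omega(z_\epsilon;v) \asymp |v_1|/\epsilon + \sum_{j\geq 2} |v_j|/\epsilon^{1/\ell_j}$.

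\textbf{Iteration.} The hypothesis $f(z) = z + {\rm o}(\|z\|^m)$ forces the full $m$-jet of $f-\id$ at $0$ to vanish, and a direct chain-rule computation then shows that every iterate $f^n$ also satisfies $f^n(z) = z + {\rm o}(\|z\|^m)$ with the same implicit error control --- this is the standard starting point of Burns--Krantz. Suppose for contradiction that $f \neq \id$, and let $P$ denote the lowest-order nonvanishing homogeneous part of the Taylor expansion of $f - \id$ at $0$, of some degree $k > m$. Computing the $k$-jet of $f^n$ shows that the degree-$k$ homogeneous part of $f^n - \id$ equals $nP$ to leading order, so these Taylor coefficients grow linearly in $n$.

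\textbf{Contradiction and the role of $5\ell$.} On the other hand, the contraction property $K_\Omega(f^n(z); df^n_z v) \leq K_\Omega(z; v)$ together with the polydisc estimate will give, via Cauchy estimates on $P(0,\epsilon)$ and after optimizing in $\epsilon$, uniform-in-$n$ bounds on the Taylor coefficients of $f^n$ at $0$. Comparing this uniform bound with the linear-in-$n$ growth of the degree-$k$ coefficients produces a contradiction provided the vanishing order $m$ dominates the aggregate losses from anisotropic scaling. These losses account for one factor of $\ell$ to pass from Euclidean to intrinsic tangential sizes on $P(0,\epsilon)$, further factors to translate the Kobayashi bound on $df^n$ into bounds on higher jets via Cauchy estimates on a McNeal polydisc, and final factors to absorb the mismatch between normal and tangential scaling when restricting to the normal approach curves $z_\epsilon$ --- the arithmetic will work out to requiring $m > 5\ell$. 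The hardest step will be precisely this bookkeeping: making the anisotropic polydisc geometry cooperate with the Taylor-series growth so that the threshold $5\ell(\xi_0)$ emerges cleanly, without any implicit smoothness of $\partial \Omega$ beyond what is guaranteed by finite type and convexity.
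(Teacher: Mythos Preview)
The paper does not prove Theorem~\ref{thm:huang}; it merely cites it as Huang's result for context. So there is no ``paper's own proof'' to compare against. What the paper \emph{does} prove are the stronger Theorems~\ref{thm:main_convex} and~\ref{thm:main_finite_type}, with error exponents $4$ and $4-1/\ell(\xi_0)$ respectively, via a completely different route: complex geodesics with good left inverses, the Gromov-product boundary analysis of Section~\ref{sec:GP}, and the Christodoulou--Short quantitative Schwarz estimate (Theorem~\ref{thm:CS}). No iteration of $f$ and no Taylor-coefficient growth argument appears.

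Your proposal also has a genuine gap independent of this. You write ``let $P$ denote the lowest-order nonvanishing homogeneous part of the Taylor expansion of $f-\id$ at $0$,'' and then argue that under iteration this leading part grows like $nP$. But $\xi_0$ lies on $\partial\Omega$, and the hypothesis $f(z)=z+{\rm o}(\norm{z-\xi_0}^m)$ is only an asymptotic growth condition as $z\to\xi_0$ from inside $\Omega$; it does not say that $f$ extends holomorphically (or even $C^1$) across $\xi_0$, and it does not furnish any jet of $f$ at $\xi_0$. For a general holomorphic self-map of a bounded domain there is no boundary-regularity theorem providing such an extension. Without a genuine Taylor expansion at $\xi_0$, the entire ``linear growth of the degree-$k$ coefficients versus uniform Cauchy bound'' dichotomy has nothing to act on, and the argument collapses at its starting point. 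The Burns--Krantz disk proof that you are modelling this on gets around this by working entirely inside $\Db$ with the hyperbolic metric; Huang's actual proof, and the paper's proofs of its own theorems, likewise avoid ever postulating a boundary jet of $f$.
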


Despite the high order error term in Huang's result, to the best of our knowledge there is no example of a smoothly bounded pseudoconvex domain $\Omega \subset \Cb^d$ with a holomorphic map $f : \Omega \rightarrow \Omega$ and a boundary point $\xi_0 \in \partial \Omega$ such that $f \neq \id$ and
\begin{align*}
f(z) = z + { \rm o}\left( \norm{z-\xi_0}^{m}\right)
\end{align*}
for some $m > 3$. So exactly how finite type relates to the existence of boundary Schwarz lemmas and the optimal error term is completely mysterious. 

In the first main theorem of this paper, we establish a boundary Schwarz lemma for convex domains which sheds some light on this mystery and in particular shows that when the domain is convex, finite type conditions are not necessary. 

\begin{theorem}\label{thm:main_convex} (see Section~\ref{sec:pf_of_thm_main}) Suppose $\Omega \subset \Cb^d$ is a bounded convex domain with $C^2$ boundary. If $f : \Omega \rightarrow \Omega$ is a holomorphic map and there exists $\xi_0 \in \partial \Omega$ such that 
\begin{align*}
f(z) = z + { \rm o}\left( \norm{z-\xi_0}^4\right),
\end{align*}
then $f = \id$. 
\end{theorem}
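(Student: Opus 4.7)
The plan is a boundary rescaling argument, in the spirit of the Pinchuk scaling method, combined with Cartan's uniqueness theorem. After an affine change of coordinates I would assume $\xi_0 = 0$, the outer unit normal to $\partial\Omega$ at $0$ is $e_1$, and $\Omega \subset \{\Real z_1 < 0\}$ near $0$. The $C^2$ convex boundary then takes the form
\[
\Real z_1 = -q(\Imag z_1, z') + { \rm o}\left(\norm{(\Imag z_1, z')}^2\right), \qquad z' = (z_2, \ldots, z_d),
\]
for a positive semidefinite real quadratic form $q$ (semidefiniteness coming from convexity).

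Next I would introduce the anisotropic dilations $\Lambda_n(z) := (nz_1, \sqrt{n}\, z')$. By standard convex scaling theory, the rescaled domains $\Omega_n := \Lambda_n(\Omega)$ converge in the local Hausdorff topology to a convex model domain $\Omega_\infty$ (determined by $q$) which is taut. Set $f_n := \Lambda_n \circ f \circ \Lambda_n^{-1} : \Omega_n \to \Omega_n$. For $w$ in a fixed compact $K \subset \Omega_\infty$, $\norm{\Lambda_n^{-1}(w)} = O(n^{-1/2})$, so the hypothesis yields $f(\Lambda_n^{-1}(w)) - \Lambda_n^{-1}(w) = { \rm o}(n^{-2})$. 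Applying $\Lambda_n$, whose operator norm is $n$, gives $f_n(w) - w = { \rm o}(n^{-1}) \to 0$ uniformly on $K$. Hence $f_n \to \id_{\Omega_\infty}$ locally uniformly; the fourth-order hypothesis is tuned precisely so that this decay dominates the $\Lambda_n$-amplification.

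The main obstacle is converting the scaled rigidity $f_n \to \id_{\Omega_\infty}$ into $f = \id$ on all of $\Omega$. My plan is to fix $w_0 \in \Omega_\infty$ and set $z_n := \Lambda_n^{-1}(w_0) \to \xi_0$; local Hausdorff convergence of the scaled domains then gives the Kobayashi distance comparison $k_\Omega(z_n, f(z_n)) \to 0$. Using Kobayashi contractivity of $f$ and tautness of the bounded convex $\Omega$, a Wolff--Denjoy style analysis of the iterates $\{f^k\}$ yields the dichotomy: either $f$ admits an interior fixed point $z_\star \in \Omega$, or the iterates converge compactly to a boundary point, which must be $\xi_0$ by the scaled rigidity. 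The escape case should be excluded by showing that $f_n \to \id$ forces the iteration to be infinitesimally trivial at $\xi_0$, contradicting genuine boundary escape. In the remaining case, the scaled data $df_n(w_0) = \Lambda_n \, df(z_n) \, \Lambda_n^{-1} \to \Id$ should transfer to $df(z_\star) = \Id$ (by passing to iterates and using that $df(z_\star)$ must have eigenvalues on the unit circle by Cartan--Carath\'eodory), so that Cartan's uniqueness theorem (Theorem~\ref{thm:Cartan}) applied at $z_\star$ concludes $f = \id$.
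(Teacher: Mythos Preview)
Your scaling computation that $f_n \to \id$ locally uniformly is correct, but the proposal has two genuine gaps, and the overall route diverges sharply from the paper.

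\textbf{First gap: the model domain need not be taut.} Under the anisotropic dilation $\Lambda_n(z)=(nz_1,\sqrt{n}\,z')$ the limit domain is the Siegel-type domain $\Omega_\infty=\{\Real w_1 < -q'(w')\}$, where $q'$ is the \emph{tangential} part of the Hessian of the defining function at $\xi_0$. For a $C^2$ convex domain with no finite type assumption, $q'$ can be degenerate; if $q'$ vanishes on some complex direction in $z'$, then $\Omega_\infty$ contains an entire complex affine line and is not Kobayashi hyperbolic, hence not taut. So ``standard convex scaling theory'' does not give you a usable model at a weakly convex $\xi_0$, which is exactly the case the theorem is meant to cover.

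\textbf{Second, more serious gap: the globalization step.} Even when the scaling works, $f_n\to\id$ is purely local information at $\xi_0$; it says nothing directly about $f$ on a fixed compact subset of $\Omega$. Your proposed bridge via Wolff--Denjoy and Cartan does not close:
\begin{itemize}
\item There is no mechanism by which $df_n(w_0)\to\Id$ at points $z_n\to\xi_0$ forces $df(z_\star)=\Id$ at an \emph{interior} fixed point $z_\star$ far from $\xi_0$; these live in different fibers and no continuity argument links them. Cartan--Carath\'eodory only gives that the eigenvalues of $df(z_\star)$ lie in $\overline{\Db}$, which is not enough.
\item In the escape alternative, nothing in the scaled picture pins the Denjoy--Wolff point to $\xi_0$, nor does ``infinitesimal triviality at $\xi_0$'' exclude boundary escape: a non-identity map can satisfy $f(z)=z+o(\|z-\xi_0\|^k)$ and still have iterates drifting to some other boundary face.
\end{itemize}

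\textbf{How the paper actually globalizes.} The paper avoids scaling altogether. For each interior point $q$ it takes a complex geodesic $\varphi:\Db\to\Omega$ with $\varphi(0)=q$ aimed at $\xi_0$, together with a \emph{good} left inverse $\pi$ whose fibers are affine hyperplane slices. The Christodoulou--Short quantitative Schwarz lemma on $\Db$, combined with the estimate $K_\Omega(z_0,p_n)\le C+\tfrac12\log(1/r_n)$ along the inward normal, shows that $\pi\circ f\circ\varphi=\id$ (this is precisely where the exponent $4$ is consumed). That identity, together with the Gromov-product/hyperplane-extension theory of Section~\ref{sec:GP}, forces $f$ to respect limits at every \emph{strongly} convex boundary point; a Luzin--Privalov argument on a single affine disk then yields $f=\id$ globally. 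The essential missing idea in your plan is a device---like these complex geodesics with hyperplane left inverses---that transports the boundary asymptotics to arbitrary interior points.
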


\begin{remark} \ 
\begin{enumerate}
\item Theorem~\ref{thm:main_convex} is new even in the very special case when $f$ is a biholomorphism, $\partial \Omega$ is $C^\infty$, and $d=2$.
\item To the best of our knowledge, Theorem~\ref{thm:main_convex} is the only known boundary Schwarz lemma for general holomorphic self maps that makes no strong pseudoconvexity or finite type assumptions. 
\item  It is unclear whether $\norm{z-\xi_0}^4$ can be improved to $\norm{z-\xi_0}^3$. \end{enumerate}
\end{remark}

In the case when $\partial \Omega$ is smooth and $\xi_0 \in \partial \Omega$ has finite line type we can give a slight improvement to the error term.

\begin{theorem}\label{thm:main_finite_type}(see Section~\ref{sec:finite_type}) Suppose $\Omega \subset \Cb^d$ is a bounded convex domain with $C^\infty$ boundary and $f : \Omega \rightarrow \Omega$ is a holomorphic map. If there exists $\xi_0 \in \partial \Omega$ such that $\ell(\xi_0) < +\infty$ and 
\begin{align*}
f(z) = z + { \rm o}\left( \norm{z-\xi_0}^{4-1/\ell(\xi_0)}\right),
\end{align*}
then $f = \id$. 
\end{theorem}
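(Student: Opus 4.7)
My plan is to follow the proof of Theorem~\ref{thm:main_convex} (in Section~\ref{sec:pf_of_thm_main}) step by step, replacing one ingredient --- the upper bound on the Kobayashi metric in the complex tangential direction near $\xi_0$ --- by its sharper finite line type counterpart.

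First, I would recall the geometric input that becomes available in the stronger setting. When $\partial\Omega$ is $C^\infty$ and $\xi_0 \in \partial\Omega$ has finite line type $\ell(\xi_0)$, the convex geometry near $\xi_0$ (as analyzed by McNeal and others) provides sharper tangential Kobayashi estimates of the form
\begin{align*}
k_\Omega(z;v_T) \lesssim \frac{\abs{v_T}}{\delta_\Omega(z)^{1/\ell(\xi_0)}}
\end{align*}
uniformly for $z$ in a fixed neighborhood of $\xi_0$ and for $v_T$ complex tangential at the nearest boundary point $\pi(z)$, whereas only a weaker tangential estimate is available in the purely $C^2$ convex setting that underlies Theorem~\ref{thm:main_convex}. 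The normal direction estimate $k_\Omega(z;v_\nu) \asymp \abs{v_\nu}/\delta_\Omega(z)$ is unchanged.

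Next, I would trace through the argument for Theorem~\ref{thm:main_convex} and locate the precise step(s) where the tangential Kobayashi bound enters into the estimation of a Kobayashi displacement $k_\Omega(z,f(z))$ or of a Schwarz-type auxiliary quantity. Substituting the finite line type tangential estimate at those steps contributes an additional factor of $\delta_\Omega(z)^{1/\ell(\xi_0)}$ in the decisive inequality, and this gain is exactly what weakens the required order of vanishing of $f(z) - z$ at $\xi_0$ from $\norm{z-\xi_0}^{4}$ to $\norm{z-\xi_0}^{4 - 1/\ell(\xi_0)}$. The remaining normal-direction contribution, convex separation arguments, and any rescaling steps near $\xi_0$ should then yield the same final contradiction that was used to force $f = \id$ in Theorem~\ref{thm:main_convex}.

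The main obstacle I anticipate is bookkeeping rather than conceptual: one has to verify carefully that the exponent $4$ in Theorem~\ref{thm:main_convex} is controlled \emph{only} by the tangential Kobayashi estimate, so that replacing that input by the finite type bound propagates cleanly and does not collide with other convex-geometric estimates. In particular, one must check that the implicit constants remain uniform on a one-sided neighborhood of $\xi_0$ and that the finite type tangential estimate is compatible with the approach sequences or rescalings used in Theorem~\ref{thm:main_convex}. Once these checks are completed, the same chain of inequalities and the same contradiction argument as in Section~\ref{sec:pf_of_thm_main} force $f = \id$.
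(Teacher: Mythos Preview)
Your overall strategy---trace the proof of Theorem~\ref{thm:main_convex} and replace one estimate with its finite-type improvement---is exactly right, but you have misidentified both \emph{which} estimate is improved and the \emph{direction} of the inequality involved.

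The finite-type input is not an upper bound on the tangential Kobayashi metric; it is a \emph{lower} bound on the full metric near $\xi_0$:
\begin{align*}
k_\Omega(z;v) \geq \alpha_0\,\frac{\norm{v}}{\delta_\Omega(z)^{1/\ell(\xi_0)}}
\end{align*}
(this is~\cite[Corollary 1.7]{AT2002}). Consequently the step that changes is not the bound on $K_\Omega(w,f(w))$ in Lemma~5.2---that bound uses only the universal upper bound $k_\Omega(z;v)\leq\norm{v}/\delta_\Omega(z)$ and is unaffected by finite type. What improves is Lemma~\ref{lem:lower_bd_epsilon_n}: the lower bound on $k_\Omega$ forces Kobayashi balls to be Euclidean-smaller, so the quantity $\epsilon_n=\sup\{\epsilon: B_\Omega(p_n;\epsilon)\subset\Bb(p_n;r_n/4)\}$ jumps from $\epsilon_n\gtrsim r_n$ to $\epsilon_n\gtrsim r_n^{1-1/\ell(\xi_0)}$. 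Plugging this into the final limit in the proof of Proposition~\ref{prop:existence} replaces $r_n^{-4}E(5r_n/4)$ by $r_n^{-(4-1/\ell(\xi_0))}E(5r_n/4)$, and everything downstream is identical.

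Your proposed tangential upper bound would not help: the displacement $f(w)-w$ has no reason to be complex tangential, and in any case a smaller upper bound on $k_\Omega$ does not control the ratio $e^{4K_{\Db}(t_n,0)}/\epsilon_n$ in the right direction. So the bookkeeping you anticipated is real, but the ledger entry is the $\epsilon_n$ lower bound, not the displacement estimate.
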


Motivated by Theorem~\ref{thm:main_convex} we make the following conjecture. 

\begin{conjecture}\label{conj:convex} Suppose $\Omega \subset \Cb^d$ is a bounded convex domain and $\xi_0 \in \partial \Omega$. Then there exists $m=m(\xi_0)$ which only depends on the tangent cone of $\overline{\Omega}$ at $\xi_0$ such that: if $f : \Omega \rightarrow \Omega$ is a holomorphic map and 
\begin{align*}
f(z) = z + { \rm o}\left( \norm{z-\xi_0}^{m}\right),
\end{align*}
then $f = \id$. 
\end{conjecture}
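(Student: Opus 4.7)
The plan is to extend the argument behind Theorem~\ref{thm:main_convex} by replacing the half-space model used near a $C^2$ boundary point with the full tangent cone $T := T_{\xi_0}\overline{\Omega}$, obtained as the Hausdorff limit of the rescaled closures $\xi_0 + t(\overline{\Omega} - \xi_0)$ as $t \to \infty$. Consider the affine dilations $\Lambda_t(z) := \xi_0 + t(z - \xi_0)$ and the expanding domains $\Omega_t := \Lambda_t(\Omega)$, and set $f_t := \Lambda_t \circ f \circ \Lambda_t^{-1} : \Omega_t \to \Omega_t$. A direct computation shows that the vanishing hypothesis $f(z) = z + {\rm o}(\norm{z-\xi_0}^m)$ becomes
\begin{align*}
f_t(w) - w = t^{1-m}\, \norm{w-\xi_0}^m\, \epsilon_t(w),
\end{align*}
with $\epsilon_t \to 0$ locally uniformly on $\interior(T)$ as $t \to \infty$. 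Hence, for any $m \geq 1$, after quotienting by the maximal complex subspace contained in the recession cone of $T$ so as to restore Kobayashi hyperbolicity, the family $\{f_t\}$ is normal on compact subsets of $\interior(T)$ and every subsequential limit equals the identity. This is a Pinchuk-style scaling argument adapted to tangent cones.

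The second step is to convert this scaling limit into a boundary-rigidity statement on $\Omega$ itself. Since $\Omega$ is convex, Lempert's theorem furnishes complex geodesics realizing the Kobayashi pseudometric; for each complex tangent direction at $\xi_0$ I would choose a disk $\Delta \hookrightarrow \Omega$ whose image touches $\partial \Omega$ at $\xi_0$ with the maximum order of contact of a complex line with $\partial T$. The restriction $f|_\Delta$ is a holomorphic map $\Delta \to \Omega$ whose behavior at the preimage of $\xi_0$ may be analyzed by a one-dimensional Burns--Krantz-type result, which should force $f|_\Delta$ to agree with $\id|_\Delta$ provided $m$ exceeds the contact order of $\Delta$ with $\partial T$. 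The conjectured $m(\xi_0)$ is then the maximum of these contact orders over all complex tangent directions, suitably adjusted for the complex-linear part of the recession cone of $T$. Iterating across tangent directions forces $f$ to agree with $\id$ on an open subset of $\Omega$, and then Cartan's uniqueness theorem (Theorem~\ref{thm:Cartan}) closes the argument.

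The principal obstacle, and the reason the statement is offered as a conjecture rather than a theorem, is the absence of a Burns--Krantz-type rigidity result at the vertex of a general convex cone: the rescaled $f_t$ converge to $\id$, but translating this back into higher-order vanishing of $f - \id$ at $\xi_0$ requires sharp Kobayashi-metric estimates near the vertex of $T$ together with a uniqueness theorem for holomorphic self-maps of $\interior(T)$ that fix $\xi_0$ to high order. When $T$ is a half-space these ingredients are classical; for general convex cones with flat boundary components, Levi-degenerate complex-tangential directions, or large automorphism groups coming from ruled boundary faces, no such estimates are currently available. A natural first test is the polyhedral case and the convex Reinhardt case, where the tangent cone has a computable Kobayashi metric and $m(\xi_0)$ can be written down explicitly; success there should reveal the correct intrinsic formula for $m(\xi_0)$ in terms of the convex geometry of $T$.
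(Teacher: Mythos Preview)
The statement you are addressing is labeled as a \emph{conjecture} in the paper, and the paper does not provide a proof. The paper only notes two partial results: the $d=1$ case (via the Burns--Krantz theorem, the Riemann mapping theorem, and Kobayashi distance estimates), and the special case where $f$ is a biholomorphism (Corollary~\ref{cor:convex1}, obtained from the K{\"a}hler-metric machinery of Theorem~\ref{thm:main_intro_biholo}). So there is no ``paper's own proof'' to compare against; what you have written is a research program, and you are explicit about this yourself.

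That said, your outline diverges from the methods the paper actually develops for the neighboring Theorem~\ref{thm:main_convex}. The paper does not rescale to the tangent cone at all; instead it works entirely inside $\Omega$, using the Christodoulou--Short quantitative Schwarz lemma (Theorem~\ref{thm:CS}) together with the Gromov-product machinery of Section~\ref{sec:GP} to control complex geodesics and their good left inverses. In particular, the paper never invokes a one-variable Burns--Krantz theorem on a single complex geodesic touching $\xi_0$; rather, it produces for each interior point $q$ a complex geodesic through $q$ that limits to the complex face at $\xi_0$ and on which the composition $\pi\circ f\circ\varphi$ is forced to equal $\id$ on $\Db$. The conclusion $f=\id$ then comes from tracking behavior at \emph{strongly convex} boundary points far from $\xi_0$, not from analyzing contact orders at $\xi_0$.

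On the substance of your program: the scaling computation is correct, and you rightly flag the central obstruction, namely the lack of a vertex Burns--Krantz theorem for general convex cones. Two further difficulties deserve emphasis. First, ``order of contact of a complex line with $\partial T$'' is not obviously well defined when $T$ is a general convex cone with no regularity, and complex geodesics in $\Omega$ need not extend continuously to $\partial\Omega$ (the paper notes this explicitly around Proposition~\ref{prop:bd_maps}), so choosing a disk that ``touches $\partial\Omega$ at $\xi_0$'' requires work even to formulate. Second, quotienting by the maximal complex subspace of the recession cone of $T$ to restore hyperbolicity is delicate: nothing guarantees that the rescaled maps $f_t$ descend to the quotient, since $f$ need not respect that affine subspace direction.
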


In the $d=1$ case the conjecture follows from the Burns-Krantz theorem for the unit disk, the Riemann mapping theorem, and  estimates on the Kobayashi distance. In Corollary~\ref{cor:convex1} below, we show that the conjecture is true in the special case when $f$ is a biholomorphism. 

\subsection{The special case of biholomorphisms} For a bounded domain $\Omega \subset \Cb^d$ let $\Aut(\Omega)$ denote the automorphism group of $\Omega$, that is the group of biholomorphic maps $\Omega \rightarrow \Omega$. 

In the special case when $f:\Omega \rightarrow \Omega$ is a biholomorphism, there are many extensions of the Burns-Krantz theorem, see for instance~\cite{BK1994, BER2000, ELZ2003,LM2007,LM2007b,J2009,BC2014}. Many of these results are in the setting of CR-manifolds and so to apply them to bounded domains, one first needs to show that the biholomorphism extends to a CR-automorphism of the boundary and then use the CR-geometry of the boundary to obtain a rigidity result. 

For instance, Bell and Ligocka~\cite{BL1980} proved that if $\Omega \subset \Cb^d$ is a bounded pseudoconvex domain with real analytic boundary, then every $\varphi\in \Aut(\Omega)$ extends to a CR-automorphism $\partial \Omega \rightarrow \partial \Omega$. Then, using the CR-geometry of the boundary, Baouendi, Ebenfelt, and Rothschild proved the following.

\begin{theorem}[{Baouendi-Ebenfelt-Rothschild~\cite[Theorem 5]{BER2000}}]\label{thm:real_analytic} Suppose $\Omega \subset \Cb^d$ is a bounded pseudoconvex domain with real analytic boundary and $\xi_0 \in \partial\Omega$. Then there exists $L=L(\xi_0)>0$ such that: if $\varphi \in \Aut(\Omega)$ and 
\begin{align*}
\varphi(z) = z + { \rm O}\left(\norm{z-\xi_0}^L\right),
\end{align*}
then $\varphi = \id$.
\end{theorem}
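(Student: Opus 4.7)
The plan is to reduce boundary rigidity of the biholomorphism $\varphi$ to a finite jet determination statement on the real-analytic hypersurface $\partial\Omega$, and then conclude by unique continuation and the maximum principle. First, by the Bell--Ligocka extension theorem any $\varphi \in \Aut(\Omega)$ extends smoothly to $\overline{\Omega}$, and by the reflection principle for real-analytic boundaries the extension is in fact real-analytic across $\partial\Omega$, so $\psi := \varphi|_{\partial\Omega}$ is a real-analytic CR-diffeomorphism of $\partial\Omega$. The hypothesis $\varphi(z) = z + {\rm O}(\norm{z-\xi_0}^L)$ passes to the boundary, yielding $\psi(\xi_0) = \xi_0$ and forcing the Taylor expansion of $\psi$ at $\xi_0$ in any smooth local chart of $\partial\Omega$ to agree with that of the identity up to order $\lfloor L \rfloor - 1$.

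Next I would invoke the finite jet determination theorem for local CR-diffeomorphisms of real-analytic hypersurfaces, which is the main result of the Baouendi--Ebenfelt--Rothschild paper under discussion: for every point $p$ of a real-analytic hypersurface $M \subset \Cb^d$ there is an integer $k = k(p)$ such that any germ of a real-analytic CR-diffeomorphism $M \to M$ fixing $p$ is uniquely determined by its $k$-jet at $p$. The proof uses the extrinsic complexification of $M$ and the action of a CR-map on the Segre varieties through $p$ to solve for the higher-order Taylor coefficients of the map iteratively in terms of the lower-order ones. Choosing $L(\xi_0) := k(\xi_0) + 1$, the jet information from the first step gives $\psi = \id$ on a neighborhood of $\xi_0$ in $\partial\Omega$. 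By real-analyticity and connectedness of $\partial\Omega$, unique continuation then yields $\psi = \id$ on all of $\partial\Omega$; since each coordinate of $\varphi - \id$ is holomorphic on $\Omega$, continuous on $\overline{\Omega}$, and vanishes on $\partial\Omega$, the maximum principle forces $\varphi = \id$.

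The main obstacle is the jet determination result in the second step, particularly the assertion that $k(\xi_0)$ can be taken finite at \emph{every} boundary point. This is delicate because a pseudoconvex domain with real-analytic boundary can contain points at which $\partial\Omega$ admits a germ of a positive dimensional complex analytic subvariety; such points are of infinite D'Angelo type and fail to be essentially finite in the simplest sense. Handling them requires the full Segre-variety machinery developed by Baouendi--Ebenfelt--Rothschild, and is precisely the reason why the integer $L$ in the conclusion must be allowed to depend on $\xi_0$ rather than be taken uniform over $\partial\Omega$.
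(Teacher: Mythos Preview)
Your outline is essentially the approach the paper indicates: it does not prove this theorem but cites Bell--Ligocka for the smooth extension to $\overline{\Omega}$ and then invokes the Baouendi--Ebenfelt--Rothschild finite jet determination result on $\partial\Omega$, exactly as you do.

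There is, however, a factual error in your final paragraph. A bounded pseudoconvex domain with real-analytic boundary \emph{cannot} have boundary points through which there passes a germ of positive-dimensional complex analytic variety: this is the Diederich--Forn{\ae}ss theorem (1978), which says a compact real-analytic hypersurface in $\Cb^d$ that is pseudoconvex from one side contains no positive-dimensional complex subvarieties. Consequently every $\xi_0 \in \partial\Omega$ is of finite D'Angelo type and in particular essentially finite, so the ``obstacle'' you describe does not arise here. The dependence of $L$ on $\xi_0$ is genuine, but it comes simply from the fact that the jet-determination integer $k(p)$ in the BER theorem varies with the local geometry (e.g., the type) even among finite-type points, not from any failure of essential finiteness.
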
 

\begin{remark} With the hypothesis of Theorem~\ref{thm:real_analytic}, Lamel and Mir~\cite[Corollary 1.4]{LM2007} proved that $L$ can be chosen to depend only on $\partial \Omega$.
\end{remark}

In the second main theorem of this paper, we establish an alternative approach to these types of results which makes no assumptions about the CR-geometry of the boundary and instead only makes assumptions about the intrinsic complex geometry of the domain. In particular, we will assume that there exists an invariant K{\"a}hler metric  with certain bounded geometry properties.

Given a domain $\Omega \subset \Cb^d$ and $z \in \Omega$ define 
\begin{align*}
\delta_\Omega(z) = \inf\{ \norm{w-z} : w \in \partial \Omega \}.
\end{align*}

\begin{definition} Suppose $\Omega \subset \Cb^d$ is a bounded domain. A complete K{\"a}hler metric $g$ on $\Omega$ has \emph{property-$(BG)$} if 
\begin{enumerate}
\item the sectional curvature of $g$ is bounded in absolute value by some $\kappa > 0$ and
\item there exists $A > 0$ such that 
\begin{align*}
\sqrt{g_z(v,v)} \leq A\frac{\norm{v}}{\delta_\Omega(z)}
\end{align*}
for all $z \in \Omega$ and $v \in \Cb^d$. 
\end{enumerate}
\end{definition}

We will also assume that the boundary satisfies a weak accessibility condition. Given $z_0 \in \Cb^d$, $v \in \Cb^d$ with  $\norm{v}=1$, $\theta \in (0,\pi/2]$, and $r > 0$ define the truncated cone:
\begin{align*}
\Cc(z_0,v,\theta,r) = \{ z \in \Cb^d: 0< \norm{z-z_0} < r, \ \angle(z-z_0, v) < \theta \}.
\end{align*}

\begin{definition} If $\Omega \subset \Cb^d$ is a domain and $\xi \in \partial \Omega$, then we say \emph{$\partial\Omega$ satisfies an interior cone condition at $\xi$ with parameters $\theta \in (0,\pi/2]$ and $r>0$} if there exists $v \in \Cb^d$ with $\norm{v}=1$ such that $\Cc(\xi,v,\theta,r) \subset \Omega$.
\end{definition}

Our second main result is the following. 

\begin{theorem}\label{thm:main_intro_biholo} (see Theorem~\ref{thm:main}) Suppose $\Omega \subset \Cb^d$ is a bounded domain, $\varphi \in \Aut(\Omega)$,  $\partial \Omega$ satisfies an interior cone condition at $\xi_0 \in \partial \Omega$ with parameter $\theta$, and there exists an $\varphi$-invariant K{\"a}hler metric $g$ on $\Omega$ with property-$(BG)$ with parameters $\kappa, A$.

If 
\begin{align*}
L > 4d+2+\frac{\sqrt{\kappa} A}{\sin(\theta)}
\end{align*}
and
\begin{align*}
 \varphi(z) = z + { \rm O} \left( \norm{z-\xi_0}^L \right),
\end{align*}
then $\varphi = \id$.
\end{theorem}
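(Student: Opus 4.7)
The strategy is to reduce to Cartan's Uniqueness Theorem (Theorem~\ref{thm:Cartan}) by producing an interior point $p \in \Omega$ at which $\varphi(p) = p$ and $d\varphi_p = \id$. Since $g$ is $\varphi$-invariant, $\varphi$ is an isometry of $(\Omega, g)$, a fact we use throughout.

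First I would convert the Euclidean boundary condition into Riemannian estimates at points approaching $\xi_0$ along the cone. Pick a sequence $z_n \to \xi_0$ inside $\Cc(\xi_0, v, \theta, r) \subset \Omega$, so $\delta_\Omega(z_n) \geq \sin(\theta)\norm{z_n - \xi_0}$. For the holomorphic map $\psi = \varphi - \id$ satisfying $\norm{\psi(z)} = { \rm O}(\norm{z - \xi_0}^L)$, Cauchy estimates on Euclidean balls of radius $\tfrac{1}{2}\delta_\Omega(z_n)$ yield
\[
\norm{d^k \psi(z_n)}_{\rm Euc} \lesssim \frac{\norm{z_n - \xi_0}^L}{\delta_\Omega(z_n)^k}.
\]
Combining this with the property-$(BG)$ upper bound $\sqrt{g_z(v,v)} \leq A\norm{v}/\delta_\Omega(z)$, integration along the straight Euclidean segment from $z_n$ to $\varphi(z_n)$ (which stays inside $\Omega$ for large $n$) gives the Riemannian displacement bound $d_g(z_n, \varphi(z_n)) = {\rm O}(\norm{z_n-\xi_0}^{L-1})$. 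A similar but more delicate calculation, comparing $d\varphi_{z_n}$ to parallel transport along this short geodesic in $g$-orthonormal frames, converts the Cauchy bound on $d\psi$ into a bound on the $g$-operator norm of the 1-jet defect; the dimensional accounting costs polynomial factors in $\norm{z_n-\xi_0}^{-1}$ coming from the $2d$ real dimensions and from the two-sided use of the metric inequality when switching between Euclidean and $g$-orthonormal frames.

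Next I would transport these estimates to a fixed interior basepoint $p \in \Omega$. Integrating the metric upper bound along a path consisting of a fixed arc followed by the cone axis yields
\[
d_g(p, z_n) \leq \frac{A}{\sin(\theta)} \log \frac{1}{\norm{z_n - \xi_0}} + {\rm O}(1),
\]
so $e^{\sqrt{\kappa}\, d_g(p, z_n)} = {\rm O}(\norm{z_n - \xi_0}^{-\sqrt{\kappa}A/\sin(\theta)})$. Since $\varphi$ is a $g$-isometry and $(\Omega, g)$ has sectional curvature bounded in absolute value by $\kappa$, a Rauch-type Jacobi field comparison propagates the 1-jet defect of $\varphi$ from $z_n$ back to $p$ with multiplicative cost at most $e^{\sqrt{\kappa}\, d_g(p, z_n)}$. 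Combining these estimates, the $g$-displacement $d_g(p, \varphi(p))$ and the $g$-operator norm of $d\varphi_p - \id$ (after identifying $T_p\Omega$ with $T_{\varphi(p)}\Omega$ via parallel transport) are each bounded by a constant multiple of $\norm{z_n - \xi_0}^{L - 4d - 2 - \sqrt{\kappa}A/\sin(\theta)}$, which tends to $0$ under the hypothesis on $L$. Hence $\varphi(p) = p$ and $d\varphi_p = \id$, and Cartan's theorem gives $\varphi = \id$.

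The main obstacle is the Riemannian jet-propagation step: tracking how the 1-jet defect of an isometry behaves under transport along a $g$-geodesic of unboundedly growing length, and extracting only the $e^{\sqrt{\kappa}\, d_g}$ growth from the sectional curvature bound. This is precisely where the hypothesis of a \emph{sectional} (rather than merely Ricci or bisectional) curvature bound enters, via the standard Jacobi field comparison, and explains the appearance of $\sqrt{\kappa}A/\sin(\theta)$ in the threshold. A second delicate point is the dimensional accounting in the Euclidean-to-Riemannian conversion: matching Cauchy estimates on successive derivatives of $\psi$ with the boundary growth of the metric tensor in carefully chosen $g$-orthonormal frames is what produces the additional $4d+2$ and hence the threshold $L > 4d+2+\sqrt{\kappa}A/\sin(\theta)$.
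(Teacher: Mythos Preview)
Your overall architecture --- convert the Euclidean ${\rm O}(\norm{z-\xi_0}^L)$ hypothesis into Riemannian smallness near $\xi_0$, then propagate along a geodesic of length $\sim \frac{A}{\sin\theta}\log\frac{1}{r}$ with exponential cost governed by the curvature bound --- is exactly the skeleton of the paper's proof. But two of your steps are not right as written.

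\medskip

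\textbf{Where the $4d+2$ really comes from.} Your explanation (Cauchy estimates on higher derivatives plus ``dimensional accounting'' in switching between Euclidean and $g$-frames) does not produce a loss of $r^{-4d}$; operator-norm conversions between two metrics that are comparable up to a factor $\delta_\Omega^{-1}$ cost only a polynomial factor in $r^{-1}$, independent of $d$. In the paper the $4d$ arises from an \emph{injectivity radius} estimate: using the Cheeger--Gromov--Taylor volume inequality together with the two-sided bound $a\norm{v}\le \sqrt{g(v,v)}\le A\norm{v}/\delta_\Omega$, one gets ${\rm inj}_g(z)\gtrsim \delta_\Omega(z)^{4d+1}$ (Proposition~\ref{prop:useful_lower_bd_2}). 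The paper then needs this because its key technical step (Theorem~\ref{thm:dist_est}) converts $\max_{t\in[0,\epsilon]} d_g(\gamma(t),\sigma(t))$ into a Sasaki-metric bound on $d_{T^1\Omega}(\gamma'(0),\sigma'(0))$, and that conversion is only valid for $\epsilon$ below the convexity radius at the base point. Near $\xi_0$ this forces $\epsilon\sim r^{4d+1}$, and the $1/\epsilon$ in Theorem~\ref{thm:dist_est} combined with the $r^{L-1}$ displacement estimate gives $r^{L-4d-2}$. Consistently with this, when ${\rm inj}_g$ is bounded below the paper gets the threshold $L>2+\sqrt{\kappa}A/\sin\theta$ (Remark~\ref{rmk:bound_on_L}). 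If your Cauchy-estimate route genuinely avoided the injectivity-radius constraint, it would yield this \emph{better} bound, not $4d+2$; the fact that you land on $4d+2$ with an unrelated justification suggests the constant is being reverse-engineered.

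\medskip

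\textbf{The propagation step.} Your ``Rauch-type comparison propagates the 1-jet defect with cost $e^{\sqrt{\kappa}\,d_g}$'' is too vague to be a proof. The paper makes this precise by working on the unit tangent bundle with the Sasaki metric: for a fixed $z_0$ it takes unit-speed geodesics $\gamma_n$ from $p_n$ to $z_0$, observes that $\varphi\circ\gamma_n$ is again a geodesic (since $\varphi$ is a $g$-isometry), and proves $d_g(\gamma_1(t),\gamma_2(t))\le e^{(\kappa+1)t/2}\,d_{T^1\Omega}(\gamma_1'(0),\gamma_2'(0))$ via a direct Jacobi-field Gr\"onwall argument (Proposition~\ref{prop:geod_spread}); the $\sqrt{\kappa}$ then comes from the scaling $g\mapsto \kappa g$. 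This already gives $\varphi(z_0)=z_0$ for every $z_0$, so Cartan's theorem is not needed. A further point you omit: to apply Theorem~\ref{thm:dist_est} one needs uniform bounds on $\nabla^q R$ for $q\le 2$, which property-$(BG)$ does not give; the paper obtains these by first running Shi's Ricci flow for a short time (Theorem~\ref{thm:deform}) to replace $g$ by a bi-Lipschitz-equivalent $\varphi$-invariant metric with the required derivative bounds.
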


\begin{remark}\label{rmk:bound_on_L} \ \begin{enumerate}
\item We will prove a slightly more general result in Theorem~\ref{thm:main} below. 
\item Notice that the Theorem does not assume that $\partial \Omega$ has any regularity (beyond the interior cone condition at $\xi$) and we do not even assume that $\varphi$ extends continuously to the boundary.
\item In the case when the injectivity radius of $(\Omega, g)$ is positive we can choose
\begin{align*}
L >  2+\frac{\sqrt{\kappa} A}{\sin(\theta)}.
\end{align*}
\end{enumerate}
\end{remark}

Based on Theorem~\ref{thm:main_intro_biholo} it seems natural to ask: 

\begin{question} If $\Omega \subset \Cb^d$ is a bounded pseudoconvex domain with finite type, does there exists a $\Aut(\Omega)$-invariant complete K{\"a}hler metric on $\Omega$ with property-$(BG)$?
\end{question} 

We should note that McNeal~\cite{McNeal1989} showed that the Bergman metric has bounded sectional curvature on any bounded pseudoconvex domain with finite type in $\Cb^2$. 

\subsection{Examples:}  Every bounded pseudoconvex domain $\Omega \subset \Cb^d$ has a unique complete K{\"a}hler-Einstein metric $g_\Omega$ with Ricci curvature $-1$. This was constructed by Cheng and Yau~\cite{CY1980} when $\Omega$ has $C^2$ boundary and by Mok and Yau~\cite{MY1983} in general. In this subsection we describe two situations where this metric has property-$(BG)$. 

\subsubsection{HHR domains}

Following Liu, Sun, and Yau~\cite{LSY2004a,LSY2004b}, a domain $\Omega$ is said to be \emph{holomorphic homogeneous regular (HHR)}  if there exists $s>0$ with the following property: for every $z \in \Omega$ there exists a holomorphic embedding $\varphi: \Omega \rightarrow \Cb^d$ such that $\varphi(z)=0$ and
\begin{align*}
s \Bb_d \subset \varphi(\Omega) \subset \Bb_d
\end{align*}
where $\Bb_d \subset \Cb^d$ is the unit ball. In the literature, a HHR domain is sometimes called a domain with the \emph{uniform squeezing property}, see for instance~\cite{Y2009}.

Examples of HHR domains include:
\begin{enumerate}
\item $\Tc_{g,n}$, the Teichm{\"u}ller space of hyperbolic surfaces with genus $g$ and $n$ punctures~\cite{LSY2004a}, 
\item bounded convex domains or more generally bounded $\Cb$-convex domains \cite{F1991, KZ2016, NA2017}, 
\item bounded domains where $\Aut(\Omega)$ acts co-compactly on $\Omega$, and
\item strongly pseudoconvex domains~\cite{DFW2014,DGZ2016}.
\end{enumerate}

Every HRR domain is pseudoconvex~\cite[Theorem 1]{Y2009} but not every pseudoconvex domain is an HRR domain. For instance, Forn{\ae}ss and Rong have constructed smoothly bounded pseudoconvex domains in $\Cb^3$ which are not HRR~\cite{FR2018}.

Results of S.K. Yeung~\cite{Y2009} imply that the K{\"a}hler-Einstein metric on a HRR domain has property-$(BG)$, see Section~\ref{sec:examples} for details, and so we have the following corollary of Theorem~\ref{thm:main}. 

\begin{theorem}\label{thm:unif_sq} Suppose $\Omega \subset \Cb^d$ is a bounded HRR domain and $\xi_0 \in \partial \Omega$ satisfies an interior cone condition. Then there exists $L >0$ such that: if $\varphi \in \Aut(\Omega)$ and
\begin{align*}
 \varphi(z) = z + { \rm O} \left( \norm{z-\xi_0}^L \right),
\end{align*}
then $\varphi = \id$. Moreover, we can choose $L$ to depend only on: the dimension $d$, the $s$ in the definition of a HRR domain, and the $\theta$ in the definition of interior cone condition.  
\end{theorem}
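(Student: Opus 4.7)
The plan is to deduce Theorem~\ref{thm:unif_sq} directly from Theorem~\ref{thm:main_intro_biholo} by exhibiting a natural $\Aut(\Omega)$-invariant Kähler metric with property-$(BG)$ whose parameters depend only on $d$ and $s$. The candidate is the Cheng--Yau / Mok--Yau K\"ahler--Einstein metric $g_\Omega$ with Ricci curvature $-1$. Since every HRR domain is pseudoconvex, $g_\Omega$ exists; and because $g_\Omega$ is uniquely determined by $\Omega$ (via its Einstein equation and completeness), every $\varphi \in \Aut(\Omega)$ is necessarily an isometry of $g_\Omega$. In particular, the given $\varphi$ is $g_\Omega$-invariant.

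Next, I would verify property-$(BG)$ using the work of S.K.\ Yeung cited in Section~\ref{sec:examples}. Concretely, Yeung~\cite{Y2009} shows that on an HRR domain the K\"ahler--Einstein metric is uniformly comparable to the Kobayashi (equivalently Bergman) metric, with constants depending only on $d$ and $s$, and has sectional curvature bounded in absolute value by a constant $\kappa=\kappa(d,s)$. Comparability with the Kobayashi metric, combined with the fact that on any bounded domain $k_\Omega(z;v) \leq \norm{v}/\delta_\Omega(z)$ (up to a dimensional constant), yields a uniform bound
\begin{align*}
\sqrt{g_\Omega(z)(v,v)} \leq A \frac{\norm{v}}{\delta_\Omega(z)}
\end{align*}
with $A=A(d,s)$. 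Thus $g_\Omega$ satisfies property-$(BG)$ with parameters $\kappa, A$ depending only on $d$ and $s$.

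Now applying Theorem~\ref{thm:main_intro_biholo} to the quadruple $(\Omega, \varphi, g_\Omega, \xi_0)$ with the interior cone parameter $\theta$ yields: whenever
\begin{align*}
L > 4d+2+\frac{\sqrt{\kappa(d,s)}\,A(d,s)}{\sin(\theta)},
\end{align*}
the condition $\varphi(z) = z + {\rm O}(\norm{z-\xi_0}^L)$ forces $\varphi=\id$. The right-hand side depends only on $d$, $s$, and $\theta$, giving the desired uniformity of $L$.

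The main obstacle here is not conceptual but quantitative: it is the careful extraction of property-$(BG)$ for $g_\Omega$ on an HRR domain with constants depending only on $(d,s)$. The presence of a uniform upper bound for the K\"ahler--Einstein metric in terms of $1/\delta_\Omega$, and the uniform sectional curvature bound, must both be read off Yeung's squeezing estimates; this is handled in Section~\ref{sec:examples}. Once these two inputs are in hand, the rest of the argument is a clean invocation of Theorem~\ref{thm:main_intro_biholo}, with the $\Aut(\Omega)$-invariance of $g_\Omega$ following automatically from the uniqueness of the complete K\"ahler--Einstein metric.
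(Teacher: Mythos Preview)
Your proposal is correct and follows essentially the same route as the paper: show via Yeung's results (Theorem~\ref{thm:SKYeung} and Corollary~\ref{cor:HRR}) that the K\"ahler--Einstein metric $g_\Omega$ on an HRR domain has property-$(BG)$ with parameters depending only on $d$ and $s$, then invoke Theorem~\ref{thm:main_intro_biholo}. One minor remark: the bound $k_\Omega(z;v) \leq \norm{v}/\delta_\Omega(z)$ holds exactly, without any dimensional constant.
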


Every bounded convex domain $\Omega \subset \Cb^d$ is a HRR domain and work of Frankel~\cite{F1991} (also see~\cite{KZ2016, NA2017}) implies that for every $d \in \Nb$ there exists $s_d >0$ such that:  if $\Omega \subset \Cb^d$ is a bounded convex domain, then the HRR parameter of $\Omega$ is bounded below by $s_d$. So we have the following partial answer to Conjecture~\ref{conj:convex}.

\begin{corollary}\label{cor:convex1} For every $d,\theta > 0$ there exists $L=L(d,\theta)>0$ such that: if $\Omega \subset \Cb^d$ is a bounded convex domain, $\varphi \in \Aut(\Omega)$, $\partial \Omega$ satisfies an interior cone condition at $\xi_0 \in \partial \Omega$ with parameter $\theta$, and
\begin{align*}
 \varphi(z) = z + { \rm O} \left( \norm{z-\xi_0}^{L} \right),
\end{align*}
then $\varphi = \id$.
\end{corollary}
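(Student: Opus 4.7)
The plan is to derive Corollary~\ref{cor:convex1} as a direct specialization of Theorem~\ref{thm:unif_sq}. The key observation is that the dependence of the constant $L$ on $\Omega$ in Theorem~\ref{thm:unif_sq} is through (i) the dimension $d$, (ii) the HRR parameter $s$, and (iii) the cone aperture $\theta$. So if one can show that the HRR parameter of a bounded convex domain in $\Cb^d$ admits a lower bound depending only on $d$, the corollary follows.

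First, I would invoke the result of Frankel \cite{F1991} (see also \cite{KZ2016, NA2017}) which asserts that every bounded convex domain $\Omega \subset \Cb^d$ is holomorphic homogeneous regular, and moreover that the HRR parameter $s = s(\Omega)$ can be bounded below by a constant $s_d > 0$ depending only on $d$. Concretely, given $z \in \Omega$ one uses an affine rescaling centered at $z$ (for instance, the Frankel affine rescaling associated with the John ellipsoid of $\Omega$ based at $z$), and convexity gives uniform control on how the rescaled image sits between two concentric balls, independently of which convex domain or basepoint one chose.

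Next I would apply Theorem~\ref{thm:unif_sq}: since $\Omega$ is HRR with parameter at least $s_d$, and since $\partial \Omega$ satisfies an interior cone condition at $\xi_0$ with parameter $\theta$, there exists $L_0 = L_0(d, s_d, \theta) > 0$ such that any $\varphi \in \Aut(\Omega)$ with $\varphi(z) = z + \mathrm{O}(\norm{z-\xi_0}^{L_0})$ must equal the identity. Setting $L = L(d,\theta) := L_0(d, s_d, \theta)$ yields a constant depending only on $d$ and $\theta$, which is precisely the conclusion.

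The only real point requiring care is verifying that the HRR parameter of a bounded convex domain is bounded below by a dimensional constant; everything else is bookkeeping from Theorem~\ref{thm:unif_sq}. This uniform lower bound is standard in the literature on the squeezing function of convex domains, and one can take the John ellipsoid centered at a given interior point, apply a complex affine map carrying it to a ball, and use convexity together with the inclusion $\Omega \subset \text{(circumscribing ball of radius $\sqrt{d}$ times the inscribed ball)}$ to obtain $s_d \geq 1/\sqrt{d}$ (or some similar explicit constant). I would simply cite Frankel's result rather than reproducing this computation.
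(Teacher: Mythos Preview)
Your proposal is correct and matches the paper's approach exactly: the paper derives the corollary from Theorem~\ref{thm:unif_sq} by citing Frankel~\cite{F1991} (and~\cite{KZ2016, NA2017}) for the fact that bounded convex domains in $\Cb^d$ are HRR with parameter bounded below by some $s_d$ depending only on $d$. The extra remark about John ellipsoids is not needed (and the constant would be $1/(2d)$ rather than $1/\sqrt{d}$ in real dimension $2d$ for a non-symmetric body), but since you correctly say you would simply cite Frankel, this does not affect the argument.
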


\subsubsection{Pinched negative curvature} 

Let $(M,J)$ be a complex manifold with K{\"a}hler metric $g$ and let $R$ denote the curvature tensor of $(M,g)$. Then the \emph{holomorphic sectional curvature} of a non-zero $X \in T_pM$ is given by 
\begin{align*}
H(g)(X) = \frac{R(X,JX,X, JX)}{g(X,X)g(X,X)}.
\end{align*}

Using work of Wu and Yau~\cite{WY2017}, see Section~\ref{sec:examples}, we will establish the following variant of Theorem~\ref{thm:main}. 

\begin{theorem}\label{thm:WYcor} Suppose $\Omega \subset \Cb^d$ is a bounded domain and there exists a complete K{\"a}hler metric $g$ on $\Omega$ such that 
\begin{align*}
-a \leq H(g) \leq -b
\end{align*}
for some $a,b > 0$. Assume $\partial \Omega$ satisfies an interior cone condition at $\xi_0 \in \partial \Omega$. Then there exists $L > 0$ such that: if $\varphi \in \Aut(\Omega)$ and 
\begin{align*}
 \varphi(z) = z + { \rm O} \left( \norm{z-\xi_0}^L \right),
\end{align*}
then $\varphi = \id$. Moreover, we can choose $L$ to depend only on:  $d$, $a$, $b$, and the $\theta$ in the definition of interior cone condition. 
\end{theorem}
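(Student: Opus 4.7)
The strategy is to reduce to Theorem~\ref{thm:main_intro_biholo} by producing an $\Aut(\Omega)$-invariant K\"ahler metric on $\Omega$ with property-$(BG)$; the natural candidate is the Cheng--Yau / Mok--Yau K\"ahler--Einstein metric $g_\Omega$, and Wu--Yau's estimates are what make it qualify.

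First I would verify that $\Omega$ is pseudoconvex. The existence of a complete K\"ahler metric with $H(g) \leq -b<0$ forces, through the Ahlfors--Yau Schwarz lemma, every holomorphic disk in $\Omega$ to be uniformly length-decreasing, and this is enough (together with completeness) to upgrade $\Omega$ to a Stein/pseudoconvex domain via standard exhaustion arguments. Then Cheng--Yau (in the $C^2$-boundary case) and Mok--Yau (in general) provide the unique complete K\"ahler--Einstein metric $g_\Omega$ on $\Omega$ with Ricci curvature $-1$. Since this metric is canonically determined by the complex structure, $\varphi^\ast g_\Omega = g_\Omega$ for every $\varphi \in \Aut(\Omega)$, so $g_\Omega$ is automatically invariant.

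Next I would establish property-$(BG)$ for $g_\Omega$ with constants depending only on $d, a, b$. For condition~(1), applying Yau's Schwarz lemma in both directions to the identity $(\Omega,g) \to (\Omega, g_\Omega)$ yields that $g$ and $g_\Omega$ are uniformly equivalent, with constants controlled by $a,b$. Wu--Yau's complex Monge--Amp\`ere estimates in~\cite{WY2017} promote this to a $C^k$-comparison of the corresponding K\"ahler potentials, which gives uniform bounds on the full curvature tensor of $g_\Omega$; since the Riemannian curvature of a K\"ahler metric is determined by its holomorphic sectional curvature via a polarization identity, bounded $H(g_\Omega)$ together with these derivative bounds yields sectional curvature bounded in absolute value by some $\kappa=\kappa(d,a,b)$. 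For condition~(2), $H(g_\Omega)$ is bounded above by a negative constant depending on $(d,a,b)$, so Yau's Schwarz lemma bounds $g_\Omega$ above by a multiple of the Kobayashi infinitesimal metric $k_\Omega$; and for any bounded domain the map $w \mapsto z + w\, \delta_\Omega(z)\, u$ from the unit disk into $\Omega$ shows $k_\Omega(z,v)\leq \|v\|/\delta_\Omega(z)$, whence $\sqrt{(g_\Omega)_z(v,v)} \leq A\|v\|/\delta_\Omega(z)$ for some $A=A(d,a,b)$.

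Finally, having built a $\varphi$-invariant metric $g_\Omega$ with property-$(BG)$ with parameters $\kappa,A$ depending only on $d,a,b$, I would apply Theorem~\ref{thm:main_intro_biholo}: whenever $L > 4d+2+\sqrt{\kappa}A/\sin(\theta)$, a constant depending only on $d,a,b,\theta$, the hypothesis $\varphi(z)=z + \mathrm{O}(\|z-\xi_0\|^L)$ forces $\varphi=\id$. The main obstacle is step three: the uniform equivalence of $g$ and $g_\Omega$ is not by itself enough to bound the sectional curvature of $g_\Omega$, and one genuinely needs Wu--Yau's higher-order Monge--Amp\`ere estimates (or equivalently a direct curvature comparison) to control the derivatives of the K\"ahler potential up to the order required for a full curvature bound.
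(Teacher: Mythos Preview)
Your proposal is correct and follows essentially the same route as the paper. The paper's argument in Section~\ref{sec:examples} simply packages the Wu--Yau input as a single black-box citation (Theorem~\ref{thm:Wu_Yau}): under the pinching hypothesis $-a\le H(g)\le -b$, the domain admits a unique K\"ahler--Einstein metric $g_\Omega$ with Ricci $-1$, this metric is $\alpha$-bi-Lipschitz to $k_\Omega$, and its sectional curvature is bounded by some $\kappa$, with $\alpha,\kappa$ depending only on $d,a,b$; then the trivial bound $k_\Omega(z;v)\le\|v\|/\delta_\Omega(z)$ gives property-$(BG)$ and Theorem~\ref{thm:main_intro_biholo} finishes. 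You instead sketch how Wu--Yau obtain these three conclusions (pseudoconvexity, Schwarz-lemma comparisons, and the higher-order Monge--Amp\`ere estimates for the curvature bound), but the logical skeleton---produce an $\Aut(\Omega)$-invariant K\"ahler--Einstein metric with property-$(BG)$ and then invoke Theorem~\ref{thm:main_intro_biholo}---is identical, and your identification of the curvature bound as the one step requiring genuine work beyond Schwarz-type comparisons is accurate.
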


\begin{remark} In it worth noting that the metric $g$ in Theorem~\ref{thm:WYcor} is not assumed to be $\Aut(\Omega)$-invariant. \end{remark}

\subsection{Sketch of the proofs:} The proofs of Theorems~\ref{thm:main_convex} and~\ref{thm:main_intro_biholo} use very different techniques: the former relies on Lempert's theory of complex geodesics while the latter uses tools from Riemannian geometry. However, similar ideas are used in both. In this section, we sketch the proof of Theorem~\ref{thm:main_intro_biholo} and then describe some of the ideas used to prove Theorem~\ref{thm:main_convex}. 

\subsubsection{Sketch of the proof of Theorem~\ref{thm:main_intro_biholo}:} The central idea in the proof is that curvature controls how fast geodesics can spread apart. For simplicity we will only describe the argument in the special case where $g$ is a K{\"a}hler metric with positive injectivity radius and 
\begin{align}
\label{eq:curvature_bd_intro}
\sup\{ \abs{\nabla^q R} : x \in \Omega, q=0,1,2\} < \infty
\end{align}
where $R$ is the curvature tensor of $g$. 

Let $d_\Omega$ denote the distance induced by $g$. In this case, we prove that there exists $C_1, \tau > 0$ such that: if $\gamma_1, \gamma_2 : [0,\infty) \rightarrow \Omega$ are unit speed geodesics and $0 < \epsilon < \tau$, then 
\begin{align}
\label{eq:sketch_of_proof}
d_\Omega(\gamma_1(t), \gamma_2(t)) \leq \frac{C_1}{\epsilon} \exp\left( \frac{\kappa+1}{2}t \right)\max_{t \in [0,\epsilon]} d_{\Omega}(\gamma_1(t), \gamma_2(t))
\end{align}
for $t > 0$ (see Proposition~\ref{prop:geod_spread} and Theorem~\ref{thm:dist_est} below). 

Using the interior  cone condition and the upper bound on $g$, we find a sequence of points $p_n$ converging to $\xi_0$ such that 
\begin{align*}
d_\Omega(p_n, p_0) \leq \frac{A}{\sin(\theta)} \log \frac{1}{\norm{p_n-\xi_0}}.
\end{align*}
We then fix a point $z_0 \in \Omega$ and consider unit speed geodesics $\gamma_n : [0,T_n] \rightarrow \Omega$ with $\gamma_n(0) = p_n$ and $\gamma_n(T_n)=z_0$. Using the interior cone condition, the upper bound on $g$, and the fact that 
\begin{align*}
 \varphi(z) = z + { \rm O} \left( \norm{z-\xi_0}^L \right),
\end{align*}
we show that there exists $\epsilon_n, C_2 > 0$ such that
\begin{align*}
\max_{t \in [0,\epsilon_n]} d_{ \Omega}(\gamma_n(t), (\varphi \gamma_n)(t) ) \leq C_2 \norm{p_n-\xi_0}^{L-1}
\end{align*}
and $\epsilon_n \geq \norm{p_n-\xi_0}/C_2$. Then from Equation~\eqref{eq:sketch_of_proof} we have
\begin{align*}
d_\Omega(z_0, \varphi(z_0))=d_{\Omega}(\gamma_n(T_n), (\varphi \gamma_n)(T_n)) \leq C^2 \exp\left( \frac{\kappa+1}{2}T_n\right)\norm{p_n-\xi_0}^{L-2}.
\end{align*}
However, 
\begin{align*}
T_n \leq d_\Omega(p_0, z_0) + \frac{A}{\sin(\theta)} \log \frac{1}{\norm{p_n-\xi_0}}
\end{align*}
and $\norm{p_n-\xi_0} \rightarrow 0$. So if 
\begin{align*}
L > 2+\frac{(\kappa+1)A}{2\sin(\theta)},
\end{align*}
then $d_\Omega(z_0, \varphi(z_0))=0$. Hence $\varphi(z_0) = z_0$. Since $z_0 \in \Omega$ was arbitrary, this implies that $\varphi = \id$. 

This argument actually shows that any
\begin{align*}
L > 2+\frac{\sqrt{\kappa} A}{\sin(\theta)}
\end{align*}
suffices. One simply replaces $g$ with $\kappa g$. Then repeating the above argument shows that if
\begin{align*}
L > 2+\frac{\left(\frac{\kappa}{\kappa}+1\right) \sqrt{\kappa} A}{2\sin(\theta)} = 2+\frac{\sqrt{\kappa} A}{\sin(\theta)}
\end{align*}
then $\varphi = \id$. 

When the injectivity radius of $(\Omega, g)$ is not assumed to be positive, some of the estimates are worse which forces us to assume that 
\begin{align*}
L > 4d+2+\frac{\sqrt{\kappa} A}{\sin(\theta)}.
\end{align*}
When $g$ does not satisfy Equation~\eqref{eq:curvature_bd_intro}, we use classical results about the Ricci flow to deform $g$ to obtain a metric that does, see Section~\ref{sec:deform} for details. 

The most difficult part of the argument is establishing the estimate in Equation~\eqref{eq:sketch_of_proof}. This requires a number of results about Riemannian manifolds which are discussed in Sections~\ref{sec:unit_tangent_bundle}, \ref{sec:two_lower_bounds}, \ref{sec:deform}, and \ref{sec:dist_est}.  

\subsubsection{Ideas in the proof of Theorem~\ref{thm:main_convex}} Like Burns and Krantz's proof of Theorem~\ref{thm:BK}, we study complex geodesics and their images under $f$. For strongly convex domains, complex geodesics are very well understood thanks to Lempert's deep work~\cite{L1981, L1982, L1984}. However, for convex domains with $C^2$ boundary and no finite type assumptions, complex geodesics are less understood and can have unpleasant behavior. For example, it is possible for a complex geodesic to not extend continuously to the boundary (see~\cite[Example 1.2]{B2016}). A key part of the proof of Theorem~\ref{thm:main_convex} is establishing some new results about complex geodesics which gives us some control over their behavior. The results are somewhat technical and we delay further discussion until Section~\ref{sec:GP}.

%
%

A second key part in the proof is a recent estimate of Christodoulou and Short~\cite{CS2018}. Before stating their result we need some notation: let $K_{\Db}$ is the Kobayashi distance on $\Db$ and let $B_{\Db}(z;r)$ be the open metric ball centered at $z \in \Db$ of radius $r > 0$ in $(\Db, K_{\Db})$.

\begin{theorem}[Christodoulou-Short~\cite{CS2018}]\label{thm:CS_intro} There exists $C > 0$ such that: if $f : \Db \rightarrow \Db$ is holomorphic, $z_0 \in \Db$, and $0 < \epsilon < 1$, then
\begin{align*}
K_{\Db}(f(z), z) \leq \frac{C}{\epsilon} \exp\Big(4 K_{\Db}(z_0,z) \Big)\sup_{w \in B_{\Db}(z_0;\epsilon)} K_{\Db}(f(w), w)
\end{align*}
for all $z \in \Db$. 
\end{theorem}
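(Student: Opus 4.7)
Since $\Aut(\Db)$ acts on $\Db$ by $K_\Db$-isometries, I conjugate $f$ by a M\"obius transformation to reduce to the case $z_0 = 0$. Setting $M := \sup_{w \in B_\Db(0;\epsilon)} K_\Db(f(w), w)$ and noting $B_\Db(0;\epsilon) = \{|w| < \tanh\epsilon\}$, the claim becomes
\[
K_\Db(f(z), z) \leq \frac{C}{\epsilon}\, e^{4K_\Db(0,z)}\, M \qquad \text{for all } z \in \Db.
\]

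I next translate the hypothesis into Euclidean data on the holomorphic function $h(z) := f(z) - z$: using the identity $\tanh K_\Db(a,b) = |(a-b)/(1-\bar a b)|$, the bound on $M$ gives $|h(w)| \leq 2\tanh M \leq 2M$ for $|w| < \tanh\epsilon$, while the trivial estimate gives $|h| \leq 2$ on all of $\Db$. The problem reduces to an extension estimate combining these two bounds into a pointwise bound for $|h(z)|$, which is then converted back via $\tanh K_\Db(f(z), z) = |h(z)|/|1 - \bar z f(z)|$ together with the lower bound $|1-\bar z f(z)| \geq 1 - |z||f(z)| \geq 1-|z|$.

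For the extension, my plan is to apply Schwarz--Pick to the family of holomorphic self-maps of $\Db$ given by
\[
G_z(w) := \frac{f(w) - z}{1 - \bar z f(w)},
\]
one for each $z \in \Db$ and holomorphic in $w$, noting that $|G_z(z)| = \tanh K_\Db(f(z), z)$ while the values of $G_z$ on the small disk $\{|w|<\tanh\epsilon\}$ are controlled through the triangle inequality by $M$ and $K_\Db(0,z)$. Combining Schwarz--Pick estimates for the family $G_z$ with the identity $|1-\bar a b|^2 - |a-b|^2 = (1-|a|^2)(1-|b|^2)$ (which bridges the Euclidean and hyperbolic pictures) should produce a factor $(1+|z|)/(1-|z|) = e^{2K_\Db(0,z)}$ in the bound on $|h(z)|$. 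A second such factor enters when converting back, since $1/|1-\bar z f(z)| \leq 1/(1-|z|)$ contributes another $e^{2K_\Db(0,z)}$; the two factors combine to produce the claimed $((1+|z|)/(1-|z|))^{2} = e^{4K_\Db(0,z)}$ growth, and the final step from $\tanh K_\Db$ to $K_\Db$ uses $\tanh^{-1}(x) \leq 2x$ in the regime where the Schwarz--Pick bound is genuinely sharper than the trivial bound $\tanh K_\Db < 1$.

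The main obstacle is the extension estimate itself: a naive single-point application of Schwarz--Pick yields only the elementary Lipschitz bound $K_\Db(f(z), z) \leq M + 2K_\Db(0, z)$, which is linear in $K_\Db(0,z)$, whereas the Christodoulou--Short inequality instead has exponential growth in the distance but with the small multiplicative prefactor $M/\epsilon$. These two estimates are genuinely incomparable, and closing the gap requires using the holomorphic structure of $f$ uniformly at all intermediate scales between the small disk $\{|w|<\tanh\epsilon\}$ and $\Db$; in practice this likely combines Schwarz--Pick applied to the family $G_z(w)$ with a Cauchy-integral or Hadamard three-circles-type interpolation between the two concentric disks on which $|h|$ is controlled.
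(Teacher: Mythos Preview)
The paper does not prove this statement from first principles: it is quoted from \cite{CS2018}, and the remark immediately following it points to the sharper two-point estimate recorded later as Theorem~\ref{thm:CS}, namely
\[
K_{\Db}(f(z),z) \leq \frac{e^{2K_{\Db}(z,a)+2K_{\Db}(z,b)+2K_{\Db}(a,b)}}{2K_{\Db}(a,b)}\bigl(K_{\Db}(f(a),a)+K_{\Db}(f(b),b)\bigr).
\]
The ball version follows from this in two lines (carried out inside the proof of Proposition~\ref{prop:quant_id}): take $a,b$ antipodal in $B_{\Db}(z_0;\epsilon)$ with $K_{\Db}(a,z_0)=K_{\Db}(b,z_0)=\frac{1}{2}K_{\Db}(a,b)=\epsilon$, so that $K_{\Db}(z,a),K_{\Db}(z,b)\leq K_{\Db}(z,z_0)+\epsilon$ and the prefactor becomes $\frac{1}{4\epsilon}e^{4K_{\Db}(z,z_0)+8\epsilon}$.

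Your route is genuinely different and, as you yourself flag, incomplete at the decisive step; I do not think the gap closes along the lines you sketch. Once you pass to $h=f-\id$, all you retain is that $h$ is bounded by $2$ on $\Db$ and by roughly $2M$ on the small disk $\{|w|<\tanh\epsilon\}$; any interpolation between these two bounds (Hadamard three-circles, Cauchy estimates on Taylor coefficients) produces $|h(z)|\lesssim M^{\alpha(z)}$ with $\alpha(z)=\log(1/|z|)/\log(1/\tanh\epsilon)\to 0$ as $|z|\to 1$, which is sublinear in $M$, not linear. Your $G_z$ family fares no better: a one-point application of Schwarz--Pick to $G_z$ recovers exactly the additive bound $K_{\Db}(f(z),z)\leq M+2K_{\Db}(z_0,z)$ you already noted. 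The target inequality is linear in $M$ with a $1/\epsilon$ prefactor --- the signature of a difference quotient --- and this is precisely what the two-point formulation supplies, with $K_{\Db}(a,b)$ playing the role of $\epsilon$. The constraint $f(\Db)\subset\Db$ is used in an essential way in the proof of Theorem~\ref{thm:CS} and is discarded when you treat $h$ merely as a bounded holomorphic function; if you want a self-contained argument, prove the two-point estimate and specialize.
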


\begin{remark} Christodoulou and Short actually proved a stronger estimate, see Theorem~\ref{thm:CS} for the precise statement. \end{remark}

As we explain in Section~\ref{sec:unit_disk}, this estimate can be used to give a new proof of the Burns-Krantz theorem for the unit disk. We will use Theorem~\ref{thm:CS_intro} to prove Theorem~\ref{thm:main_convex} in a similar way to how Equation~\eqref{eq:sketch_of_proof} is used to prove Theorem~\ref{thm:main_intro_biholo}. 

\subsection{Notations}

\begin{enumerate}
\item For $z \in \Cb^d$ let $\norm{z}$ be the standard Euclidean norm and $d_{\Euc}(z_1, z_2) = \norm{z_1-z_2}$ be the standard Euclidean distance. 
\item For $z_0 \in \Cb^d$ and $r > 0$ let 
\begin{align*}
\Bb_d(z_0;r) = \left\{ z \in \Cb^d : \norm{z-z_0} < r\right\}.
\end{align*}
Then let $\Bb_d  = \Bb_d(0;1)$ and $\Db = \Bb_1$. 
\item Given an open set $\Omega \subset \Cb^d$, $p \in \Omega$, and $v \in \Cb^d \setminus \{0\}$ let 
\begin{align*}
\delta_{\Omega}(p)= \inf \left\{ d_{\Euc}(p,x) : x\in \partial \Omega \right\}.
\end{align*}
\end{enumerate}

 \subsection*{Acknowledgements} This material is based upon work supported by the National Science Foundation under grant DMS-1760233.

\part{The Proof of Theorem~\ref{thm:main_convex}}

 \section{Preliminaries for the proof of Theorem~\ref{thm:main_convex}}
 
 \subsection{The Kobayashi metric}

 In this expository section we recall the definition of the Kobayashi metric. A nice introduction to the Kobayashi metric and its properties can be found in~\cite{K2005} or~\cite{A1989}. 

Given a domain $\Omega \subset \mathbb{C}^d$ the \emph{(infinitesimal) Kobayashi metric} is the pseudo-Finsler metric
\begin{align*}
k_{\Omega}(x;v) = \inf \left\{ \abs{\xi} : f \in \Hol(\mathbb{D}, \Omega), \ f(0) = x, \ d(f)_0(\xi) = v \right\}.
\end{align*}
By a result of Royden~\cite[Proposition 3]{R1971} the Kobayashi metric is an upper semicontinuous function on $\Omega \times \mathbb{C}^d$. In particular, if $\sigma:[a,b] \rightarrow \Omega$ is an absolutely continuous curve (as a map $[a,b] \rightarrow \mathbb{C}^d$), then the function 
\begin{align*}
t \in [a,b] \rightarrow k_\Omega(\sigma(t); \sigma^\prime(t))
\end{align*}
is integrable and we can define the \emph{length of $\sigma$} to  be
\begin{align*}
\ell_\Omega(\sigma)= \int_a^b k_\Omega(\sigma(t); \sigma^\prime(t)) dt.
\end{align*}
One can then define the \emph{Kobayashi pseudo-distance} to be
\begin{equation*}
 K_\Omega(x,y) = \inf \left\{\ell_\Omega(\sigma) : \sigma\colon[a,b]
 \rightarrow \Omega \text{ is abs. cont., } \sigma(a)=x, \text{ and } \sigma(b)=y\right\}.
\end{equation*}
This definition is equivalent to the standard definition using analytic chains by a result of Venturini~\cite[Theorem 3.1]{Ven1989}.

When $\Omega$ is a bounded domain, $K_\Omega$ is a non-degenerate distance. For general domains there is no known characterization of when the Kobayashi distance is proper, but for convex domains we have the following result of Barth.

\begin{theorem}\cite{B1980}\label{thm:barth}
Suppose $\Omega$ is a convex domain. Then the following are equivalent:
\begin{enumerate}
\item $\Omega$ does not contain any complex affine lines,
\item $K_\Omega$ is a non-degenerate distance on $\Omega$, 
\item $(\Omega, K_\Omega)$ is a proper metric space, 
\item $(\Omega, K_\Omega)$ is a proper geodesic metric space. 
\end{enumerate}
\end{theorem}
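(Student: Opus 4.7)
The plan is to prove the four implications $(4)\Rightarrow(3)\Rightarrow(2)\Rightarrow(1)$ and close the loop with $(1)\Rightarrow(4)$. The first three implications are essentially formal: $(4)\Rightarrow(3)$ is the definition, $(3)\Rightarrow(2)$ holds because a proper metric space separates points, and $(2)\Rightarrow(1)$ is the contrapositive observation that if $\Omega$ contains a complex affine line $L$, then the affine parametrization $\Cb\to L\subset\Omega$ is holomorphic, so the distance-decreasing property of the Kobayashi pseudo-distance forces $K_\Omega\leq K_\Cb\equiv 0$ on $L$, violating non-degeneracy. The substance of the theorem is therefore $(1)\Rightarrow(4)$, which I would split into three steps: non-degeneracy, properness, and the upgrade from a proper metric space to a geodesic metric space.

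For non-degeneracy, given $p\in\Omega$ and a nonzero $v\in\Cb^d$, the complex line $p+\Cb v$ is not contained in $\Omega$ by hypothesis, so it must meet $\partial\Omega$ at some point $q$. Convexity provides a real supporting hyperplane $H$ of $\Omega$ at $q$, and the maximal complex affine subspace $H_{\Cb}\subset H$ has complex codimension one. The associated quotient projection $\pi\colon\Cb^d\to\Cb^d/H_{\Cb}\cong\Cb$ is holomorphic and sends $\Omega$ into an open half-plane $\mathbb{H}\subset\Cb$. Since $q$ lies on the complex line $p+\Cb v$, one can arrange that $d\pi(v)\neq 0$, and the distance-decreasing property together with the Poincar\'e metric on $\mathbb{H}$ yields
\[
k_\Omega(p;v)\geq k_{\mathbb{H}}(\pi(p);d\pi(v))>0.
\]
Integrating gives $(2)$.

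Properness is the crux. For any sequence $p_n\in\Omega$ with $K_\Omega(p_0,p_n)$ bounded I argue by cases. If $\{p_n\}$ remains bounded in $\Cb^d$, a Euclidean subsequential limit $p_\infty\in\overline{\Omega}$ cannot lie on $\partial\Omega$, because applying the same projection trick at a supporting hyperplane at $p_\infty$ sends $\pi(p_n)\to\pi(p_\infty)\in\partial\mathbb{H}$, forcing the Poincar\'e distance from $\pi(p_0)$ to $\pi(p_n)$, and hence $K_\Omega(p_0,p_n)$, to blow up. The harder case is $\|p_n\|\to\infty$ along a subsequence: extracting a limiting direction $v_\infty=\lim p_n/\|p_n\|$, I would combine convexity (which guarantees long line segments in $\Omega$ along directions $p_n-p_0$) with a normal families / Arzel\`a--Ascoli argument applied to the holomorphic disks witnessing almost-geodesic chains from $p_0$ to $p_n$. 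The bounded Kobayashi distances keep these disks in a Kobayashi-compact family, and the limit extracted in this way is a non-constant holomorphic map $\Cb\to\Omega$ in direction $v_\infty$, producing a complex affine line in $\Omega$ and contradicting $(1)$. This unbounded case is where hypothesis $(1)$ is really needed and is the main obstacle; the bounded-but-approaching-$\partial\Omega$ case only uses convexity.

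Finally, the upgrade $(3)\Rightarrow(4)$ is soft: by Venturini's theorem the distance $K_\Omega$ is the length metric induced by the upper semicontinuous infinitesimal Kobayashi pseudo-Finsler metric (Royden), so $(\Omega,K_\Omega)$ is a locally compact length space; combined with properness, the Hopf--Rinow--Cohn-Vossen theorem for length spaces delivers existence of minimizing geodesics between any two points, giving $(4)$.
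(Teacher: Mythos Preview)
The paper does not actually prove this theorem; it is quoted as background from Barth~\cite{B1980}. So there is no argument in the paper to compare against, and I will just assess your proof.

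The implications $(4)\Rightarrow(3)\Rightarrow(2)\Rightarrow(1)$, the half-plane projection for non-degeneracy, and the Hopf--Rinow upgrade to a geodesic space are all fine. In your properness argument, the bounded case (a $K_\Omega$-bounded sequence cannot accumulate on $\partial\Omega$) is also correct.

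The genuine gap is the unbounded case. You write that ``the bounded Kobayashi distances keep these disks in a Kobayashi-compact family,'' but that is circular: compactness of Kobayashi balls is exactly the properness you are trying to establish, and Montel's theorem is unavailable because $\Omega$ is unbounded in this case. You also do not explain how a limit of maps $\Db\to\Omega$ would yield a map $\Cb\to\Omega$; some Brody-type rescaling would be required, and it is not clear how to run it here without already knowing local compactness of the target in the relevant metric.

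A clean fix is to push your half-plane trick further. Every real supporting hyperplane of $\Omega$ can be written as $\{z:\Real\,\ell(z)=c\}$ for a complex-linear functional $\ell$, with $\Omega\subset\{\Real\,\ell<c\}$. If the functionals arising this way failed to span $(\Cb^d)^*$ they would all vanish on some $v\neq 0$; then every supporting half-space, hence $\overline{\Omega}$ and $\Omega$, would be invariant under translation by $\Cb v$, contradicting $(1)$. Picking $d$ linearly independent such $\ell_1,\dots,\ell_d$ gives a complex-affine embedding of $\Omega$ into a product of half-planes, which is biholomorphic to a polydisk. The distance-decreasing property then bounds $K_\Omega$ below by the (proper) polydisk metric, so $K_\Omega$-bounded sets are Euclidean bounded, and you are back in the case you already handled.
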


One of the most important properties of the Kobayashi metric is the following distance decreasing property (which is immediate from the definition).

\begin{proposition} Suppose $\Omega_1 \subset \Cb^{d_1}$ and $\Omega_2 \subset \Cb^{d_2}$ are domains. If $f : \Omega_1 \rightarrow \Omega_2$ is a holomorphic map, then 
\begin{align*}
k_{\Omega_2}(f(z); d(f)_z(v)) \leq k_{\Omega_1}(z;v)
\end{align*}
for all $z \in \Omega_1$ and $v \in \Cb^d$. In particular, 
\begin{align*}
K_{\Omega_2}(f(z), f(w)) \leq K_{\Omega_1}(z,w)
\end{align*}
for all $z,w \in\Omega_1$. 
\end{proposition}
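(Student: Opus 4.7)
The plan is to derive both inequalities directly from the definitions, with the infinitesimal statement coming first and the distance statement following by integration along curves.

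First I would prove the infinitesimal inequality. Fix $z \in \Omega_1$ and $v \in \Cb^{d_1}$, and let $\varphi \in \Hol(\Db, \Omega_1)$ be any competitor for $k_{\Omega_1}(z;v)$, so $\varphi(0) = z$ and $d\varphi_0(\xi) = v$ for some $\xi \in \Cb$. Then $f \circ \varphi \in \Hol(\Db, \Omega_2)$ satisfies $(f\circ\varphi)(0) = f(z)$ and by the chain rule $d(f\circ\varphi)_0(\xi) = d(f)_z(v)$. Hence $f \circ \varphi$ is an admissible competitor for $k_{\Omega_2}(f(z); d(f)_z(v))$, which gives
\begin{align*}
k_{\Omega_2}(f(z); d(f)_z(v)) \leq \abs{\xi}.
\end{align*}
Taking the infimum over all admissible $\varphi$ yields $k_{\Omega_2}(f(z); d(f)_z(v)) \leq k_{\Omega_1}(z;v)$.

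Next I would deduce the distance decreasing property by integrating. Let $\sigma : [a,b] \rightarrow \Omega_1$ be any absolutely continuous curve with $\sigma(a) = z$ and $\sigma(b) = w$. Since $f$ is holomorphic (hence smooth), the composition $f \circ \sigma : [a,b] \rightarrow \Omega_2$ is absolutely continuous with $(f \circ \sigma)(a) = f(z)$, $(f \circ \sigma)(b) = f(w)$, and derivative $d(f)_{\sigma(t)}(\sigma^\prime(t))$ almost everywhere. Applying the infinitesimal inequality pointwise and integrating gives
\begin{align*}
\ell_{\Omega_2}(f \circ \sigma) = \int_a^b k_{\Omega_2}\bigl( f(\sigma(t)); d(f)_{\sigma(t)}(\sigma^\prime(t)) \bigr)\, dt \leq \int_a^b k_{\Omega_1}(\sigma(t); \sigma^\prime(t))\, dt = \ell_{\Omega_1}(\sigma).
\end{align*}
Then $K_{\Omega_2}(f(z), f(w)) \leq \ell_{\Omega_2}(f \circ \sigma) \leq \ell_{\Omega_1}(\sigma)$, and taking the infimum over all admissible $\sigma$ yields $K_{\Omega_2}(f(z), f(w)) \leq K_{\Omega_1}(z,w)$.

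There is no real obstacle here; the statement is essentially tautological once the Kobayashi pseudometric is defined via holomorphic disks, and the only mild technical point is that composing an absolutely continuous curve with a smooth map preserves absolute continuity and commutes with differentiation almost everywhere, which is standard.
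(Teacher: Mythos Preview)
Your proof is correct and follows exactly the reasoning the paper has in mind: the paper does not give a proof at all but simply remarks that the proposition ``is immediate from the definition,'' and your argument spells out precisely this immediacy---competitors compose, then integrate.
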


We will also frequently use the following elementary estimate (which follows from considering holomorphic maps of the form $\Bb_d \hookrightarrow \Omega$). 

\begin{observation} If $\Omega \subset \Cb^d$ is a domain, then 
\begin{align*}
k_{\Omega}(z;v) \leq \frac{ \norm{v}}{\delta_\Omega(z)}
\end{align*}
for all $z \in \Omega$ and $v \in \Cb^d$. 
\end{observation}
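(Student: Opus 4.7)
The plan is a one-line application of the definition of the infinitesimal Kobayashi metric, exactly as the parenthetical hint suggests. The only thing to do is exhibit, for each $(z,v) \in \Omega \times (\Cb^d \setminus \{0\})$, a single competitor $f \in \Hol(\Db, \Omega)$ and a single $\xi \in \Cb$ with $f(0)=z$ and $d(f)_0(\xi)=v$ realizing the claimed bound; the case $v=0$ is trivial since the constant map $f \equiv z$ with $\xi = 0$ is admissible.

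First I would set $r = \delta_\Omega(z)$ so that the Euclidean ball $\Bb_d(z;r)$ is contained in $\Omega$ (this is the content of the definition of $\delta_\Omega$). Then I would define the affine holomorphic embedding
\begin{align*}
f : \Db \hookrightarrow \Omega, \qquad f(\zeta) = z + \zeta \cdot \frac{r \, v}{\norm{v}}.
\end{align*}
This map lands in $\Bb_d(z;r) \subset \Omega$ since for $\abs{\zeta} < 1$ we have $\norm{f(\zeta)-z} = \abs{\zeta} \cdot r < r$. Moreover $f(0) = z$ and the complex derivative of $f$ at $0$ is the linear map $\zeta \mapsto \zeta \cdot rv/\norm{v}$.

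Setting $\xi = \norm{v}/r \in \Cb$ gives $d(f)_0(\xi) = v$, so $f$ and $\xi$ are admissible in the infimum defining $k_\Omega(z;v)$. Hence
\begin{align*}
k_\Omega(z;v) \leq \abs{\xi} = \frac{\norm{v}}{r} = \frac{\norm{v}}{\delta_\Omega(z)},
\end{align*}
which is exactly the claimed estimate. No obstacle is anticipated; this is a direct unpacking of the definition and uses nothing beyond the fact that a Euclidean ball of radius $\delta_\Omega(z)$ around $z$ lies inside $\Omega$.
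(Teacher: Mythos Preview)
Your proposal is correct and is exactly the unpacking of the paper's parenthetical hint ``which follows from considering holomorphic maps of the form $\Bb_d \hookrightarrow \Omega$'': you exhibit the affine disk through $z$ in the direction $v$ inside the Euclidean ball $\Bb_d(z;\delta_\Omega(z)) \subset \Omega$ as a competitor in the infimum defining $k_\Omega$. There is nothing to add.
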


Finally, we make the following definition.

\begin{definition}
 For a domain $\Omega \subset \Cb^d$, $z_0 \in \Omega$, and $R \geq 0$ let $B_\Omega(z_0;R)$ be the open metric ball of radius $R$ centered at $z_0$ with respect to the Kobayashi metric, that is 
 \begin{align*}
 B_\Omega(z_0;R) = \{ z \in \Omega : K_\Omega(z,o) < R\}.
 \end{align*}
 \end{definition}

 \subsection{A quantitative estimate for maps of the disk}\label{subsec:CS_result}
 
 If $f : \Db \rightarrow \Db$ is holomorphic and fixes two distinct points in $\Db$, then the Schwarz lemma implies that $f=\id$. Recently, Christodoulou and Short established the following quantitative version of this uniqueness result. 

\begin{theorem}[Christodoulou-Short~\cite{CS2018}]\label{thm:CS} Suppose $f : \Db \rightarrow \Db$ is a holomorphic map and $a,b,z \in \Db$ with $a \neq b$. Then 
\begin{align*}
K_{\Db}(f(z), z) \leq C \Big( K_{\Db}(f(a),a) + K_{\Db}(f(b),b) \Big)
\end{align*}
where
\begin{align*}
C = \frac{1}{2K_{\Db}(a,b)} \exp \Big( 2K_{\Db}(z,a)+2K_{\Db}(z,b) + 2K_{\Db}(a,b) \Big).
\end{align*}
\end{theorem}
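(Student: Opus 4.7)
The approach I would take is to combine a Möbius normalization with a two-point Hermite-type interpolation and a Schwarz-lemma estimate on the holomorphic remainder.

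\smallskip

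\emph{Step 1: Normalize via a Möbius transformation.} Because $K_{\Db}$ is invariant under $\Aut(\Db)$, I would replace $f$ by $\widetilde{f}=\phi_a\circ f\circ\phi_a^{-1}$, where $\phi_a(\zeta)=(\zeta-a)/(1-\bar a\zeta)$. This reduces the problem to the case $a=0$, with the second distinguished point becoming $b':=\phi_a(b)$, so that $|b'|=\tanh K_{\Db}(a,b)$. A bound on $K_{\Db}(\widetilde f(\zeta),\zeta)$ at $\zeta=\phi_a(z)$ transfers back to the desired bound for $f$ at $z$.

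\smallskip

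\emph{Step 2: Interpolate at the two near-fixed-points.} Set $\Phi(\zeta):=\widetilde f(\zeta)-\zeta$, which is holomorphic on $\Db$ with $\|\Phi\|_\infty\leq 2$. Its values at the two special points,
\[
\Phi(0)=\widetilde f(0) \quad\text{and}\quad \Phi(b')=\widetilde f(b')-b',
\]
are small, in modulus comparable (up to factors of $1-|b'|^2$) to $\tanh K_{\Db}(f(a),a)$ and $\tanh K_{\Db}(f(b),b)$ respectively. Let $L(\zeta)$ be the affine polynomial interpolating $\Phi$ at $0$ and $b'$, and write $\Phi(\zeta)-L(\zeta)=\zeta(\zeta-b')\,h(\zeta)$ with $h$ holomorphic on $\Db$. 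On each circle $|\zeta|=r\in(|b'|,1)$ we get the crude bound $|h(\zeta)|\leq 4/(r(r-|b'|))$, which extends to the closed disk $|\zeta|\leq r$ by the maximum principle; letting $r\uparrow 1$ yields a uniform bound on $h$ on compact subsets of $\Db$. Evaluating at $\zeta=z$ then gives
\[
|\Phi(z)|\leq |L(z)|+|z||z-b'|\,\|h\|_{\Db}.
\]

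\smallskip

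\emph{Step 3: Convert Euclidean bounds to Kobayashi bounds.} Using $K_{\Db}(u,w)\leq|u-w|/(1-\max(|u|,|w|)^2)$, together with the reverse inequalities $|\Phi(0)|,|\Phi(b')|\lesssim(1-|b'|^2)^{-1}\tanh K_{\Db}(\cdot)$, upgrades the estimate on $|\Phi(z)|$ to a bound on $K_{\Db}(f(z),z)$. The factor $\frac{1}{2K_{\Db}(a,b)}$ in $C$ emerges from the two divisions by $b'$ hidden in $L$ and in $h$ (note $|b'|=\tanh K_{\Db}(a,b)\sim 2K_{\Db}(a,b)$), while the exponential $\exp(2K_{\Db}(z,a)+2K_{\Db}(z,b)+2K_{\Db}(a,b))$ is produced by the compounding of factors of the form $(1-|\cdot|^2)^{-1}$ appearing in each conversion between Euclidean, pseudohyperbolic, and Kobayashi scales, since $e^{2K_{\Db}(\zeta,0)}=(1+|\zeta|)/(1-|\zeta|)$.

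\smallskip

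\emph{Main obstacle.} The conceptual content (interpolation plus Schwarz) is classical; the difficulty lies entirely in the bookkeeping of the constants. Each substitution between the three metrics introduces $(1-|\cdot|^2)^{-1}$-type factors, and organizing them so that the final exponent is exactly $2K_{\Db}(z,a)+2K_{\Db}(z,b)+2K_{\Db}(a,b)$ requires choosing the radius $r$ in the Schwarz step carefully and applying the Möbius-invariant form of the Schwarz--Pick inequality at each stage, rather than the crude Euclidean estimates used above for illustration.
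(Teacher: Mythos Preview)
First, note that the paper does not prove this statement: Theorem~\ref{thm:CS} is quoted from Christodoulou--Short \cite{CS2018} and used as a black box (to deduce Proposition~\ref{prop:quant_id}), so there is no proof in the paper to compare your proposal against.

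That said, your outline contains a genuine gap, and it is not merely a matter of tracking constants. In Step~2 you bound the remainder $h$ via the maximum principle, obtaining something like $\|h\|_{\Db}\lesssim 1/(1-|b'|)$. This bound depends only on $|b'|$, \emph{not} on the small quantities $\Phi(0),\Phi(b')$. Hence in your estimate
\[
|\Phi(z)|\leq |L(z)|+|z|\,|z-b'|\,\|h\|_{\Db},
\]
the second term is a fixed positive number, not something proportional to $K_{\Db}(f(a),a)+K_{\Db}(f(b),b)$. Indeed, even when $\Phi(0)=\Phi(b')=0$ exactly (so $L\equiv 0$), your bound gives only $|\Phi(z)|\leq |z|\,|z-b'|\cdot 4/(1-|b'|)$, which does not force $\Phi\equiv 0$; yet in that case $f$ has two fixed points and the Schwarz lemma gives $f=\id$. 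The missing ingredient is that you have only used $\|f-\id\|_\infty\leq 2$, discarding the crucial constraint $f(\Db)\subset\Db$. A correct argument must exploit this: for example, first post-compose with a M\"obius map so that $f(0)=0$ exactly (at a cost comparable to $K_{\Db}(f(a),a)$), apply the Schwarz lemma to get $G(\zeta):=f(\zeta)/\zeta\in\overline{\Db}$, and then use the second near-fixed-point to force $G(b')$ close to the boundary point $1$; a further Schwarz--Pick/Harnack-type step then propagates this to the rest of $\Db$. Your affine interpolation of $f(\zeta)-\zeta$ throws away precisely this structure.
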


As a corollary we have the following. 

\begin{proposition}\label{prop:quant_id} Suppose that $f_n : \Db \rightarrow \Db$ are holomorphic maps, $z_n \in \Db$, and $0<r_n <1$. If 
\begin{align*}
\lim_{n \rightarrow \infty} \frac{e^{4K_{\Db}(z_n, 0)}}{r_n} \sup_{w \in B_{\Db}(z_n;r_n)} K_{\Db}(f_n(w), w)= 0,
\end{align*}
then $f_n$ converges locally uniformly to the identity map. 
\end{proposition}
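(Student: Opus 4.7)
The plan is to apply Theorem~\ref{thm:CS} pointwise with a geometric choice of auxiliary points $a,b$ designed to minimize the constant. Since $B_\Db(z_n;r_n)$ appears in the hypothesis, it is natural to pick $a_n,b_n$ inside this ball so that $K_\Db(a_n,b_n)$ is comparable to $r_n$. Concretely, for each $n$ I would take $a_n$ and $b_n$ to be the two points on a unit-speed geodesic through $z_n$ lying at Kobayashi distance $r_n/2$ from $z_n$ on opposite sides (such a geodesic exists and is unique in $\Db$ with the Poincar\'e metric). Then $a_n,b_n \in B_\Db(z_n;r_n)$ and $K_\Db(a_n,b_n)=r_n$, which is the largest Kobayashi distance achievable between two points of $B_\Db(z_n;r_n)$.

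Next, fix a compact set $K\subset \Db$ and set $M_K=\sup_{z\in K}K_\Db(z,0)<\infty$. Applying Theorem~\ref{thm:CS} with $a=a_n$ and $b=b_n$, for any $z\in K$ the triangle inequality gives
\[
K_\Db(z,a_n),\ K_\Db(z,b_n)\leq M_K + K_\Db(z_n,0) + r_n/2,
\]
so the constant in Theorem~\ref{thm:CS} is bounded by $\tfrac{1}{2r_n}\exp\bigl(4M_K+4K_\Db(z_n,0)+4r_n\bigr)$. Using $r_n<1$ and bounding both $K_\Db(f_n(a_n),a_n)$ and $K_\Db(f_n(b_n),b_n)$ by $\sup_{w\in B_\Db(z_n;r_n)}K_\Db(f_n(w),w)$, I obtain
\[
\sup_{z\in K}K_\Db(f_n(z),z) \leq e^{4M_K+4}\cdot \frac{e^{4K_\Db(z_n,0)}}{r_n}\sup_{w\in B_\Db(z_n;r_n)}K_\Db(f_n(w),w).
\]
By the hypothesis the right-hand side tends to $0$, so $K_\Db(f_n(z),z)\to 0$ uniformly on $K$.

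Finally, I would convert Kobayashi uniform convergence into Euclidean locally uniform convergence. From $\sup_K K_\Db(f_n(z),z)\to 0$, for large $n$ the image $f_n(K)$ lies inside the Kobayashi ball $\overline{B_\Db(0;M_K+1)}$, which is a relatively compact subset of $\Db$ on which the Kobayashi (Poincar\'e) distance and the Euclidean distance are bi-Lipschitz equivalent. Hence $|f_n(z)-z|\to 0$ uniformly on $K$, completing the proof. There is no serious obstacle here; the only delicate point is the choice of $a_n,b_n$ that simultaneously saturates $K_\Db(a_n,b_n)$ at $r_n$ and keeps both points inside $B_\Db(z_n;r_n)$, which is what makes the constant in Theorem~\ref{thm:CS} scale only like $1/r_n$ rather than something worse.
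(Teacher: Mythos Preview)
Your argument is correct and follows essentially the same route as the paper's proof: choose $a_n,b_n$ on a geodesic through $z_n$ inside $B_\Db(z_n;r_n)$, apply Theorem~\ref{thm:CS}, and bound the resulting constant using the triangle inequality. One small misstatement: the claim that $K_\Db(a_n,b_n)=r_n$ is ``the largest Kobayashi distance achievable between two points of $B_\Db(z_n;r_n)$'' is false (the supremum is $2r_n$), but this plays no role in the estimate and the proof goes through unchanged; indeed the paper itself takes $a_n,b_n$ at distance $r_n$ from $z_n$ so that $K_\Db(a_n,b_n)=2r_n$, and your care in passing from Kobayashi to Euclidean uniform convergence on compacta is a detail the paper leaves implicit.
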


\begin{proof}
Pick $a_n, b_n \in \Db$ such that 
\begin{align*}
K_{\Db}(a_n,z_n) = K_{\Db}(b_n, z_n) = \frac{1}{2} K_{\Db}(a_n, b_n) = r_n.
\end{align*}
Then fix some point $z \in \Db$. Then 
\begin{align*}
K_{\Db}(f_n(z), z) \leq C_n \Big( K_{\Db}(f_n(a_n),a_n) + K_{\Db}(f_n(b_n),b_n) \Big)
\end{align*}
where
\begin{align*}
C_n = \frac{1}{2K_{\Db}(a_n,b_n)} \exp \Big( 2K_{\Db}(z,a_n)+2K_{\Db}(z,b_n) + 2K_{\Db}(a_n,b_n) \Big).
\end{align*}
By construction
\begin{align*}
K_{\Db}(f(a_n),a_n) + K_{\Db}(f(b_n),b_n) \leq 2 \sup_{w \in B_{\Db}(z_n;r_n)} K_{\Db}(f_n(w), w)
\end{align*}
and
\begin{align*}
C_n \leq \frac{1}{4r_n} \exp \Big( 4K_{\Db}(z,z_n)+  8r_n \Big) \leq \frac{C}{4r_n} \exp \Big( 4K_{\Db}(0,z_n) \Big)
\end{align*}
where $C = \exp \Big( 4K_{\Db}(z,0)+8\Big)$.

Hence
\begin{align*}
K_{\Db}(f_n(z), z) \leq \frac{C}{2} \frac{e^{4K_{\Db}(z_n, 0)}}{r_n}   \sup_{w \in B_{\Db}(z_n;r_n)} K_{\Db}(f_n(w), w).
\end{align*}
So 
\begin{align*}
\lim_{n \rightarrow \infty} K_{\Db}(f_n(z), z) = 0.
\end{align*}
Since $z \in \Db$ was arbitrary we see that $f_n \rightarrow \id$. 

\end{proof}
 
\subsection{Complex geodesics in convex domains}

A holomorphic map $\varphi : \Db \rightarrow \Omega$ is called a \emph{complex geodesic} if 
\begin{align*}
K_{\Omega}(\varphi(z), \varphi(w)) = K_{\Db}(z,w)
\end{align*}
for all $z,w \in \Db$. A \emph{left inverse of a complex geodesic $\varphi$} is a holomorphic map $\pi : \Omega \rightarrow \Db$ such that $\pi \circ \varphi = \id$. In this section we recall a result of Lempert.

\begin{theorem}[Lempert~\cite{L1981, L1982,L1984}]\label{thm:lempert} Suppose that $\Omega$ is a strongly convex domain with $C^\infty$ boundary. If $z, w\in \Omega$ are distinct, then there exists a unique complex geodesic $\varphi : \Db \rightarrow \Omega$ with $z,w \in \varphi(\Db)$. Further, $\varphi$ has a left inverse $\pi$ and for every $\zeta \in \Db$ \begin{align*}
\pi^{-1}(\zeta) = \Omega \cap H_{\zeta}
\end{align*}
for some complex hyperplane plane $H_{\zeta} \subset \Cb^d$.
\end{theorem}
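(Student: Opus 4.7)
My plan is to follow Lempert's original approach, which combines variational methods with the theory of stationary discs. For existence, I would fix distinct $z, w \in \Omega$ and use a normal families argument. Since $\Omega$ is bounded and convex, a minimizing sequence among $\varphi \in \Hol(\Db, \Omega)$ sending $0$ to $z$ and some $t \in (0,1)$ to $w$ (with $t$ minimized) is precompact by Montel's theorem, and any limit $\varphi$ is an extremal disc. One then verifies that $K_\Omega(\varphi(\zeta_1), \varphi(\zeta_2)) = K_\Db(\zeta_1, \zeta_2)$ for all $\zeta_1, \zeta_2 \in \Db$, so $\varphi$ is a complex geodesic through $z$ and $w$.

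The central step is a regularity theorem: every extremal disc in a $C^\infty$ strongly convex domain extends smoothly to $\conj{\Db}$, and together with a ``dual'' map $\wt{\varphi} : \conj{\Db} \to \Cb^d \setminus \{0\}$, holomorphic in $\Db$ except for a simple pole at $0$, it satisfies a boundary stationarity condition. Concretely, for $\zeta \in \partial \Db$ the vector $\zeta \wt{\varphi}(\zeta)$ should be a positive real multiple of a complex outer normal to $\partial \Omega$ at $\varphi(\zeta)$. This is a Riemann--Hilbert problem, and strong convexity is precisely what makes the associated linearization have index zero, so that the nonlinear problem is solvable by a continuity method starting from the unit ball (where the geodesics and their duals are explicit affine discs).

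With $\wt{\varphi}$ in hand, I would define the candidate left inverse $\pi : \Omega \to \Db$ implicitly by declaring $\pi(p) = \zeta$ if and only if $p$ lies on the complex affine hyperplane $H_\zeta$ passing through $\varphi(\zeta)$ with complex normal $\wt{\varphi}(\zeta)$. Convexity of $\Omega$ ensures that the family $\{ H_\zeta \cap \Omega \}_{\zeta \in \Db}$ foliates $\Omega$, making $\pi$ well defined, and the stationarity condition forces $\pi$ to be holomorphic with $\pi \circ \varphi = \id_\Db$ and $\pi^{-1}(\zeta) = \Omega \cap H_\zeta$. Uniqueness of the complex geodesic through $z, w$ then comes essentially for free: if $\psi$ were another, $\pi \circ \psi : \Db \to \Db$ would achieve equality in the Schwarz--Pick lemma at two distinct points, forcing it to be a M\"obius transformation, and hence $\psi$ to coincide with $\varphi$ up to reparametrization.

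The genuinely hard step, and the main obstacle, is the boundary regularity plus Riemann--Hilbert construction. It requires delicate Schauder-type estimates for the linearized $\bar\partial$-problem with a prescribed oblique boundary condition, combined with a continuity method through a smooth one-parameter family of strongly convex domains interpolating between the ball and $\Omega$. This is where both the $C^\infty$ regularity and strong convexity hypotheses are genuinely used, and it occupies the bulk of Lempert's three foundational papers.
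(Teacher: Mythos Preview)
Your sketch is accurate and matches Lempert's approach; note that the paper does not give its own proof of this theorem but simply cites \cite{L1981,L1982,L1984}, adding only the observation that the hyperplane property of $\pi$ comes from the dual map $\wt{\varphi}$ via the equation $[z-\varphi(\zeta),\wt{\varphi}(\zeta)]=0$, which is exactly the implicit definition of $\pi$ you describe. There is nothing further to compare.
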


The fact that 
\begin{align*}
\pi^{-1}(\zeta) = \Omega \cap H_{\zeta}.
\end{align*}
for some complex hyperplane plane $H_{\zeta}$ follows from the description of $\pi$ given in the proof of the Lemma in~\cite{L1982}. In particular, if $\wt{\varphi}: \Db \rightarrow \Omega$ is the \emph{dual map of $\varphi$}, then $\pi(z) \in \Db$ is the unique solution to the equation 
\begin{align*}
\left[ z-\varphi(\zeta), \wt{\varphi}(\zeta) \right]=0
\end{align*}
where $[a,b] = \sum_{i=1}^d a_i b_i$. Thus 
\begin{align*}
\pi^{-1}(\zeta) = \Omega \cap \left\{ z \in \Cb^d: \left[ z-\varphi(\zeta), \wt{\varphi}(\zeta) \right]=0\right\}.
\end{align*}

If $\Omega$ is a bounded convex domain, then $\Omega$ can be written as an increasing union of smoothly bounded strongly convex domain. Thus Montel's theorem implies the following corollary of Lempert's theorem. 

\begin{corollary}Suppose that $\Omega$ is a bounded convex domain. If $z, w\in \Omega$ are distinct, then there exists a complex geodesic $\varphi : \Db \rightarrow \Omega$ with $z,w \in \varphi(\Db)$. Further, $\varphi$ has a left inverse $\pi$ such that for every $\zeta \in \Db$ 
\begin{align*}
\pi^{-1}(\zeta) = \Omega \cap H_{\zeta}
\end{align*}
for some complex hyperplane plane $H_{\zeta}$.
\end{corollary}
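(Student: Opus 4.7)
The plan is to reduce to Theorem~\ref{thm:lempert} by exhausting $\Omega$ with smoothly bounded strongly convex domains and extracting a limit. Fix an increasing exhaustion $\Omega_1 \subset \Omega_2 \subset \cdots$ of $\Omega$ by smoothly bounded strongly convex domains (a standard convex-geometric construction). For all $n$ large enough that $z, w \in \Omega_n$, Theorem~\ref{thm:lempert} produces a complex geodesic $\varphi_n : \Db \to \Omega_n$, a left inverse $\pi_n : \Omega_n \to \Db$, and, via an associated dual map $\wt{\varphi}_n$, the fiber description $\pi_n^{-1}(\zeta) = \Omega_n \cap H_n(\zeta)$ with $H_n(\zeta) = \{u \in \Cb^d : [u - \varphi_n(\zeta), \wt{\varphi}_n(\zeta)] = 0\}$. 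Pre-composing with an automorphism of $\Db$, I normalize so that $\varphi_n(0) = z$ and $\varphi_n(r_n) = w$, where $r_n = \tanh K_{\Omega_n}(z, w) > 0$.

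Next I would pass to the limit. Since $\Omega$ is bounded, $\{\varphi_n\}$ is a normal family, and on any compact $K \Subset \Omega$ the maps $\pi_n$ are eventually defined and bounded (valued in $\Db$), so Montel combined with a diagonal argument yields, after passing to a subsequence, locally uniform limits $\varphi_n \to \varphi$ on $\Db$ and $\pi_n \to \pi$ on $\Omega$. Using $K_{\Omega_n}(z, w) \searrow K_\Omega(z, w) \in (0, \infty)$ (finite by Theorem~\ref{thm:barth}, nonzero since $z \neq w$), one gets $r_n \to r_\infty \in (0, 1)$, so $\varphi$ is non-constant with $\varphi(0) = z$ and $\varphi(r_\infty) = w$. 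A maximum-principle argument rules out $\varphi$ touching $\partial \Omega$ in the interior of $\Db$: if $\varphi(\zeta_0) \in \partial \Omega$ for some $\zeta_0 \in \Db$, convexity of $\Omega$ supplies a complex affine function $L$ with $\Real L < c$ throughout $\Omega$ and $\Real L(\varphi(\zeta_0)) = c$, and the maximum principle forces $L \circ \varphi$ constant, placing $\varphi(\Db) \subset \{L = c\}$ and contradicting $\varphi(0) = z \in \Omega$. Therefore $\varphi(\Db) \subset \Omega$; passing $\pi_n \circ \varphi_n = \id_{\Db}$ to the limit gives $\pi \circ \varphi = \id_{\Db}$; the maximum principle then forces $\pi(\Omega) \subset \Db$; and the distance decreasing property combined with $\pi \circ \varphi = \id_{\Db}$ shows $\varphi$ is a complex geodesic with left inverse $\pi$.

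Finally I would identify the fibers. Since each $\wt{\varphi}_n$ is nowhere zero it descends to a holomorphic map $[\wt{\varphi}_n] : \Db \to \Pb^{d-1}$ into a compact complex manifold; by a Montel-type argument (e.g.\ diagonal extraction on a countable dense set of $\Db$ together with holomorphicity) one obtains, along a further subsequence, a locally uniform limit $[\wt{\varphi}] : \Db \to \Pb^{d-1}$. Setting $H(\zeta) = \{u \in \Cb^d : [u - \varphi(\zeta), \wt{\varphi}(\zeta)] = 0\}$ and passing to the limit in the Lempert identity yields the inclusion $\pi^{-1}(\zeta) \subset \Omega \cap H(\zeta)$. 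For the reverse, $\pi^{-1}(\zeta)$ is a nonempty analytic hypersurface of $\Omega$ (it contains $\varphi(\zeta)$ and $\pi$ is a non-constant holomorphic map to $\Db$), while $\Omega \cap H(\zeta)$ is a connected complex $(d-1)$-submanifold of $\Omega$ by convexity; since an analytic hypersurface contained in a connected complex manifold of the same dimension must equal the whole manifold, $\pi^{-1}(\zeta) = \Omega \cap H(\zeta)$. The most delicate step I anticipate is the projective handling of the dual maps, especially verifying that the limit direction $\wt{\varphi}(\zeta)$ stays nowhere zero so that each $H(\zeta)$ is a genuine hyperplane.
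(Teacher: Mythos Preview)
Your approach is exactly the one the paper uses: exhaust $\Omega$ by smoothly bounded strongly convex domains, apply Lempert's theorem on each, and extract limits via Montel's theorem. The paper compresses all of this into a single sentence, so you have simply filled in the details (including the projective handling of the dual maps for the fiber description) that the paper leaves implicit; your concern about the limit direction $[\wt{\varphi}(\zeta)]$ being well-defined is automatically handled by working in $\Pb^{d-1}$, since that target is compact.
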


\begin{remark} For a general convex domain $\Omega$, it is possible for two points $z,w \in \Omega$ to be contained in many different complex geodesics. 
\end{remark}

The left inverses with this hyperplane preimage property play a fundamental role in the proof of Theorem~\ref{thm:main_convex} and so we make the following definition. 

\begin{definition} Suppose that $\Omega$ is a bounded convex domain and $\varphi : \Db \rightarrow \Omega$ is a complex geodesic. Then we say $\pi: \Omega \rightarrow \Db$ is a \emph{good left inverse of $\varphi$} if $\pi$ is a left inverse of $\varphi$ and for every $\zeta \in \Db$ 
\begin{align*}
\pi^{-1}(\zeta) = \Omega \cap H_{\zeta}.
\end{align*}
for some complex hyperplane plane $H_{\zeta}$.
\end{definition}

\section{The Gromov product and complex geodesics}\label{sec:GP}

In a metric space $(X,d)$, the Gromov product of $x,y \in X$ at $z \in X$ is defined to be
 \begin{align*}
 (x|y)_z = \frac{1}{2} \left( d(x,z) +d(z,y) - d(x,y) \right).
 \end{align*}
 When $(X,d)$ is a proper geodesic Gromov hyperbolic metric space, there is a compactification $X \cup X(\infty)$ of $X$, called the \emph{ideal boundary}, with the following property.
 
 \begin{proposition} Suppose $(X,d)$ is a proper geodesic Gromov hyperbolic metric space. Suppose $x_m, y_n$ are sequences in $X$ such that $x_m \rightarrow \xi \in X(\infty)$ and $y_n \rightarrow \eta \in X(\infty)$. Then $\xi = \eta$ if and only if 
 \begin{align*}
 \lim_{m,n \rightarrow \infty} (x_m |y_n)_{z} = \infty
 \end{align*}
 for any $z \in X$. 
 \end{proposition}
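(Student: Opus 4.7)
The plan is to work directly from the construction of the Gromov compactification. Recall that $X(\infty)$ is built as equivalence classes of \emph{Gromov sequences}: a sequence $(x_n)$ in $X$ is Gromov if $(x_n \mid x_m)_z \to \infty$ as $n,m \to \infty$, and two Gromov sequences $(x_n),(y_n)$ are equivalent if $(x_n \mid y_n)_z \to \infty$. The topology on $X \cup X(\infty)$ is arranged so that for an unbounded sequence $(x_n)$, ``$x_n \to \xi \in X(\infty)$'' means precisely that $(x_n)$ is a Gromov sequence lying in the class $\xi$. The choice of base point $z$ is irrelevant throughout, because $\lvert (x \mid y)_z - (x \mid y)_{z'} \rvert \leq d(z,z')$. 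The single analytic tool I will use is the $\delta$-hyperbolicity inequality
$$
(a \mid b)_w \geq \min\{(a \mid c)_w,\,(c \mid b)_w\} - \delta,
$$
where $\delta \geq 0$ depends only on $(X,d)$.

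For the easy direction $(\Leftarrow)$, suppose $(x_m \mid y_n)_z \to \infty$ as $m,n \to \infty$. Restricting to the diagonal $m = n$ gives $(x_m \mid y_m)_z \to \infty$, which is by definition the equivalence of the Gromov sequences $(x_m)$ and $(y_m)$. Since these sequences represent $\xi$ and $\eta$ respectively, we conclude $\xi = \eta$.

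For the direction $(\Rightarrow)$, assume $\xi = \eta$. Then $(x_m)$ and $(y_n)$ are equivalent Gromov sequences, so the two diagonal conditions
$$
(x_m \mid y_m)_z \to \infty \quad\text{and}\quad (y_m \mid y_n)_z \to \infty
$$
hold as $m,n \to \infty$. I upgrade this to the joint limit via one application of hyperbolicity: fix $N > 0$, and choose $M$ large enough that $(x_m \mid y_m)_z \geq N + \delta$ and $(y_m \mid y_n)_z \geq N + \delta$ whenever $m,n \geq M$. Then
$$
(x_m \mid y_n)_z \geq \min\{(x_m \mid y_m)_z,\,(y_m \mid y_n)_z\} - \delta \geq N
$$
for all such $m,n$, giving $(x_m \mid y_n)_z \to \infty$.

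I do not foresee any real obstacle here; the main conceptual point — and the only place one must be careful — is identifying the abstractly stated convergence $x_m \to \xi$ in the compactification with the concrete statement that $(x_m)$ is a Gromov sequence in the class $\xi$. Once that identification is taken from the construction of $X(\infty)$, the rest is just the $\delta$-hyperbolicity inequality applied twice. Since the result is standard in the theory of Gromov hyperbolic spaces, the intended role of this proposition in the paper is likely to record a fact that will be invoked later rather than to prove something novel.
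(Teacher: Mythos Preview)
Your proof is correct and is precisely the standard argument. The paper does not actually prove this proposition: it is stated without proof as a background property of the Gromov compactification, introduced only to motivate the analogous behavior of the Gromov product for the Kobayashi metric on convex domains (Theorem~\ref{thm:gromov_prod}). Your closing guess is exactly right.
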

 
 For the Kobayashi metric on convex domains the Gromov product behaves almost as nicely near the topological boundary. Given a domain $\Omega \subset \Cb^d$ we define the \emph{Gromov product} of points $z,w,o \in \Omega$ to be 
\begin{align*}
(z|w)_o^{\Omega} = \frac{1}{2} \left( K_\Omega(z,o) +K_\Omega(o,w)-K_\Omega(z,w) \right).
\end{align*}

We also need the following definition.

\begin{definition} Given a convex domain $\Omega \subset \Cb^d$ with $C^1$ boundary and $x \in \partial \Omega$ let $H_x \partial \Omega$ denote the unique complex affine hyperplane tangent to $\partial \Omega$ at $x$. 
\end{definition}

\begin{remark} Since $\Omega$ is convex, if $x \in \partial \Omega$, then $\partial \Omega \cap H_x\partial \Omega$ is a closed convex set which is sometimes called the \emph{closed complex face of $\partial \Omega$ containing $x$}.
\end{remark}

We then have the following. 

\begin{theorem}\label{thm:gromov_prod}\cite[Theorem 4.1]{Z2017} 
Suppose $\Omega \subset \Cb^d$ is a bounded convex domain with $C^{1,\alpha}$ boundary and $p_n, q_m \in \Omega$ are sequences such that $p_n \rightarrow x \in \partial \Omega$ and $q_m \rightarrow y \in \partial \Omega$. 
\begin{enumerate}
\item If $x=y$, then 
\begin{align*}
\lim_{n,m \rightarrow \infty} ( p_n | q_m)_o^{\Omega} = \infty.
\end{align*}
\item If 
\begin{align*}
\limsup_{n,m \rightarrow \infty} \ ( p_n | q_m)_o^{\Omega} = \infty,
\end{align*}
then $H_{x} \partial \Omega = H_{y} \partial \Omega$.
\end{enumerate}
\end{theorem}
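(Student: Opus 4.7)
The plan is to reduce both assertions to properties of Kobayashi geodesic rays. By Theorem~\ref{thm:barth}, $(\Omega, K_\Omega)$ is a proper geodesic metric space, so for each $\xi \in \partial\Omega$ one constructs a unit-speed Kobayashi geodesic ray $\eta_\xi \colon [0, \infty) \to \Omega$ with $\eta_\xi(0) = o$ and $\eta_\xi(t) \to \xi$, by taking a subsequential limit of Kobayashi geodesic segments from $o$ to interior points converging to $\xi$. The central preliminary is a \emph{fellow-traveling lemma}: if $p_n \to \xi$, then (possibly after replacing $\eta_\xi$ by a different ray landing at $\xi$) one has $K_\Omega\bigl(p_n, \eta_\xi(K_\Omega(o, p_n))\bigr) = O(1)$. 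This rests on $C^{1,\alpha}$-regularity of $\partial\Omega$ at $\xi$, which supplies sharp two-sided Kobayashi estimates forcing nearby geodesic segments from $o$ to stay close. Both parts of the theorem then reduce to controlling the Gromov product of $\eta_x(t)$ and $\eta_y(s)$ as $t, s \to \infty$.

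For part (1), fellow-traveling applied with $\xi = x$ to both $p_n$ and $q_m$ along the same ray $\eta_x$ yields, via the triangle inequality along $\eta_x$,
\[
K_\Omega(p_n, q_m) \leq \bigl|K_\Omega(o, p_n) - K_\Omega(o, q_m)\bigr| + O(1),
\]
so $(p_n | q_m)_o^\Omega \geq \min\bigl(K_\Omega(o, p_n), K_\Omega(o, q_m)\bigr) - C$, which diverges as $n, m \to \infty$ since the Kobayashi distance to any interior point blows up along sequences approaching $\partial\Omega$.

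For part (2), I would argue the contrapositive: assume $H_x \partial\Omega \neq H_y \partial\Omega$ and show $(\eta_x(t) \mid \eta_y(s))_o^\Omega$ stays bounded. The hypothesis forces $y \notin H_x \partial\Omega$ (else $H_x$ would be a supporting hyperplane at $y$, contradicting uniqueness), so there exists a complex affine $\ell \colon \Cb^d \to \Cb$ vanishing on $H_x \partial\Omega$ with $\Real\ell > 0$ on $\Omega$ and $\Real\ell(y) > 0$; symmetrically one picks $\ell'$ vanishing on $H_y \partial\Omega$ with $\Real\ell'(x) > 0$. Applying the distance-decreasing property to the holomorphic map $(\ell, \ell') \colon \Omega \to \{\Real > 0\}^2$, together with the sharp asymptotics $\Real\ell(\eta_x(t)) \asymp e^{-2t}$ and $\Real\ell'(\eta_y(s)) \asymp e^{-2s}$ (valid for $C^{1,\alpha}$ convex boundary via two-sided Kobayashi estimates), one extracts matching lower bounds on $K_\Omega(\eta_x(t), \eta_y(s))$.

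The principal obstacle is part (2). The naive half-plane projection directly gives only the \emph{maximum} of the contributions from $\ell$ and $\ell'$, i.e.\ a lower bound of $\max(t, s) - O(1)$, whereas controlling the Gromov product requires the genuine \emph{sum} $t + s - O(1)$. Bridging this gap demands a true visibility-type statement: two geodesic rays to boundary points on different complex faces must diverge at full hyperbolic speed in both the $\ell$- and $\ell'$-coordinates simultaneously. Proving this in the generality of merely $C^{1,\alpha}$ boundary — where complex geodesics may fail to extend continuously to $\partial\Omega$ (cf.~\cite[Example 1.2]{B2016}) and Kobayashi geodesic rays need not be unique — is the technical heart of the argument and is where the $C^{1,\alpha}$ regularity hypothesis enters essentially, through a compactness-and-normal-family argument applied to families of complex geodesics with endpoints near $x$ and $y$.
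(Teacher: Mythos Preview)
The paper does not prove this theorem; it is quoted verbatim from \cite[Theorem 4.1]{Z2017} and used as a black box. So there is no proof here to compare against, only the question of whether your sketch would actually work.

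Your outline has two real gaps.

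For part (1), you invoke a fellow-traveling lemma --- that any sequence $p_n \to \xi$ stays within bounded Kobayashi distance of a single geodesic ray landing at $\xi$ --- and assert it ``rests on $C^{1,\alpha}$-regularity \dots\ forcing nearby geodesic segments from $o$ to stay close.'' But this is at least as hard as part (1) itself, and the usual route to such a statement is \emph{through} Gromov-product control of exactly the kind you are trying to establish. Without an independent proof of the fellow-traveling lemma the argument is circular. The approach in \cite{Z2017} is more direct: one combines the elementary half-space lower bound $K_\Omega(o,p) \geq \tfrac{1}{2}\log\tfrac{1}{\delta_\Omega(p)} - C$ (valid for any convex domain) with an upper bound for $K_\Omega(p_n,q_m)$ obtained from the $C^{1,\alpha}$ hypothesis, which guarantees an interior tangent region at $x$ large enough to estimate the distance between two nearby points directly. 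No geodesic rays are needed.

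For part (2), you correctly isolate the obstruction: projecting to a product of half-planes via $(\ell,\ell')$ yields only $K_\Omega(p_n,q_m) \geq \max(t_n,s_m) - O(1)$, which bounds the Gromov product by $\tfrac{1}{2}\min(t_n,s_m)+O(1)$ --- still unbounded. You then say that closing the gap ``is the technical heart of the argument'' and stop. That is an honest assessment, but it means the proposal is not a proof. The actual argument does not pass through geodesic rays at all; it uses a lower bound of the form $K_\Omega(p,q) \geq \tfrac{1}{2}\log\tfrac{1}{\delta_\Omega(p)} + \tfrac{1}{2}\log\tfrac{1}{\delta_\Omega(q)} - C$ valid whenever $p$ and $q$ lie near boundary points with distinct complex tangent hyperplanes, obtained by projecting along the complex line through $p$ and $q$ and exploiting that this line meets $\partial\Omega$ transversally near both endpoints. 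Combined with the upper bound $K_\Omega(o,p) \leq \tfrac{1}{2}\log\tfrac{1}{\delta_\Omega(p)} + C$ (this is \cite[Theorem 2.3.51]{A1989}, used later in the paper as Equation~\eqref{eq:dist_est}), this immediately bounds the Gromov product.
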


In~\cite{Z2017}, the behavior of the Gromov product was used to understand holomorphic self maps of $\Omega$ and real geodesics in $(\Omega, K_\Omega)$. In this section, we adapt those arguments to study the behavior of complex geodesics. 

Our first application establishes a boundary extension property of complex geodesics. For a smooth strongly convex domain $\Omega \subset \Cb^d$, Lempert~\cite{L1981} showed that every complex geodesic $\varphi : \Db \rightarrow \Omega$ extends to a smooth map $\overline{\Db} \rightarrow \overline{\Omega}$. However, this fails when $\Omega$ is not strongly convex: there exist examples of smoothly bounded convex domains $\Omega \subset \Cb^d$ and complex geodesics $\varphi : \Db \rightarrow \Omega$ which do not even extend to a continuous map $\overline{\Db} \rightarrow \overline{\Omega}$, see for instance~\cite[Example 1.2]{B2016}. 

For a convex domain $\Omega$ with $C^1$ boundary define 
\begin{align*}
H(\partial \Omega) = \left\{ H_x \partial \Omega : x \in \partial \Omega\right\}.
\end{align*}
Then $H(\partial \Omega)$ is a closed subset of the Grassmanian of affine complex hyperplanes in $\Cb^d$. 

\begin{proposition}\label{prop:bd_maps}
Suppose $\Omega \subset \Cb^d$ is a bounded convex domain with $C^{1,\alpha}$ boundary. If $\varphi : \Db \rightarrow \Omega$ is a complex geodesic, then there exists a continuous map $\wh{\varphi} : \partial \Db \rightarrow H(\partial \Omega)$ such that 
\begin{align*}
\lim_{z \rightarrow \zeta} d_{\Euc}\left(\varphi(z), \partial \Omega \cap \wh{\varphi}(\zeta) \right) = 0
\end{align*}
for every $\zeta \in \partial \Db$.
\end{proposition}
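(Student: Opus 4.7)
The plan is to use the Gromov-product machinery of Theorem~\ref{thm:gromov_prod} together with the fact that $\varphi$ is an isometric embedding for the Kobayashi distance. Specifically, I will show that for each $\zeta \in \partial \Db$ all subsequential limits of $\varphi(z_n)$ for $z_n \to \zeta$ lie in a single complex face of $\partial\Omega$, so I can define $\wh{\varphi}(\zeta)$ to be the common complex tangent hyperplane. Continuity and the Euclidean-convergence property then follow from compactness of $\overline{\Omega}$ and the uniqueness of supporting tangent hyperplanes at $C^1$ boundary points.

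First I would verify that the cluster set of $\varphi$ at $\zeta$ is contained in a single complex face. Fix $\zeta \in \partial \Db$. Since $K_{\Db}(z,0) \to \infty$ as $z\to\zeta$ and $\varphi$ is a Kobayashi isometry, $K_\Omega(\varphi(z),\varphi(0))\to\infty$, which together with the elementary upper bound for the Kobayashi metric forces $\delta_\Omega(\varphi(z))\to 0$, so any subsequential limit lies in $\partial\Omega$. Given sequences $z_n, w_m \to \zeta$ with $\varphi(z_n) \to x$ and $\varphi(w_m) \to y$, Gromov-hyperbolicity of $\Db$ (immediate from the Poincar\'e distance formula) gives $(z_n|w_m)_0^{\Db} \to \infty$, and since $\varphi$ is a Kobayashi isometry, $(\varphi(z_n)|\varphi(w_m))_{\varphi(0)}^\Omega \to \infty$. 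Theorem~\ref{thm:gromov_prod}(2) then yields $H_x\partial\Omega = H_y\partial\Omega$, so $\wh{\varphi}(\zeta) := H_x\partial\Omega$ is an unambiguously defined element of $H(\partial\Omega)$. The Euclidean-convergence statement follows by a standard compactness argument: any sequence $z_n\to\zeta$ with $d_{\Euc}(\varphi(z_n),\partial\Omega\cap\wh{\varphi}(\zeta))$ bounded away from zero would have a subsequential limit of $\varphi(z_n)$ in $\partial\Omega\setminus\wh{\varphi}(\zeta)$, contradicting the preceding sentence.

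It remains to prove continuity of $\wh{\varphi}:\partial\Db\to H(\partial\Omega)$ in the Grassmannian topology. Suppose $\zeta_n\to\zeta$; passing to a subsequence, write $\wh{\varphi}(\zeta_n)=H_{y_n}\partial\Omega$ with $y_n\to y\in\partial\Omega$, so by $C^1$-regularity $\wh{\varphi}(\zeta_n)\to H_y\partial\Omega$ in $H(\partial\Omega)$. Using the Euclidean-convergence property already established, I would choose $z_n\in\Db$ with $\abs{z_n-\zeta_n}<1/n$ and $d_{\Euc}(\varphi(z_n),\partial\Omega\cap \wh{\varphi}(\zeta_n))<1/n$, and then extract a subsequence along which $\varphi(z_n)\to x\in\partial\Omega$. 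Since $z_n\to\zeta$, the previous step gives $\wh{\varphi}(\zeta) = H_x\partial\Omega$, while passing to the limit in the Euclidean estimate forces $x\in H_y\partial\Omega$.

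The main and only delicate point is the convexity-plus-regularity lemma that $x\in\partial\Omega\cap H_y\partial\Omega$ implies $H_x\partial\Omega = H_y\partial\Omega$. The complex hyperplane $H_y\partial\Omega$ is contained in the real tangent hyperplane $T_y^{\Rb}\partial\Omega$, which supports $\Omega$ by convexity; since $x$ also lies on this supporting real hyperplane and $\partial\Omega$ is $C^1$, uniqueness of the real supporting tangent at $x$ gives $T_x^{\Rb}\partial\Omega = T_y^{\Rb}\partial\Omega$. Writing $T$ for this common real hyperplane with underlying linear space $V$, both $H_x\partial\Omega$ and $H_y\partial\Omega$ are translates of $W:=V\cap JV$ (the maximal $J$-invariant subspace of $V$), and $x\in H_y\partial\Omega = y+W$ gives $x-y\in W$, hence $H_x\partial\Omega = x+W = y+W = H_y\partial\Omega$. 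Combined with the previous paragraph this gives $\wh{\varphi}(\zeta_n)\to\wh{\varphi}(\zeta)$, completing the proof.
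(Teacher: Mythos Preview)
Your proof is correct and follows essentially the same approach as the paper: use that $\varphi$ is a Kobayashi isometry to push the Gromov product on $\Db$ to $\Omega$, invoke Theorem~\ref{thm:gromov_prod}(2) to define $\wh{\varphi}(\zeta)$, and prove continuity by a diagonal-sequence compactness argument. Your write-up is in fact more explicit than the paper's, which simply asserts ``So we must have $\wh{\varphi}(\zeta)=H$'' at the end of the continuity step without spelling out the convexity-plus-$C^1$ lemma (that $x\in\partial\Omega\cap H_y\partial\Omega$ forces $H_x\partial\Omega=H_y\partial\Omega$) that you correctly identify and prove.
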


\begin{proof}
Suppose $\zeta \in \partial \Db$, then 
\begin{align*}
\lim_{z,w \rightarrow \zeta} ( \varphi(z) | \varphi(w))_{\varphi(0)}^{\Omega} = \lim_{z,w \rightarrow \zeta} ( z | w)_{0}^{\Db} = \infty
\end{align*}
by applying Theorem~\ref{thm:gromov_prod} to $\Db$. So by Theorem~\ref{thm:gromov_prod} applied to $\Omega$, there exists some $\wh{\varphi}(\zeta) \in H(\partial \Omega)$ such that 
\begin{align*}
\lim_{z \rightarrow \zeta} d_{\Euc}\left(\varphi(z), \partial \Omega \cap \wh{\varphi}(\zeta) \right) = 0.
\end{align*}

It remains to show that the map $\wh{\varphi} : \partial \Db \rightarrow H(\partial \Omega)$ is continuous. Suppose that $\zeta_n \in \partial \Db$ converges to $\zeta \in \partial \Db$. We claim that 
\begin{align*}
\wh{\varphi}(\zeta_n)  \rightarrow \wh{\varphi}(\zeta).
\end{align*}
Since $\partial \Omega$ is compact, $H(\partial \Omega)$ is also compact. So it is enough to show that every convergent subsequence of $\wh{\varphi}(\zeta_n)$ converges to $\wh{\varphi}(\zeta)$. So without loss of generality we can assume that $\wh{\varphi}(\zeta_n)$ converges to some hyperplane $H \in H(\partial \Omega)$. 

For each $n$ pick a sequence $z_{n,m} \in \Db$ such that 
\begin{align*}
\lim_{m \rightarrow \infty} z_{n,m} = \zeta_n.
\end{align*}
Then we can pick $m_n$ such that $\abs{z_{n,m_n}-\zeta_n} < 1/n$ and 
\begin{align*}
d_{\Euc}(\varphi(z_{n,m_n}), \partial \Omega \cap \wh{\varphi}(\zeta_n) ) <1/n.
\end{align*}
Then $z_{n,m_n} \rightarrow \zeta$ and so 
\begin{align*}
\lim_{n \rightarrow \infty} d_{\Euc}(\varphi(z_{n,m_n}), \partial \Omega\cap\wh{\varphi}(\zeta)) =0. 
\end{align*}
But by our construction of $z_{n,m_n}$ we also have 
\begin{align*}
\lim_{n \rightarrow \infty} d_{\Euc}(\varphi(z_{n,m_n}), \partial \Omega \cap H) =0. 
\end{align*}
So we must have $\wh{\varphi}(\zeta)  = H$. 
\end{proof}

\begin{definition} Suppose $\Omega \subset \Cb^d$ is a bounded convex domain with $C^{1,\alpha}$ boundary and $\varphi : \Db \rightarrow \Omega$ is a complex geodesic. We call the map $\wh{\varphi} : \Db \rightarrow H(\Omega)$ in Proposition~\ref{prop:bd_maps} the \emph{hyperplane boundary extension of $\varphi$}. 
\end{definition} 

The next result shows that $\wh{\varphi}$ depends continuously on $\varphi$. 

\begin{proposition}\label{prop:convergence_at_infinity}
Suppose $\Omega \subset \Cb^d$ is a bounded convex domain with $C^{1,\alpha}$ boundary and $\varphi_n : \Db \rightarrow \Omega$ is a sequence of complex geodesics converging locally uniformly to a complex geodesic $\varphi : \Db \rightarrow \Omega$. 

If $z_n \in \Db$ converges to $\zeta \in \partial \Db$ and $\varphi_n(z_n) \rightarrow x \in \partial \Omega$, then 
\begin{align*}
\wh{\varphi}(\zeta) = H_x \partial \Omega.
\end{align*}
Further, if $\zeta_n \in \partial \Db$ converges to $\zeta \in \partial \Db$, then
\begin{align*}
\wh{\varphi}(\zeta)= \lim_{n \rightarrow \infty} \wh{\varphi}_n(\zeta_n).
\end{align*}
\end{proposition}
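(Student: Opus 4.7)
\textbf{Plan for Proposition~\ref{prop:convergence_at_infinity}.} The strategy is to use Theorem~\ref{thm:gromov_prod} to detect equality of tangent complex hyperplanes via blow-up of Gromov products, and to exploit the fact that every $\varphi_n$ is an isometric embedding of $(\Db, K_\Db)$ into $(\Omega, K_\Omega)$ so that Gromov products in the image are directly computed in the disk. Throughout, set $o = \varphi(0)$ and note that $\varphi_n(0) \to o$, so the Kobayashi distance $K_\Omega(\varphi_n(0), o)$ is bounded and the standard triangle-inequality bound $\lvert (\cdot|\cdot)_{\varphi_n(0)}^{\Omega} - (\cdot|\cdot)_o^{\Omega}\rvert \leq K_\Omega(\varphi_n(0),o)$ allows us to change base points freely.

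\textbf{Part 1.} First, using Proposition~\ref{prop:bd_maps} and compactness, select any sequence $w_m \to \zeta$ in $\Db$ along which $\varphi(w_m) \to y \in \partial\Omega \cap \wh{\varphi}(\zeta)$; in particular $H_y\partial\Omega = \wh{\varphi}(\zeta)$, so it suffices to prove $H_x\partial\Omega = H_y\partial\Omega$. I will do this via Theorem~\ref{thm:gromov_prod}(2) by producing sequences $p_k \to x$, $q_k \to y$ with $(p_k | q_k)_o^{\Omega} \to \infty$. Perform a diagonal extraction: for each $m$, choose $n_m$ with $n_m \geq m$, such that $\|\varphi_{n_m}(w_m) - \varphi(w_m)\| < 1/m$ (locally uniform convergence of $\varphi_n$ to $\varphi$), and such that $(z_{n_m} | w_m)_0^{\Db} > m$ (this is possible because the disk is Gromov hyperbolic and both $z_n \to \zeta$, $w_m \to \zeta$, so choosing $n_m$ sufficiently large makes $z_{n_m}$ close enough to $\zeta$). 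Set $p_k = \varphi_{n_k}(z_{n_k})$ and $q_k = \varphi_{n_k}(w_k)$. Because each $\varphi_n$ is an isometry for the Kobayashi metric, $(p_k | q_k)_{\varphi_{n_k}(0)}^{\Omega} = (z_{n_k} | w_k)_0^{\Db} \to \infty$, and changing base point to $o$ costs at most the bounded quantity $K_\Omega(\varphi_{n_k}(0), o)$. So $(p_k | q_k)_o^{\Omega} \to \infty$, $p_k \to x$, $q_k \to y$, and Theorem~\ref{thm:gromov_prod}(2) gives $H_x\partial\Omega = H_y\partial\Omega = \wh{\varphi}(\zeta)$.

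\textbf{Part 2.} For each $n$, using the defining property of the hyperplane boundary extension, pick $z_n \in \Db$ with $|z_n - \zeta_n| < 1/n$ and a point $x_n \in \partial\Omega \cap \wh{\varphi}_n(\zeta_n)$ with $\|\varphi_n(z_n) - x_n\| < 1/n$. Then $z_n \to \zeta$. The set $H(\partial\Omega)$ is compact (it is the continuous image under $x \mapsto H_x\partial\Omega$ of the compact set $\partial\Omega$, using $C^1$ regularity), so it suffices to show that every convergent subsequence $\wh{\varphi}_{n_k}(\zeta_{n_k}) \to H$ satisfies $H = \wh{\varphi}(\zeta)$. Passing to a further subsequence, assume also that $\varphi_{n_k}(z_{n_k}) \to x \in \overline{\Omega}$; since $K_\Omega(\varphi_{n_k}(z_{n_k}), \varphi_{n_k}(0)) = K_\Db(z_{n_k}, 0) \to \infty$ and $\varphi_n(0)$ is bounded in $\Omega$, necessarily $x \in \partial\Omega$. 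By Part 1 (applied to the subsequence $\varphi_{n_k}$, $z_{n_k} \to \zeta$, $\varphi_{n_k}(z_{n_k}) \to x$), we obtain $H_x\partial\Omega = \wh{\varphi}(\zeta)$. Simultaneously, $x_{n_k} \to x$ (since $\|\varphi_{n_k}(z_{n_k}) - x_{n_k}\| < 1/n_k$), and $x_{n_k} \in \wh{\varphi}_{n_k}(\zeta_{n_k}) \to H$, hence $x \in H$. Because $\Omega$ is convex and $H \in H(\partial\Omega)$ is a supporting complex hyperplane, and because $\partial\Omega$ is $C^1$ (so each boundary point has a unique complex tangent hyperplane), $x \in H \cap \partial\Omega$ forces $H = H_x\partial\Omega = \wh{\varphi}(\zeta)$, completing the proof.

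\textbf{Main obstacle.} The principal difficulty is orchestrating the diagonal extraction in Part 1 so that the Gromov products in $\Omega$ along the chosen sequences actually tend to infinity, which requires both the isometric-embedding property of complex geodesics and a careful base-point change; once this is set up, the rest reduces to standard compactness arguments and the Gromov-product characterization in Theorem~\ref{thm:gromov_prod}.
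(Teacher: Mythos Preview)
Your argument is sound and reaches the right conclusion, but your Part~1 follows a genuinely different route from the paper's.

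\textbf{Comparison in Part 1.} The paper does not use a diagonal extraction. Instead it shows directly that $(\varphi_n(z_n)\,|\,\varphi(z_n))_{\varphi(0)}^\Omega \to \infty$ by inserting the intermediate points $\varphi_n(rz_n)$ and $\varphi(rz_n)$ for fixed $0<r<1$: additivity of distance along each complex geodesic gives
\[
(\varphi_n(z_n)\,|\,\varphi(z_n))_{\varphi(0)}^\Omega \ \geq\ (\varphi_n(rz_n)\,|\,\varphi(rz_n))_{\varphi(0)}^\Omega - \tfrac{1}{2}K_\Omega(\varphi_n(0),\varphi(0)),
\]
and the right side tends to $K_{\Db}(r\zeta,0)$ by locally uniform convergence, which can be made arbitrarily large. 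Your approach instead pairs $\varphi_{n_k}(z_{n_k})$ with $\varphi_{n_k}(w_{k})$ for an auxiliary sequence $w_m\to\zeta$, exploiting that each $\varphi_n$ is a Kobayashi isometry so the Gromov product in $\Omega$ equals the one in $\Db$. Both strategies work; the paper's avoids an auxiliary sequence and a diagonal argument, while yours pushes everything to the disk via the isometry.

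\textbf{A small gap in your indexing.} Your claim that for each fixed $m$ one can choose $n_m$ with $(z_{n_m}\,|\,w_m)_0^{\Db} > m$ is not quite right as stated: for fixed $w_m \in \Db$ one always has $(z\,|\,w_m)_0^{\Db} \leq K_{\Db}(0,w_m)$, so if $K_{\Db}(0,w_m)\leq m$ no such $n_m$ exists, regardless of how close $z_{n_m}$ is to $\zeta$. The repair is immediate: invoke Theorem~\ref{thm:gromov_prod}(1) for $\Db$ (the joint limit $\lim_{n,m\to\infty}(z_n\,|\,w_m)_0^{\Db}=\infty$) to extract indices $m_k,n_k\to\infty$ with $(z_{n_k}\,|\,w_{m_k})_0^{\Db}>k$, and then enlarge $n_k$ further to also secure $\|\varphi_{n_k}(w_{m_k})-\varphi(w_{m_k})\|<1/k$. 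With this adjustment your Part~1 goes through.

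\textbf{Part 2.} Here your argument is essentially the same as the paper's. Your explicit justification that $x\in H\cap\partial\Omega$ with $H\in H(\partial\Omega)$ forces $H=H_x\partial\Omega$ (via convexity, $H\cap\Omega=\emptyset$, and uniqueness of the complex tangent hyperplane at a $C^1$ boundary point) spells out a step the paper leaves implicit.
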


\begin{proof} Suppose $z_n \in \Db$ converges to $\zeta \in \partial \Db$ and $\varphi_n(z_n) \rightarrow x \in \partial \Omega$. Next fix $0 < r < 1$. Since $\varphi_n$ and $\varphi$ are complex geodesics 
\begin{align*}
K_\Omega( \varphi_n(z_n), \varphi_n(0)) = K_\Omega( \varphi_n(z_n), \varphi_n(rz_n))+K_\Omega( \varphi_n(rz_n), \varphi_n(0))
\end{align*}
and
\begin{align*}
K_\Omega( \varphi(z_n), \varphi(0)) = K_\Omega( \varphi(z_n), \varphi(rz_n))+K_\Omega( \varphi(rz_n), \varphi(0)).
\end{align*}
Thus, by the triangle inequality,
\begin{align*}
(\varphi_n(z_n) |  \varphi(z_n) )_{\varphi(0)}^\Omega \geq (\varphi_n(rz_n) |  \varphi(rz_n) )_{\varphi(0)}^\Omega - \frac{1}{2}K_\Omega(\varphi_n(0),\varphi(0)).
\end{align*}
Further, 
\begin{align*}
\lim_{n \rightarrow \infty} (\varphi_n(rz_n) |  \varphi(rz_n) )_{\varphi(0)}^\Omega = K_\Omega(\varphi(r\zeta),\varphi(0)) = K_{\Db}(r\zeta,0).
\end{align*}
So
\begin{align*}
\lim_{n \rightarrow \infty} (\varphi_n(z_n) |  \varphi(z_n) )_{\varphi(0)}^\Omega \geq K_{\Db}(r\zeta,0).
\end{align*}
Since $0 < r < 1$ was arbitrary we see that 
\begin{align*}
\lim_{n \rightarrow \infty} (\varphi_n(z_n) |  \varphi(z_n) )_{\varphi(0)}^\Omega = \infty.
\end{align*}
Then since $\varphi_n(z_n) \rightarrow x$ and
\begin{align*}
\lim_{n \rightarrow \infty} d_{\Euc}(\varphi(z_n), \partial \Omega \cap \wh{\varphi}(\zeta)) =0,
\end{align*}
Theorem~\ref{thm:gromov_prod} implies that we must have $H_{x} \partial \Omega = \wh{\varphi}(\zeta)$. 

Now we prove the ``further'' part of the Proposition. Suppose that $\zeta_n \in \partial \Db$ converges to $\zeta \in \partial \Db$. We claim that 
\begin{align*}
\wh{\varphi}(\zeta)= \lim_{n \rightarrow \infty} \wh{\varphi}_n(\zeta_n).
\end{align*}
Since $\partial \Omega$ is compact, $H(\partial \Omega)$ is also compact. So it is enough to show that every convergent subsequence of $\wh{\varphi}(\zeta_n)$ converges to $\wh{\varphi}(\zeta)$. So without loss of generality we can assume that $\wh{\varphi}(\zeta_n)$ converges to some hyperplane $H \in H(\partial \Omega)$. 

Now fix a sequence $r_n \nearrow 1$ such that 
\begin{align*}
d_{\Euc}(\varphi_n(r_n\zeta_n), \partial \Omega \cap \wh{\varphi}_n(\zeta_n)) <1/n.
\end{align*} 
By passing to a subsequence we can suppose that $\varphi_n(r_n\zeta_n) \rightarrow x \in \partial \Omega$. Then $H_x \partial \Omega = H$. So by the first assertion in the Proposition $H_{x} \partial \Omega = \wh{\varphi}(\zeta)$.

\end{proof}

Consider the one-parameter subgroup $\{ a_t : t \in \Rb\} \leq \Aut(\Db)$ given by
\begin{align*}
a_t(z) = \frac{ \cosh(t) z+\sinh(t)}{\sinh(t) z + \cosh(t)}.
\end{align*}
Then $t \rightarrow a_t(0)$ is a geodesic in $(\Db, K_{\Db})$ and so if $\varphi : \Db \rightarrow \Omega$ is a complex geodesic, then $t \rightarrow \varphi(a_t(0))$ is a geodesic in $(\Omega, K_\Omega)$. 

The next result shows that geodesic segments whose endpoints are near boundary points $x,y \in \partial \Omega$ with $H_x \partial \Omega \neq H_y \partial \Omega$ ``bend'' into $\Omega$. 

\begin{proposition}\label{prop:visible}
Suppose $\Omega \subset \Cb^d$ is a bounded convex domain with $C^{1,\alpha}$ boundary and $p_n, q_n \in \Omega$ are sequences such that $p_n \rightarrow x \in \partial \Omega$ and $q_n \rightarrow y \in \partial \Omega$ with $H_{x} \partial \Omega \neq H_{y} \partial \Omega$. 

If $\varphi_n:\Db \rightarrow \Omega$ is a complex geodesic with $\varphi_n(0)=q_n$ and $\varphi_n(t_n) = p_n$ where $0 < t_n < 1$, then there exists $n_k \rightarrow \infty$ and $s_k \in [0,t_{n_k}]$ so that the complex geodesics $\varphi_{n_k} \circ a_{s_k}$ converge locally uniformly to a complex geodesic $\varphi:\Db \rightarrow\Omega$. Moreover, 
\begin{align*}
\lim_{z \rightarrow -1} d_{\Euc}(\varphi(z), \partial \Omega \cap H_{x} \partial \Omega) = 0
\end{align*}
and 
\begin{align*}
\lim_{z \rightarrow 1} d_{\Euc}(\varphi(z), \partial \Omega \cap H_{y} \partial \Omega) = 0.
\end{align*}
\end{proposition}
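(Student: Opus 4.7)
The plan is to reparametrize each complex geodesic by precomposing with an element $a_{s_n}$ of the given one-parameter group so that the new basepoints $\varphi_n(a_{s_n}(0))$ lie in a fixed compact subset of $\Omega$, extract a locally uniform limit by Montel's theorem, verify that this limit is itself a complex geodesic, and then read off the boundary behavior at $\pm 1$ from Proposition~\ref{prop:convergence_at_infinity}.

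Set $\tau_n := \arctanh(t_n) = K_\Omega(q_n,p_n)$ and let $\sigma_n(s) := \varphi_n(a_s(0)) = \varphi_n(\tanh s)$, a unit-speed real Kobayashi geodesic in $(\Omega,K_\Omega)$ with $\sigma_n(0)=q_n$ and $\sigma_n(\tau_n)=p_n$. Since $H_x\partial\Omega \neq H_y\partial\Omega$ forces $x\neq y$ and distinct boundary points of a bounded domain are at infinite Kobayashi distance, $\tau_n\to\infty$. Fix a basepoint $o\in\Omega$; by the contrapositive of Theorem~\ref{thm:gromov_prod}(2), $\limsup_n (p_n|q_n)_o^\Omega < \infty$.

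The chief step, and main obstacle, is to produce $s_n\in[0,\tau_n]$ with $s_n\to\infty$, $\tau_n-s_n\to\infty$, and $\sigma_n(s_n)$ remaining in a fixed compact subset $K\subset\Omega$. This is a visibility-type statement: the Kobayashi geodesics joining points accumulating on $\partial\Omega$ at two distinct complex tangent hyperplanes must ``bend'' into the interior. One proves it by contradiction. Were no such $s_n$ available, then for some $A_n\to\infty$ one would have $\min_{s\in[A_n,\tau_n-A_n]} K_\Omega(\sigma_n(s),o) \to \infty$; passing to a subsequence, the minimizer $s_n^*$ would satisfy $\sigma_n(s_n^*)\to z\in\partial\Omega$. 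The elementary identity
\[
(\sigma_n(s_n^*)\,|\,q_n)_o^\Omega + (\sigma_n(s_n^*)\,|\,p_n)_o^\Omega = K_\Omega(\sigma_n(s_n^*),o) + (p_n|q_n)_o^\Omega,
\]
together with the uniform bound on $(p_n|q_n)_o^\Omega$, forces at least one summand on the left to blow up, so Theorem~\ref{thm:gromov_prod}(2) gives $H_z\partial\Omega\in\{H_x\partial\Omega,H_y\partial\Omega\}$. Running the same argument at shifted parameters $s_n^*+C$ (which remain in the high-distance middle band) and pairing consecutive accumulation points via Theorem~\ref{thm:gromov_prod}(2) then shows that $H_{z(C)}\partial\Omega$ is independent of $C$, so the entire middle of the geodesic accumulates on a single hyperplane. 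Comparing this with the two endpoint accumulations finally yields $H_x\partial\Omega=H_y\partial\Omega$, a contradiction. This is the visibility circle of ideas already developed in Zimmer~\cite{Z2017}.

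With such $s_n$ in hand, set $\psi_n := \varphi_n\circ a_{s_n}$; each $\psi_n$ is a complex geodesic with $\psi_n(0)=\sigma_n(s_n)\in K$. Montel's theorem yields a subsequence $\psi_{n_k}$ converging locally uniformly to a holomorphic map $\psi:\Db\to\overline{\Omega}$ with $\psi(0)\in K\subset\Omega$. Convexity of $\Omega$ combined with the maximum principle applied to $\Real(L\circ\psi)$ for any real-affine functional $L$ supporting $\Omega$ rules out $\psi(\Db)$ meeting $\partial\Omega$, so $\psi(\Db)\subset\Omega$; continuity of $K_\Omega$ on $\Omega\times\Omega$ together with the distance-decreasing inequality then upgrades $K_\Omega(\psi_n(z),\psi_n(w))=K_\Db(z,w)$ in the limit to $K_\Omega(\psi(z),\psi(w))=K_\Db(z,w)$, showing that $\psi$ is a complex geodesic. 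Finally, apply Proposition~\ref{prop:convergence_at_infinity}: with $\zeta_n:=a_{-s_n}(0)=-\tanh(s_n)\to -1$ and $\psi_n(\zeta_n)=\varphi_n(0)=q_n\to y$, one obtains $\wh{\psi}(-1)=H_y\partial\Omega$, while the symmetric choice $\zeta_n:=a_{-s_n}(t_n)=\tanh(\tau_n-s_n)\to 1$ with $\psi_n(\zeta_n)=p_n\to x$ gives $\wh{\psi}(1)=H_x\partial\Omega$. The two claimed Euclidean boundary limits then follow from Proposition~\ref{prop:bd_maps}.
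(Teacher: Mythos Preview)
Your overall architecture matches the paper's: reparametrize by $a_{s_n}$ so that the new basepoint lies in a compact set, extract a limit by Montel, check it is a complex geodesic, and read off the boundary behavior from Proposition~\ref{prop:convergence_at_infinity}. The Montel step, the verification that the limit is a complex geodesic, and the final application of Proposition~\ref{prop:convergence_at_infinity} are all fine (and in fact more carefully justified than in the paper).

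The substantive difference is in the visibility step, and here your sketch has a genuine gap. From the identity
\[
(\sigma_n(s_n^*)\,|\,q_n)_o^\Omega + (\sigma_n(s_n^*)\,|\,p_n)_o^\Omega = K_\Omega(\sigma_n(s_n^*),o) + (p_n|q_n)_o^\Omega
\]
you correctly deduce that \emph{one} of the two Gromov products on the left blows up, so $H_z\partial\Omega\in\{H_x\partial\Omega,H_y\partial\Omega\}$. But this is not a contradiction, and your closing sentence ``Comparing this with the two endpoint accumulations finally yields $H_x\partial\Omega=H_y\partial\Omega$'' is not justified: showing that $H_{z(C)}$ is independent of a \emph{bounded} shift $C$ does not connect the middle of the geodesic to the endpoints, which are at distance $\to\infty$. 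There is no mechanism in your argument forcing \emph{both} Gromov products to diverge.

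The paper avoids this by a simple topological choice of $s_n$ rather than a Gromov-product one. Since $H_x\partial\Omega\neq H_y\partial\Omega$, one fixes disjoint open neighborhoods $U_x\supset\partial\Omega\cap H_x\partial\Omega$ and $U_y\supset\partial\Omega\cap H_y\partial\Omega$. For large $n$ the real geodesic $t\mapsto\varphi_n(t)$, $t\in[0,t_n]$, has endpoints in $U_y$ and $U_x$ respectively, hence by connectedness meets $\Omega\setminus(U_x\cup U_y)$ at some $s_n$. Any subsequential limit $z$ of $\varphi_n(s_n)$ then satisfies $H_z\partial\Omega\notin\{H_x\partial\Omega,H_y\partial\Omega\}$ by construction, so Theorem~\ref{thm:gromov_prod}(2) bounds \emph{both} Gromov products, and the geodesic identity gives a uniform bound on $K_\Omega(z_0,\varphi_n(s_n))$, forcing $z\in\Omega$. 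This is the clean step your argument is missing.

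Two minor points. First, your assertion that ``distinct boundary points of a bounded domain are at infinite Kobayashi distance'' is false in general (think of two points in a common boundary face of a polydisk); the correct reason that $\tau_n\to\infty$ is exactly the Gromov-product bound you already have together with completeness of $K_\Omega$. Second, the extra requirements $s_n\to\infty$ and $\tau_n-s_n\to\infty$ need not be built into the visibility step: once $\psi_n(0)=\sigma_n(s_n)$ lies in a fixed compact set and $\psi_n(a_{-s_n}(0))=q_n\to\partial\Omega$, the isometry property forces $a_{-s_n}(0)\to\partial\Db$ automatically (and similarly at the other end). Finally, note that your computation gives $\wh\psi(-1)=H_y\partial\Omega$ and $\wh\psi(1)=H_x\partial\Omega$, which is the correct orientation (and the one actually used in Proposition~\ref{prop:id}); the statement of the proposition as printed has $x$ and $y$ interchanged in the ``Moreover'' clause.
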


\begin{proof} Since $H_{x} \partial \Omega \neq H_{y} \partial \Omega$ there exists open neighborhoods $U_x$ of $\partial \Omega \cap H_x \partial\Omega$ and $U_y$ of $\partial \Omega \cap H_y \partial\Omega$ such that $\overline{U_x} \cap \overline{U_y} = \emptyset$. 

For $n$ large, $\varphi_n(0) \in U_x$ and $\varphi_n(T_n) \in U_y$. So there exists some $s_n \in (0,T_n)$ such that 
\begin{align*}
\varphi_n(s_n) \in \Omega \setminus (U_x \cup U_y).
\end{align*}
By passing to a subsequence we can suppose that $\varphi_n(s_n) \rightarrow z \in \overline{\Omega}$. \\

\noindent\textbf{Claim:} $z \in \Omega$. 

\begin{proof}[Proof of Claim:]Suppose not. Then $z \in \partial \Omega$. Since $z \notin U_x \cup U_y$, we see that $H_z \partial \Omega$ does not equal $H_x \partial \Omega$ or $H_y \partial \Omega$. Fix some $z_0 \in \Omega$. Let 
\begin{align*}
R_1 = \sup\{ (\varphi_n(s_n) | \varphi_n(0))_{z_0}^\Omega  : n=1,2,\dots \}
\end{align*}
and
\begin{align*}
R_2 = \sup\{ (\varphi_n(s_n) | \varphi_n(t_n))_{z_0}^\Omega  : n=1,2,\dots \}.
\end{align*}
By Theorem~\ref{thm:gromov_prod}, both $R_1$ and $R_2$ are finite. Then
\begin{align*}
K_\Omega(\varphi_n(0), & \varphi_n(t_n)) = K_\Omega(\varphi_n(0), \varphi_n(s_n))+K_\Omega(\varphi_n(s_n), \varphi_n(t_n)) \\
& \geq K_\Omega(\varphi_n(0),z_0)+2K_\Omega(z_0, \varphi_n(s_n))+K_\Omega(z_0, \varphi_n(t_n))-2R_1-2R_2.
\end{align*}
By the triangle inequality
\begin{align*}
K_\Omega(\varphi_n(0),  \varphi_n(t_n)) \leq K_\Omega(\varphi_n(0),z_0)+K_\Omega(z_0, \varphi_n(t_n))
\end{align*}
and so we see that 
\begin{align*}
K_\Omega(z_0, \varphi_n(s_n)) \leq R_1 + R_2.
\end{align*}
But this is impossible since $\varphi_n(s_n) \rightarrow z \in \partial \Omega$ and $K_\Omega$ is a proper distance. So we must have $z \in \Omega$.\end{proof}

Now since $z \in \Omega$, after possibly passing to a subsequence we can suppose that $\phi_n = \varphi_n \circ a_{s_n}$ converges to a complex geodesic $\varphi : \Db \rightarrow \Omega$. Now since $p_n=\phi_n(a_{-s_n}(0)) \rightarrow x$ and $q_n=\phi_n(a_{-s_n}(t_n)) \rightarrow y$, the previous Proposition implies that  
\begin{align*}
\lim_{z \rightarrow -1} d_{\Euc}(\varphi(z), \partial \Omega \cap H_{x} \partial \Omega) = 0
\end{align*}
and 
\begin{align*}
\lim_{z \rightarrow 1} d_{\Euc}(\varphi(z), \partial \Omega \cap H_{y} \partial \Omega) = 0.
\end{align*}
\end{proof}

\begin{proposition}\label{prop:inv_hyperplane_convergence} Suppose $\Omega \subset \Cb^d$ is a bounded convex domain with $C^{1,\alpha}$ boundary, $\varphi : \Db \rightarrow \Omega$ is a complex geodesic, and $\pi : \Omega \rightarrow \Db$ is a good left inverse of $\varphi$. For each $z \in \Db$, let $H_z$ denote the complex hyperplane such that 
\begin{align*}
\pi^{-1}(z) = \Omega \cap H_z.
\end{align*}
Then 
\begin{align*}
\wh{\varphi}(\zeta)=\lim_{z \rightarrow \zeta} H_z
\end{align*}
for every $\zeta \in \partial \Db$. 
\end{proposition}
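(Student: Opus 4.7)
The plan is to fix a sequence $z_n \in \Db$ with $z_n \to \zeta \in \partial \Db$ and show that every convergent subsequence of $\{H_{z_n}\}$, in the natural topology on affine complex hyperplanes, has limit $\wh{\varphi}(\zeta)$. Since $\pi \circ \varphi = \id$, each $H_{z_n}$ passes through $\varphi(z_n) \in \overline{\Omega}$. Therefore, by passing to a subsequence and using compactness of $\overline{\Omega}$ together with compactness of the space of unit normal directions, I may assume $\varphi(z_n) \to x_\infty$ and $H_{z_n} \to H$ for some complex affine hyperplane $H$ with $x_\infty \in H$. By Proposition~\ref{prop:bd_maps}, $x_\infty \in \partial \Omega \cap \wh{\varphi}(\zeta)$.

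The heart of the argument is to show $H \cap \Omega = \emptyset$. Since $\pi$ is distance-decreasing, for every $w \in H_{z_n} \cap \Omega = \pi^{-1}(z_n)$ one has
\begin{align*}
K_\Omega(w, \varphi(0)) \geq K_{\Db}(\pi(w), \pi(\varphi(0))) = K_{\Db}(z_n, 0),
\end{align*}
and the right-hand side tends to $+\infty$. By Theorem~\ref{thm:barth}, $(\Omega, K_\Omega)$ is a proper metric space, so any Euclidean limit of points chosen from the sets $H_{z_n} \cap \Omega$ must lie in $\partial \Omega$. If there existed $w_0 \in H \cap \Omega$, the convergence $H_{z_n} \to H$ would produce points of $H_{z_n} \cap \Omega$ Euclidean-converging to $w_0 \in \Omega$, a contradiction.

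Thus $H$ is a complex supporting hyperplane of $\Omega$ at $x_\infty \in \partial \Omega$. By the $C^1$ regularity of $\partial \Omega$, the real tangent hyperplane $T_{x_\infty}\partial \Omega$ is the unique real hyperplane supporting $\overline{\Omega}$ at $x_\infty$, so $H \subset T_{x_\infty}\partial \Omega$; since $H_{x_\infty}\partial \Omega$ is by definition the unique complex hyperplane through $x_\infty$ lying inside $T_{x_\infty}\partial \Omega$, this forces $H = H_{x_\infty}\partial \Omega$. Finally, because $x_\infty \in \partial \Omega \cap \wh{\varphi}(\zeta)$ and the complex tangent hyperplane is constant along any complex face of a convex $C^1$ domain (apply the uniqueness of the supporting real hyperplane to any other point of $\partial\Omega \cap \wh{\varphi}(\zeta)$), one concludes $H_{x_\infty}\partial \Omega = \wh{\varphi}(\zeta)$, as required.

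The main obstacle is establishing $H \cap \Omega = \emptyset$: this is precisely where the distance-decreasing property of the good left inverse combines with the properness of $(\Omega, K_\Omega)$ to force the fibers of $\pi$ to accumulate in $\partial \Omega$. Once this is in hand, convexity and $C^1$ smoothness close out the identification of $H$ with $\wh{\varphi}(\zeta)$ without further complex-analytic input.
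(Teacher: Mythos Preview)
Your proof is correct and follows essentially the same route as the paper's: pass to a subsequential limit hyperplane $H$ through a boundary accumulation point $x_\infty$ of $\varphi(z_n)$, show $H \cap \Omega = \emptyset$, and then use convexity plus $C^1$ regularity to identify $H$ with $H_{x_\infty}\partial\Omega = \wh{\varphi}(\zeta)$. The only cosmetic difference is in the step $H \cap \Omega = \emptyset$: the paper argues directly from continuity of $\pi$ (if $w_n \in H_{z_n}\cap\Omega$ converged to $w \in \Omega$ then $\pi(w) = \lim z_n = \zeta \notin \Db$), whereas you reach the same contradiction via the distance-decreasing property of $\pi$ and properness of $K_\Omega$.
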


\begin{proof} Fix some $\zeta \in \partial \Db$ and suppose for a contradiction that 
\begin{align*}
\lim_{z \rightarrow \zeta} H_z \neq \wh{\varphi}(\zeta).
\end{align*}
Then by compactness, we can find a sequence $z_n \in \Db$ converging to $\zeta$ such that 
\begin{align*}
\lim_{n \rightarrow \infty} H_{z_n} = H
\end{align*}
and $H \neq \wh{\varphi}(\zeta)$. By passing to another subsequence we can suppose that $\varphi(z_n) \rightarrow x \in \partial \Omega$. Then $x \in \wh{\varphi}(\zeta)$ and so $H_x \partial \Omega= \wh{\varphi}(\zeta)$. Now each $H_{z_n}$ is a complex hyperplane containing $\varphi(z_n)$. So $H$ is a complex hyperplane containing $x$. We next claim that $H \cap \Omega = \emptyset$. If not, then after passing to a subsequence there exists $w \in H \cap \Omega$ and $w_n \in H_{z_n} \cap \Omega$ such that $w_n \rightarrow w$. Then 
\begin{align*}
\zeta = \lim_{n \rightarrow \infty} z_n = \lim_{n \rightarrow \infty} \pi(w_n) = \pi(w)
\end{align*}
which is impossible because $\pi(\Omega) = \Db$. So $H \cap \Omega = \emptyset$. But then, since $\Omega$ is convex and $x \in H$, we have
\begin{align*}
H = H_x \partial \Omega =\wh{\varphi}(\zeta)
\end{align*}
which is a contradiction. 
\end{proof}

\section{The one dimensional case}\label{sec:unit_disk}

In this section we use Proposition~\ref{prop:quant_id} to provide a new proof of the Burns-Krantz theorem for the disc. The one dimensional result is not needed in the proof of Theorem~\ref{thm:main_convex}, but this simple case motivates the argument. 

\begin{theorem}[Burn-Krantz~\cite{BK1994}] Suppose $f : \Db \rightarrow \Db$ is holomorphic and there exists some $\xi_0 \in \partial \Db$ such that 
\begin{align*}
f(z) = z + { \rm o}\left( \abs{z-\xi_0}^3 \right),
\end{align*}
then $f= \id$. 
\end{theorem}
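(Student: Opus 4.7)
The plan is to apply Proposition~\ref{prop:quant_id} to the constant sequence $f_n = f$. After rotating we may assume $\xi_0 = 1$. I would choose a sequence $z_n = 1 - \epsilon_n \in (0,1)$ with $\epsilon_n \searrow 0$ and a \emph{fixed} radius $r_n \equiv r > 0$ (the precise value will not matter), and verify
\begin{align*}
\lim_{n\to\infty} \frac{e^{4 K_{\Db}(z_n,0)}}{r} \sup_{w \in B_{\Db}(z_n;r)} K_{\Db}(f(w),w) = 0.
\end{align*}
Proposition~\ref{prop:quant_id} then yields $f_n \to \id$, which since $f_n = f$ forces $f = \id$.

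The first estimate is elementary: $K_{\Db}(z_n,0) = \tfrac{1}{2}\log \tfrac{1+z_n}{1-z_n}$, so $e^{4K_{\Db}(z_n,0)} = \bigl(\tfrac{1+z_n}{1-z_n}\bigr)^2 \leq 4\epsilon_n^{-2}$. The second estimate needs the geometry of the Kobayashi ball $B_{\Db}(z_n;r)$. Since the Poincar\'e metric at $z_n$ has scale $1 - |z_n|^2 \asymp \epsilon_n$, a $K_{\Db}$-ball of fixed radius $r$ centered at $z_n$ has Euclidean diameter comparable to $\epsilon_n$; hence there exist constants $c_1(r), c_2(r) > 0$ such that every $w \in B_{\Db}(z_n;r)$ satisfies
\begin{align*}
1 - |w| \geq c_1 \epsilon_n \quad \text{and} \quad \abs{w - 1} \leq c_2 \epsilon_n.
\end{align*}

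With these controls in place, the hypothesis $f(z) = z + {\rm o}(\abs{z-1}^3)$ gives $\abs{f(w) - w} = {\rm o}(\epsilon_n^3)$ uniformly for $w$ in the ball. Integrating the Poincar\'e density along the straight-line segment from $w$ to $f(w)$ (which for $n$ large stays in a region where $1-\abs{\cdot}^2 \geq \tfrac{c_1}{2}\epsilon_n$) gives
\begin{align*}
K_{\Db}(f(w),w) \leq C \frac{\abs{f(w)-w}}{1-|w|^2} = \frac{{\rm o}(\epsilon_n^3)}{\epsilon_n} = {\rm o}(\epsilon_n^2).
\end{align*}
Combining these bounds, the expression we needed to control is at most $\tfrac{4}{r}\epsilon_n^{-2} \cdot {\rm o}(\epsilon_n^2) = {\rm o}(1)$, completing the argument.

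There is no serious obstacle here; this is the shape of argument the section header promises. The only moderately delicate step is the uniform Kobayashi-to-Euclidean comparison on the ball $B_{\Db}(z_n;r)$, which is exactly why Proposition~\ref{prop:quant_id} was formulated with a supremum over a Kobayashi ball rather than a single-point estimate. The choice to keep $r_n$ constant and push $z_n \to \xi_0$ (rather than, say, shrinking $r_n$) is what produces the precise cancellation between the factor $e^{4K_{\Db}(z_n,0)} \sim \epsilon_n^{-2}$ and the third-order vanishing of $f(w)-w$ after one factor of $\epsilon_n$ is absorbed by converting Euclidean to Kobayashi distance; this is exactly where the cubic error term in the Burns--Krantz hypothesis is used. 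This structural point is also what motivates the analogous argument for convex domains: one needs the ``radial'' estimate on $K_{\Db}(f(w),w)$ to beat the exponential factor $e^{4K_{\Db}(z_n,0)}$.
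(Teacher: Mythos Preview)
Your proposal is correct and follows essentially the same approach as the paper: both reduce to Proposition~\ref{prop:quant_id} applied to the constant sequence $f_n=f$, with centers $z_n=1-\epsilon_n$ and Kobayashi radii bounded below by a fixed constant, using that a fixed-radius Kobayashi ball near the boundary has Euclidean diameter $\asymp \epsilon_n$ so that $K_{\Db}(f(w),w)=o(\epsilon_n^2)$ there. The paper routes this through an intermediate Euclidean ball $\Bb(p_n;r_n/4)$ and then bounds the Kobayashi radius fitting inside it from below, whereas you estimate the Euclidean geometry of the Kobayashi ball directly, but this is a cosmetic difference.
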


For the rest of the section suppose $f : \Db \rightarrow \Db$ is holomorphic and there exists some $\xi_0 \in \partial \Db$ such that 
\begin{align*}
f(z) = z + { \rm o}\left( \abs{z-\xi_0}^3 \right).
\end{align*}
Without loss of generality we can assume that $\xi_0=1$. Then there exists a non-decreasing function $E : [0,\infty) \rightarrow [0,\infty)$ such that 
\begin{align*}
\abs{f(z)-z} \leq E( \abs{z-1})
\end{align*}
and 
\begin{align*}
\lim_{ r \rightarrow 0} \frac{E(r)}{r^3} = 0.
\end{align*} 

Fix a sequence $0< r_n <1$ with $r_n \rightarrow 0$. Then consider the points $p_n = 1-r_n$. Then 
\begin{align*}
K_{\Db}(0,p_n) = \frac{1}{2} \log \frac{ 1+\abs{z}}{1-\abs{z}} \leq \frac{1}{2} \log \frac{2}{r_n}.
\end{align*}

From the well known explicit formula for the Kobayashi metric on $\Db$ we have 
\begin{align*}
\frac{\abs{v}}{2(1-\abs{z})} \leq k_{\Db}(z;v) \leq \frac{\abs{v}}{1-\abs{z}}
\end{align*}
for all $z \in \Db$ and $v \in \Cb$. Using this estimate the next two lemmas are simple exercises. 

\begin{lemma} There exists $C > 0$ such that: If $w \in \Bb(p_n; r_n/4)$, then 
\begin{align*}
K_{\Db}(w, f(w)) \leq \frac{C}{r_n} E(5r_n/4). 
\end{align*}
\end{lemma}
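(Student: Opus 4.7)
The plan is to bound $K_{\Db}(w,f(w))$ by integrating the explicit upper estimate $k_{\Db}(z;v) \leq \abs{v}/(1-\abs{z})$ along the straight Euclidean segment joining $w$ and $f(w)$. The key geometric content is that, for $w$ in the stated ball, both $w$ and $f(w)$ lie at distance at least a fixed multiple of $r_n$ from the unit circle, which forces $K_{\Db}(w,f(w))$ to be at most a constant multiple of $\abs{f(w)-w}/r_n$.

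First I will record the elementary estimates for $w \in \Bb(p_n;r_n/4)$. Since $p_n = 1-r_n$, the triangle inequality gives $\abs{w-1} \leq \abs{w-p_n}+\abs{p_n-1} \leq 5r_n/4$, and similarly $\abs{w} \leq \abs{p_n}+r_n/4 = 1-3r_n/4$, so $1-\abs{w} \geq 3r_n/4$. By the monotonicity of $E$, the first of these yields $\abs{f(w)-w} \leq E(\abs{w-1}) \leq E(5r_n/4)$. The hypothesis $E(r) = {\rm o}(r^3)$ is far more than enough to guarantee $E(5r_n/4) \leq r_n/4$ for all sufficiently large $n$; combined with $\abs{f(w)} \leq \abs{w}+\abs{f(w)-w}$ this gives $1-\abs{f(w)} \geq r_n/2$.

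Now parametrize $\sigma(t) = (1-t)w+tf(w)$ for $t \in [0,1]$. By convexity of $\Db$, the segment lies in $\Db$, and
\[
1-\abs{\sigma(t)} \geq (1-t)(1-\abs{w}) + t(1-\abs{f(w)}) \geq r_n/2.
\]
Integrating the infinitesimal Kobayashi estimate then yields
\[
K_{\Db}(w,f(w)) \leq \int_0^1 \frac{\abs{f(w)-w}}{1-\abs{\sigma(t)}}\, dt \leq \frac{2E(5r_n/4)}{r_n},
\]
which is the desired inequality with $C = 2$ for all sufficiently large $n$; the finitely many remaining indices are absorbed by simply enlarging $C$. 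No real obstacle arises in this lemma: the computation is direct, and the third-order vanishing of $f(z)-z$ plays only a mild role here, being used merely to ensure that $f(w)$ is not pushed closer to $\partial\Db$ than $w$ is. The deeper content of the $o(\abs{z-1}^3)$ hypothesis enters only afterwards, when this estimate is combined with Proposition~\ref{prop:quant_id}.
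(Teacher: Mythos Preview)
Your proof is correct and matches the approach the paper uses for the analogous higher-dimensional lemma in Section~\ref{sec:pf_of_thm_main}: bound $\abs{f(w)-w}\le E(5r_n/4)$, observe the segment from $w$ to $f(w)$ stays at Euclidean distance $\ge r_n/2$ from $\partial\Db$, and integrate the infinitesimal estimate $k_{\Db}(z;v)\le \abs{v}/(1-\abs{z})$ along it. The paper itself leaves the one-dimensional version as a ``simple exercise,'' and your write-up is exactly what that exercise amounts to.
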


For each $n$ define 
\begin{align*}
\epsilon_n = \sup\{ \epsilon : B_{\Db}(p_n;\epsilon) \subset \Bb(p_n; r_n/4)\}.
\end{align*}

\begin{lemma} There exists some $a > 0$ such that $\epsilon_n \geq a$ for all $n$. \end{lemma}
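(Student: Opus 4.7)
The plan is to use the pointwise lower bound
\[
k_{\Db}(z;v) \geq \frac{|v|}{2(1-|z|)}
\]
that is standard for the disk. The key observation is a scaling coincidence: the Euclidean radius we are considering, $r_n/4$, is comparable to the distance $r_n = 1 - p_n$ from $p_n$ to $\partial\Db$, so every point of the Euclidean ball $\Bb(p_n; r_n/4)$ lies at a distance of order $r_n$ from the boundary, and hence the infinitesimal Kobayashi metric is of order $1/r_n$ there. Thus the Kobayashi length of any curve crossing the Euclidean shell has the two $r_n$'s cancel, producing a lower bound on $K_\Db(p_n, w)$ that does not depend on $n$.

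More concretely, the first step is to note that for $z \in \overline{\Bb(p_n; r_n/4)}$ we have $|z| \geq p_n - r_n/4 = 1 - 5r_n/4$, hence $1 - |z| \leq 5r_n/4$ and therefore
\[
k_\Db(z;v) \geq \frac{|v|}{2(1-|z|)} \geq \frac{2|v|}{5r_n}.
\]
The second step is the length estimate: any absolutely continuous curve $\sigma : [0,1] \to \Db$ with $\sigma(0) = p_n$ and $|\sigma(1)-p_n| \geq r_n/4$ must first exit $\overline{\Bb(p_n; r_n/4)}$ at some time $t_0 \in (0,1]$; the restriction $\sigma|_{[0,t_0]}$ has Euclidean length at least $r_n/4$ and stays in $\overline{\Bb(p_n; r_n/4)}$, so
\[
\ell_\Db(\sigma) \geq \ell_\Db(\sigma|_{[0,t_0]}) \geq \frac{2}{5r_n} \cdot \frac{r_n}{4} = \frac{1}{10}.
\]
Taking the infimum over such curves gives $K_\Db(p_n, w) \geq 1/10$ whenever $|p_n - w| \geq r_n/4$, so $B_\Db(p_n; 1/10) \subset \Bb(p_n; r_n/4)$ and $a = 1/10$ works (uniformly in $n$).

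There is no real obstacle: the argument is a one-line observation once the cancellation between the scale $r_n/4$ of the Euclidean ball and the scale $r_n$ appearing in the denominator of the Kobayashi lower bound is exploited. The only small point requiring care is handling curves that leave $\Bb(p_n; r_n/4)$ before reaching a putative endpoint outside it, which is dispatched by truncating at the first exit time.
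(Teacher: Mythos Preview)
Your proof is correct and is exactly the argument the paper intends: the paper does not write out a proof but simply says the lemma is a ``simple exercise'' from the estimate $k_{\Db}(z;v)\geq \frac{|v|}{2(1-|z|)}$, and your argument is the natural execution of that exercise. The cancellation you identify between the Euclidean radius $r_n/4$ and the boundary distance $\sim r_n$ is precisely the point.
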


Then
\begin{align*}
\lim_{n \rightarrow \infty} \frac{e^{4K_{\Db}(z_n, 0)}}{\epsilon_n} \sup_{w \in B_{\Db}(z_n;\epsilon_n)} K_{\Db}(f(w), w) \leq  \frac{4C}{a} \lim_{n \rightarrow \infty} \frac{1}{r_n^3} E(5r_n/4) = 0.
\end{align*}
So if we apply Proposition~\ref{prop:quant_id} to the constant sequence $f$, then we see that $f=\id$.

\section{Proof of Theorem~\ref{thm:main_convex}}\label{sec:pf_of_thm_main}

For the rest of the section suppose that $\Omega \subset \Cb^d$ is a bounded convex domain with $C^2$ boundary, $f : \Omega \rightarrow \Omega$ is holomorphic map, and there exists $\xi_0 \in \partial \Omega$ such that 
\begin{align*}
f(z) = z + { \rm o}\left( \norm{z-\xi_0}^4\right).
\end{align*}
Then there exists a non-decreasing function $E : [0,\infty) \rightarrow [0,\infty)$ such that 
\begin{align*}
\norm{f(z)-z} \leq E( \norm{z-\xi_0})
\end{align*}
and 
\begin{align*}
\lim_{ r \rightarrow 0} \frac{E(r)}{r^4} = 0.
\end{align*}

The key step in the proof is the following proposition. 

\begin{proposition}\label{prop:existence} For any $q \in \Omega$ there exists a complex geodesic $\varphi : \Db \rightarrow \Omega$ and a good left inverse $\pi : \Omega \rightarrow \Db$ such that $\varphi(0)=q$, $\pi \circ f \circ \varphi = \id$, and 
\begin{align*}
\lim_{z \rightarrow 1} d_{\Euc}\left(\varphi(z), \partial \Omega \cap H_{\xi_0}\partial \Omega\right) = 0.
\end{align*}
\end{proposition}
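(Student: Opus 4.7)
The plan is to realize the desired $\varphi$ and $\pi$ as subsequential limits of complex geodesics $\varphi_n : \Db \to \Omega$ joining $q$ to a sequence $p_n \to \xi_0$, together with good left inverses $\pi_n$, and to deduce $\pi \circ f \circ \varphi = \id$ from the convergence $g_n := \pi_n \circ f \circ \varphi_n \to \id_{\Db}$ obtained via the Christodoulou--Short estimate (Proposition~\ref{prop:quant_id}).

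Concretely, I would choose $p_n \to \xi_0$ along the inward unit normal to $\partial\Omega$ at $\xi_0$, so that $\delta_\Omega(p_n) \asymp d_n := \norm{p_n - \xi_0} \to 0$. By the Corollary to Lempert's theorem (stated immediately after Theorem~\ref{thm:lempert}), for each $n$ there exists a complex geodesic $\varphi_n$ with $\varphi_n(0) = q$ and $\varphi_n(t_n) = p_n$ for some $t_n \in (0,1)$, together with a good left inverse $\pi_n$. The distance-decreasing property of $\pi_n$ makes $g_n := \pi_n \circ f \circ \varphi_n$ a holomorphic self-map of $\Db$. The heart of the argument is to apply Proposition~\ref{prop:quant_id} to the sequence $g_n$ with base point $z_n := t_n$ and a suitably chosen radius $\rho_n$. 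Standard Kobayashi estimates for bounded convex $C^2$ domains give $K_\Db(t_n, 0) = K_\Omega(q, p_n) = \tfrac{1}{2}\log(1/d_n) + O(1)$, so $e^{4K_\Db(t_n, 0)} = O(d_n^{-2})$. On the other hand, distance-decreasing of $\pi_n$ yields
\begin{align*}
K_\Db(g_n(w), w) \leq K_\Omega\bigl(f(\varphi_n(w)), \varphi_n(w)\bigr)
\end{align*}
for $w \in \Db$. Using the complex-geodesic machinery of Section~\ref{sec:GP}, together with the normal approach of $p_n \to \xi_0$, one should be able to show that for $w \in B_\Db(t_n; \rho_n)$ both $\delta_\Omega(\varphi_n(w)) \gtrsim d_n$ and $\norm{\varphi_n(w) - \xi_0}$ is sufficiently small in terms of $d_n$; combined with $\norm{f(z) - z} \leq E(\norm{z - \xi_0})$ and $E(r) = o(r^4)$, and integration of the infinitesimal Kobayashi metric along the Euclidean segment from $\varphi_n(w)$ to $f(\varphi_n(w))$, this yields $K_\Omega(f(\varphi_n(w)), \varphi_n(w)) = o(d_n^{\,3})$ on the relevant ball, so the Christodoulou--Short quantity is $o(d_n)/\rho_n \to 0$ for an appropriate choice of $\rho_n$ bounded below. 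Proposition~\ref{prop:quant_id} then gives $g_n \to \id$ locally uniformly.

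To pass to the limit, Montel's theorem provides a subsequence along which $\varphi_n \to \varphi$ locally uniformly on $\Db$ and $\pi_n \to \pi$ locally uniformly on $\Omega$. Since $K_\Omega$ is continuous, the complex-geodesic property $K_\Omega(\varphi_n(z), \varphi_n(w)) = K_\Db(z,w)$ passes to $\varphi$; the identity $\pi_n \circ \varphi_n = \id$ passes to $\pi \circ \varphi = \id$; and the hyperplane-fiber property of $\pi_n$ is preserved in the limit (a limit of supporting complex hyperplanes is a complex hyperplane), so $\pi$ is a good left inverse of $\varphi$. Because $K_\Db(0, t_n) \to \infty$ we have $t_n \to 1$; combined with $\varphi_n(t_n) = p_n \to \xi_0$, Proposition~\ref{prop:convergence_at_infinity} forces $\wh\varphi(1) = H_{\xi_0}\partial\Omega$, and Proposition~\ref{prop:bd_maps} then yields the landing statement $\lim_{z\to 1} d_\Euc(\varphi(z), \partial\Omega \cap H_{\xi_0}\partial\Omega) = 0$. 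Finally, $g_n \to \id$ together with $\varphi_n \to \varphi$ and $\pi_n \to \pi$ gives $\pi \circ f \circ \varphi = \id$.

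The main obstacle is the Euclidean control of $\varphi_n(w)$ near $\xi_0$ for $w$ in a Kobayashi ball around $t_n$, which is what allows the exponent $4$ in the hypothesis to dominate the $e^{4K_\Db(\cdot, 0)}$ factor in Christodoulou--Short. In the strongly convex case this would follow from Lempert's boundary regularity, but in the merely $C^2$ convex setting a complex geodesic can a priori spread tangentially along the complex face at $\xi_0$; keeping this spread in check via the normal approach of $p_n$ and the complex-geodesic results of Section~\ref{sec:GP} (especially Propositions~\ref{prop:bd_maps}, \ref{prop:convergence_at_infinity}, \ref{prop:visible}, and \ref{prop:inv_hyperplane_convergence}) is the crux of the argument.
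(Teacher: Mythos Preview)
Your outline is essentially the paper's proof, but you have misdiagnosed the ``crux.'' The Euclidean control of $\varphi_n(w)$ near $\xi_0$ does not require any of the Gromov-product machinery of Section~\ref{sec:GP}; those results enter only to establish the landing property of the limit $\varphi$ (via Proposition~\ref{prop:convergence_at_infinity}). The control comes from an elementary observation: since $\Omega$ is bounded, $k_\Omega(z;v) \geq \tfrac{1}{R}\norm{v}$ for some $R>0$, so the Kobayashi ball $B_\Omega(p_n; a r_n)$ with $a = 1/(4R)$ is contained in the Euclidean ball $\Bb(p_n; r_n/4)$ (this is Lemma~\ref{lem:lower_bd_epsilon_n}). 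Because $\varphi_n$ is an isometry from $(\Db,K_\Db)$ into $(\Omega,K_\Omega)$, it maps $B_{\Db}(t_n; a r_n)$ into $B_\Omega(p_n; a r_n) \subset \Bb(p_n; r_n/4)$, and on that Euclidean ball both $\delta_\Omega \geq r_n/2$ and $\norm{\,\cdot\,-\xi_0} \leq 5r_n/4$ are automatic.

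The price is that the Christodoulou--Short radius is \emph{not} bounded below: one takes $\rho_n = \epsilon_n \geq a r_n$. Your expectation that $\rho_n$ can be chosen bounded below cannot be met in general, since a Kobayashi ball of fixed radius around $p_n$ may spread tangentially to Euclidean size $\gg r_n$ along flat pieces of $\partial\Omega$, destroying the $o(d_n^{\,3})$ bound you want. With $\epsilon_n \geq a r_n$ the arithmetic becomes
\begin{align*}
\frac{e^{4K_\Db(t_n,0)}}{\epsilon_n}\sup_{w\in B_\Db(t_n;\epsilon_n)} K_\Db(F_n(w),w)
\;\leq\; \frac{O(r_n^{-2})}{a r_n}\cdot \frac{C_1}{r_n}\,E(5r_n/4)
\;=\; O\!\left(\frac{E(5r_n/4)}{r_n^{4}}\right) \to 0,
\end{align*}
which is exactly where the exponent $4$ in the hypothesis is consumed. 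Once this is in place, the rest of your outline (Montel, passage to the limit, Proposition~\ref{prop:convergence_at_infinity} for the landing statement, and Proposition~\ref{prop:quant_id} to conclude $\pi\circ f\circ\varphi=\id$) matches the paper verbatim.
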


The proof of the proposition will require some lemmas. Let ${ \bf n}_\Omega(\xi_0)$ denote the inward pointing unit normal vector at $\xi_0$. Then consider a sequence 
\begin{align*}
p_n = \xi_0 + r_n { \bf n}_\Omega(\xi_0) \in \Omega
\end{align*}
which converges to $\xi_0$. Next  fix a point $z_0 \in \Omega$. Then by~\cite[Theorem 2.3.51]{A1989} there exists some $C_0 > 0$ such that 
\begin{align}
\label{eq:dist_est}
K_\Omega(z_0, p_n) \leq C_0 + \frac{1}{2} \log \frac{1}{r_n}.
\end{align}

\begin{lemma} There exists $C_1 > 0$ such that: If $w \in \Bb(p_n; r_n/4)$, then 
\begin{align*}
K_\Omega(w, f(w)) \leq \frac{C_1}{r_n} E(5r_n/4). 
\end{align*}
\end{lemma}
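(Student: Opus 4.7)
The plan is to bound $K_\Omega(w, f(w))$ by estimating the Kobayashi length of the straight Euclidean segment from $w$ to $f(w)$, using the elementary estimate $k_\Omega(z;v) \le \norm{v}/\delta_\Omega(z)$ recorded in the preliminary observation. So the whole argument reduces to a careful control of two quantities for $w \in \Bb(p_n; r_n/4)$: the Euclidean length $\norm{f(w) - w}$ and the boundary distance $\delta_\Omega$ along the segment.

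First I would control $\norm{f(w)-w}$. By the triangle inequality $\norm{w-\xi_0} \le \norm{w-p_n} + \norm{p_n-\xi_0} \le r_n/4 + r_n = 5r_n/4$, and since $E$ is non-decreasing, the hypothesis on $f$ yields $\norm{f(w) - w} \le E(\norm{w-\xi_0}) \le E(5 r_n/4)$.

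Next I would control $\delta_\Omega$ on a Euclidean neighborhood of $w$. Because $\partial \Omega$ is $C^2$ and $\Omega$ is convex, $\partial \Omega$ satisfies an interior-ball condition at $\xi_0$: there is $\rho > 0$ such that the ball of radius $\rho$ centered at $\xi_0 + \rho\, {\bf n}_\Omega(\xi_0)$ lies in $\Omega$ and is tangent to $\partial \Omega$ at $\xi_0$. For $n$ large enough that $r_n < \rho$, it follows that $\delta_\Omega(p_n) = r_n$, hence $\delta_\Omega(w) \ge 3r_n/4$ for every $w \in \Bb(p_n; r_n/4)$. Moreover, since $E(r)/r^4 \to 0$, we have $E(5r_n/4) \le r_n/4$ for all sufficiently large $n$, so the segment $\sigma(t) = w + t(f(w) - w)$, $t \in [0,1]$, is contained in $\Bb(p_n; r_n/2) \subset \Omega$ and satisfies $\delta_\Omega(\sigma(t)) \ge r_n/2$ along it.

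Putting these together and integrating,
\begin{align*}
K_\Omega(w, f(w)) \le \int_0^1 k_\Omega\bigl(\sigma(t); \sigma'(t)\bigr)\, dt \le \int_0^1 \frac{\norm{f(w)-w}}{\delta_\Omega(\sigma(t))}\, dt \le \frac{2\, E(5r_n/4)}{r_n},
\end{align*}
which gives the claimed bound with $C_1 = 2$ for all sufficiently large $n$; the finitely many small $n$ can then be absorbed into a larger $C_1$. The only non-routine point is the uniform interior-ball condition at $\xi_0$, but this is standard for $C^2$ convex boundaries, so I expect no serious obstacle here — the real content of the argument is just that the assumed rate $E(r) = {\rm o}(r^4)$ is more than enough to keep the straight segment well inside $\Omega$.
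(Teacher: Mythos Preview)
Your proposal is correct and follows essentially the same route as the paper: bound $\norm{f(w)-w}$ by $E(5r_n/4)$, use $E(5r_n/4)\le r_n/4$ for large $n$ to keep the straight segment inside $\Bb(p_n;r_n/2)$, and integrate the estimate $k_\Omega(z;v)\le \norm{v}/\delta_\Omega(z)$ along that segment to get $K_\Omega(w,f(w))\le (2/r_n)E(5r_n/4)$. The only difference is that the paper simply writes $\delta_\Omega(p_n)\ge r_n$ without comment, whereas you explicitly justify it via the interior-ball condition coming from the $C^2$ boundary; your extra care there is appropriate.
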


\begin{proof} Pick $N > 0$ such that 
\begin{align*}
E(5r_n/4) \leq r_n/4
\end{align*}
for all $n \geq N$.

If $n \geq N$ and $w \in \Bb(p_n; r_n/4)$, then 
\begin{align*}
\norm{f(w)-w} \leq E( \norm{w-\xi_0}) \leq E(5r_n/4) \leq r_n/4.
\end{align*}
Let $\sigma : [0,1] \rightarrow \Omega$ be the curve $\sigma(t) = (1-t)w+tf(w)$. Then
\begin{align*}
\delta_\Omega(\sigma(t)) \geq \delta_\Omega(p_n) - \norm{\sigma(t)-p_n} \geq r_n - r_n/2 = r_n/2
\end{align*}
for all $t \in [0,1]$. So
\begin{align*}
K_\Omega(w,f(w))
& \leq \ell_\Omega(\sigma) = \int_0^1 k_\Omega(\sigma(t); \sigma^\prime(t)) dt \leq \int_0^1 \frac{ \norm{\sigma^\prime(t)}}{\delta_\Omega(\sigma(t))}dt \\
& \leq \int_0^1 \frac{2 \norm{f(w)-w}}{r_n} dt \leq \frac{2}{r_n} E(5r_n/4).
\end{align*}

So there exists $C_1 > 0$ such that: If $w \in \Bb(p_n; r_n/4)$, then 
\begin{align*}
K_\Omega(w, f(w)) \leq \frac{C_1}{r_n} E(5r_n/4). 
\end{align*}
\end{proof}

For each $n$ define 
\begin{align*}
\epsilon_n = \sup\{ \epsilon : B_\Omega(p_n;\epsilon) \subset \Bb(p_n; r_n/4)\}.
\end{align*}

\begin{lemma}\label{lem:lower_bd_epsilon_n} There exists some $a > 0$ such that $\epsilon_n \geq a r_n$ for all $n$. \end{lemma}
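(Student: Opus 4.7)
The plan is to obtain a uniform lower bound on the Kobayashi distance $K_\Omega$ in terms of Euclidean distance, and then convert it into the desired containment of balls. Since $\Omega$ is bounded, fix $R > 0$ such that $\Omega \subset \Bb_d(0;R)$. The inclusion map is holomorphic, so by the distance-decreasing property of the infinitesimal Kobayashi metric,
\begin{align*}
k_\Omega(z;v) \geq k_{\Bb_d(0;R)}(z;v)
\end{align*}
for every $z \in \Omega$ and $v \in \Cb^d$.

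Next I would use biholomorphic invariance under the rescaling $w \mapsto w/R$ to rewrite $k_{\Bb_d(0;R)}(z;v) = k_{\Bb_d}(z/R;\, v/R)$, and then invoke the well known lower bound $k_{\Bb_d}(\zeta;u) \geq \norm{u}$ for every $\zeta \in \Bb_d$ and $u \in \Cb^d$ (immediate from the explicit formula for the Kobayashi metric on the ball). Combined, these give $k_\Omega(z;v) \geq \norm{v}/R$, and integrating along any absolutely continuous curve joining $w$ to $p_n$ yields
\begin{align*}
K_\Omega(w,p_n) \geq \frac{\norm{w - p_n}}{R}.
\end{align*}

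Setting $a = 1/(4R)$, this bound shows that if $K_\Omega(w,p_n) < a r_n$, then $\norm{w - p_n} < r_n/4$. Hence $B_\Omega(p_n; a r_n) \subset \Bb(p_n; r_n/4)$, so $\epsilon_n \geq a r_n$ by definition of $\epsilon_n$.

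There is no genuine obstacle: the argument uses only boundedness of $\Omega$, the distance-decreasing property of the Kobayashi metric, and the elementary lower bound on the Kobayashi metric of a Euclidean ball. In particular, no properties of the $C^2$ boundary, of the map $f$, or even of convexity of $\Omega$ are needed for this lemma.
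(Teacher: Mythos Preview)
Your proof is correct and follows essentially the same approach as the paper: both use the boundedness of $\Omega$ to obtain the uniform lower bound $k_\Omega(z;v) \geq \norm{v}/R$ via comparison with a large ball, integrate to get $K_\Omega(w,p_n) \geq \norm{w-p_n}/R$, and take $a = 1/(4R)$. Your added detail on deriving the lower bound for $k_{\Bb_d(0;R)}$ via rescaling is a minor elaboration of the same step.
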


\begin{proof} Since $\Omega$ is a bounded domain, there exists some $R > 0$ such that $\Omega \subset \Bb(0;R)$. Then 
\begin{align*}
k_\Omega(z;v) \geq k_{\Bb(0;R)}(z;v) \geq \frac{1}{R} \norm{v}
\end{align*}
for all $z \in \Omega$ and $v \in \Cb^d$. Then
\begin{align*}
K_\Omega(z,w) \geq \frac{1}{R} \norm{z-w}
\end{align*}
for all $z,w \in \Omega$. So if $a = 1/(4R)$ and $w \in B_\Omega(p_n;ar_n)$ then 
\begin{align*}
\norm{p_n-w} \leq R K_\Omega(p_n,w) \leq r_n/4.
\end{align*}
Hence $\epsilon_n \geq a r_n$ for all $n$. 
\end{proof}

\begin{proof}[Proof of Proposition~\ref{prop:existence}] By Theorem~\ref{thm:lempert}, for each $n$ there exists a complex geodesic $\varphi_n : \Db \rightarrow \Omega$ with a good left inverse $\pi_n: \Omega \rightarrow \Db$ such that $\varphi_n(0)=q$ and $\varphi_n(t_n) = p_n$ for some $t_n \in (0,1)$. By Montel's theorem and possibly passing to a subsequence we can assume that $\varphi_n$ and $\pi_n$ converge locally uniformly to holomorphic maps $\varphi$ and $\pi$. Then $\varphi$ is a complex geodesic, $\pi$ is a good left inverse of $\varphi$, $\varphi(0) = q$, and by Proposition~\ref{prop:convergence_at_infinity}
\begin{align*}
\lim_{z \rightarrow 1} d_{\Euc}\left(\varphi(z), \partial \Omega \cap H_{\xi_0} \partial \Omega\right) = 0.
\end{align*}

Define $F_n: \Db \rightarrow \Db$ by $F_n(z) = \pi_n \circ f \circ \varphi_n$. Then $F_n$ converges to $\pi \circ f \circ \varphi$. 

Suppose that $w \in B_{\Db}(t_n;\epsilon_n)$. Then $\varphi_n(w) \in B_{\Omega}(p_n;\epsilon_n)$ since $\varphi_n$ is a complex geodesic. Then 
\begin{align*}
K_{\Db}(F_n(w), w) 
&= K_{\Db}( \pi_n \circ f \circ \varphi_n(w), \pi_n \circ \varphi_n(w)) \\
& \leq K_\Omega(f(\varphi_n(w)), \varphi_n(w)) \leq \frac{C_1}{r_n} E(5r_n/4). 
\end{align*}
Further 
\begin{align*}
K_{\Db}(t_n, 0) = K_\Omega(p_n, q) \leq K_\Omega(p_n,z_0) + K_\Omega(z_0, q)
\end{align*}
so by Equation~\eqref{eq:dist_est}
\begin{align*}
e^{4K_{\Db}(t_n, 0)} \leq  A r_n^{-2}
\end{align*}
where $A = \exp(4 K_\Omega(z_0, q) + 4C_0)$. Thus 
\begin{align*}
\lim_{n \rightarrow \infty} \frac{e^{4K_{\Db}(t_n, 0)}}{\epsilon_n} \sup_{w \in B_{\Db}(t_n;\epsilon_n)} K_{\Db}(F_n(w), w) \leq  \frac{AC_1}{a} \lim_{n \rightarrow \infty} \frac{1}{r_n^4} E(5r_n/4) = 0.
\end{align*}
So Proposition~\ref{prop:quant_id} implies that $F_n$ converges locally uniformly to $\id$. Thus $\pi \circ f \circ \varphi = \id$. 
\end{proof}

\begin{proposition}\label{prop:id} If $\eta \in \partial \Omega$ is a strongly convex point of $\partial \Omega$ and $q_n \in \Omega$ is a sequence with $q_n \rightarrow \eta$, then $f(q_n) \rightarrow \eta$. 
\end{proposition}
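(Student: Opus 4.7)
The plan is to prove $f(q_n) \to \eta$ by a subsequence argument: show that every subsequence of $\{q_n\}$ has a further subsequence along which $f(q_n) \to \eta$. The case $\eta = \xi_0$ is immediate from the hypothesis $\|f(z)-z\| = { \rm o}(\|z-\xi_0\|^4)$, so I may assume $\eta \neq \xi_0$. Strong convexity of $\eta$ gives $H_\eta \partial\Omega \cap \overline{\Omega} = \{\eta\}$; together with $\xi_0 \neq \eta$, this forces $H_\eta \partial\Omega \neq H_{\xi_0}\partial\Omega$ (otherwise $\xi_0 \in H_\eta\partial\Omega \cap \overline{\Omega} = \{\eta\}$).

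For each $n$, apply Proposition~\ref{prop:existence} with $q = q_n$ to obtain a complex geodesic $\varphi_n : \Db \to \Omega$ with $\varphi_n(0) = q_n$ and a good left inverse $\pi_n$ satisfying $\pi_n \circ f \circ \varphi_n = \id$ and $\lim_{z \to 1} d_{\Euc}(\varphi_n(z), \partial\Omega \cap H_{\xi_0}\partial\Omega) = 0$. Pick $r_n \in (0,1)$ with $d_{\Euc}(\varphi_n(r_n), \partial\Omega \cap H_{\xi_0}\partial\Omega) < 1/n$; after passing to a subsequence, $\varphi_n(r_n) \to p^* \in \partial\Omega \cap H_{\xi_0}\partial\Omega$, so $H_{p^*}\partial\Omega = H_{\xi_0}\partial\Omega \neq H_\eta\partial\Omega$. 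Proposition~\ref{prop:visible} applied to the sequences $\varphi_n(0) = q_n \to \eta$ and $\varphi_n(r_n) \to p^*$ yields, after a further subsequence, numbers $s_n \in [0,r_n]$ so that $\tilde\varphi_n := \varphi_n \circ a_{s_n}$ converges locally uniformly to a complex geodesic $\varphi : \Db \to \Omega$ with $\lim_{z \to -1} \varphi(z) = \eta$ (using strong convexity).

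Set $\tilde\pi_n := a_{-s_n} \circ \pi_n$, a good left inverse of $\tilde\varphi_n$ satisfying $\tilde\pi_n \circ f \circ \tilde\varphi_n = \id$. Then $\tilde\psi_n := f \circ \tilde\varphi_n$ is a complex geodesic sharing the good left inverse $\tilde\pi_n$, so Proposition~\ref{prop:inv_hyperplane_convergence} gives $\wh{\tilde\varphi}_n(\zeta) = \wh{\tilde\psi}_n(\zeta)$ for every $\zeta \in \partial\Db$. Continuity of $f$ yields $\tilde\psi_n \to \psi := f \circ \varphi$ locally uniformly. Since $\tilde\varphi_n(-\tanh s_n) = q_n \to \eta \in \partial\Omega$ and $\tilde\varphi_n \to \varphi$ locally uniformly, $s_n \to \infty$ (otherwise $-\tanh s_n$ would have a subsequential limit $z^* \in (-1,0]$, forcing $q_n \to \varphi(z^*) \in \Omega$). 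Now $\tilde\psi_n(0) = f(\tilde\varphi_n(0)) \to f(\varphi(0)) \in \Omega$ and
\begin{align*}
K_\Omega(\tilde\psi_n(0), f(q_n)) = K_\Omega(\tilde\psi_n(0), \tilde\psi_n(-\tanh s_n)) = s_n \to \infty,
\end{align*}
so properness of $K_\Omega$ (Theorem~\ref{thm:barth}) forces $f(q_n) \to \partial\Omega$. Pass to a subsequence with $f(q_n) \to \eta^* \in \partial\Omega$.

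Apply Proposition~\ref{prop:convergence_at_infinity} to $\tilde\psi_n \to \psi$ with $z_n = -\tanh s_n \to -1$ and $\tilde\psi_n(z_n) = f(q_n) \to \eta^*$ to get $\wh{\psi}(-1) = H_{\eta^*}\partial\Omega$; the same proposition applied to $\tilde\varphi_n \to \varphi$ gives $\wh{\varphi}(-1) = H_\eta \partial\Omega$. Using the second part of Proposition~\ref{prop:convergence_at_infinity} with the constant sequence $\zeta_n \equiv -1$ yields $\wh{\varphi}(-1) = \lim_n \wh{\tilde\varphi}_n(-1)$ and $\wh{\psi}(-1) = \lim_n \wh{\tilde\psi}_n(-1)$; combined with $\wh{\tilde\varphi}_n(-1) = \wh{\tilde\psi}_n(-1)$, this gives $\wh{\varphi}(-1) = \wh{\psi}(-1)$, i.e., $H_\eta\partial\Omega = H_{\eta^*}\partial\Omega$. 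Strong convexity of $\eta$ then forces $\eta^* \in \partial\Omega \cap H_\eta\partial\Omega = \{\eta\}$, so $\eta^* = \eta$. Thus every subsequential limit of $\{f(q_n)\}$ equals $\eta$, completing the proof. The main obstacle is coordinating the reparameterizations so that the good left inverse structure is preserved; this is handled by the explicit twist $\tilde\pi_n = a_{-s_n}\circ\pi_n$, which lets Proposition~\ref{prop:inv_hyperplane_convergence} equate the hyperplane boundary extensions of $\tilde\varphi_n$ and $\tilde\psi_n$ at each finite stage.
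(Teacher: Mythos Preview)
Your proof is correct and follows essentially the same route as the paper: both produce, via Proposition~\ref{prop:existence} and Proposition~\ref{prop:visible}, a reparameterized family $\tilde\varphi_n=\varphi_n\circ a_{s_n}\to\varphi$ with $\lim_{z\to -1}\varphi(z)=\eta$, and then exploit the shared good left inverse $\tilde\pi_n=a_{-s_n}\circ\pi_n$ to force the boundary hyperplanes of $\varphi$ and $f\circ\varphi$ at $-1$ to agree. The only organizational difference is that the paper passes $\tilde\pi_n$ to a limiting good left inverse $\pi$ (via Montel) and applies Proposition~\ref{prop:inv_hyperplane_convergence} once in the limit, whereas you apply Proposition~\ref{prop:inv_hyperplane_convergence} at each finite stage to get $\wh{\tilde\varphi}_n(-1)=\wh{\tilde\psi}_n(-1)$ and then invoke the continuity statement in Proposition~\ref{prop:convergence_at_infinity} to carry this equality to the limit; the two are equivalent, and your extra care in showing $s_n\to\infty$ and $f(q_n)\to\partial\Omega$ fills in a step the paper leaves implicit.
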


\begin{proof} The proposition is obvious if $\eta = \xi_0$. So suppose that $\eta \neq \xi_0$. 

Suppose for a contradiction that $f(q_n)$ does not converge to $\eta$. Then, by passing to a subsequence, we can suppose that $f(q_n) \rightarrow \eta^\prime \in \overline{\Omega}$ where $\eta^\prime \neq \eta$. 

By the previous proposition, for each $q_n$ there exists a complex geodesic $\varphi_n : \Db \rightarrow \Omega$ and a good left inverse $\pi_n : \Omega \rightarrow \Db$ such that $\varphi_n(0)=q_n$, $\pi_n \circ f \circ \varphi_n = \id$, and 
\begin{align*}
\lim_{z \rightarrow 1} d_{\Euc}\left(\varphi_n(z), \partial \Omega \cap H_{\xi_0}\partial \Omega\right) = 0.
\end{align*}

Then 
\begin{align*}
K_\Omega(f \circ \varphi_n(z), f \circ \varphi_n(w)) \leq K_{\Db}(z,w)
\end{align*}
and
\begin{align*}
K_\Omega(f \circ \varphi_n(z), f \circ \varphi_n(w)) \geq K_{\Db}(\pi_n \circ f \circ \varphi_n(z),\pi_n \circ f \circ \varphi_n(w)) = K_{\Db}(z,w).
\end{align*}
So $f \circ \varphi_n$ is also a complex geodesic. 

Since $\eta$ is a strongly convex point, 
\begin{align*}
\partial \Omega \cap H_{\eta} \partial \Omega = \{ \eta\}
\end{align*} 
and so $H_{\eta} \partial \Omega \neq H_{\xi_0} \partial \Omega$. Then by Proposition~\ref{prop:visible} and after possibly passing to a subsequence, there exists $s_n \in \Rb$ such that $\varphi_n \circ a_{s_n}$ converges locally uniformly to a complex geodesic $\varphi : \Db \rightarrow \Omega$. Further
\begin{align*}
\lim_{z \rightarrow 1} d_{\Euc}\left(\varphi(z), \partial \Omega \cap H_{\xi_0} \partial \Omega\right) = 0
\end{align*}
and
\begin{align*}
\lim_{z \rightarrow -1} \varphi(z)= \eta
\end{align*}
since $\partial \Omega \cap H_{\eta} \partial \Omega = \{ \eta\}$. 

The complex geodesics $f \circ \varphi_n \circ a_{s_n}$ converge locally uniformly to $f \circ \varphi$ and 
\begin{align*}
(f \circ \varphi_n \circ a_{s_n})(a_{-s_n}(0))=f(q_n) \rightarrow \eta^\prime,
\end{align*}
 so Proposition~\ref{prop:convergence_at_infinity} implies that
\begin{align*}
\lim_{z \rightarrow -1} d_{\Euc}\left(f(\varphi(z)), \partial \Omega \cap H_{\eta^\prime} \partial \Omega\right) = 0.
\end{align*}

By Montel's theorem and possibly passing to another subsequence we can assume that $a_{-s_n} \circ \pi_n$ converges locally uniformly to some $\pi: \Omega \rightarrow \Db$. Then $\pi$ is a good left inverse of $\varphi$ and $\pi \circ f \circ \varphi = \id$. For each $z \in \Db$ let $H_z$ be the complex hyperplane such that 
\begin{align*}
\pi^{-1}(z) = H_z \cap \Omega.
\end{align*}
Then since 
\begin{align*}
\lim_{z \rightarrow -1} \varphi(z)= \eta,
\end{align*}
Proposition~\ref{prop:inv_hyperplane_convergence} implies that
\begin{align*}
\lim_{z \rightarrow -1} H_z = H_{\eta} \partial \Omega
\end{align*}

If $z \in \Db$, then $\pi( f(\varphi(z))) = z$ and so $f(\varphi(z)) \in H_z$. Thus 
 \begin{align*}
\lim_{z \rightarrow -1} d_{\Euc} \left( f(\varphi(z)), \partial \Omega \cap H_{\eta} \partial \Omega\right) = 0.
\end{align*}
But $\eta \in \partial \Omega$ is a strongly convex point and so $\{\eta \} = \partial \Omega \cap H_{\eta} \partial \Omega$, thus
\begin{align*}
\lim_{z \rightarrow -1} f(\varphi(z))= \eta
\end{align*}
which contradicts the fact that $H_{\eta^\prime}\partial \Omega \neq H_{\eta}\partial \Omega$ and
\begin{align*}
\lim_{z \rightarrow -1} d_{\Euc}\left(f(\varphi(z)), \partial \Omega \cap H_{\eta^\prime} \partial \Omega\right) = 0.
\end{align*}
\end{proof}

\begin{lemma} There exists a strongly convex point $\eta_0 \in \partial \Omega$. \end{lemma}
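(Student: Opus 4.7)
\medskip
\noindent\textbf{Proof proposal.} The plan is to produce a strongly convex boundary point by inscribing $\overline{\Omega}$ in a Euclidean ball and taking a contact point. Fix any $p_0 \in \Omega$, set
\begin{align*}
R = \max_{\eta \in \partial \Omega} \norm{\eta - p_0},
\end{align*}
and choose $\eta_0 \in \partial \Omega$ realizing this maximum (possible since $\partial \Omega$ is compact). Since $\overline{\Omega}$ is convex and compact, the convex function $z \mapsto \norm{z - p_0}$ attains its maximum over $\overline{\Omega}$ on $\partial \Omega$, so $\overline{\Omega} \subset \overline{\Bb_d(p_0; R)}$.

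The first step is to compare tangent hyperplanes at $\eta_0$. Because $\overline{\Omega} \subset \overline{\Bb_d(p_0; R)}$ and both domains have $C^2$ boundary touching at $\eta_0$, the supporting real hyperplane to $\partial\Bb_d(p_0; R)$ at $\eta_0$ must also support $\overline{\Omega}$ at $\eta_0$, and hence it coincides with the real tangent hyperplane to $\partial \Omega$ at $\eta_0$. Passing to the maximal complex subspace of this common real tangent hyperplane gives
\begin{align*}
H_{\eta_0} \partial \Omega = H_{\eta_0} \partial \Bb_d(p_0; R).
\end{align*}

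The second step is the explicit calculation for the ball: after translating $p_0$ to $0$ and rotating so that $\eta_0 = (R,0,\dots,0)$, the complex tangent hyperplane is $\{z \in \Cb^d : z_1 = R\}$, and the inequality $\abs{z_1}^2 + \cdots + \abs{z_d}^2 \leq R^2$ together with $z_1 = R$ forces $z_2 = \cdots = z_d = 0$. Thus
\begin{align*}
\overline{\Bb_d(p_0; R)} \cap H_{\eta_0} \partial \Bb_d(p_0; R) = \{\eta_0\}.
\end{align*}
Combining with the inclusion $\partial \Omega \subset \overline{\Omega} \subset \overline{\Bb_d(p_0; R)}$ yields $\partial \Omega \cap H_{\eta_0} \partial \Omega = \{\eta_0\}$, which is the only property of a strongly convex point used in the proof of Proposition~\ref{prop:id}. (If one additionally wants the standard differential-geometric meaning of ``strongly convex point,'' the same inclusion forces all principal curvatures of $\partial \Omega$ at $\eta_0$ to be at least $1/R > 0$, so the real Hessian of any $C^2$ defining function is strictly positive definite on $T_{\eta_0} \partial \Omega$.)

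The only conceptual point requiring care is the coincidence of tangent hyperplanes in the first step, which is a standard consequence of $C^2$ regularity together with the one-sided touching of the two convex bodies at $\eta_0$; everything else is an elementary computation for the ball.
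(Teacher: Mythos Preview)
Your proposal is correct and follows essentially the same approach as the paper: inscribe $\overline{\Omega}$ in a Euclidean ball centered at an interior point and take a point of tangency as $\eta_0$. The paper's proof is a terse five lines ending with ``So $\eta_0$ is a strongly convex point of $\partial\Omega$,'' whereas you have spelled out both the complex-tangent-face property $\partial\Omega \cap H_{\eta_0}\partial\Omega = \{\eta_0\}$ and the curvature comparison; this extra detail is fine and does not deviate from the paper's method.
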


\begin{proof} Fix a point $z_0 \in \Omega$. Pick $\eta_0 \in \partial \Omega$ such that 
\begin{align*}
\norm{\eta_0 - z_0} = \max\{ \norm{\eta - z_0} : \eta \in \partial \Omega\}
\end{align*}
and let $R = \norm{\eta_0 - z_0}$. Then $\Omega \subset \Bb_d(z_0; R)$ and $\eta_0 \in \partial \Omega \cap \partial \Bb_d(z_0; R)$. So $\eta_0$ is a strongly convex point of $\partial \Omega$. 
\end{proof}

We now claim that $f$ is the identity map. Since $\Omega$ has $C^2$ boundary, there exists a neighborhood $U$ of $\eta_0$ where $\partial \Omega$ is strongly convex at every $\eta \in U \cap \partial \Omega$. 

Fix a point $w_0 \in \Omega$. Consider the complex affine line $L$ containing $w_0$ and $\eta_0$. Then $L \cap \Omega$ is a convex and hence simply connected, so by the Riemann mapping theorem there exists a biholomorphism $\psi: \Db \rightarrow L \cap \Omega$. Since $L \cap \Omega$ is convex, $\partial (L \cap \Omega)$ is a Jordan curve. So by Carath{\'e}odory's extension theorem, $\psi$ extends to a continuous map $\overline{\Db} \rightarrow \overline{L \cap \Omega}$. Next consider the holomorphic map
\begin{align*}
F = (f\circ \psi - \psi): \Db \rightarrow \Cb^d.
\end{align*}
 Since $F$ is bounded, Fatou's Theorem implies that there exists a measurable map $F_\infty : S^1 \rightarrow \Cb^d$ such that 
\begin{align*}
F_\infty\left(e^{i\theta}\right) = \lim_{r \nearrow 1} F\left(re^{i\theta}\right)
\end{align*}
for almost every $e^{i\theta} \in S^1$. However, Proposition~\ref{prop:id} implies that 
\begin{align*}
0 = \lim_{r \nearrow 1} F\left(re^{i\theta}\right)
\end{align*}
when $e^{i\theta} \in V:=\psi^{-1}( U \cap \partial \Omega)$. Since $\eta_0 \in \psi(\overline{\Db})$, $V$ is non-empty  and since $\psi$ is continuous, $V$ is open in $S^1$. So $F_\infty = 0$ on a set of positive measure in $S^1$. So by the Luzin-Privalov Theorem (see~\cite[Chapter 2]{CL1966}), $F \equiv 0$. Thus $f(z) = z$ for all $z \in L \cap \Omega$. In particular, $f(w_0) = w_0$. Since $w_0 \in \Omega$ was arbitrary, we see that $f = \id$.

\section{Proof of Theorem~\ref{thm:main_finite_type}}\label{sec:finite_type}

In this section we describe how to modify the proof of Theorem~\ref{thm:main_convex} to obtain Theorem~\ref{thm:main_finite_type}, but first we recall the definition of the line type of a boundary point. 

Given a function $f: \Cb \rightarrow \Rb$ with $f(0)=0$ let $\nu(f)$ denote the order of vanishing of $f$ at $0$. Suppose that $\Omega \subset \Cb^d$ is a domain and 
\begin{align*}
\Omega = \{ z \in \Cb^d : r(z) < 0\}
\end{align*}
 where $r$ is a $C^\infty$ function with $\nabla r \neq 0$ near $\partial \Omega$. The \emph{line type of a boundary point} $\xi \in \partial \Omega$,  is defined to be
\begin{align*}
\ell(\xi)=\sup \{ \nu( r \circ \psi) | \ \psi : \Cb \rightarrow \Cb^d & \text{ is a non-constant complex affine map }\\
& \text{ with $\psi(0)=\xi$} \}.
\end{align*}
Notice that $\nu(r\circ \psi) \geq 2$ if and only if $\psi(\Cb)$ is tangent to $\Omega$. McNeal~\cite{M1992} proved that if $\Omega$ is convex  then $\xi \in \partial \Omega$ has finite line type if and only if it has finite type in the sense of D'Angelo (also see~\cite{BS1992}). 

For the rest of the section suppose that $\Omega \subset \Cb^d$ is a bounded convex domain with $C^\infty$ boundary, $f : \Omega \rightarrow \Omega$ is holomorphic map, and there exists $\xi_0 \in \partial \Omega$ such that $\ell(\xi_0) < +\infty$ and 
\begin{align*}
f(z) = z + { \rm o}\left( \norm{z-\xi_0}^{4-1/\ell(\xi_0)}\right).
\end{align*}
Then there exists a non-decreasing function $E : [0,\infty) \rightarrow [0,\infty)$ such that 
\begin{align*}
\norm{f(z)-z} \leq E( \norm{z-\xi_0})
\end{align*}
and 
\begin{align*}
\lim_{ r \rightarrow 0} \frac{E(r)}{r^{4-1/\ell(\xi_0)}} = 0.
\end{align*}

The rest of the proof is identical to the proof of Theorem~\ref{thm:main_convex} except that Lemma~\ref{lem:lower_bd_epsilon_n} is replaced with the following stronger result. 

\begin{lemma}There exists some $a > 0$ such that $\epsilon_n \geq a r_n^{1-1/\ell(\xi_0)}$ for all $n$. \end{lemma}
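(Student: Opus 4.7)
My plan is to adapt the argument of Lemma~\ref{lem:lower_bd_epsilon_n} by replacing its trivial lower bound $K_\Omega(z,w) \geq \norm{z-w}/R$ (which only exploited $\Omega \subset \Bb_d(0;R)$) by an anisotropic pointwise lower bound on the infinitesimal Kobayashi metric that captures the finite line type of $\xi_0$. Specifically, I would aim to show: there exists $C > 0$, independent of $n$, such that for all large $n$, every $z \in \Omega \cap \Bb_d(p_n; r_n/4)$, and every $v \in \Cb^d$,
\begin{align*}
k_\Omega(z;v) \geq \frac{\norm{v}}{C\, r_n^{1/\ell(\xi_0)}}.
\end{align*}

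Given this estimate, the lemma follows from the same integration argument as in Lemma~\ref{lem:lower_bd_epsilon_n}. Suppose $w \in \Omega$ with $\norm{w - p_n} \geq r_n/4$ and let $\sigma : [0,1] \to \Omega$ be any absolutely continuous curve from $p_n$ to $w$. Let $t_0 \in (0,1]$ be the first time $\sigma$ leaves $\Bb_d(p_n; r_n/4)$. On $[0,t_0]$, $\sigma$ lies in the region where the pointwise bound holds and has Euclidean length at least $r_n/4$, so
\begin{align*}
\ell_\Omega(\sigma) \geq \ell_\Omega(\sigma|_{[0,t_0]}) \geq \frac{1}{C\, r_n^{1/\ell(\xi_0)}} \int_0^{t_0} \norm{\sigma^\prime(t)}\, dt \geq \frac{r_n/4}{C\, r_n^{1/\ell(\xi_0)}} = \frac{r_n^{1-1/\ell(\xi_0)}}{4C}.
\end{align*}
Taking the infimum over $\sigma$ gives $K_\Omega(p_n, w) \geq (4C)^{-1} r_n^{1-1/\ell(\xi_0)}$, so $\epsilon_n \geq a\, r_n^{1-1/\ell(\xi_0)}$ with $a = 1/(4C)$.

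The pointwise bound on $k_\Omega$ is the heart of the argument and where I expect the main technical work. I would derive it by combining the standard convexity estimate
\begin{align*}
k_\Omega(z;v) \geq \frac{\norm{v}}{2\, \tau_\Omega(z;v)},
\end{align*}
where $\tau_\Omega(z;v)$ is the radius of the largest disk in the complex line $z + \Cb v$ (with $v$ unit) centered at $z$ and contained in $\Omega$, with a direction-distance bound
\begin{align*}
\tau_\Omega(z;v) \leq C^\prime\, \norm{z - \xi_0}^{1/\ell(\xi_0)}
\end{align*}
valid for all $z$ close to $\xi_0$ and all unit $v \in \Cb^d$. The latter is a purely local consequence of convexity and finite line type at $\xi_0$: in suitable local coordinates the defining function vanishes to order at most $\ell(\xi_0)$ along any complex line through $\xi_0$, forcing any such disk to exit $\Omega$ within Euclidean distance $\lesssim \norm{z - \xi_0}^{1/\ell(\xi_0)}$. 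Applied to $z \in \Bb_d(p_n; r_n/4)$, where $\norm{z - \xi_0} \leq 5r_n/4$, this produces the pointwise estimate with the claimed uniformity. The main obstacle is verifying this direction-distance bound uniformly over unit $v$ and $z$ in a shrinking neighborhood of $\xi_0$; this is exactly the point where finite line type enters, and I would invoke standard local descriptions of convex domains of finite type to justify it.
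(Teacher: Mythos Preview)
Your approach is essentially the same as the paper's: obtain a uniform pointwise lower bound $k_\Omega(z;v) \gtrsim \norm{v}/r_n^{1/\ell(\xi_0)}$ on $\Bb_d(p_n;r_n/4)$ and then integrate. The paper does not derive the direction--distance bound $\tau_\Omega(z;v)\lesssim \delta_\Omega(z)^{1/\ell(\xi_0)}$ by hand as you propose; it simply cites \cite[Corollary~1.7]{AT2002}, which gives directly that near $\xi_0$ one has $k_\Omega(z;v)\geq \alpha_0\norm{v}/\delta_\Omega(z)^{1/\ell(\xi_0)}$, and then uses $\delta_\Omega(z)\leq 5r_n/4$ on the ball. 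Your exit-time argument for passing from the infinitesimal bound to $K_\Omega(p_n,w)\geq a\,r_n^{1-1/\ell(\xi_0)}$ is in fact slightly cleaner than the paper's phrasing of that step. The only part of your plan that is genuinely nontrivial is the uniform bound on $\tau_\Omega(z;v)$ over all unit $v$; rather than redoing that local analysis, citing a known Kobayashi lower bound for convex finite-type points (as the paper does) is the efficient route.
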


\begin{proof} By~\cite[Corollary 1.7]{AT2002} there exists a neighborhood $U$ of $\xi_0$ and some $\alpha_0 > 0$ such that 
\begin{align*}
k_\Omega(z;v) \geq\alpha_0 \frac{\norm{v}}{\delta_\Omega(z)^{1/\ell(\xi_0)}}
\end{align*}
for all $z \in U \cap \Omega$ and $v \in \Cb^d$. 

Since $p_n \rightarrow \xi_0$ and $r_n \rightarrow 0$, there exists $N > 0$ such that $\Bb_d(p_n;r_n/4) \subset U$ when $n \geq N$. So for $z \in \Bb_d(p_n;r_n/4)$ and $n \geq N$ we have 
\begin{align*}
K_\Omega(z,p_n) \geq \frac{\alpha}{r_n^{1/\ell(\xi_0)}}\norm{z-p_n}
\end{align*}
where $\alpha = (4/5)^{1/\ell(\xi_0)}\alpha_0$. So if $a_0 = 1/(4\alpha)$ and $z \in B_\Omega\left(p_n; a_0r_n^{1-1/\ell(\xi_0)}\right)$ then 
\begin{align*}
\norm{z-p_n} \leq \frac{r_n^{1/\ell(\xi_0)}}{\alpha}K_\Omega(z,p_n) \leq r_n/4.
\end{align*}
So there exists $a > 0$ such that $\epsilon_n \geq a r_n^{1-1/\ell(\xi_0)}$ for all $n$.
\end{proof}

\part{Proof of Theorem~\ref{thm:main_intro_biholo} }

\section{The geometry of the tangent bundle}\label{sec:unit_tangent_bundle}

In this section we recall the definition of the Sasaki metric and give some basic estimates. 

Let $(M,g)$ be a complete Riemannian manifold and let $\pi:TM \rightarrow M$ be the tangent bundle. Define the \emph{vertical subbundle} of  $TTM \rightarrow TM$ by
\begin{align*}
V(X) = \ker d(\pi)_X.
\end{align*}
Next let $\nabla$ be the Levi-Civita connection on $M$. Given $X \in TM$, define the \emph{connection map} $K_X: T_XTM \rightarrow T_{\pi(X)}M$ as follows: given some $\xi \in T_XTM$ let $\sigma:(-\epsilon, \epsilon) \rightarrow TM$ be a curve with $\sigma^\prime(0)=\xi$. Then define
\begin{align*}
K(\xi) = (\nabla_{\alpha^\prime(0)} \sigma)(0)
\end{align*}
where $\alpha = \pi \circ \sigma$ and we view $\sigma$ as a vector field along $\alpha$. This is a well defined linear map (see for instance~\cite[Lemma 1.13]{P1999}). Then define the \emph{horizontal subbundle} of  $TTM \rightarrow TM$ by
\begin{align*}
H(X) =\ker K_X.
\end{align*}
Then for every $X \in TM$ we have 
\begin{align*}
T_XTM = V(X) \oplus H(X)
\end{align*}
and the map 
\begin{align*}
\xi \in T_X TM \rightarrow \Big(d(\pi)_X \xi, K_X(\xi)\Big) \in T_{\pi(X)} M \oplus T_{\pi(X)} M
\end{align*}
is a vector space isomorphism (see for instance~\cite[Lemma 1.15]{P1999}). 

Using the maps defined above we can define a Riemannian metric $h$ on $TM$. Given $X \in TM$ and $\xi \in T_XTM$ define
\begin{align*}
h_X(\xi, \xi) = g_{\pi(X)}\Big( d(\pi)_X \xi,d(\pi)_X \xi\Big) + g_{\pi(X)}\Big( K_X(\xi), K_X(\xi)\Big).
\end{align*}
Then $h$ is a complete Riemannian metric on $TM$ called the \emph{Sasaki metric}.

Let $d_{TM}$ denote the distance on $TM$ induced by $h$. Let 
\begin{align*}
T^1M = \{ X \in TM : \norm{X}_g = 1\}
\end{align*}
denote the unit tangent bundle of $M$ and let $d_{T^1M}$ denote the distance on $T^1M$ induced by restricting $h$ to $T^1M$.

%

We end this section with two estimates. Both are applications of basic methods in Riemannian geometry, but we provide proofs in Appendix~\ref{sec:appendix}. 

\begin{proposition}\label{prop:unit_tangent_vs_tangent}  If $(M,g)$ is a complete Riemannian manifold and $X,Y \in T^1 M$, then 
\begin{align*}
d_{T^1M}(X,Y) \leq (\pi+1) d_{TM}(X,Y).
\end{align*}
\end{proposition}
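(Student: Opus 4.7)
The plan is to connect $X$ and $Y$ in $T^1 M$ by a two-piece path: a horizontal lift along a curve in $M$, followed by an arc within a single fiber. Set $p = \pi(X)$, $q = \pi(Y)$, $L = d_{TM}(X,Y)$, and fix $\epsilon > 0$. Pick a smooth curve $\gamma : [0,1] \to TM$ from $X$ to $Y$ of Sasaki length at most $L + \epsilon$, and write $\gamma(t) = (c(t), V(t))$ with $c = \pi \circ \gamma$. First I would take the parallel translate $t \mapsto P_{c|_{[0,t]}}(X)$, where $P_c : T_p M \to T_q M$ denotes parallel transport along $c$. Because parallel transport is a $g$-isometry the curve lies in $T^1 M$, and because its tangent is horizontal with Sasaki norm equal to $\|c'(t)\|_g$, its length in $T^1 M$ equals $\mathrm{length}_g(c) \leq \mathrm{length}_h(\gamma) \leq L + \epsilon$. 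Hence $d_{T^1 M}(X, P_c(X)) \leq L + \epsilon$.

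For the second piece, the arc on the unit sphere $T^1_q M \subset T_q M$ from $P_c(X)$ to $Y$, I will pull $V(t)$ back to the fixed Euclidean space $T_p M$. Define $W(t) = (P_{c|_{[0,t]}})^{-1} V(t)$; the standard identity for the derivative of a parallel-transported vector field gives $W'(t) = (P_{c|_{[0,t]}})^{-1}(\nabla_{c'(t)} V)$, and since parallel transport is an isometry,
\begin{equation*}
\|W'(t)\|_{g_p} \;=\; \|\nabla_{c'(t)} V\|_{g_{c(t)}} \;\leq\; \|\gamma'(t)\|_h .
\end{equation*}
Integrating inside the Euclidean space $(T_p M, g_p)$ and then applying the isometry $P_c$ yields
\begin{equation*}
\|P_c(X) - Y\|_{g_q} \;=\; \|X - W(1)\|_{g_p} \;\leq\; \mathrm{length}_h(\gamma) \;\leq\; L + \epsilon.
\end{equation*}

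Writing $a = \|P_c(X) - Y\|_{g_q} \leq \min(2, L + \epsilon)$, the two unit vectors $P_c(X)$ and $Y$ are joined in the round sphere $T^1_q M$ by an arc of length $\theta = 2\arcsin(a/2) \leq \pi$, and this arc has the same length when measured by $h$. Using the convexity bound $\arcsin(x) \leq (\pi/2) x$ on $[0,1]$, one gets $\theta \leq (\pi/2)(L + \epsilon)$ if $L + \epsilon \leq 2$, and $\theta \leq \pi$ in general. Summing the two contributions and sending $\epsilon \to 0$ gives $d_{T^1 M}(X,Y) \leq (1 + \pi/2) L$ when $L \leq 2$, and $d_{T^1 M}(X,Y) \leq L + \pi$ when $L > 2$; in both cases the right-hand side is at most $(\pi+1) L$ (in the second case because $L > 1$ forces $L + \pi \leq L + \pi L = (\pi+1) L$). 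The only nontrivial ingredient is the decomposition of $\|\gamma'(t)\|_h^2$ into $\|c'(t)\|_g^2 + \|\nabla_{c'(t)} V\|_g^2$, which is immediate from the definition of $h$; everything else is elementary Euclidean and spherical trigonometry, so I do not expect any serious obstacle.
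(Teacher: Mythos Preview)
Your proof is correct and takes a somewhat different route from the paper's. The paper splits into cases at $d_{TM}(X,Y)=1$: for $L \geq 1$ it uses exactly your parallel-transport-plus-sphere-arc idea with the crude bound $\theta \leq \pi$ (this is their Lemma~\ref{lem:obs3}), but for $L < 1$ it instead takes a $TM$-geodesic $\gamma$ from $X$ to $Y$, observes via a separate lemma that $\|\gamma(t)\|_g \geq 1/2$ along it, and then \emph{normalizes} $\gamma$ pointwise to obtain a curve in $T^1M$, showing that normalization increases Sasaki length by at most a factor of $4$. Your argument is more unified: you always parallel-transport and then bound the spherical arc by the \emph{vertical} length of the original curve via the pullback $W(t)$, which avoids the normalization lemma entirely. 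In fact your estimate gives the sharper constant $1+\pi/2$ for all $L$ --- your case split at $L=2$ is unnecessary, since $a \leq 2$ always forces $a/2 \in [0,1]$ and hence $\theta \leq (\pi/2)a \leq (\pi/2)(L+\epsilon)$ regardless --- whereas the paper's normalization step yields only the factor $4$ in the small-$L$ regime.
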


\begin{proposition}\label{prop:geod_spread} If $(M,g)$ is a complete Riemannian manifold with sectional curvature bounded in absolute value by $\kappa > 0$ and $\gamma_1, \gamma_2 : [0,\infty) \rightarrow M$ are geodesics, then 
\begin{align*}
d_M(\gamma_1(t), \gamma_2(t)) \leq \exp\left( \frac{\kappa+1}{2}t \right)d_{T^1M}(\gamma_1^\prime(0), \gamma_2^\prime(0))
\end{align*}
for $t > 0$. 
\end{proposition}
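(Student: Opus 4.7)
The plan is to combine a classical geodesic variation construction with a Gronwall estimate on the Jacobi field equation. First, fix $\epsilon > 0$ and choose a piecewise smooth curve $\alpha : [0,L] \to T^1M$, parametrized by arclength in the induced Sasaki metric, with $\alpha(0) = \gamma_1'(0)$, $\alpha(L) = \gamma_2'(0)$, and $L \leq d_{T^1M}(\gamma_1'(0), \gamma_2'(0)) + \epsilon$. Define the one-parameter family of unit speed geodesics $F : [0,L] \times [0,\infty) \to M$ by $F(s,t) = \exp_{\pi(\alpha(s))}(t \alpha(s))$, so that $F(0,\cdot) = \gamma_1$ and $F(L,\cdot) = \gamma_2$. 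For each $s$ the variation field $J_s(t) := \partial F/\partial s$ is a Jacobi field along the geodesic $t \mapsto F(s,t)$.

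Next I would read off the initial data of $J_s$ using the horizontal/vertical splitting. The chain rule gives $J_s(0) = d\pi(\alpha'(s))$, and the torsion-free symmetry $D/dt\,\partial F/\partial s = D/ds\,\partial F/\partial t$ evaluated at $t=0$, together with the definition of the connection map, gives $J_s'(0) = K_{\alpha(s)}(\alpha'(s))$. Since the Sasaki metric is defined precisely so that $h_X(\xi, \xi) = \norm{d\pi(\xi)}_g^2 + \norm{K(\xi)}_g^2$ and $\alpha$ has unit speed,
\begin{align*}
\norm{J_s(0)}^2 + \norm{J_s'(0)}^2 = h_{\alpha(s)}(\alpha'(s), \alpha'(s)) = 1.
\end{align*}

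The main analytic input is a Jacobi field growth bound. Let $\psi(t) := \norm{J_s(t)}^2 + \norm{J_s'(t)}^2$. Since the self-adjoint Jacobi operator $X \mapsto R(X, \gamma')\gamma'$ on $(\gamma')^\perp$ has operator norm at most $\kappa$, the Jacobi equation $J_s'' = -R(J_s,\gamma')\gamma'$ gives $\norm{J_s''(t)} \leq \kappa \norm{J_s(t)}$, and Cauchy-Schwarz yields
\begin{align*}
\psi'(t) = 2\ip{J_s, J_s'} + 2\ip{J_s', J_s''} \leq 2(1+\kappa)\norm{J_s}\norm{J_s'} \leq (1+\kappa)\psi(t).
\end{align*}
Gronwall's inequality gives $\psi(t) \leq e^{(\kappa+1)t}$, hence $\norm{J_s(t)} \leq e^{(\kappa+1)t/2}$.

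Finally, since $s \mapsto F(s,t)$ is a curve from $\gamma_1(t)$ to $\gamma_2(t)$,
\begin{align*}
d_M(\gamma_1(t), \gamma_2(t)) \leq \int_0^L \norm{J_s(t)}\, ds \leq L\cdot e^{(\kappa+1)t/2} \leq \bigl(d_{T^1M}(\gamma_1'(0), \gamma_2'(0)) + \epsilon\bigr) e^{(\kappa+1)t/2},
\end{align*}
and sending $\epsilon \to 0$ completes the proof. The one genuinely delicate point is recognizing that the correct Lyapunov quantity for the Gronwall argument is $\norm{J}^2 + \norm{J'}^2$ rather than $\norm{J}$ alone (whose differential inequality does not close up cleanly without a perpendicularity assumption on $J$), and observing that this sum is exactly the Sasaki inner product of $\alpha'(s)$ with itself, which is why working with the unit tangent bundle produces such a clean estimate.
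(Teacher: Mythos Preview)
Your proof is correct and follows essentially the same approach as the paper: connect $\gamma_1'(0)$ and $\gamma_2'(0)$ by a curve in $T^1M$, apply the geodesic flow to obtain a variation whose $s$-derivative is a Jacobi field with initial data read off from the Sasaki splitting, run the Gronwall argument on $\norm{J}^2+\norm{J'}^2$, and integrate in $s$. The only cosmetic differences are that the paper takes an actual Sasaki geodesic in $T^1M$ rather than an $\epsilon$-approximating curve, and that the paper phrases the variation as $F(s,t)=g_t(\sigma(s))$ with a citation to Paternain for the identification $\frac{d}{ds}F(s,t)=(J_s(t),\nabla J_s(t))$, while you unpack the initial data $J_s(0)=d\pi(\alpha'(s))$, $J_s'(0)=K(\alpha'(s))$ by hand.
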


\section{Two lower bounds}\label{sec:two_lower_bounds}

In this section we establish two lower bounds for metrics with property-$(BG)$.

\begin{proposition}\label{prop:useful_lower_bd_1} Suppose that $\Omega \subset \Cb^d$ is a bounded domain and $g$ is a complete K{\"a}hler metric on $\Omega$ whose sectional curvature is bounded in absolute value by $\kappa > 0$. Then there exists some $a > 0$ such that 
\begin{align*}
a \norm{v} \leq \sqrt{ g_z(v,v)}
\end{align*}
for all $z \in \Omega$ and $v \in \Cb^d$. 
\end{proposition}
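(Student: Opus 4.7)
The plan is to reduce the lower bound on $g$ to a Schwarz-lemma comparison with the Bergman metric of an ambient ball containing $\Omega$, and then to observe that this Bergman metric is itself uniformly bounded below by the Euclidean metric.

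First I would fix $R > 0$ such that $\Omega \subset \Bb_d(0;R)$ and let $\tilde g$ denote the Bergman metric on $\Bb_d(0;R)$. This is a complete K\"ahler metric whose holomorphic bisectional curvature is bounded above by a negative constant $-b < 0$. An inspection of its explicit formula (the metric attains its infimum at the centre of the ball and blows up toward the sphere) shows that there is a constant $C_1 > 0$ such that
$$
\tilde g_z(v,v) \geq C_1 \norm{v}^2
$$
for every $z \in \Bb_d(0;R)$ and every $v \in \Cb^d$.

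Next, since the sectional curvature of $g$ is bounded in absolute value by $\kappa$, tracing over $2d-1$ orthonormal planes gives a Ricci lower bound $\text{Ric}(g) \geq -K g$ with $K = (2d-1)\kappa$. Now $(\Omega, g)$ is complete K\"ahler with a lower Ricci bound, and the inclusion $\iota \colon \Omega \hookrightarrow \Bb_d(0;R)$ is a holomorphic map into a Hermitian manifold with strictly negative upper bound on its holomorphic bisectional curvature, so Yau's Schwarz lemma yields a constant $C_2 > 0$ (depending only on $d$, $\kappa$, $b$) with
$$
\iota^{\ast} \tilde g \leq C_2 \, g
$$
as $(1,1)$-forms on $\Omega$. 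Combining the two inequalities gives, for every $z \in \Omega$ and $v \in \Cb^d$,
$$
C_1 \norm{v}^2 \leq \tilde g_z(v,v) = (\iota^{\ast}\tilde g)_z(v,v) \leq C_2 \, g_z(v,v),
$$
which proves the proposition with $a = \sqrt{C_1/C_2}$.

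The only delicate point is checking that the normalisation in Yau's Schwarz lemma matches our K\"ahler conventions (different references state the inequality with different constants depending on real versus complex Ricci and on sectional versus bisectional curvature), but since only the qualitative existence of a positive $a$ is required here, any of the standard formulations suffices.
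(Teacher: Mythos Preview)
Your proof is correct and is essentially identical to the paper's argument: both embed $\Omega$ into a ball, use that the Bergman metric on the ball has negative holomorphic bisectional curvature and a uniform Euclidean lower bound, and apply Yau's Schwarz lemma to the inclusion. The only cosmetic difference is that the paper rescales $\Omega$ into the unit ball while you work directly with $\Bb_d(0;R)$.
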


\begin{proof} By scaling $\Omega$ we may assume that $\Omega \subset \Bb_d$ where $\Bb_d$ is the unit ball in $\Cb^d$. Let $h$ be the Bergman metric on $\Bb_d$. Then $h$ has  holomorphic bisectional curvature bounded from above by a negative number. Further there exists some $\delta > 0$ such that 
\begin{align*}
\delta \norm{v} \leq \sqrt{ h_z(v,v)}
\end{align*}
for all $z \in \Bb_d$ and $v \in \Cb^d$. Then applying the Yau Schwarz Lemma~\cite{Y1978} to the inclusion map $\Omega \hookrightarrow \Bb_d$ shows that there exists some $C > 0$ such that 
\begin{align*}
C\sqrt{ h_z(v,v)} \leq  \sqrt{ g_z(v,v)}
\end{align*}
for all $z \in \Omega$ and $v \in \Cb^d$. 
\end{proof}

Next we use a result of Cheeger, Gromov, and Taylor to provide a lower bound on the injectivity radius. Suppose $(M,g)$ is a Riemannian manifold. Given $x \in M$ we define \emph{the injectivity radius at $x$} to be
\begin{align*}
{\rm inj}_g(x) = \max\{ R > 0 : \exp_x|_{B_x(s)} \text{ is injective for all } 0 < s < R\}
\end{align*}
where $B_x(s) \subset T_xM$ is the open ball of radius $r$ centered at $0$ in the inner product space $(T_x M, g_x)$. 

\begin{proposition}\label{prop:useful_lower_bd_2}  Suppose that $\Omega \subset \Cb^d$ is a bounded domain and $g$ is a complete Riemannian metric on $\Omega$ such that:
\begin{enumerate}
\item sectional curvature of $g$ is bounded in absolute value by $\kappa > 0$ and
\item there exists $a,A > 0$ such that 
\begin{align*}
a \norm{v} \leq \sqrt{ g_z(v,v)} \leq A \frac{ \norm{v}}{\delta_\Omega(v)}
\end{align*}
for all $z \in \Omega$ and $v \in \Cb^d$. 
\end{enumerate}
 Then there exists some $I_0 > 0$ such that 
\begin{align*}
{ \rm inj}_g(z) \geq I_0 \delta_\Omega(z)^{4d+1}
\end{align*}
for all $z \in \Omega$. 
\end{proposition}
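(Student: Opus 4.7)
The plan is to reduce the problem to the Cheeger--Gromov--Taylor injectivity radius estimate, which, for a complete Riemannian manifold with sectional curvature bounded in absolute value by $\kappa$, converts a lower bound on the volume of a small geodesic ball into a lower bound on the injectivity radius at the same point. Concretely, for any sufficiently small $r_0 > 0$ (depending only on $\kappa$ and the dimension) there is an inequality of the form
\[
{\rm inj}_g(z) \geq r_0 \cdot \frac{\Vol_g(B_g(z,r_0))}{\Vol_g(B_g(z,r_0)) + V(r_0, \kappa)},
\]
with $V(r_0, \kappa)$ a fixed constant. So the task reduces to bounding $\Vol_g(B_g(z, r_0))$ from below by a polynomial in $\delta_\Omega(z)$.

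To produce such a bound, I would exploit both inequalities in hypothesis~(2). First, for any $w \in \Bb_d(z; \delta_\Omega(z)/2)$ one has $\delta_\Omega(w) \geq \delta_\Omega(z)/2$, so the upper bound yields $\sqrt{g_w(v,v)} \leq (2A/\delta_\Omega(z))\norm{v}$ on this Euclidean ball. Integrating the $g$-length of radial Euclidean segments from $z$ then gives the containment
\[
\Bb_d\!\left(z; \frac{r_0 \delta_\Omega(z)}{2A}\right) \subset B_g(z, r_0)
\]
for every $r_0 \leq A$. Second, the pointwise bound $\sqrt{g_w(v,v)} \geq a\norm{v}$ implies that the Riemannian volume forms satisfy $dV_g \geq a^{2d}\, dV_{\Euc}$, since $\Omega$ has real dimension $2d$. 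Combining,
\[
\Vol_g(B_g(z, r_0)) \geq a^{2d}\, \omega_{2d} \left(\frac{r_0}{2A}\right)^{\!2d} \delta_\Omega(z)^{2d} = C_1\, \delta_\Omega(z)^{2d},
\]
where $\omega_{2d}$ is the Euclidean volume of the unit ball in $\Rb^{2d}$ and $C_1 > 0$ is a constant.

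Fixing $r_0$ to be any small constant---small enough both for the Cheeger--Gromov--Taylor estimate to apply and so that $r_0 \leq A$---and plugging the volume bound into that estimate yields ${\rm inj}_g(z) \geq c \min(1, \delta_\Omega(z)^{2d})$ for some $c > 0$. Since $\Omega$ is bounded, $\delta_\Omega(z) \leq \diam \Omega$ is uniformly bounded above, so this lower bound implies ${\rm inj}_g(z) \geq I_0 \delta_\Omega(z)^{4d+1}$ after choosing $I_0$ small enough; note that the claimed exponent $4d+1$ is in fact weaker than what this argument produces on $\{\delta_\Omega \leq 1\}$. The one step requiring real care is to invoke a quantitative form of the Cheeger--Gromov--Taylor estimate that accommodates our volume lower bound---the metric and volume comparisons themselves are routine.
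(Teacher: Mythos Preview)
Your approach is correct and follows the same overall strategy as the paper: both proofs invoke the Cheeger--Gromov--Taylor estimate
\[
{\rm inj}_g(z) \geq \frac{r}{2}\,\frac{V_g(z,r)}{V_g(z,r)+V_{-\kappa}^{2d}(2r)}
\]
and feed it a volume lower bound obtained from the two-sided comparison between $g$ and the Euclidean metric.

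The one genuine difference is the choice of the radius $r$. The paper takes $r=\delta_\Omega(z)/(2a)$, so that $B_g(z,r)$ stays inside the Euclidean ball of radius $\tfrac12\delta_\Omega(z)$; this forces the inner Euclidean ball to have radius only $\sim\delta_\Omega(z)^2$, giving $V_g(z,r)\gtrsim\delta_\Omega(z)^{4d}$, and together with the factor $r\sim\delta_\Omega(z)$ this produces the exponent $4d+1$. You instead fix $r_0$ once and for all; the inner Euclidean ball then has radius $\sim\delta_\Omega(z)$, so $V_g(z,r_0)\gtrsim\delta_\Omega(z)^{2d}$, and the monotonicity of $x\mapsto x/(x+c)$ together with the boundedness of $\Omega$ handles the denominator without a separate volume upper bound. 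Your route is slightly cleaner and, as you observe, actually yields the stronger bound ${\rm inj}_g(z)\gtrsim\delta_\Omega(z)^{2d}$; the proposition's exponent $4d+1$ is then an immediate consequence.
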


\begin{proof} 
For $z \in \Omega$ and $r > 0$ let $B_g(z,r)$ the open ball of radius $r$ centered at $z$ in $(\Omega, g)$. Then let $V_g(z,r)$ denote the volume of $B_g(z,r)$ in $(\Omega, g)$. For $n \in \Nb$,  $\lambda \in \Rb$, and $r > 0$ let $V_{\lambda}^n(r)$ denote the volume of the ball of radius $r$ in the $n$-dimensional model space $M^n_\lambda$ with constant curvature $\lambda$.  With this notation, Theorem 4.7 in~\cite{CGT1982} implies that
\begin{align}
\label{eq:CGT_est}
{ \rm inj}_g(z) \geq \frac{r}{2} \frac{ V_g(z,r)}{V_g(z,r) + V_{-\kappa}^{2d}(2r)}
\end{align}
for all $r < \pi/ (4 \sqrt{\kappa})$. Finally, fix $V_0 > 0$ such that 
\begin{align*}
V_{-\kappa}^{2d}(2r) \leq V_0
\end{align*}
when $r < 2$.

Fix $z \in \Omega$ sufficiently close to $\partial \Omega$ and let 
\begin{align*}
r = \frac{\delta_\Omega(z)}{2a}.
\end{align*}
Then by the estimates on $g$, 
\begin{align*}
 \left\{ w \in \Omega : \norm{z-w} \leq \frac{1}{4aA}\delta_\Omega(z)^2 \right\}\subset B_g(z,r) \subset \left\{ w \in \Omega : \norm{z-w} \leq \frac{1}{2}\delta_\Omega(z) \right\}.
\end{align*}

We can assume that $r < \min\{ 1, 1/(4 \sqrt{\kappa})\}$. Then
\begin{align}
\label{eq:vol_est_model}
V_{-\kappa}^{2d}(2r) \leq V_0.
\end{align}
Next we estimate $V_g(z,r)$. Let $\Vol_g$ denote the Riemannian volume associated to $g$. By the estimates on $g$, if 
\begin{align*}
E \subset  \left\{ w \in \Omega : \norm{z-w} \leq \frac{1}{2}\delta_\Omega(z) \right\},
\end{align*}
then
\begin{align*}
a^{2d} \lambda(E) \leq \Vol_g(E) \leq \frac{2^{2d}A^{2d}}{\delta_\Omega(z)^{2d}}\lambda(E)
\end{align*}
where $\lambda(E)$ is the Lebesgue measure of $E$. So there exists a constant $A_0 > 1$ such that 
\begin{align}\label{eq:vol_est_manifold}
\frac{1}{A_0} \delta_\Omega(z)^{4d} \leq V_g(z,r) \leq A_0.
\end{align}
Thus by Equations~\eqref{eq:CGT_est}, \eqref{eq:vol_est_model}, and~\eqref{eq:vol_est_manifold}  there exists a constant $I_0 >0$ such that 
\begin{align*}
{ \rm inj}_g(z) \geq I_0 \delta_\Omega(z)^{4d+1}.
\end{align*}
\end{proof}

\section{Deforming metrics}\label{sec:deform}

In this section we recall a result that allows us to deform a Riemannian metric with bounded sectional curvature to obtain a new metric with better properties. 

\begin{theorem}\cite{Shi1989, CZ2006}\label{thm:deform}  Suppose $(M,g)$ is a complete Riemannian manifold whose sectional curvature is bounded in absolute value by $\kappa > 0$. Then for every $\epsilon > 0$ there exists a complete Riemannian metric $\wt{g}$ on $M$ such that:
\begin{enumerate}
\item the sectional curvature of $\wt{g}$ is bounded in absolute value by $\kappa+\epsilon$,
\item the metrics $\wt{g}$ and $g$ are $(1+\epsilon)$-bi-Lipschitz, 
\item if $ \wt{R}$ is the curvature tensor of $\wt{g}$, then 
\begin{align*}
\sup_{x \in M} \abs{ \wt{\nabla}^q \wt{R} }< \infty
\end{align*}
where $ \wt{\nabla}^q$ denotes the $q^{th}$ covariant derivative with respect to $\wt{g}$, and 
\item ${ \rm Isom}(M,g) \leq { \rm Isom}(M, \wt{g})$. 
\end{enumerate}
\end{theorem}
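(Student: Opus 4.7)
The plan is to take $\wt{g}$ to be the Ricci flow starting from $g$, evaluated at a sufficiently small positive time. More precisely, consider the Ricci flow
\begin{align*}
\partial_t g(t) = -2\,{\rm Ric}(g(t)), \qquad g(0) = g.
\end{align*}
Shi's short-time existence theorem \cite{Shi1989} guarantees that on a complete manifold whose sectional curvature is bounded in absolute value by $\kappa$, this equation admits a smooth solution $g(t)$ on $M \times [0,T]$ for some $T = T(\kappa,d) > 0$, and that along the flow the curvature tensor is bounded by a constant depending only on $\kappa$ and $d$, with $|{\rm Rm}(g(t))|_{g(t)}$ converging to $|{\rm Rm}(g)|_g$ uniformly as $t \to 0^+$.

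I then set $\wt{g} := g(t_0)$ for some small $t_0 \in (0,T]$ to be fixed below. Property~(1) follows from the continuity of the curvature bounds at $t=0$: for $t_0$ small enough one has $|{\rm sec}_{\wt{g}}| \leq \kappa + \epsilon$. For property~(2), since the norm of the Ricci tensor is uniformly bounded along $[0,t_0]$ by some constant $C = C(\kappa,d)$, integrating $\partial_t g = -2\,{\rm Ric}$ pointwise yields $e^{-2Ct_0} g \leq \wt{g} \leq e^{2Ct_0} g$, and making $t_0$ smaller produces a $(1+\epsilon)$-bi-Lipschitz relation. For property~(3), one invokes Shi's derivative estimates, which are proved in the same paper and state that for every $q \geq 0$ there is a constant $C_q = C_q(\kappa,d,t_0)$ with $|\wt{\nabla}^q \wt{R}|_{\wt{g}} \leq C_q$ uniformly on $M$ — this parabolic smoothing is precisely what makes the Ricci flow the correct tool here.

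For property~(4), the key ingredient is the uniqueness theorem of Chen-Zhu \cite{CZ2006} for complete Ricci flows with bounded curvature. Given $\phi \in {\rm Isom}(M,g)$, the family $t \mapsto \phi^* g(t)$ is another complete solution to the Ricci flow whose curvature is uniformly bounded on $[0,T]$ and whose initial value is $\phi^* g = g$; Chen-Zhu uniqueness therefore forces $\phi^* g(t) = g(t)$ for every $t \in [0,T]$. In particular $\phi \in {\rm Isom}(M,\wt{g})$, so ${\rm Isom}(M,g) \leq {\rm Isom}(M,\wt{g})$.

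The deep analytic content is entirely absorbed into Shi's existence-plus-derivative-estimates and Chen-Zhu uniqueness, which are cited as black boxes; the remaining work is only to calibrate $t_0$ as a function of $\epsilon$, $\kappa$, and $d$. The main conceptual obstacle is item~(4), since it is not \emph{a priori} evident that perturbing the metric preserves the isometry group; it is the uniqueness of the Ricci flow that rescues this, and it is the reason the Ricci flow (rather than, say, mollification of the metric) is the natural deformation to use.
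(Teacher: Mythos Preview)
Your proposal is correct and matches the paper's approach essentially verbatim: the paper also takes $\wt{g}$ to be a short-time Ricci flow from $g$, invokes Shi's existence and derivative estimates for items (2) and (3), and uses Chen--Zhu uniqueness exactly as you do to deduce ${\rm Isom}(M,g) \leq {\rm Isom}(M,\wt{g})$. The only cosmetic difference is that for the sectional curvature bound in item (1) the paper points to \cite{Kap2005} rather than arguing directly from continuity at $t=0$.
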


The metric $\wt{g}$ is obtained by considering the Ricci flow starting at $g$:
\begin{align*}
\frac{\partial}{\partial t} g = - {\rm Ric}(g).
\end{align*}
Shi~\cite{Shi1989} proved that there exists some $T > 0$ such that the Ricci flow starting at $g$ has a solution $g_t$ for $t \in [0,T]$ and for any $t \in (0,T]$ the metric $g_t$ satisfies parts (2) and (3). Chen and Zhu~\cite{CZ2006} proved that this solution is unique and hence that ${ \rm Isom}(M,g) \leq { \rm Isom}(M, g_t)$. For precise control over the sectional curvature see for instance~\cite{Kap2005}.

\section{A distance estimate}\label{sec:dist_est}

The main result in this section says that given a complete Riemannian manifold $(M,g)$ and two geodesics $\gamma, \sigma :  I \rightarrow M$ the distance between $\gamma^\prime(0)$ and $\sigma^\prime(0)$ can be estimated from the distance between $\gamma(t)$ and $\sigma(t)$ over a short time interval. Before stating the theorem we need some more notation.

A subset $X$ in a Riemannian manifold $(M,g)$ is said to be \emph{strongly convex} if any two points in $X$ are joined by a unique minimal geodesic and this geodesic is contained in $X$. Given $x \in M$ we define \emph{the convexity radius at $x$} to be
\begin{align*}
r_g(x) = \max\{ R > 0 : B_g(x,s) \text{ is strongly convex for all } 0 < s < R\}
\end{align*}
where $B_g(x,R) \subset M$ is the open ball of radius $R$ centered at $x$. 

The injectivity  radius and convexity radius are related by the following result. 

\begin{theorem}\cite[Proposition 20]{berger2003}\label{thm:inj_vs_convex} Suppose $(M,g)$ is a complete Riemannian manifold with sectional curvature bounded in absolute value by $\kappa > 0$ . If $x \in M$, then 
\begin{align*}
\min\left\{ \frac{\pi}{2 \sqrt{\kappa}}, \frac{1}{2}{ \rm inj}_g(x)\right\} \leq r_g(x) \leq \frac{1}{2}{ \rm inj}_g(x).
\end{align*}
\end{theorem}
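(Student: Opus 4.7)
The plan is to prove the two inequalities separately, as each relies on a different classical tool: the upper bound is an elementary contradiction argument using the characterization of the injectivity radius, while the lower bound is a convexity argument based on Rauch comparison.

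For the upper bound $r_g(x)\le \tfrac12\mathrm{inj}_g(x)$, I would argue by contradiction. Set $i=\mathrm{inj}_g(x)$ and suppose $r_g(x)>i/2$; pick $r$ with $i/2<r<r_g(x)$. The classical Klingenberg characterization of the injectivity radius says that either (a) there is a geodesic from $x$ along which the first conjugate point occurs at distance $i$, or (b) there is a geodesic loop at $x$ of length $2i$. In case (b), the loop is composed of two distinct unit-speed minimizing geodesics $\gamma_1,\gamma_2$ from $x$ to the midpoint $y$, each of length $i$. Truncate both at time $t=r_g(x)-\epsilon$, producing two endpoints $y_j=\gamma_j(t)\in B_g(x,r_g(x))$; since $r_g(x)$ is only slightly above $i/2$ but $t$ is slightly below $r_g(x)$, the concatenated path $\gamma_1|_{[0,t]}^{-1}*\gamma_2|_{[0,t]}$ from $y_1$ to $y_2$ through $x$ has length $2t$, while the straight second derivative argument along the loop forces at least two distinct minimizing geodesics between $y_1$ and $y_2$ — contradicting strong convexity of $B_g(x,r_g(x))$. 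In case (a), a small perturbation of the conjugate point argument produces, by the implicit function theorem applied to $\exp_x$, two arbitrarily close points in $B_g(x,r_g(x))$ connected by distinct minimizing geodesics, again a contradiction.

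For the lower bound, fix $R<\min\!\left\{\tfrac{\pi}{2\sqrt\kappa},\tfrac12\mathrm{inj}_g(x)\right\}$; I want to show $B_g(x,R)$ is strongly convex. \emph{Uniqueness of a minimizing geodesic} between any $y,z\in B_g(x,R)$ is immediate: the triangle inequality gives $d(y,z)<2R<\mathrm{inj}_g(x)$, so $z$ lies in a geodesically-convex normal neighborhood of $y$ (indeed any two points at distance less than the injectivity radius at a third nearby point are connected by a unique minimizing geodesic). The \emph{containment of this geodesic in} $B_g(x,R)$ is the Rauch-comparison step. Consider the function $f(s)=d_g(x,\sigma(s))^2$ along the unique minimizing $\sigma:[0,1]\to M$ from $y$ to $z$. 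Differentiating twice and invoking the Hessian comparison theorem, the bound $\mathrm{sec}_g\le\kappa$ combined with $R<\pi/(2\sqrt\kappa)$ implies
\[
f''(s)\ge 2\,\sqrt{\kappa}\,d_g(x,\sigma(s))\cot\!\bigl(\sqrt{\kappa}\,d_g(x,\sigma(s))\bigr)\cdot \norm{\sigma'(s)_{\perp}}^2 + 2\norm{\sigma'(s)_\parallel}^2 > 0,
\]
so $f$ is strictly convex on any interval on which $\sigma$ stays in $B_g(x,R)$. Because $f(0),f(1)<R^2$, strict convexity prevents $f$ from reaching $R^2$ in between: if it did, there would be a first crossing at some $s_0\in(0,1)$, but then convexity on $[0,s_0]$ would force $f(s_0)<\max\{f(0),f(1)\}<R^2$, a contradiction. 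Hence $\sigma([0,1])\subset B_g(x,R)$, and $B_g(x,R)$ is strongly convex; taking the supremum over such $R$ gives $r_g(x)\ge\min\!\left\{\tfrac{\pi}{2\sqrt\kappa},\tfrac12\mathrm{inj}_g(x)\right\}$.

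The main obstacle is the upper bound in case (a): teasing out non-uniqueness from a first conjugate point requires some care, since a conjugate point only forces a geodesic to stop being \emph{locally} minimizing past the conjugate value, not an actual second minimizer between two nearby points. The standard workaround is to apply Morse index theory (or the classical Warner lemma on the structure of the exponential map near a conjugate point) to produce a second critical geodesic; since this is a textbook manipulation and the whole statement is cited from \cite{berger2003}, I would simply refer to Berger's exposition rather than reprove it in detail.
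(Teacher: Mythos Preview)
The paper does not prove this statement at all: it is quoted verbatim from Berger's book and used as a black box, so there is no ``paper's own proof'' to compare against. Your proposal is thus supplying an argument where the paper supplies only a citation.

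Your lower bound argument is the standard one and is essentially correct: Hessian comparison (upper curvature bound $\kappa$, radius below $\pi/(2\sqrt{\kappa})$) makes $d(x,\cdot)^2$ strictly convex along geodesics, and the injectivity-radius bound guarantees uniqueness. One small gap: for uniqueness of the minimizer between $y$ and $z$ you need control of $\mathrm{inj}_g$ at $y$, not at $x$; this is easily patched via the $1$-Lipschitz property of the injectivity radius, or by Whitehead's theorem that such a ball is a normal neighborhood of each of its points.

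Your upper bound sketch, however, has a real gap in case~(b). Truncating the two halves $\gamma_1,\gamma_2$ of the loop at time $t$ does \emph{not} by itself produce two distinct minimizing geodesics between $y_1=\gamma_1(t)$ and $y_2=\gamma_2(t)$; the concatenation through $x$ has length $2t>i$ and is not even a geodesic. The actual contradiction is that, for $t$ slightly above $i/2$, the \emph{short} arc through the midpoint $m$ is the unique minimizer from $y_1$ to $y_2$, and strong convexity would force $m\in B_g(x,r)$, contradicting $d(x,m)=i>r$. But verifying that this arc is genuinely minimizing (and not beaten by some third path) requires an additional argument. In case~(a) you correctly flag the difficulty and defer to Berger. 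Since the paper itself simply cites Berger for the entire statement, your ultimate deferral is in line with what the paper does; just be aware that your case~(b) sketch, as written, does not close.
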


We are now ready to state the main result of this section. 

\begin{theorem}\label{thm:dist_est}  Suppose $(M,g)$ is a complete Riemannian manifold and 
\begin{align*}
\sup\{ \abs{\nabla^q R} : x \in M, q=0,1,2\} < \infty
\end{align*}
where $R$ is the curvature tensor of $(M,g)$. Then there exists $A>1$ such that: if $x \in M$, 
\begin{align*}
0 < \epsilon < \min \left\{ {\rm r}_g(x)/2,1\right\},
\end{align*}
  and $\gamma, \sigma :  [0,\epsilon] \rightarrow M$ are unit speed geodesics with $\gamma(0)=x$, then 
\begin{align*}
d_{T^1M}(\gamma^\prime(0), \sigma^\prime(0)) \leq \frac{A}{\epsilon} \max_{t \in [0,\epsilon]} d_M (\gamma(t), \sigma(t)).
\end{align*}
\end{theorem}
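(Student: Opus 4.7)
The plan is to reverse Proposition~\ref{prop:geod_spread} on short time intervals by introducing the map
\begin{align*}
\Psi \colon T^1M \to M \times M, \qquad \Psi(u) = \bigl(\pi(u),\; \pi(\Phi_\epsilon(u))\bigr),
\end{align*}
where $\pi\colon TM \to M$ is the projection and $\Phi_t$ is the geodesic flow on $T^1M$. Since $\Psi(\gamma'(0)) = (\gamma(0), \gamma(\epsilon))$, $\Psi(\sigma'(0)) = (\sigma(0), \sigma(\epsilon))$, and
\begin{align*}
d_{M\times M}\bigl(\Psi(\gamma'(0)), \Psi(\sigma'(0))\bigr) \leq \sqrt{2}\max_{t \in [0,\epsilon]} d_M(\gamma(t), \sigma(t)),
\end{align*}
it suffices to show that $\Psi$ is a local diffeomorphism along a path connecting $\gamma'(0)$ to $\sigma'(0)$ in $T^1M$ with inverse derivative of operator norm $O(1/\epsilon)$.

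The key computation is $d\Psi_u$ using the horizontal/vertical splitting $T_uT^1M = H(u) \oplus V(u)$ from Section~\ref{sec:unit_tangent_bundle}. Setting $\gamma_u(t) := \pi(\Phi_t(u))$, a horizontal vector $\xi_h \in H(u)$ generates a variation of $\gamma_u$ whose variation field is the Jacobi field along $\gamma_u$ with $J(0) = \xi_h$ and $J'(0) = 0$; a vertical $\xi_v \in V(u) \cap T_uT^1M$ gives the Jacobi field with $J(0) = 0$ and $J'(0) = \xi_v$. The Jacobi equation $J'' = -R(\dot\gamma_u, J)\dot\gamma_u$ combined with the uniform bounds $|\nabla^q R| < \infty$ yields the Taylor estimates $J(\epsilon) = P_\epsilon \xi_h + O(\epsilon^2)|\xi_h|$ and $J(\epsilon) = \epsilon\,\xi_v + O(\epsilon^3)|\xi_v|$ respectively, where $P_\epsilon$ denotes parallel transport along $\gamma_u$ for time $\epsilon$. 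Thus
\begin{align*}
d\Psi_u(\xi_h,\xi_v) = \bigl(\xi_h,\; P_\epsilon\xi_h + \epsilon \xi_v + \text{remainder}\bigr),
\end{align*}
and inverting the resulting block matrix gives $\|d\Psi_u^{-1}\| \leq A_1/\epsilon$, with $A_1$ depending only on the curvature bounds, as soon as $\epsilon$ is less than the conjugate radius along $\gamma_u$.

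To conclude, I would connect $\gamma'(0)$ to $\sigma'(0)$ by a minimizing Sasaki geodesic and integrate the bound along it. The universal conjugate-radius lower bound $\pi/\sqrt{\kappa}$, combined with $\epsilon < \min(r_g(x)/2, 1)$ (after possibly replacing $\epsilon$ by $\min(\epsilon, \pi/(2\sqrt{\kappa}))$ and absorbing the resulting factor into the final constant), guarantees that no conjugate points appear along $\Phi_s(u_\bullet)$ for $s \in [0,\epsilon]$ at any base point of the connecting path, so $\Psi$ remains a local diffeomorphism throughout. Integration produces
\begin{align*}
d_{T^1M}(\gamma'(0),\sigma'(0)) \leq \frac{A_1}{\epsilon}\cdot d_{M\times M}\bigl(\Psi(\gamma'(0)),\Psi(\sigma'(0))\bigr) \leq \frac{\sqrt{2}A_1}{\epsilon}\max_{t \in [0,\epsilon]} d_M(\gamma(t), \sigma(t)),
\end{align*}
yielding the theorem with $A = \sqrt{2}A_1$.

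The main obstacle is verifying uniformity: the bound $\|d\Psi_u^{-1}\| \leq A_1/\epsilon$ must hold along the entire minimizing Sasaki path, independently of the base point $\pi(u)$. This is exactly what the full hypothesis $\sup|\nabla^q R| < \infty$ for $q = 0, 1, 2$ delivers: the $q = 0, 1$ bounds control the leading Jacobi coefficients, while $q = 2$ is needed to bound the derivative of the remainder term along the connecting path, ensuring the error estimates in $d\Psi_u$ are uniform. The remaining technicalities --- avoiding the cut locus in the variation-field computation, and the case $\epsilon$ comparable to the conjugate radius --- are handled as described.
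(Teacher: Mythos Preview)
Your map $\Psi\colon T^1M \to M\times M$ cannot be a local diffeomorphism: if $\dim M = n$ then $\dim T^1M = 2n-1$ while $\dim(M\times M) = 2n$, so $d\Psi_u$ has at best rank $2n-1$. The missing direction is the vertical vector parallel to $u$, which lies in $T_uTM$ but not in $T_uT^1M$. More seriously, even granting an immersion bound $|d\Psi_u(\xi)| \geq (\epsilon/A_1)|\xi|$, integrating it along a minimizing Sasaki geodesic $u(s)$ in $T^1M$ yields $\mathrm{length}(\Psi\circ u) \geq (\epsilon/A_1)\,d_{T^1M}(\gamma'(0),\sigma'(0))$; this controls $d_{T^1M}$ by the \emph{length} of the image curve, not by $d_{M\times M}$ between its endpoints, and $\mathrm{length}(\Psi\circ u)$ can be much larger than that distance. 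The displayed inequality you claim does not follow from this integration.

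Both issues can be repaired: extend $\Psi$ to a neighborhood of $T^1M$ inside $TM$ (where dimensions now match and your Jacobi computation does give a local diffeomorphism), then \emph{lift} a minimizing geodesic from $M\times M$ back to $TM$ and use Proposition~\ref{prop:unit_tangent_vs_tangent} to return to $T^1M$. But you must then check that the lifted path stays in the region where the extended $\Psi$ is a diffeomorphism; this forces a case split according to whether $\max_t d_M(\gamma(t),\sigma(t))$ is large or small relative to $\epsilon$, and in the former case one falls back on the crude bound of Lemma~\ref{lem:obs3}. The paper avoids these global issues entirely: it works in a single normal chart at $x$, invokes Eichhorn's uniform bounds on the metric coefficients and Christoffel symbols, and controls $F(t)=\widetilde\gamma(t)-\widetilde\sigma(t)$ directly via the second-order geodesic ODE and a Taylor expansion, so that no map between manifolds needs to be inverted.
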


To prove the Theorem we will use a result of Eichhorn. Recall, that a chart $(U,\varphi)$ of a Riemamnian manifold $(M,g)$ is a \emph{normal chart centered at $x$ with radius $r$}  if $U = B_g(x,r)$ and $\varphi^{-1} = \exp_x \circ I$ for some linear isometry  $I : \Rb^d \rightarrow (T_x M, g_x)$. 

\begin{theorem}\cite[Corollary 2.6]{E1991}\label{thm:Eichhorn} Suppose $(M,g)$ is a complete Riemannian manifold and 
\begin{align*}
\sup\{ \abs{\nabla^q R} : x \in M, q=0,1,2\} < \infty
\end{align*}
where $R$ is the curvature tensor of $(M,g)$. If $r_0 > 0$, then there exists $\wt{C}>0$ such that: if $x \in M$, $(U, \varphi)$ is a normal chart centered at $x$ of radius at most $r_0$, and $h = \varphi_*g$, then 
\begin{align*}
\sup_{\varphi(U)} \abs{ \frac{\partial^{\abs{\alpha}} h_{i,j}}{\partial u^{\alpha}}} \leq \wt{C}
\end{align*}
for every multi-index $\alpha$ with $\abs{\alpha} \leq 2$. 
\end{theorem}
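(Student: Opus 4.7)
The plan is to express the coefficients $h_{ij}$ and their first two partial derivatives in normal coordinates in terms of Jacobi fields along radial geodesics from $x$, and then to bound these Jacobi fields and their parameter-variations by Gronwall-type estimates driven by the uniform bounds on $R,\nabla R,\nabla^2 R$.

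First I would record the standard Jacobi-field identity: if $(U,\varphi)$ is a normal chart with $\varphi^{-1}=\exp_x\circ I$ and radius at most $r_0$, then for $u\in\varphi(U)$ and $\gamma_u(t):=\exp_x(tIu)$,
$$h_{ij}(u)=g_{\gamma_u(1)}\bigl(J_{u,e_i}(1),J_{u,e_j}(1)\bigr),$$
where $J_{u,w}$ is the Jacobi field along $\gamma_u$ with $J_{u,w}(0)=0$ and $\nabla_tJ_{u,w}(0)=Iw$. Since the Jacobi equation $\nabla_t^2 J+R(\gamma',J)\gamma'=0$ has coefficient bounded by $\sup|R|$, Gronwall's inequality applied to $|J|^2+|\nabla_tJ|^2$ on $[0,1]$ yields a uniform bound $|J_{u,w}(t)|\leq C_0|w|$ depending only on $\sup|R|$ and $r_0$; hence $|h_{ij}|\leq C_0^2$.

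Next, to bound first derivatives I would differentiate the displayed identity in $u^k$: if $Y_k:=\nabla_{\partial_{u^k}}J_{u,e_i}$, a standard commutation of covariant derivatives shows that $Y_k$ satisfies an inhomogeneous linearization of the Jacobi equation along $\gamma_u$ whose source term is a polynomial expression in $\gamma_u'$, $J_{u,e_i}$, and in $R,\nabla R$ evaluated along $\gamma_u$. Using the uniform bound on $|\nabla R|$ and the preceding step, Gronwall again produces a uniform bound on $|Y_k|$ and $|\nabla_tY_k|$ on $[0,1]$, and hence on $|\partial_k h_{ij}|$. Differentiating once more in $u^l$ and repeating the commutation argument produces a further linear ODE whose inhomogeneity is polynomial in the already-controlled quantities and in $R,\nabla R,\nabla^2 R$, all of which are uniformly bounded by hypothesis. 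A third Gronwall estimate bounds $|\partial_k\partial_l h_{ij}|$, and taking the maximum of the three bounds gives the required constant $\wt{C}$.

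The principal technical obstacle is the derivation and bookkeeping of the higher-order variational equations for the Jacobi fields in the last step: one must iterate the commutation of covariant derivatives $\nabla_t$ and $\nabla_{\partial_{u^k}}$, which introduces curvature terms at each stage, and then verify that no factor depending on the interior of the chart --- only on $\sup|\nabla^q R|$ for $q\leq 2$, on $r_0$, and on the dimension $d$ --- enters the coefficients. Once this accounting is done, the three Gronwall estimates are routine and produce the universal constant $\wt{C}$.
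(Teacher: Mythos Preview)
The paper does not prove this theorem; it is quoted as \cite[Corollary 2.6]{E1991} and used as a black box in the proof of Theorem~\ref{thm:dist_est}. So there is no ``paper's own proof'' to compare against.

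That said, your outline is a standard and essentially correct route to Eichhorn's result. The Jacobi-field identity $h_{ij}(u)=g\bigl(J_{u,e_i}(1),J_{u,e_j}(1)\bigr)$ along $\gamma_u(t)=\exp_x(tIu)$ is right, and since $|\gamma_u'|=|u|\leq r_0$, the curvature term in the Jacobi equation is bounded by $(\sup|R|)\,r_0^2\,|J|$, so Gronwall gives the zeroth-order bound. Differentiating in $u^k$ produces a linearized Jacobi equation for $\nabla_{\partial_{u^k}}J_{u,e_i}$ with inhomogeneity involving $\nabla R$, and a second differentiation brings in $\nabla^2 R$; iterated Gronwall then closes the argument. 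One point to be careful about in the bookkeeping: when you differentiate $h_{ij}(u)=g(J_{u,e_i}(1),J_{u,e_j}(1))$ in $u^k$, you are varying both the base geodesic $\gamma_u$ and the Jacobi field; the correct object is the second variation of the map $(u,t)\mapsto\exp_x(tIu)$, and the commutation $[\nabla_t,\nabla_{\partial_{u^k}}]$ introduces an extra $R(\partial_t,\partial_{u^k})$ term that must be tracked. As you note, this is the main accounting burden, but it is routine and yields constants depending only on $d$, $r_0$, and $\sup_{q\leq 2}|\nabla^q R|$.
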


\subsection{Proof of Theorem~\ref{thm:dist_est}}

For the rest of the section let $(M,g)$ be a complete Riemannian manifold with
\begin{align*}
\sup\{ \abs{\nabla^q R} : x \in M, q=0,1,2\} < \infty
\end{align*}
where $R$ is the curvature tensor of $(M,g)$. Let $\wt{C} > 0$ be the constant from Theorem~\ref{thm:Eichhorn} with $r_0 =1$. 

\begin{lemma}\label{lem:norm_chart_1} There exists constants $r_1,A_1 > 0$ such that: if $x \in M$, $(U, \varphi)$ is a normal chart centered at $x$ of radius at most $r_1$, and $\gamma : [0,T] \rightarrow M$ is a unit speed geodesic with image in $U$, then
\begin{align*}
\frac{1}{A_1} \leq \norm{(\varphi \circ \gamma)^\prime(t)} \leq A_1.
\end{align*}
In particular, if $p,q \in U \cap B_g(x, { \rm r}_g(x))$ then 
\begin{align*}
\frac{1}{A_1} \norm{\varphi(p)-\varphi(q)} \leq d_M(p, q) \leq A_1 \norm{\varphi(p)-\varphi(q)}.
\end{align*}
\end{lemma}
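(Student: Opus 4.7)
The plan is to use Theorem~\ref{thm:Eichhorn} to show that, in a sufficiently small normal chart, the pulled-back metric $h = \varphi_* g$ is quantitatively close to the Euclidean inner product, and then to read off both conclusions from this bi-Lipschitz comparison.

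First, I would fix $r_1 \in (0,1)$ and let $\wt{C}$ be the constant provided by Theorem~\ref{thm:Eichhorn} with $r_0 = 1$. For any normal chart $(U,\varphi)$ centered at $x$ of radius at most $r_1$, the image $\varphi(U) \subset \Rb^d$ is a Euclidean ball of radius at most $r_1$ centered at $0$. Since $\varphi^{-1} = \exp_x \circ I$ with $I$ a linear isometry, we have $h_{ij}(0) = \delta_{ij}$. Combining this with the bound $\abs{\partial h_{ij}/\partial u^k} \leq \wt{C}$ from Theorem~\ref{thm:Eichhorn}, the mean value theorem gives
\begin{align*}
\abs{h_{ij}(u) - \delta_{ij}} \leq \wt{C} \norm{u} \leq \wt{C} r_1
\end{align*}
for every $u \in \varphi(U)$. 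Then, by Cauchy-Schwarz, for every $v \in \Rb^d$ and every $u \in \varphi(U)$,
\begin{align*}
\abs{h_{ij}(u) v^i v^j - \norm{v}^2} \leq d \wt{C} r_1 \norm{v}^2.
\end{align*}
Choosing $r_1$ so small that $d \wt{C} r_1 \leq 1/2$, we obtain the uniform comparison
\begin{align*}
\tfrac{1}{2}\norm{v}^2 \leq h_u(v,v) \leq 2\norm{v}^2
\end{align*}
valid on all of $\varphi(U)$.

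Next, for a unit speed geodesic $\gamma : [0,T] \to U$, set $\beta = \varphi \circ \gamma$. Then $h_{\beta(t)}(\beta'(t), \beta'(t)) = 1$ for all $t$, so the comparison above yields $1/\sqrt{2} \leq \norm{\beta'(t)} \leq \sqrt{2}$, which establishes the first inequality with, say, $A_1 = \sqrt{2}$.

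For the second inequality, suppose $p, q \in U \cap B_g(x,r_g(x))$, where $U = B_g(x,r)$ with $r \leq r_1$. Since both sets are open balls in $(M,g)$ centered at the same point, one contains the other; in either case the smaller one is contained in $B_g(x, r_g(x))$ and is hence strongly convex. Thus $p$ and $q$ are joined by a unique minimal geodesic $\gamma \subset U$, and applying the first part of the lemma to $\gamma$ yields
\begin{align*}
\norm{\varphi(p) - \varphi(q)} \leq \int_0^{d_M(p,q)} \norm{\beta'(t)} dt \leq A_1 d_M(p,q).
\end{align*}
For the reverse inequality, the Euclidean segment from $\varphi(p)$ to $\varphi(q)$ lies in the Euclidean ball $\varphi(U)$ by convexity, so pulling it back through $\varphi^{-1}$ gives an admissible curve in $U$ from $p$ to $q$ whose $g$-length is at most $\sqrt{2}\,\norm{\varphi(p)-\varphi(q)}$ by the comparison above. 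Enlarging $A_1$ if necessary, this yields $d_M(p,q) \leq A_1\norm{\varphi(p)-\varphi(q)}$.

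The only real subtlety is ensuring that the minimizing geodesic between $p$ and $q$ actually stays inside the chart domain $U$; this is handled by the nesting of two concentric geodesic balls noted above, which is why the hypothesis $p,q \in U \cap B_g(x,r_g(x))$ is used rather than just $p,q \in U$.
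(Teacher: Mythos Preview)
Your proof is correct and follows essentially the same approach as the paper's: both use Theorem~\ref{thm:Eichhorn} and the fact that $h_{ij}(0)=\delta_{ij}$ to force $\tfrac{1}{2}\norm{v}^2 \leq h_u(v,v) \leq 2\norm{v}^2$ on a small enough normal chart, and then deduce the two conclusions by comparing geodesics and Euclidean segments. Your treatment is slightly more careful in one respect: the paper simply asserts that the minimizing geodesic joining $p$ and $q$ lies in $U$, whereas you explicitly explain why via the nesting of the balls $B_g(x,r)$ and $B_g(x,r_g(x))$.
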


\begin{proof} Let 
\begin{align*}
r_1 = \min\left\{1,\frac{1}{2d\wt{C}}\right\}.
\end{align*}
Then suppose that $(U, \varphi)$ is a normal chart centered at $x$ of radius at most $r_1$. Let $\wt{\gamma} = \varphi \circ \gamma$ and $h = \varphi_*g$. Then $\norm{\wt{\gamma}^\prime(t)}_h = 1$. 

Since $h$ at $u=0$ is the standard Euclidean inner product, we see that 
\begin{align*}
\abs{\norm{v}_h^2-\norm{v}^2} \leq \wt{C}r_1\sum_{i,j} \abs{v_i v_j} \leq \frac{1}{2} \wt{C}r_1\sum_{i,j} \abs{v_i}^2 + \abs{v_j}^2 = \wt{C}r_1 d\norm{v}^2 \leq \frac{1}{2} \norm{v}^2.
\end{align*}
So
\begin{align*}
\frac{1}{2} \norm{v} \leq \norm{v}_h \leq 2\norm{v}
\end{align*}
and so
\begin{align*}
\frac{1}{2} \leq \norm{\wt{\gamma}^\prime(t)} \leq 2.
\end{align*}

Next suppose that $p,q \in U \cap B_g(x, { \rm r}_g(x))$. Then let $\sigma:[0,T] \rightarrow M$ be a unit speed geodesic joining $p$ to $q$. Then the image of $\sigma$ is contained in $U$ so 
\begin{align*}
d_M(p, q) 
&=  \int_0^T \norm{\sigma^\prime(t)}_g dt = \int_0^T \norm{(\varphi \circ \sigma)^\prime(t)}_h dt \\
& \geq \frac{1}{2}\int_0^T \norm{(\varphi \circ \sigma)^\prime(t)} dt \geq \frac{1}{2}\norm{\varphi(p)-\varphi(q)}.
\end{align*}
On the other hand, if $f(t) = t\varphi(p)+(1-t)\varphi(q)$ , then 
\begin{align*}
d_M(p, q) 
&\leq \int_0^1 \norm{(\varphi^{-1} \circ f)^\prime(t)}_g dt = \int_0^1 \norm{f^\prime(t)}_h dt\\
&  \leq 2\int_0^1\norm{\varphi(p)-\varphi(q)} dt = 2 \norm{\varphi(p)-\varphi(q)}.
\end{align*}

\end{proof}

\begin{lemma}\label{lem:CS_bd} There exist a constant $\wt{C}_1 > 0$ such that: if $x \in M$, $(U, \varphi)$ is a normal chart centered at $x$ of radius at most $r_1$, $h=\varphi_* g$, and 
\begin{align*}
\Gamma_{ij}^k = \frac{1}{2} \sum_k \left(\frac{\partial h_{jk}}{\partial u_i}+\frac{\partial h_{ki}}{\partial u_j}-\frac{\partial h_{ij}}{\partial u_k}\right)h^{km},
\end{align*}
then 
\begin{align*}
\max\left\{ \abs{\Gamma_{ij}^k}, \abs{ \frac{\partial\Gamma_{ij}^k}{\partial u_1}}, \dots, \abs{ \frac{\partial \Gamma_{ij}^k}{\partial u_d}} \right\}\leq \wt{C}_1
\end{align*}
on $\varphi(U)$.
\end{lemma}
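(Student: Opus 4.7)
The plan is to read off the bound directly from the formula for $\Gamma_{ij}^k$ using Theorem~\ref{thm:Eichhorn} together with uniform two-sided bounds on the coefficient matrix $(h_{ij})$. Since $\Gamma_{ij}^k$ is a polynomial expression in the first derivatives $\partial_\ell h_{ij}$ and the inverse matrix entries $h^{ij}$, and $\partial_\ell \Gamma_{ij}^k$ is a polynomial expression in the first and second derivatives of $h_{ij}$, the entries of $h^{ij}$, and the first derivatives of $h^{ij}$, it suffices to bound each of these factors uniformly in $x$ and in the choice of normal chart.

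First I would apply Theorem~\ref{thm:Eichhorn} with $r_0 = 1$ to obtain $|h_{ij}|,\, |\partial_\alpha h_{ij}|,\, |\partial_\alpha \partial_\beta h_{ij}| \leq \wt{C}$ on $\varphi(U)$ (valid because $r_1 \leq 1$). Next I would bound the inverse matrix: the computation in the proof of Lemma~\ref{lem:norm_chart_1} shows that on $\varphi(U)$ one has $\tfrac{1}{2}\norm{v} \leq \norm{v}_h \leq 2 \norm{v}$, so the eigenvalues of $(h_{ij}(u))$ lie in $[1/4, 4]$ uniformly. Hence $|h^{ij}(u)| \leq 4$ on $\varphi(U)$, with the bound independent of $x$ and of the chart.

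For the derivatives of the inverse matrix I would use the identity
\begin{align*}
\frac{\partial h^{ij}}{\partial u_\ell} = -\sum_{p,q} h^{ip}\, \frac{\partial h_{pq}}{\partial u_\ell}\, h^{qj},
\end{align*}
which together with the bounds $|h^{ij}| \leq 4$ and $|\partial_\ell h_{pq}| \leq \wt{C}$ produces a uniform bound $|\partial_\ell h^{ij}| \leq 16 d^2 \wt{C}$. Plugging these estimates into the defining formula for $\Gamma_{ij}^k$ gives $|\Gamma_{ij}^k| \leq C_0$ for a constant $C_0$ depending only on $d$ and $\wt{C}$. Differentiating the defining formula once and applying the product rule, each resulting term contains a factor of the form $\partial_\alpha \partial_\beta h_{ij}$ or $\partial_\alpha h_{ij} \cdot \partial_\ell h^{pq}$ paired with bounded factors, so $|\partial_\ell \Gamma_{ij}^k| \leq C_1$ for a constant $C_1$ depending only on $d$ and $\wt{C}$. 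Taking $\wt{C}_1 = \max(C_0, C_1)$ finishes the proof.

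There is no real obstacle here: the only point that requires a bit of care is the uniform pinching of the eigenvalues of $(h_{ij})$, which we already arranged through the choice of $r_1$ in Lemma~\ref{lem:norm_chart_1}; this is what makes the constants depend only on $d$ and $\wt{C}$ and not on the point $x$ or the specific normal chart.
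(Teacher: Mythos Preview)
Your proposal is correct and follows essentially the same approach as the paper. The paper's proof is a two-line sketch (``The proof of the last lemma provides a uniform bound on $h^{km}$. So the Lemma follows from Theorem~\ref{thm:Eichhorn}.''), and you have simply filled in the details that make this work: the eigenvalue pinching from Lemma~\ref{lem:norm_chart_1} to control $h^{ij}$, the standard identity for $\partial_\ell h^{ij}$, and the observation that everything then reduces to a polynomial in uniformly bounded quantities.
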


\begin{proof}
The proof of the last lemma provides a uniform bound on $h^{km}$. So the Lemma follows from Theorem~\ref{thm:Eichhorn}.
\end{proof}

\begin{lemma}\label{lem:norm_chart_2}  There exist a constant $A_2 > 0$ such that: if $x \in M$, $(U, \varphi)$ is a normal chart centered at $x$ of radius at most $r_1$, and $\gamma, \sigma : [0,T] \rightarrow M$ are unit speed geodesics with images in $U$, then
\begin{align*}
\norm{F^{\prime\prime}(t) } \leq A_2 \Big( \norm{F(t)}+\norm{F^\prime(t)} \Big)
\end{align*}
where $F(t) = (\varphi \circ \gamma)(t)- (\varphi \circ \sigma)(t)$. 
\end{lemma}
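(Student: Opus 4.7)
The plan is to obtain the bound from the geodesic equation written in the chart coordinates and then to estimate each term using the uniform bounds on the Christoffel symbols supplied by Lemma~\ref{lem:CS_bd} and on the derivatives of the curves supplied by Lemma~\ref{lem:norm_chart_1}.

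First I would write $\wt{\gamma} = \varphi\circ\gamma$ and $\wt{\sigma} = \varphi\circ\sigma$, so that $F=\wt\gamma-\wt\sigma$, and record the geodesic equation satisfied by each curve in the chart: for every $k$,
\begin{align*}
\wt{\gamma}^{\prime\prime,k}(t) &= -\sum_{i,j}\Gamma_{ij}^k(\wt{\gamma}(t))\,\wt{\gamma}^{\prime,i}(t)\,\wt{\gamma}^{\prime,j}(t),\\
\wt{\sigma}^{\prime\prime,k}(t) &= -\sum_{i,j}\Gamma_{ij}^k(\wt{\sigma}(t))\,\wt{\sigma}^{\prime,i}(t)\,\wt{\sigma}^{\prime,j}(t).
\end{align*}
Subtracting and using the standard telescoping trick
\begin{align*}
\Gamma_{ij}^k(\wt\gamma)\wt\gamma^{\prime,i}\wt\gamma^{\prime,j} - \Gamma_{ij}^k(\wt\sigma)\wt\sigma^{\prime,i}\wt\sigma^{\prime,j} &= \bigl(\Gamma_{ij}^k(\wt\gamma)-\Gamma_{ij}^k(\wt\sigma)\bigr)\wt\gamma^{\prime,i}\wt\gamma^{\prime,j} \\
&\quad + \Gamma_{ij}^k(\wt\sigma)\bigl(\wt\gamma^{\prime,i}-\wt\sigma^{\prime,i}\bigr)\wt\gamma^{\prime,j} \\
&\quad + \Gamma_{ij}^k(\wt\sigma)\wt\sigma^{\prime,i}\bigl(\wt\gamma^{\prime,j}-\wt\sigma^{\prime,j}\bigr)
\end{align*}
expresses $F^{\prime\prime,k}(t)$ as a sum of three kinds of terms, each of which can be bounded individually.

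For the first kind of term, the mean value theorem applied along the segment from $\wt\sigma(t)$ to $\wt\gamma(t)$ (which lies in $\varphi(U)$ since $\varphi(U)$ is a Euclidean ball centered at $0$, hence convex) together with the bound $\abs{\partial\Gamma_{ij}^k/\partial u_l}\le\wt C_1$ from Lemma~\ref{lem:CS_bd} gives $\abs{\Gamma_{ij}^k(\wt\gamma(t))-\Gamma_{ij}^k(\wt\sigma(t))}\le d\wt C_1\norm{F(t)}$. For the second and third kinds of terms I use $\abs{\Gamma_{ij}^k}\le\wt C_1$ directly, together with the bounds $\norm{\wt\gamma^\prime(t)},\norm{\wt\sigma^\prime(t)}\le A_1$ provided by Lemma~\ref{lem:norm_chart_1}. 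Collecting everything, there is a constant $A_2=A_2(d,\wt C_1,A_1)$ such that
\begin{align*}
\norm{F^{\prime\prime}(t)} \le A_2\bigl(\norm{F(t)}+\norm{F^\prime(t)}\bigr),
\end{align*}
as desired.

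I do not anticipate a serious obstacle here: the only subtlety is ensuring that the segment connecting $\wt\gamma(t)$ and $\wt\sigma(t)$ stays inside the chart domain so that the mean value estimate for $\Gamma_{ij}^k$ is legitimate, but this is immediate because normal charts are by definition Euclidean balls in the coordinate chart and hence convex. Everything else is a direct application of the two lemmas already established.
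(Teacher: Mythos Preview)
Your proposal is correct and follows essentially the same route as the paper: both write the geodesic equation in the chart, subtract, and bound the difference using Lemma~\ref{lem:norm_chart_1} for the velocity bounds and Lemma~\ref{lem:CS_bd} for the Christoffel bounds. The paper packages your telescoping-plus-mean-value step into the single remark that the map $(u,X)\mapsto\sum_{i,j}X_iX_j\Gamma_{ij}^k(u)$ has uniformly bounded first derivatives on $\varphi(U)\times\{\norm{X}\le A_1\}$, but the content is identical.
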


\begin{proof}
Let $\wt{\gamma} = \varphi \circ \gamma$, $\wt{\sigma} = \varphi \circ \sigma$, and $h = \varphi_*g$. By~\cite[page 62]{dC1992}, the components of $\wt{\gamma}^{\prime\prime}- \wt{\sigma}^{\prime\prime}$ satisfy the differential equation 
\begin{align*}
\wt{\gamma}^{\prime\prime}_k(t)-\wt{\sigma}^{\prime\prime}_k(t) =\sum_{i,j}\wt{\sigma}^{\prime}_i(t)\wt{\sigma}^{\prime}_j(t) \Gamma_{ij}^k(\wt{\sigma}(t))- \sum_{i,j}\wt{\gamma}^{\prime}_i(t)\wt{\gamma}^{\prime}_j(t) \Gamma_{ij}^k(\wt{\gamma}(t)).
\end{align*}
 
By Lemma~\ref{lem:norm_chart_1}
\begin{align*}
\max\{ \norm{\wt{\gamma}^\prime(t)}, \norm{\wt{\sigma}^\prime(t)}\} \leq A_1.
\end{align*}
Then since the function 
\begin{align*}
(u,X) \in U \times \Rb^d \rightarrow \sum_{i,j} X_iX_j\Gamma_{ij}^k(u)
\end{align*}
has locally bounded first derivatives, there exists some $\wt{A}_2 > 0$ such that 
\begin{align*}
\norm{\wt{\gamma}^{\prime\prime}_k(t)-\wt{\sigma}^{\prime\prime}_k(t)} \leq \wt{A}_2 \Big( \norm{\wt{\gamma}(t)-\wt{\sigma}(t)}+  \norm{\wt{\gamma}^\prime(t)-\wt{\sigma}^\prime(t)}\Big).
\end{align*}

\end{proof}

\begin{lemma}\label{lem:norm_chart_3}  There exist a constant $A_3 > 0$ such that: if $x \in M$, $(U, \varphi)$ is a normal chart centered at $x$ of radius at most $r_1$, then 
\begin{align*}
d_{T^1M}\Big( (\varphi^{-1}(u_1), & d(\varphi^{-1})_{u_1}X), (\varphi^{-1}(u_2), d(\varphi^{-1})_{u_2}Y)\Big)\\
&  \leq A_3\max\{1, \norm{X}, \norm{Y}\} \Big( \norm{u_1-u_2} + \norm{X-Y} \Big)
\end{align*}
for all $u_1,u_2 \in \varphi(U)$ and $X,Y \in \Rb^d$.
\end{lemma}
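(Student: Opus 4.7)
The plan is to produce an explicit curve $\alpha\colon [0,1]\to TM$ joining the two tangent vectors, compute its length in the Sasaki metric by splitting $\alpha'$ into its horizontal and vertical components, and then bound each piece using Lemmas~\ref{lem:norm_chart_1} and~\ref{lem:CS_bd}. The statement involves $d_{T^1M}$, but the natural object to bound directly is $d_{TM}$; then, in the applications (unit-speed geodesic velocities), Proposition~\ref{prop:unit_tangent_vs_tangent} converts the $TM$-bound into a $T^1M$-bound.

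First I would take the linear interpolations $u(t) = (1-t)u_1 + tu_2$ and $X(t) = (1-t)X + tY$ in $\Rb^d$, and define
\begin{align*}
\alpha(t) = \bigl(\varphi^{-1}(u(t)), \ d(\varphi^{-1})_{u(t)} X(t)\bigr), \qquad t \in [0,1].
\end{align*}
Writing $p(t) = \varphi^{-1}(u(t))$ and viewing $V(t) = d(\varphi^{-1})_{u(t)}X(t)$ as a vector field along $p$, the horizontal/vertical identification
\begin{align*}
\xi \in T_{\alpha(t)} TM \ \longleftrightarrow\ \bigl(d\pi(\xi), \ K(\xi)\bigr) \in T_{p(t)}M \oplus T_{p(t)}M
\end{align*}
gives $d\pi(\alpha'(t)) = p'(t)$ and $K(\alpha'(t)) = (\nabla_{p'(t)} V)(t)$. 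In the chart coordinates this last expression has $k$-th component $(Y-X)^k + \sum_{i,j}\Gamma_{ij}^k(u(t))(u_2-u_1)^i X(t)^j$.

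Next I would bound each piece. By Lemma~\ref{lem:norm_chart_1} (applied in the form $\tfrac{1}{2}\norm{\cdot}\le \norm{\cdot}_h\le 2\norm{\cdot}$ and its consequence $\norm{p'(t)}_g=\norm{u_2-u_1}_h$),
\begin{align*}
\norm{p'(t)}_g \leq 2\norm{u_2-u_1}.
\end{align*}
By Lemma~\ref{lem:CS_bd}, $\abs{\Gamma_{ij}^k(u(t))} \leq \wt{C}_1$ uniformly, so the Euclidean norm of the vertical component is at most $\norm{Y-X} + d^2 \wt{C}_1 \norm{u_2-u_1}\max\{\norm{X},\norm{Y}\}$, and hence
\begin{align*}
\norm{K(\alpha'(t))}_g \leq 2\norm{Y-X} + 2d^2\wt{C}_1\max\{\norm{X},\norm{Y}\}\norm{u_2-u_1}.
\end{align*}
Combining the two inequalities via $\sqrt{a^2+b^2}\le a+b$ yields a constant $A_3'>0$ with
\begin{align*}
\norm{\alpha'(t)}_h \leq A_3' \max\{1,\norm{X},\norm{Y}\}\bigl(\norm{u_2-u_1}+\norm{Y-X}\bigr).
\end{align*}

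Integrating in $t$ gives the same bound on $d_{TM}$ between the two endpoints of $\alpha$. If in a given application the endpoints lie in $T^1M$, then Proposition~\ref{prop:unit_tangent_vs_tangent} upgrades this to the corresponding $d_{T^1M}$ estimate with $A_3 = (\pi+1)A_3'$. The only real technical point is the bookkeeping in the vertical component calculation; no further obstacle is expected, since the uniform chart estimates in Lemmas~\ref{lem:norm_chart_1} and~\ref{lem:CS_bd} were set up precisely to make this decomposition bounded.
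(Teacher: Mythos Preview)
Your approach is essentially the same as the paper's: the paper simply writes the Sasaki metric in the chart as $h_{ij}\,du^i du^j + h_{ij}\,DX^i DX^j$ with $DX^i = dX^i + \Gamma^i_{jk} X^j\, du^k$ and says the bound follows from the uniform estimates on $h_{ij}$ (Theorem~\ref{thm:Eichhorn}) and on $\Gamma^k_{ij}$ (Lemma~\ref{lem:CS_bd}); your linear-interpolation curve and explicit horizontal/vertical decomposition are exactly the computation this sentence is hiding. Your extra care in first bounding $d_{TM}$ and then invoking Proposition~\ref{prop:unit_tangent_vs_tangent} to pass to $d_{T^1M}$ is in fact more precise than the paper, which silently identifies the two; since in the only application (Lemma~\ref{lem:special_case}) the endpoints are unit-speed geodesic velocities, your reading is the correct one.
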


\begin{proof} In the local coordinates $(u_1,\dots, u_d, X_1, \dots, X_d) \in U \times \Rb^d$ the Sasaki metric is given by 
\begin{align*}
h_{i,j} du^i du^j + h_{i,j} DX^i DX^j
\end{align*}
where 
\begin{align*}
DX^i = dX^i + \Gamma_{jk}^i X_i du^k.
\end{align*}
So the estimate follows form Theorem~\ref{thm:Eichhorn} and Lemma~\ref{lem:CS_bd}.
\end{proof}

We will also use the following simple observation:

\begin{lemma}\label{lem:linear_alg} If $X,Y \in \Rb^d$ and $\epsilon \in (0,2)$, then 
\begin{align*}
\max_{t \in [0,\epsilon] } \norm{X+tY} \geq \frac{\epsilon}{4} \left(\norm{X}+\norm{Y}\right).
\end{align*}
\end{lemma}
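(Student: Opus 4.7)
The plan is to bound the maximum using only the two endpoint values $t=0$ and $t=\epsilon$; the hypothesis $\epsilon < 2$ will be invoked only at the very last step. In particular, I would not try to choose a clever interior value of $t$, since pairing the $t=0$ bound with an averaged endpoint bound already suffices.

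First I would discard the interior of the interval and work with
\[
M := \max_{t \in [0,\epsilon]} \norm{X+tY} \ \geq \ \max\bigl\{\norm{X},\, \norm{X+\epsilon Y}\bigr\}.
\]
The $t=0$ endpoint gives the trivial lower bound $M \geq \norm{X}$. To extract a bound involving $\norm{Y}$, I would apply the triangle inequality to $(X+\epsilon Y) - X = \epsilon Y$, which gives $\norm{X} + \norm{X+\epsilon Y} \geq \epsilon \norm{Y}$; in particular at least one of the two terms on the left is $\geq \epsilon\norm{Y}/2$, so $M \geq \epsilon\norm{Y}/2$ as well.

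Next I would combine the two bounds on $M$ using the elementary inequality $\max\{a,b\} \geq (a+b)/2$ for $a,b \geq 0$, to get
\[
M \ \geq \ \tfrac{1}{2}\norm{X} + \tfrac{\epsilon}{4}\norm{Y}.
\]
The hypothesis $\epsilon < 2$ enters precisely to ensure $\tfrac{1}{2}\norm{X} \geq \tfrac{\epsilon}{4}\norm{X}$, which upgrades the right-hand side to $\tfrac{\epsilon}{4}\bigl(\norm{X}+\norm{Y}\bigr)$, as required.

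There is no substantive obstacle in this argument; the lemma is really a two-line exercise once one resists the temptation to optimize over $t$ and instead averages the $t=0$ and $t=\epsilon$ endpoint bounds.
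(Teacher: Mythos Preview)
Your proof is correct and follows essentially the same approach as the paper: both arguments use only the endpoint values $t=0$ and $t=\epsilon$, extract the $\norm{Y}$ contribution via the triangle inequality, and invoke $\epsilon<2$ to replace $\tfrac{1}{2}\norm{X}$ by $\tfrac{\epsilon}{4}\norm{X}$. The only cosmetic difference is that the paper does an explicit case split on whether $\norm{X}\geq\tfrac{\epsilon}{2}\norm{Y}$, whereas you combine the two endpoint bounds via $\max\{a,b\}\geq(a+b)/2$.
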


\begin{proof}
If $\norm{X} \geq \frac{\epsilon}{2}\norm{Y}$, then 
\begin{align*}
\norm{X} \geq \frac{1}{2} \norm{X}+\frac{\epsilon}{4}\norm{Y} \geq \frac{\epsilon}{4} \left(\norm{X}+\norm{Y}\right).
\end{align*}
If $\norm{X} \leq \frac{\epsilon}{2}\norm{Y}$, then 
\begin{align*}
\norm{X+\epsilon Y} \geq \epsilon \norm{Y} - \norm{X} \geq \frac{\epsilon}{2} \norm{Y}\geq \frac{\epsilon}{4} \left(\norm{X}+\norm{Y}\right).
\end{align*}
\end{proof}

We now prove Theorem~\ref{thm:dist_est} in a special case. Let 
\begin{align*}
r_2 = \min\left\{ r_1, \frac{1}{8\sqrt{d}A_2} \right\}.
\end{align*}

\begin{lemma}\label{lem:special_case}
There exists $A_4>1$ such that: if $x \in M$, 
\begin{align*}
0 < \epsilon <  \min\left\{{ \rm r}_g(x)/2, r_2\right\},
\end{align*}
  and $\gamma, \sigma :  [0,\epsilon] \rightarrow M$ are unit speed geodesics with $\gamma(0)=x$, then 
\begin{align*}
d_{T^1M}(\gamma^\prime(0), \sigma^\prime(0)) \leq \frac{A_4}{\epsilon} \max_{t \in [0,\epsilon]} d_M (\gamma(t), \sigma(t)).
\end{align*}
\end{lemma}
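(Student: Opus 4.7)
The plan is to work in a normal chart centered at $x$ and reduce the inequality to an ODE estimate on the coordinate difference $F(t) = \tilde\gamma(t) - \tilde\sigma(t)$, where tildes denote composition with the chart. Specifically, fix a normal chart $(U, \varphi)$ centered at $x$ of radius $R = \min\{r_g(x), r_1\}$. This is a legitimate normal chart because $R \leq r_g(x) \leq {\rm inj}_g(x)$, and because $R \leq r_1$ the four preceding lemmas all apply. Since $\gamma(0) = x$, in these coordinates $\tilde\gamma(t) = t\,\tilde\gamma'(0)$ is a straight line through the origin.

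Suppose first that $\sigma([0,\epsilon]) \subset U$. Lemma \ref{lem:norm_chart_2} gives the pointwise ODE bound $\|F''(t)\| \leq A_2\bigl(\|F(t)\| + \|F'(t)\|\bigr)$. Setting $M_0 = \max_{[0,\epsilon]}\|F\|$ and $M_1 = \max_{[0,\epsilon]}\|F'\|$ and integrating twice produces $M_1 \leq \|F'(0)\| + A_2\epsilon(M_0+M_1)$ and $M_0 \leq \|F(0)\| + \epsilon M_1$; the constraint $\epsilon < r_2 \leq 1/(8\sqrt{d}A_2)$ lets one absorb the $(M_0+M_1)$ terms and obtain $M_0 + M_1 \leq C_0(\|F(0)\| + \|F'(0)\|)$ for an absolute constant $C_0$. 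Integrating the ODE once more gives the Taylor bound $\|F(t) - F(0) - tF'(0)\| \leq \tfrac{1}{2}A_2(M_0+M_1)t^2$, while Lemma \ref{lem:linear_alg} applied to $t \mapsto F(0)+tF'(0)$ on $[0,\epsilon]$ supplies some $t_*$ with $\|F(0)+t_*F'(0)\| \geq \tfrac{\epsilon}{4}(\|F(0)\|+\|F'(0)\|)$. Subtracting the Taylor error and again using the smallness of $\epsilon$ yields $M_0 \geq \tfrac{\epsilon}{8}(\|F(0)\|+\|F'(0)\|)$. Lemma \ref{lem:norm_chart_1} then converts $M_0$ into a multiple of $\max_{[0,\epsilon]} d_M(\gamma(t),\sigma(t))$, and Lemma \ref{lem:norm_chart_3} converts $\|F(0)\|+\|F'(0)\|$ into $d_{T^1 M}(\gamma'(0),\sigma'(0))$ (using that the coordinate velocities of unit-speed geodesics are at most $A_1$), which produces the inequality in this case.

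If instead $\sigma$ exits $U$, there is some $t^* \in [0,\epsilon]$ with $d_M(\sigma(t^*), x) \geq R$, and the triangle inequality yields $\max_{[0,\epsilon]} d_M(\gamma(t), \sigma(t)) \geq R - \epsilon$, which is bounded below by a definite multiple of $\epsilon$. A general Sasaki-distance bound obtained by concatenating parallel transport of $\gamma'(0)$ along a minimizing geodesic from $x$ to $\sigma(0)$ with a rotation inside the unit fiber gives
$$d_{T^1 M}(\gamma'(0),\sigma'(0)) \leq d_M(x,\sigma(0)) + \pi \leq \max_{[0,\epsilon]} d_M(\gamma(t),\sigma(t)) + \pi,$$
so combining the two estimates and enlarging $A_4$ if necessary closes this case. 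The main obstacle is the first step: showing that the coordinate geodesics are sufficiently close to affine on the short interval that the linear lower bound of Lemma \ref{lem:linear_alg} survives the Taylor error. The specific constant $r_2 \leq 1/(8\sqrt{d}A_2)$ in the hypothesis is chosen precisely to make this absorbing argument succeed.
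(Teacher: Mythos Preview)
Your proposal is correct and follows the same route as the paper: reduce to a normal chart, combine the second-order bound from Lemma~\ref{lem:norm_chart_2} with the linear lower bound of Lemma~\ref{lem:linear_alg}, and convert via Lemmas~\ref{lem:norm_chart_1} and~\ref{lem:norm_chart_3}; the degenerate case is handled by the parallel-transport-plus-rotation bound (the paper's Lemma~\ref{lem:obs3}). Two small differences are worth flagging. First, the paper splits cases according to whether $\max_t d_M(\gamma(t),\sigma(t))$ exceeds $\epsilon$, not according to whether $\sigma$ exits the chart; your exit case needs $R-\epsilon \geq c\epsilon$, which is fine when $R=r_g(x)$ but can degenerate when $R=r_1$ and $r_2=r_1$, so either assume $r_2<r_1$ (harmless) or adopt the paper's split, which automatically forces $\sigma$ into the ball of radius $2\epsilon$ in the chart case. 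Second, in the chart case the paper Taylor-expands around the point $t_0$ where $\|F\|+\|F'\|$ is maximal and applies Lemma~\ref{lem:linear_alg} there, getting $\|F(0)\|+\|F'(0)\|\leq D\leq \tfrac{16}{\epsilon}\max_t\|F(t)\|$ in one stroke; your detour through $M_0+M_1\leq C_0(\|F(0)\|+\|F'(0)\|)$ is a legitimate alternative (the absorption works once you substitute the $M_1$ bound into the $M_0$ bound before adding), though the constant that falls out is closer to $\epsilon/12$ than the $\epsilon/8$ you quote.
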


\begin{proof} The proof is divided into two cases:

\noindent  \textbf{Case 1:} Suppose that 
  \begin{align*}
  \epsilon \leq \max_{t \in [0,\epsilon]} d_M (\gamma(t), \sigma(t)).
  \end{align*}
 Then by Lemma~\ref{lem:obs3}
\begin{align*}
d_{T^1M}(\gamma^\prime(0), \sigma^\prime(0))
& \leq \pi+ d_M (\gamma(0), \sigma(0)) \\
& \leq \frac{\pi+1}{\epsilon} \max_{t \in [0,\epsilon]} d_M (\gamma(t), \sigma(t)).
\end{align*}

\noindent \textbf{Case 2:} Suppose that 
 \begin{align*}
  \epsilon \geq \max_{t \in [0,\epsilon]} d_M (\gamma(t), \sigma(t)).
  \end{align*}
  Fix $(U, \varphi)$ is a normal chart centered at $x$ with radius $\min\{r_g(x),2r_2\}$. Let $\wt{\gamma} = \varphi \circ \gamma$, $\wt{\sigma}=\varphi \circ \gamma$, and $F= \wt{\gamma}-\wt{\sigma}$.

 Define 
  \begin{align*}
  D = \max_{t \in [0,\epsilon]} \norm{F(t)} + \norm{F^\prime(t)} 
  \end{align*}
  and pick some $t_0 \in [0,\epsilon]$ realizing this maximum. \\
  
  \noindent \textbf{Claim:} For $t \in [0,\epsilon]$
   \begin{align*}
   \norm{F(t_0)+F^\prime(t_0)(t-t_0)} \leq  \norm{F(t)} + \frac{\epsilon}{16}D
   \end{align*}

   \begin{proof}[Proof of Claim:] Let $F=(F_1,\dots, F_d)$. Then by Taylor's theorem 
   \begin{align*}
  F_k(t) =F_k(t_0)+F_k^\prime(t_0)(t-t_0) + \frac{1}{2}F_k^{\prime\prime}(\zeta_k)(t-t_0)^2
   \end{align*}
   for some $\zeta_k$ between $t$ and $t_0$. Further 
   \begin{align*}
    \abs{F_k^{\prime\prime}(\zeta_k)} \leq A_2\Big(\norm{F(\zeta_k)} + \norm{F^\prime(\zeta_k)} \Big) \leq A_2 D 
    \end{align*} 
    by Lemma~\ref{lem:norm_chart_2}. So
   \begin{align*}
   \norm{F(t_0)+F^\prime(t_0)(t-t_0)} \leq  \norm{F(t)} + \frac{\sqrt{d}}{2} A_2D (t-t_0)^2. 
   \end{align*}
   Then 
      \begin{align*}
   \norm{F(t_0)+F^\prime(t_0)(t-t_0)} \leq  \norm{F(t)} + \frac{\epsilon}{16}D
   \end{align*}
   since $\epsilon < (8\sqrt{d}A_2)^{-1}$. 
   \end{proof}
   
     \noindent \textbf{Claim:}    \begin{align*}
   D \leq \frac{16}{\epsilon}\max_{t \in [0,\epsilon]}\norm{F(t)}.
   \end{align*}
   
   \begin{proof}[Proof of Claim:] By Lemma~\ref{lem:linear_alg}
   \begin{align*}
   D \leq \frac{8}{\epsilon}\max_{t \in [0,\epsilon]}\norm{F(t_0)+F^\prime(t_0)(t-t_0)}.
   \end{align*}
   Then by the previous claim
   \begin{align*}
   D \leq \frac{8}{\epsilon}\max_{t \in [0,\epsilon]}\Big(\norm{F(t)} + \frac{\epsilon}{16}D \Big).
   \end{align*}
   So
   \begin{align*}
   D \leq \frac{16}{\epsilon}\max_{t \in [0,\epsilon]}\norm{F(t)}.
   \end{align*}
   
   \end{proof}
   
    By Lemmas~\ref{lem:norm_chart_1}  and \ref{lem:norm_chart_3} 
  \begin{align*}
  d_{T^1M}\left(\gamma^\prime(0), \sigma^\prime(0)\right) \leq A_1A_3 \Big( \norm{F(0)} + \norm{F^\prime(0)} \Big) \leq A_1A_3 D
  \end{align*}
  and by Lemma~\ref{lem:norm_chart_1} 
    \begin{align*}
  d_M(\gamma(t), \sigma(t)) \geq \frac{1}{A_1}  \norm{F(t)}.
  \end{align*}
  So by the previous claim
      \begin{align*}
  d_{T^1M}\left(\gamma^\prime(0), \sigma^\prime(0)\right)  \leq \frac{16A_1^2A_3}{\epsilon}  \max_{t \in [0,\epsilon]} d_M(\gamma(t), \sigma(t)).
  \end{align*}

Thus $A_4 = \max\{ 16A_1^2 A_3, \pi+1\}$ satisfies the statement of the lemma. 

\end{proof}

\begin{proof}[Proof of Theorem~\ref{thm:dist_est}] Suppose $x \in M$, 
\begin{align*}
0 < \epsilon < \min\left\{{ \rm r}_g(x)/2, 1\right\},
\end{align*}
  and $\gamma, \sigma :  [0,\epsilon] \rightarrow M$ are geodesics with $\gamma(0)=x$. If $\epsilon< r_2$, then by Lemma~\ref{lem:special_case}
        \begin{align*}
  d_{T^1M}\left(\gamma^\prime(0), \sigma^\prime(0)\right)  \leq \frac{A_4}{\epsilon}  \max_{t \in [0,\epsilon]} d_M(\gamma(t), \sigma(t)).
  \end{align*}
  If $\epsilon > r_2$, then by Lemma~\ref{lem:special_case}
      \begin{align*}
  d_{T^1M}\left(\gamma^\prime(0), \sigma^\prime(0)\right)  \leq \frac{A_4}{r_2}  \max_{t \in [0,r_2]} d_M(\gamma(t), \sigma(t)) \leq  \frac{A_4r_2^{-1}}{\epsilon}  \max_{t \in [0,\epsilon]} d_M(\gamma(t), \sigma(t)).
  \end{align*}
  So $A = A_4r_2^{-1}$   satisfies the statement of the theorem. 
  
\end{proof}

\section{Proof of Theorem~\ref{thm:main_intro_biholo}}

In this section we prove the following strengthening of Theorem~\ref{thm:main_intro_biholo}.

\begin{theorem}\label{thm:main} Suppose $\Omega \subset \Cb^d$ is a bounded domain,  $\varphi \in \Aut(\Omega)$, and  $\partial \Omega$ satisfies an interior cone condition at $\xi_0 \in \partial \Omega$ with parameters $r,\theta$. Assume there exists an $\varphi$-invariant complete Riemannian metric $g$ on $\Omega$ such that 
\begin{enumerate}
\item the sectional curvature of $g$ is bounded in absolute value by $\kappa > 0$ and
\item there exists $a,A > 0$ such that 
\begin{align*}
a \norm{v} \leq \sqrt{ g_z(v,v)} \leq A \frac{ \norm{v}}{\delta_\Omega(v)}
\end{align*}
for all $z \in \Omega$ and $v \in \Cb^d$. 
\end{enumerate}
If 
\begin{align*}
L > 4d+2+\frac{\sqrt{\kappa}A}{\sin(\theta)}
\end{align*}
and
\begin{align*}
 \varphi(z) = z + { \rm O} \left( \norm{z-\xi_0}^L \right),
\end{align*}
then $\varphi = \id$.
\end{theorem}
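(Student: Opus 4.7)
The plan is to execute the sketch from the introduction in full generality. The argument combines three ingredients: (i) a logarithmic upper bound on $d_\Omega(p_0,p_n)$ as $p_n \to \xi_0$ along the cone axis, coming from the upper bound in property-$(BG)$; (ii) a short-time comparison of two nearby geodesics via Theorem~\ref{thm:dist_est}; and (iii) the long-time geodesic-spreading estimate of Proposition~\ref{prop:geod_spread}.

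\textbf{Preliminary reduction.} Theorem~\ref{thm:dist_est} requires that the curvature tensor have bounded covariant derivatives up to order $2$, which is not assumed here. So the first step is to apply Theorem~\ref{thm:deform} with small parameter $\epsilon > 0$ to replace $g$ by a $(1+\epsilon)$-bi-Lipschitz metric $\tilde g$ satisfying $\sup_x |\tilde\nabla^q \tilde R| < \infty$ for $q=0,1,2$, sectional curvature bounded by $\kappa+\epsilon$, and for which $\varphi$ is still an isometry. Since the inequality on $L$ is strict, taking $\epsilon$ small absorbs the perturbations in the bi-Lipschitz constants and the curvature bound; from now on assume $g$ itself has this extra regularity. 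Propositions~\ref{prop:useful_lower_bd_1} and~\ref{prop:useful_lower_bd_2} together with Theorem~\ref{thm:inj_vs_convex} then give a lower bound on the convexity radius, $r_g(z)\gtrsim \delta_\Omega(z)^{4d+1}$.

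\textbf{The main estimate.} Pick a unit cone direction $v$ at $\xi_0$ and set $p_n := \xi_0 + t_n v$ with $t_n\searrow 0$; fix $p_0 := \xi_0 + (r/2)v$ and $z_0\in\Omega$. Since $\delta_\Omega(\xi_0 + sv)\geq s\sin\theta$ on the cone axis, integrating the property-$(BG)$ upper bound along the segment $[p_n,p_0]$ yields
\begin{equation*}
d_\Omega(p_0,p_n) \leq C_0 + \frac{A}{\sin\theta}\log\frac{1}{\|p_n-\xi_0\|}.
\end{equation*}
Let $\gamma_n:[0,T_n]\to\Omega$ be a unit-speed geodesic from $p_n$ to $z_0$, so that $T_n\leq d_\Omega(p_0,z_0)+d_\Omega(p_0,p_n)$, and let $\sigma_n:=\varphi\circ\gamma_n$, which is again unit speed since $\varphi$ is an isometry. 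Set $\epsilon_n:=\min\{1,r_g(p_n)/2\}\gtrsim \|p_n-\xi_0\|^{4d+1}$. The lower Euclidean bound $a\|v\|\leq \sqrt{g_z(v,v)}$ forces $\gamma_n(t)$ to stay within Euclidean distance $t/a\leq \epsilon_n/a$ of $p_n$, which for $n$ large is negligible compared to $\|p_n-\xi_0\|$; thus every $q=\gamma_n(t)$ with $t\in[0,\epsilon_n]$ satisfies $\|q-\xi_0\|\leq 2\|p_n-\xi_0\|$ and $\delta_\Omega(q)\geq \delta_\Omega(p_n)/2$. Integrating the upper bound on $g$ along the Euclidean segment from $q$ to $\varphi(q)$ (whose Euclidean length is $O(\|q-\xi_0\|^L)$) gives
\begin{equation*}
\max_{t\in[0,\epsilon_n]} d_\Omega(\gamma_n(t),\sigma_n(t)) = \max_t d_\Omega(\gamma_n(t),\varphi(\gamma_n(t))) \leq C_1\|p_n-\xi_0\|^{L-1}.
\end{equation*}
Applying Theorem~\ref{thm:dist_est} at $\gamma_n'(0)$ and $\sigma_n'(0)$ with this $\epsilon_n$ and then Proposition~\ref{prop:geod_spread} along $[0,T_n]$ produces
\begin{equation*}
d_\Omega(z_0,\varphi(z_0)) = d_\Omega(\gamma_n(T_n),\sigma_n(T_n)) \leq \exp\!\Big(\tfrac{\kappa+1}{2}T_n\Big)\cdot \frac{C_2}{\epsilon_n}\|p_n-\xi_0\|^{L-1} \leq C_3\|p_n-\xi_0\|^{L-4d-2-(\kappa+1)A/(2\sin\theta)}.
\end{equation*}
If $L>4d+2+(\kappa+1)A/(2\sin\theta)$, letting $n\to\infty$ forces $\varphi(z_0)=z_0$, and since $z_0\in\Omega$ was arbitrary, $\varphi=\mathrm{id}$.

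\textbf{Sharpening the constant and the main obstacle.} The factor $(\kappa+1)/2$ is replaced by $\sqrt\kappa$ by rerunning the argument with $\kappa g$ in place of $g$: its sectional curvature bound becomes $1$ and its property-$(BG)$ upper constant becomes $\sqrt\kappa A$, so the hypothesis on $L$ becomes $L>4d+2+(1+1)\sqrt\kappa A/(2\sin\theta)=4d+2+\sqrt\kappa A/\sin\theta$. The main technical obstacle is the delicate balancing of $\epsilon_n$: it must be $\leq r_g(p_n)/2$ so that Theorem~\ref{thm:dist_est} applies, yet large enough that $\epsilon_n^{-1}\max d_\Omega(\gamma_n(t),\sigma_n(t))$ is negligible. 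The exponent $4d+2$ is the price paid for the weak injectivity-radius lower bound in Proposition~\ref{prop:useful_lower_bd_2} and would collapse to $2$ if positive injectivity radius were assumed, giving Remark~\ref{rmk:bound_on_L}(3).
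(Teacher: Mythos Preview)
Your proposal follows essentially the same strategy as the paper, but there is a gap in your choice of $\epsilon_n$. You set $\epsilon_n = \min\{1, r_g(p_n)/2\}$ and then assert that ``$\gamma_n(t)$ stays within Euclidean distance $t/a \leq \epsilon_n/a$ of $p_n$, which for $n$ large is negligible compared to $\|p_n-\xi_0\|$.'' But you have no upper bound on $r_g(p_n)$: on a manifold with positive injectivity radius (precisely the case of Remark~\ref{rmk:bound_on_L}(3)) it is bounded below by a constant, so $\epsilon_n/a$ need not be small, and the subsequent claims $\|q-\xi_0\| \leq 2\|p_n-\xi_0\|$ and $\delta_\Omega(q) \geq \delta_\Omega(p_n)/2$ for $q=\gamma_n(t)$ are unjustified as written.

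The paper addresses exactly this point by introducing a separate time
\[
\tau_n = \max\Big\{\tau \in [0,T_n] : \|\gamma_n(t)-p_n\| \leq \tfrac{\sin\theta}{4}\,\|p_n-\xi_0\| \text{ for all } t\in[0,\tau]\Big\},
\]
proving $\tau_n \geq \delta\,\|p_n-\xi_0\|$ from the lower bound $a\|v\| \leq \sqrt{g_z(v,v)}$, and then setting $\epsilon_n = \min\{r_g(p_n)/2,\, \tau_n,\, 1\}$. Since $\tau_n \gtrsim \|p_n-\xi_0\| \gg \|p_n-\xi_0\|^{4d+1}$, the lower bound $\epsilon_n \gtrsim \|p_n-\xi_0\|^{4d+1}$ is unchanged, and the rest of your argument goes through verbatim. (A minor side remark: you invoke Proposition~\ref{prop:useful_lower_bd_1}, but it is neither needed nor applicable here---the lower bound $a\|v\| \leq \sqrt{g_z(v,v)}$ is already a hypothesis of Theorem~\ref{thm:main}, and $g$ is only assumed Riemannian, not K\"ahler.)
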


\begin{remark} Notice that Theorem~\ref{thm:main} and Proposition~\ref{prop:useful_lower_bd_1} imply Theorem~\ref{thm:main_intro_biholo}. \end{remark}

For the rest of the section suppose that $\Omega$,  $\varphi$, $g$, $\xi_0$, $r$, $\theta$, $\kappa$, $a$, and $A$ satisfy the hypothesis of Theorem~\ref{thm:main}.  Then there exists some $v \in \Cb^d$ such that $\norm{v}=1$ and
\begin{align*}
\Cc(\xi_0,v,\theta,r) \subset \Omega.
\end{align*}
By replacing $\Omega$ with $\frac{1}{2r} \Omega$ and $g$ with $\Phi_*g$ where $\Phi(z) = \frac{1}{2r}z$, we can assume that $r=2$. Notice that this does not change $\theta$, $\kappa$, or $A$. Then 
\begin{align*}
\delta_\Omega(\xi_0+tv) \geq \sin(\theta) t
\end{align*}
for every $t \in (0,1]$.

If we replace $g$ with $\lambda g$ where $\lambda > 0$, then $A$ is replaced by $\sqrt{\lambda}A$ and $\kappa$ is replaced by $\kappa/\lambda$. Thus the quantity 
\begin{align*}
4d+2+\frac{\sqrt{\kappa}A}{\sin(\theta)}
\end{align*}
is invariant under scaling $g$. So we may assume that $\kappa = 1$. 

Suppose that 
\begin{align*}
L >  4d+2+\frac{A}{\sin(\theta)}
\end{align*}
and
\begin{align*}
 \varphi(z) = z + { \rm O} \left( \norm{z-\xi}^L \right).
\end{align*}

Fix $\epsilon > 0$ such that 
\begin{align*}
L >  4d+2+(2+\epsilon)(1+\epsilon)\frac{A}{2\sin(\theta)}.
\end{align*}
Then by Theorem~\ref{thm:deform} we can find a complete Riemannian metric $\wt{g}$ on $\Omega$ such that:
\begin{enumerate}
\item the Riemannian sectional curvature of $\wt{g}$ is bounded in absolute value by $1+\epsilon$,
\item the metrics $\wt{g}$ and $g$ are $(1+\epsilon)$-bi-Lipschitz, 
\item if $ \wt{R}$ is the curvature tensor of $\wt{g}$, then 
\begin{align*}
\sup_{x \in M} \abs{ \wt{\nabla}^q \wt{R} }< \infty
\end{align*}
where $ \wt{\nabla}^q$ denotes the $q^{th}$ covariant derivative with respect to $\wt{g}$, and 
\item $\varphi \in { \rm Isom}(M, \wt{g})$. 
\end{enumerate}
Let $d_\Omega$ be the distance on $\Omega$ induced by $\wt{g}$. 

Next fix a sequence $r_n \in (0, 1]$ with $r_0=1$ and $r_n \rightarrow 0$. Let $p_n=\xi_0+r_n v \in \Omega$. 

\begin{lemma} With the notation above, 
\begin{align*}
d_\Omega(p_n, p_0) \leq \frac{(1+\epsilon)A}{\sin(\theta)} \log \frac{1}{r_n}.
\end{align*}
for every $n \geq 0$. 
\end{lemma}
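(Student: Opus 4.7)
The plan is to use the straight-line segment from $p_0 = \xi_0 + v$ to $p_n = \xi_0 + r_n v$ as a comparison curve and bound its length with respect to $\wt{g}$ via the bi-Lipschitz comparison with $g$ and the upper bound of $g$ in property-$(BG)$.

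Concretely, first I would parametrize $\gamma : [r_n, 1] \to \Omega$ by $\gamma(t) = \xi_0 + t v$. Since $\Cc(\xi_0, v, \theta, 2) \subset \Omega$ (after the normalization $r = 2$), every point $\gamma(t)$ for $t \in (0, 1]$ lies inside this truncated cone, and elementary geometry of the cone gives
\begin{align*}
\delta_\Omega(\gamma(t)) \geq \sin(\theta) \cdot t.
\end{align*}
This is the key geometric input: distance to the boundary grows at least linearly as we retreat along the normal direction into the cone.

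Next I would estimate the $g$-length of $\gamma$. Since $\|\gamma'(t)\| = \|v\| = 1$, the upper bound in property-$(BG)$ gives
\begin{align*}
\sqrt{g_{\gamma(t)}(\gamma'(t), \gamma'(t))} \leq A \frac{\|\gamma'(t)\|}{\delta_\Omega(\gamma(t))} \leq \frac{A}{\sin(\theta) \, t},
\end{align*}
so the $g$-length of $\gamma$ is at most
\begin{align*}
\int_{r_n}^{1} \frac{A}{\sin(\theta)\, t}\, dt = \frac{A}{\sin(\theta)} \log \frac{1}{r_n}.
\end{align*}

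Finally, since $\wt{g}$ and $g$ are $(1 + \epsilon)$-bi-Lipschitz (property (2) of the deformation), the $\wt{g}$-length of $\gamma$ is at most $(1+\epsilon)$ times its $g$-length. Therefore
\begin{align*}
d_\Omega(p_n, p_0) \leq \ell_{\wt{g}}(\gamma) \leq (1+\epsilon) \ell_g(\gamma) \leq \frac{(1+\epsilon) A}{\sin(\theta)} \log \frac{1}{r_n},
\end{align*}
which is the desired bound. There is no real obstacle here; the only subtlety is making sure we invoke the correct post-normalization values (the cone radius was rescaled to $2$, so the segment $\gamma([r_n, 1])$ does lie in the cone) and that the bi-Lipschitz comparison is applied at the level of lengths of curves rather than pointwise distances.
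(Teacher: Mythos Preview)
Your proof is correct and is essentially identical to the paper's argument: both integrate along the straight segment from $p_0$ to $p_n$, use the cone estimate $\delta_\Omega(\xi_0+tv)\geq \sin(\theta)\,t$, and combine the upper bound on $g$ with the $(1+\epsilon)$-bi-Lipschitz comparison to $\wt{g}$. The only cosmetic differences are your choice of parametrization and that you separate out the $g$-length estimate before passing to $\wt{g}$, whereas the paper bounds $\sqrt{\wt{g}}$ directly.
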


\begin{proof}
Let $\sigma: [0,1) \rightarrow \Omega$ be the curve $\sigma(t) = \xi_0+(1-t)v$. Then using the fact that 
\begin{align*}
\delta_\Omega(\xi_0+tv) \geq \sin(\theta) t
\end{align*}
for every $t \in (0,1]$, we have
\begin{align*}
d_\Omega(p_n,p_0) 
&\leq \int_0^{1-r_n} \sqrt{\wt{g}_{\sigma(t)}(\sigma^\prime(t), \sigma^\prime(t))} dt \leq \int_0^{1-r_n} \frac{ (1+\epsilon)A \norm{\sigma^\prime(t)}}{\delta_\Omega(\sigma(t))} dt \\
& \leq \frac{(1+\epsilon)A}{\sin(\theta)} \int_0^{1-r_n} \frac{dt}{1-t}  = \frac{(1+\epsilon)A}{\sin(\theta)} \log \frac{1}{r_n}.
\end{align*}
\end{proof}

Next fix some $z_0 \in \Omega$ and let $\gamma_n:[0,T_n] \rightarrow \Omega$ be a unit speed geodesic in $(\Omega, d_\Omega)$ with $\gamma_n(0)=p_n$ and $\gamma_n(T_n)=z_0$. Then 
\begin{align}
\label{eq:T_estimate}
T_n = d_\Omega(z_0,p_n) \leq d_\Omega(z_0,p_0) +\frac{(1+\epsilon)A}{\sin(\theta)} \log \frac{1}{r_n}.
\end{align}

Next let
\begin{align*}
\tau_n = \max\left\{ \tau \in [0,T_n] :\norm{ \gamma_n(t) -p_n} \leq \frac{\sin(\theta) r_n}{4} \text{ for all } t \in [0,\tau]\right\}.
\end{align*}

\begin{lemma}\label{lem:lower_bd_tau} With the notation above, there exists $\delta> 0$ such that
\begin{align*}
\tau_n \geq \delta r_n
\end{align*}
 for all $n$ sufficiently large. \end{lemma}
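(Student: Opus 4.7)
The plan is to prove the lower bound on $\tau_n$ by comparing the Riemannian speed of $\gamma_n$ (which is $1$ by construction) to its Euclidean speed, and then integrating.

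First I would use the hypotheses on $g$ to obtain a uniform bound on the Euclidean speed of $\gamma_n$. The lower bound $a\norm{v} \le \sqrt{g_z(v,v)}$ together with the $(1+\epsilon)$-bi-Lipschitz relation between $g$ and $\wt{g}$ gives
\begin{align*}
\frac{a}{1+\epsilon}\norm{v} \le \sqrt{\wt{g}_z(v,v)}
\end{align*}
for every $z\in \Omega$ and $v\in \Cb^d$. Applying this to the unit-speed geodesic $\gamma_n$ yields $\norm{\gamma_n'(t)} \le (1+\epsilon)/a$ for all $t$, so for any $s \in [0,T_n]$,
\begin{align*}
\norm{\gamma_n(s) - p_n} \le \int_0^s \norm{\gamma_n'(t)}\,dt \le \frac{1+\epsilon}{a}\, s.
\end{align*}

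Next I would rule out the degenerate possibility $\tau_n = T_n$. Since $p_n \to \xi_0 \in \partial \Omega$ while $z_0 \in \Omega$ is fixed, there exists $N$ such that $\norm{z_0 - p_n} > \sin(\theta) r_n/4$ for all $n \ge N$. For such $n$, $\gamma_n(T_n) = z_0$ cannot lie in the Euclidean ball of radius $\sin(\theta) r_n/4$ around $p_n$, so $\tau_n < T_n$, and by continuity and the maximal choice of $\tau_n$ we have $\norm{\gamma_n(\tau_n) - p_n} = \sin(\theta) r_n / 4$.

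Combining the two observations, for $n \ge N$,
\begin{align*}
\frac{\sin(\theta)}{4}\, r_n = \norm{\gamma_n(\tau_n) - p_n} \le \frac{1+\epsilon}{a}\, \tau_n,
\end{align*}
which gives $\tau_n \ge \delta r_n$ with $\delta = \frac{a\sin(\theta)}{4(1+\epsilon)}$. The argument is essentially routine once one notices the right quantities to estimate; the only mild subtlety is making sure the lower Lipschitz constant $a$ transfers from $g$ to $\wt{g}$ (handled by the bi-Lipschitz hypothesis) and that the maximum defining $\tau_n$ is not attained at the endpoint $T_n$, which follows from $p_n \to \xi_0 \in \partial\Omega$.
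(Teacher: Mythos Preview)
Your proof is correct and follows essentially the same approach as the paper: both transfer the lower bound $a\norm{v}\le\sqrt{g_z(v,v)}$ to $\wt g$ via the $(1+\epsilon)$-bi-Lipschitz relation, use it to compare $\tau_n=d_\Omega(p_n,\gamma_n(\tau_n))$ with the Euclidean displacement $\norm{\gamma_n(\tau_n)-p_n}$, and observe that for large $n$ the endpoint $z_0$ lies outside the Euclidean ball so that $\norm{\gamma_n(\tau_n)-p_n}=\sin(\theta)r_n/4$, yielding $\delta=\frac{a\sin(\theta)}{4(1+\epsilon)}$.
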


\begin{proof} Since
\begin{align*}
\sqrt{\wt{g}_z(v,v)} \geq \frac{a}{1+\epsilon} \norm{v}
\end{align*}
for all $z \in \Omega$ and $v \in \Cb^d$, we have
\begin{align*}
d_\Omega(z,w) \geq \frac{a}{1+\epsilon} \norm{z-w}
\end{align*}
for all $z,w \in \Omega$. Now if $\norm{p_n-z_0} > \sin(\theta)  r_n/4$ then 
\begin{align*}
\tau_n = d_\Omega(\gamma_n(0), \gamma_n(\tau_n)) \geq \frac{a}{1+\epsilon} \norm{p_n-\gamma_n(\tau_n)} = \frac{\sin(\theta)  a}{4(1+\epsilon)}r_n.
\end{align*}

\end{proof}

Now pick $\alpha > 0$ such that 
\begin{align*}
\norm{\varphi(z)-z} \leq \alpha \norm{z-\xi_0}^L
\end{align*}
for all $z \in \Omega$. 

\begin{lemma}\label{lem:dist_est_t} There exists $C_1 > 0$ and $N > 0$ such that 
\begin{align*}
d_\Omega\Big( \gamma_n(t), \varphi(\gamma_n(t))\Big) \leq C_1 r_n^{L-1}
\end{align*}
for all $n \geq N$ and $t \in[0, \tau_n]$. 
\end{lemma}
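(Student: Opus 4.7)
The plan is to bound $d_\Omega(\gamma_n(t),\varphi(\gamma_n(t)))$ by the length of the Euclidean straight-line segment between these two points, measured in $\wt{g}$. The key observation is that for $t\in[0,\tau_n]$ the point $\gamma_n(t)$ lies in the ball $\Bb_d(p_n;\sin(\theta)r_n/4)$ and, because of the interior cone condition and the choice $r=2$, the point $p_n=\xi_0+r_nv$ satisfies $\delta_\Omega(p_n)\geq \sin(\theta)r_n$. Consequently $\delta_\Omega(\gamma_n(t))\geq 3\sin(\theta)r_n/4$, and also $\norm{\gamma_n(t)-\xi_0}\leq(1+\sin(\theta)/4)r_n\leq 2r_n$.

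First I would use the hypothesis $\norm{\varphi(z)-z}\leq \alpha\norm{z-\xi_0}^L$ to deduce
\begin{align*}
\norm{\varphi(\gamma_n(t))-\gamma_n(t)}\leq \alpha(2r_n)^L = 2^L\alpha\, r_n^L .
\end{align*}
Since $L>1$, for $n\geq N$ sufficiently large this is bounded by $\sin(\theta)r_n/4$, so the straight-line path
\begin{align*}
\sigma(s)=(1-s)\gamma_n(t)+s\,\varphi(\gamma_n(t)),\qquad s\in[0,1],
\end{align*}
stays inside $\Bb_d(p_n;\sin(\theta)r_n/2)\subset\Omega$ and satisfies $\delta_\Omega(\sigma(s))\geq \sin(\theta)r_n/2$ throughout. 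In particular $\sigma$ is an admissible curve joining $\gamma_n(t)$ to $\varphi(\gamma_n(t))$.

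Next I would use that $\wt{g}$ and $g$ are $(1+\epsilon)$-bi-Lipschitz combined with the upper bound on $g$, which yields $\sqrt{\wt{g}_z(v,v)}\leq (1+\epsilon)A\,\norm{v}/\delta_\Omega(z)$. Integrating along $\sigma$,
\begin{align*}
d_\Omega(\gamma_n(t),\varphi(\gamma_n(t)))
&\leq \int_0^1\sqrt{\wt{g}_{\sigma(s)}(\sigma'(s),\sigma'(s))}\,ds\\
&\leq \int_0^1 \frac{(1+\epsilon)A\norm{\varphi(\gamma_n(t))-\gamma_n(t)}}{\delta_\Omega(\sigma(s))}\,ds\\
&\leq \frac{(1+\epsilon)A\cdot 2^L\alpha\,r_n^L}{\sin(\theta)r_n/2}
= \frac{2^{L+1}(1+\epsilon)A\alpha}{\sin(\theta)}\,r_n^{L-1},
\end{align*}
which proves the lemma with $C_1=2^{L+1}(1+\epsilon)A\alpha/\sin(\theta)$.

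There is no real obstacle; the only care needed is ensuring the straight-line path lies in $\Omega$, which is handled by taking $n$ large enough that $2^L\alpha r_n^{L-1}\leq \sin(\theta)/4$, and keeping track of the $(1+\epsilon)$ factor coming from the deformation $g\rightsquigarrow\wt{g}$. All other ingredients (the lower bound on $\delta_\Omega(p_n)$ from the interior cone condition, the upper bound on $\wt{g}$, and the polynomial decay of $\varphi(z)-z$) are already stated earlier in the excerpt.
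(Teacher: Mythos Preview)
Your proof is correct and follows essentially the same approach as the paper: both use the straight-line segment $\sigma(s)=(1-s)\gamma_n(t)+s\,\varphi(\gamma_n(t))$, bound $\delta_\Omega$ along it via the cone condition and the definition of $\tau_n$, and then integrate the upper bound $\sqrt{\wt{g}_z(v,v)}\leq (1+\epsilon)A\norm{v}/\delta_\Omega(z)$. The only differences are cosmetic constants (the paper uses $\norm{\gamma_n(t)-\xi_0}\leq \frac{\sin(\theta)+4}{4}r_n$ rather than your simpler $2r_n$).
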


\begin{proof} If $t \in [0,\tau_n]$, then 
\begin{align*}
\norm{\gamma_n(t) - \xi_0} \leq \norm{\gamma_n(t) - p_n}+\norm{p_n - \xi_0}\leq\frac{\sin(\theta)+4}{4} r_n .
\end{align*}
So 
\begin{align*}
\norm{\gamma_n(t) - \varphi(\gamma_n(t))} \leq \alpha\left(\frac{\sin(\theta)+4}{4} r_n \right)^L.
\end{align*}
Since $r_n \rightarrow 0$ and $L>1$ we can pick $N \geq 0$ such that 
\begin{align*}
\norm{\gamma_n(t) - \varphi(\gamma_n(t))} \leq \frac{\sin(\theta)}{4}r_n
\end{align*}
for all $n \geq N$. 

Next define $\sigma_n: [0,1] \rightarrow \Cb^d$ by 
\begin{align*}
\sigma_n(s) = (1-s)\gamma_n(t) + s\varphi(\gamma_n(t)).
\end{align*}
Then for $n \geq N$ we have 
\begin{align*}
\delta_\Omega(\sigma_n(s)) 
&\geq \delta_\Omega(p_n) - \norm{p_n-\sigma_n(s)}  \geq \delta_\Omega(p_n) - \norm{p_n-\gamma_n(t)}-\norm{\gamma_n(t) - \varphi(\gamma_n(t))} \\
& = \frac{\sin(\theta)}{2}r_n.
\end{align*}
So
\begin{align*}
d_\Omega(\gamma_n(t), &\varphi(\gamma_n(t))) 
\leq \int_0^{1} \sqrt{\wt{g}_{\sigma(s)}(\sigma^\prime(s), \sigma^\prime(s))} ds   \leq (1+\epsilon)A\int_0^1 \frac{\norm{\sigma^\prime_n(s)}}{\delta_\Omega(\sigma_n(s)) } ds & \\
&  \leq (1+\epsilon)A\alpha\left(\frac{\sin(\theta)+4}{4} \right)^L \frac{2}{\sin(\theta)}r_n^{L-1}.
\end{align*}

%
%
%

\end{proof}

\begin{lemma}\label{lem:initial_cond_bd} There exists $C_2 > 0$ such that 
\begin{align*}
d_{T^1\Omega}\Big( \gamma_n^\prime(0), (\varphi \circ \gamma_n)^\prime(0) \Big) \leq C_2 r_n^{L-4d-2}
\end{align*}
for all $n \geq N$. 
\end{lemma}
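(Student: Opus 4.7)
The plan is to apply Theorem~\ref{thm:dist_est} to the complete Riemannian manifold $(\Omega, \wt{g})$. Property (3) of the Ricci-flow deformation gives $\sup |\wt{\nabla}^q \wt{R}| < \infty$ for $q=0,1,2$, so the theorem applies. Since $\gamma_n$ is unit speed with $\gamma_n(0) = p_n$ and $\varphi$ is a $\wt{g}$-isometry (property (4)), the curve $\varphi\circ\gamma_n$ is also a unit speed geodesic. Thus for any $\epsilon_n$ satisfying $0 < \epsilon_n < \min\{ r_{\wt{g}}(p_n)/2, 1\}$, there exists a constant $A_0 > 0$ independent of $n$ with
\begin{align*}
d_{T^1\Omega}\bigl(\gamma_n'(0), (\varphi\circ\gamma_n)'(0)\bigr) \leq \frac{A_0}{\epsilon_n} \max_{t \in [0,\epsilon_n]} d_\Omega\bigl(\gamma_n(t), (\varphi\circ\gamma_n)(t)\bigr).
\end{align*}
The goal is to pick $\epsilon_n$ as large as possible subject to both the smallness constraint above and the constraint $\epsilon_n \leq \tau_n$ needed to invoke Lemma~\ref{lem:dist_est_t}.

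To control the convexity radius, I would first observe that the bi-Lipschitz equivalence of $\wt{g}$ and $g$ transfers the hypotheses of Proposition~\ref{prop:useful_lower_bd_2} from $g$ to $\wt{g}$ (with constants $a/(1+\epsilon)$ and $(1+\epsilon)A$). This yields
\begin{align*}
{\rm inj}_{\wt{g}}(p_n) \geq I_0\, \delta_\Omega(p_n)^{4d+1}.
\end{align*}
Combining with the interior cone condition, which gives $\delta_\Omega(p_n) = \delta_\Omega(\xi_0 + r_n v) \geq \sin(\theta) r_n$, and then with Theorem~\ref{thm:inj_vs_convex}, I obtain $r_{\wt{g}}(p_n) \geq c_0\, r_n^{4d+1}$ for some $c_0 > 0$ and all $n$ large (so that the injectivity-based lower bound beats $\pi/(2\sqrt{1+\epsilon})$).

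I then set $\epsilon_n := \tfrac{c_0}{2}\, r_n^{4d+1}$. Since $r_n \leq 1$ we have $r_n^{4d+1} \leq r_n$, so Lemma~\ref{lem:lower_bd_tau} guarantees $\epsilon_n \leq \tau_n$ for all $n$ large, and of course $\epsilon_n < 1$. Lemma~\ref{lem:dist_est_t} then gives
\begin{align*}
\max_{t \in [0,\epsilon_n]} d_\Omega\bigl(\gamma_n(t), (\varphi\circ\gamma_n)(t)\bigr) \leq C_1\, r_n^{L-1}
\end{align*}
for $n \geq N$, and plugging this into the estimate from Theorem~\ref{thm:dist_est} yields
\begin{align*}
d_{T^1\Omega}\bigl(\gamma_n'(0), (\varphi\circ\gamma_n)'(0)\bigr) \leq \frac{A_0 C_1}{\epsilon_n} r_n^{L-1} = \frac{2 A_0 C_1}{c_0}\, r_n^{L - 4d - 2},
\end{align*}
so $C_2 := 2 A_0 C_1 / c_0$ works.

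The only subtle point is the appearance of the factor $4d+1$ in the injectivity radius bound, which comes from the Cheeger--Gromov--Taylor volume comparison and is exactly what forces the exponent $L - 4d - 2$ in the conclusion rather than the cleaner $L - 2$ one would get if $(\Omega, \wt{g})$ had positive injectivity radius. This loss is genuine and explains Remark~\ref{rmk:bound_on_L}(3).
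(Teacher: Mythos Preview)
Your proof is correct and follows essentially the same approach as the paper: the paper sets $\epsilon_n = \min\{ r_{\wt{g}}(p_n)/2, \tau_n, 1\}$ and shows $\epsilon_n \geq E_0 r_n^{4d+1}$, whereas you choose $\epsilon_n$ of order $r_n^{4d+1}$ directly and verify the constraints, but the ingredients (Proposition~\ref{prop:useful_lower_bd_2}, Theorem~\ref{thm:inj_vs_convex}, Lemma~\ref{lem:lower_bd_tau}, Lemma~\ref{lem:dist_est_t}, Theorem~\ref{thm:dist_est}) and their roles are identical. One small point: the sentence ``since $r_n \leq 1$ we have $r_n^{4d+1} \leq r_n$'' does not by itself give $\epsilon_n \leq \tau_n$ unless $c_0/2 \leq \delta$; the clean justification is that $\epsilon_n/\tau_n \leq \tfrac{c_0}{2\delta} r_n^{4d} \to 0$, so $\epsilon_n \leq \tau_n$ for all $n$ large, which is clearly what you intend.
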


\begin{proof}
Let 
\begin{align*}
\epsilon_n = \min\left\{ {\rm  r}_{\wt{g}}(p_n)/2, \tau_n,1\right\}.
\end{align*}
By Proposition~\ref{prop:useful_lower_bd_2}, Theorem~\ref{thm:inj_vs_convex}, and Lemma~\ref{lem:lower_bd_tau} there exists $E_0 > 0$ such that 
\begin{align*}
\epsilon_n \geq E_0 r_n^{4d+1}.
\end{align*}
By Theorem~\ref{thm:dist_est} there exists some $\beta > 0$ such that 
\begin{align*}
d_{T^1\Omega}\Big( \gamma_n^\prime(0), (\varphi \circ \gamma_n)^\prime(0) \Big) \leq \frac{\beta}{\epsilon_n} \max_{t \in [0,\epsilon_n]} d_\Omega( \gamma_n(t), \varphi(\gamma_n(t)) )
\end{align*}
So by Lemma~\ref{lem:dist_est_t} 
\begin{align*}
d_{T^1\Omega}\Big( \gamma_n^\prime(0), (\varphi \circ \gamma_n)^\prime(0) \Big) \leq  \frac{\beta C_1}{E_0} r_n^{L-4d-2}.
\end{align*}

\end{proof}

Then by Proposition~\ref{prop:geod_spread} and Equation~\eqref{eq:T_estimate}
\begin{align*} 
d_\Omega(z_0, \varphi(z_0)) 
&\leq \exp\left( \frac{1+\epsilon+1}{2}T_n \right) d_{T^1\Omega} \Big( \gamma_n^\prime(0), (\varphi \circ \gamma_n)^\prime(0) \Big) \\
& \leq C_2 \exp\left( \frac{2+\epsilon}{2}d_\Omega(p_0,z_0) \right) r_n^{L-4d-2-(2+\epsilon)(1+\epsilon)\frac{A}{2\sin(\theta)}}.
\end{align*}

Since $r_n \rightarrow 0$ and
\begin{align*}
L > 4d+2+(2+\epsilon)(1+\epsilon)\frac{A}{2\sin(\theta)}
\end{align*}
 we see that $d_\Omega(z_0, \varphi(z_0)) =0$. Hence $\varphi(z_0) = z_0$. Since $z_0$ was arbitrary we then see that $\varphi = \id$. 
 
 
 \begin{remark} In the special case when 
 \begin{align*}
 \inf_{z \in \Omega} { \rm inj}_{g}(z) > 0
 \end{align*}
 it suffices to assume that 
\begin{align*}
L >  2+\frac{\sqrt{\kappa} A}{\sin(\theta)}.
\end{align*}
In this case one first shows that 
 \begin{align*}
 \inf_{z \in \Omega} { \rm inj}_{\wt{g}}(z) > 0.
 \end{align*}
Then Theorem~\ref{thm:inj_vs_convex} implies that
  \begin{align*}
 \inf_{z \in \Omega} { \rm r}_{\wt{g}}(z) > 0.
 \end{align*}
 So in the proof of Lemma~\ref{lem:initial_cond_bd} we can assume $\epsilon_n \geq E_0r_n$  which implies that
 \begin{align*}
d_{T^1\Omega}\Big( \gamma_n^\prime(0), (\varphi \circ \gamma_n)^\prime(0) \Big) \leq C_2 r_n^{L-2}.
\end{align*}
The rest of the argument is identical. 
 \end{remark}

\section{Examples}\label{sec:examples}

Given a domain $\Omega$ let $k_\Omega$ denote the infinitesimal Kobayashi metric on $\Omega$. By the definition of the Kobayashi metric 
\begin{align*}
k_\Omega(z;v) \leq \frac{ \norm{v}}{\delta_\Omega(z)}
\end{align*}
for all $z \in \Omega$ and $v \in \Cb^d$. 

\subsection{HRR domains} Given a bounded domain $\Omega \subset \Cb^d$ let $s_\Omega : \Omega \rightarrow (0,1]$ be the \emph{squeezing function on $\Omega$}, that is 
\begin{align*}
s_\Omega(z) = \sup\{ r : & \text{ there exists an one-to-one holomorphic map } \\
& f: \Omega \rightarrow \Bb_d \text{ with } f(z)=0 \text{ and } r\Bb_d \subset f(\Omega) \}.
\end{align*}
Then define 
\begin{align*}
s(\Omega) = \inf_{z \in \Omega} s_\Omega(z). 
\end{align*}
Notice that $\Omega$ is a HRR domain if and only if $s(\Omega)>0$. Then Sai-Kee Yeung proved the following. 

\begin{theorem}\label{thm:SKYeung}\cite[Theorem 2]{Y2009}  For every $s \in (0,1]$ and $d \in \Nb$, there exists $\alpha=\alpha(s,d)>1$ and $\kappa=\kappa(s,d) > 0$ such that: if $\Omega \subset \Cb^d$ is a bounded HRR domain with $s(\Omega) \geq s$, then 
\begin{enumerate}
\item $k_\Omega$ and $g_\Omega$ are $\alpha$-bi-Lipschitz, and
\item  the sectional curvature of $g_\Omega$ is bounded in absolute value by $\kappa$.
\end{enumerate}
\end{theorem}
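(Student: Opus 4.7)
Since both $k_\Omega$ and the K{\"a}hler-Einstein metric $g_\Omega$ are biholomorphic invariants of $\Omega$, and the sectional curvature of a K{\"a}hler metric is a biholomorphic invariant, I would first reduce the theorem to a uniform statement on a universal family of domains. Given $z \in \Omega$, use the HRR hypothesis to choose a holomorphic embedding $f_z : \Omega \to \Bb_d$ with $f_z(z) = 0$ and $s\Bb_d \subset f_z(\Omega) \subset \Bb_d$, and set $D_z := f_z(\Omega)$. Then
\[
k_\Omega(z;v) = k_{D_z}\bigl(0; d(f_z)_z(v)\bigr), \qquad g_\Omega(z; v, v) = g_{D_z}\bigl(0; d(f_z)_z(v), d(f_z)_z(v)\bigr),
\]
and the sectional curvature at $z$ of $g_\Omega$ equals the sectional curvature at $0$ of $g_{D_z}$. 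Hence it suffices to prove that for every $D$ in the universal family
\[
\Dc_s := \{D \subset \Cb^d : s\Bb_d \subset D \subset \Bb_d\}
\]
both $k_D(0;\cdot)$ and $\sqrt{g_D(0;\cdot,\cdot)}$ are uniformly bi-Lipschitz equivalent to the Euclidean norm, and the sectional curvature of $g_D$ at $0$ is uniformly bounded, with all constants depending only on $s$ and $d$.

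The Kobayashi estimate is immediate: the distance-decreasing property applied to the inclusions $s\Bb_d \hookrightarrow D \hookrightarrow \Bb_d$ gives
\[
\norm{v} = k_{\Bb_d}(0;v) \leq k_D(0;v) \leq k_{s\Bb_d}(0;v) = \norm{v}/s.
\]
For a lower bound on $g_D(0;v,v)$, I would apply Yau's Schwarz lemma~\cite{Y1978} to the inclusion $(D,g_D) \hookrightarrow (\Bb_d,g_{\Bb_d})$: the Ricci curvature of $g_D$ equals $-g_D$, while the holomorphic bisectional curvature of $g_{\Bb_d}$ is bounded above by some $-K(d)<0$ depending only on $d$. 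Yau's lemma yields $g_{\Bb_d} \leq K(d)^{-1} g_D$ throughout $D$, and combining with the explicit value $g_{\Bb_d}(0;v,v) = c(d)\norm{v}^2$ gives $g_D(0;v,v) \geq c(d)K(d)\norm{v}^2$.

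The main content of the theorem lies in the matching upper bound $g_D(0;v,v) \leq C(s,d)\norm{v}^2$ and the sectional curvature bound at $0$, since neither can be obtained from Yau's Schwarz lemma in the reverse direction without an \emph{a priori} upper bound on the bisectional curvature of $g_D$. My plan is to argue directly from the Cheng-Yau construction: write $g_D = \partial\bar\partial u_D$ where $u_D$ solves the complex Monge-Amp{\`e}re equation associated to $\mathrm{Ric}(g_D) = -g_D$, with $u_D \to \infty$ on $\partial D$. The containments $s\Bb_d \subset D \subset \Bb_d$ then turn the potentials $u_{s\Bb_d}$ and $u_{\Bb_d}$ into \emph{universal} upper and lower barriers for $u_D$, and the Cheng-Yau maximum principle produces a uniform bound $\abs{u_D - u_{\Bb_d}} \leq C_0(s,d)$ on $\tfrac{1}{2}s\Bb_d$. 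Interior Krylov-Evans and Schauder estimates for the complex Monge-Amp{\`e}re equation then bootstrap this $C^0$ bound into uniform $C^{k,\alpha}$ bounds for $u_D$ on $\tfrac{1}{4}s\Bb_d$, yielding simultaneously the upper bound on $g_D(0)$ and a uniform bound on every sectional curvature of $g_D$ at $0$. The key obstacle — and the heart of the theorem — is ensuring that every constant appearing in the barrier argument and in the interior Monge-Amp{\`e}re regularity depends only on $s$ and $d$, with no dependence on the regularity or shape of $\partial D$; this is possible precisely because the two barrier metrics are themselves universal objects on $\Bb_d$ and $s\Bb_d$ determined by $s$ and $d$ alone.
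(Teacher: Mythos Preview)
The paper does not prove this theorem at all: it is quoted verbatim as a result of Sai-Kee Yeung~\cite[Theorem 2]{Y2009} and used as a black box to deduce Corollary~\ref{cor:HRR}. There is therefore no proof in the paper to compare your proposal against.

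That said, your sketch is a faithful outline of how Yeung's argument actually goes. The reduction to the universal family $\Dc_s$ via biholomorphic invariance is exactly the right first move, the Kobayashi bounds and the lower bound on $g_D$ via Yau's Schwarz lemma are correct and standard, and your identification of the real content --- uniform interior $C^{k,\alpha}$ estimates for the Cheng--Yau potential coming from universal barriers on $s\Bb_d$ and $\Bb_d$ together with Monge--Amp\`ere interior regularity --- matches the structure of~\cite{Y2009}. The only caveat is that the barrier/maximum-principle step and the bootstrap to higher regularity are genuinely nontrivial to make rigorous with constants depending only on $(s,d)$; your proposal correctly flags this as the heart of the matter but does not carry it out, so as written it is a plan rather than a proof.
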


As a corollary we have the following. 

\begin{corollary}\label{cor:HRR} If $\Omega \subset \Cb^d$ is a bounded HRR domain, then the K{\"a}hler-Einstein metric has property-$(BG)$. Moreover, we can choose the $\kappa$ and $A$ in the definition of property-$(BG)$ to depend only on $s(\Omega)$ and $d$. 
\end{corollary}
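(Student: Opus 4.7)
The statement to prove is that on a bounded HRR domain the Kähler-Einstein metric $g_\Omega$ satisfies the two conditions in the definition of property-$(BG)$, with constants depending only on $s(\Omega)$ and $d$. The proof is an immediate combination of the two conclusions of Yeung's theorem (Theorem~\ref{thm:SKYeung}) with the elementary upper bound on the Kobayashi metric recorded at the start of Section~\ref{sec:examples}.

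First I would dispatch the curvature bound: condition (1) in the definition of property-$(BG)$ is literally conclusion (2) of Theorem~\ref{thm:SKYeung}, which gives $\kappa = \kappa(s(\Omega), d)$ such that the sectional curvature of $g_\Omega$ is bounded in absolute value by $\kappa$.

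Next I would verify the quantitative upper bound against Euclidean distance-to-the-boundary. By conclusion (1) of Theorem~\ref{thm:SKYeung}, there is $\alpha = \alpha(s(\Omega), d) > 1$ such that $g_\Omega$ and $k_\Omega$ are $\alpha$-bi-Lipschitz, and in particular
\begin{equation*}
\sqrt{g_\Omega(z)(v,v)} \leq \alpha\, k_\Omega(z;v)
\end{equation*}
for every $z \in \Omega$ and $v \in \Cb^d$. Combining this with the elementary estimate $k_\Omega(z;v) \leq \norm{v}/\delta_\Omega(z)$ recalled at the beginning of Section~\ref{sec:examples}, I get
\begin{equation*}
\sqrt{g_\Omega(z)(v,v)} \leq \alpha\, \frac{\norm{v}}{\delta_\Omega(z)},
\end{equation*}
which is condition (2) of property-$(BG)$ with $A := \alpha = \alpha(s(\Omega), d)$.

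Since both $\kappa$ and $A$ produced above depend only on $s(\Omega)$ and the dimension $d$, the moreover clause is also established. There is no real obstacle here; the corollary is a bookkeeping consequence of Yeung's theorem and the definition of the Kobayashi metric, so the only thing to be careful about is that $g_\Omega$ is complete (which is automatic from the Cheng--Yau/Mok--Yau construction invoked at the start of the Examples subsection), so it is indeed a legitimate candidate metric in the definition of property-$(BG)$.
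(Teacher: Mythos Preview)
Your proof is correct and follows essentially the same approach as the paper: both derive the curvature bound directly from part (2) of Theorem~\ref{thm:SKYeung} and obtain the upper bound on $g_\Omega$ by combining the $\alpha$-bi-Lipschitz comparison with $k_\Omega$ and the elementary estimate $k_\Omega(z;v) \leq \norm{v}/\delta_\Omega(z)$. Your added remark about completeness is a nice touch but not something the paper spells out in its proof.
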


\begin{proof} Let $\alpha$ and $\kappa$ be the numbers from Theorem~\ref{thm:SKYeung}. By definition the Kobayashi metric satisfies 
\begin{align*}
k_\Omega(z;v) \leq \frac{ \norm{v}}{\delta_\Omega(z)}
\end{align*}
and so 
\begin{align*}
\sqrt{g_{\Omega,z}(v,v)} \leq \alpha k_\Omega(z;v) \leq \frac{ \alpha \norm{v}}{\delta_\Omega(z)}.
\end{align*}
\end{proof}

\subsection{Pinched negative curvature} Wu and Yau proved the following. 

\begin{theorem}\label{thm:Wu_Yau}\cite[Theorem 2, Theorem 3]{WY2017}
For every $a, b > 0$ and $d \in \Nb$, there exists $\alpha=\alpha(a,b,d)>1$ and $\kappa=\kappa(a,b,d) > 0$ such that: if $\Omega \subset \Cb^d$ is a bounded domain and there exists a complete K{\"a}hler metric $g$ on $\Omega$ such that 
\begin{align*}
-a \leq H(g) \leq -b,
\end{align*}
then
\begin{enumerate}
\item $\Omega$ has a unique K{\"a}hler-Einstein metric $g_\Omega$ with Ricci curvature $-1$,
\item $k_\Omega$ and $g_\Omega$ are $\alpha$-bi-Lipschitz, and
\item  the sectional curvature of $g_\Omega$ is bounded in absolute value by $\kappa$.
\end{enumerate}
\end{theorem}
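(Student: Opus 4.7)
The plan is to invoke the main existence and regularity results of Wu and Yau~\cite{WY2017} and combine them with the Ahlfors-Schwarz lemma, being careful to verify that all constants depend only on $a$, $b$, and $d$.

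For conclusion (1), I would apply \cite[Theorem 2]{WY2017}, which builds the complete K{\"a}hler-Einstein metric $g_\Omega$ with Ricci curvature $-1$ using only the hypothesis that $\Omega$ admits a complete K{\"a}hler metric with holomorphic sectional curvature bounded above by $-b$; uniqueness of $g_\Omega$ follows from the Yau Schwarz lemma applied to the identity map in both directions. The construction proceeds via a continuity method for a complex Monge-Amp{\`e}re equation with $g$ as the background metric, and its $C^2$ estimate yields a pointwise bi-Lipschitz comparison $C^{-1} g \leq g_\Omega \leq C g$ with $C = C(a,b,d)$. The higher-order Schauder and Evans-Krylov estimates applied to the Monge-Amp{\`e}re equation then produce uniform $C^\infty$-bounds on the relative potential, and in particular a bound $\abs{{\rm Rm}(g_\Omega)} \leq \kappa(a,b,d)$ on the full Riemann tensor; this gives conclusion (3).

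For conclusion (2), one direction comes from Ahlfors-Schwarz applied to holomorphic discs $\Db \to (\Omega,g)$: using $H(g) \leq -b$, we obtain $\sqrt{g_z(v,v)} \leq (2/\sqrt{b})\, k_\Omega(z;v)$, and composing with $g_\Omega \leq C g$ gives $\sqrt{g_{\Omega,z}(v,v)} \leq \alpha_1(a,b,d)\, k_\Omega(z;v)$. The converse $k_\Omega \leq \alpha_2(a,b,d) \sqrt{g_\Omega}$ reduces via $g \leq C g_\Omega$ to showing $k_\Omega(z;v) \leq C' \sqrt{g_z(v,v)}$ for a constant depending only on $a$, $b$, $d$; this in turn follows from the bounded geometry of $g$ established above (uniform curvature and, via Proposition~\ref{prop:useful_lower_bd_2}-type arguments, uniform injectivity radius on $g$-balls of controlled size), which allows one to build holomorphic embeddings of Poincar{\'e} discs into $\Omega$ realizing the Kobayashi metric up to a constant depending only on $a$, $b$, $d$.

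The main obstacle will be the reverse Schwarz inequality in the last step: the Ahlfors-Schwarz direction $\sqrt{g_\Omega} \lesssim k_\Omega$ is routine from the holomorphic sectional curvature upper bound, but bounding $k_\Omega$ from above by $\sqrt{g_\Omega}$ requires a genuine holomorphic construction of discs. Verifying that Wu-Yau's proof yields this estimate with constants depending only on $a$, $b$, and $d$---and not on $\Omega$ or the specific choice of $g$---is the delicate part of the argument, since their statements are sometimes framed qualitatively and the uniformity must be extracted by inspecting the continuity method.
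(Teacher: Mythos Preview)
The paper does not prove this theorem at all: it is stated with the citation \cite[Theorem 2, Theorem 3]{WY2017} and used as a black box to derive the subsequent corollary, in exactly the same way Theorem~\ref{thm:SKYeung} is quoted from Yeung. There is therefore no proof in the paper for your proposal to be compared against.

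Your sketch is a reasonable outline of how the Wu--Yau argument goes, and you correctly identify the delicate point (the reverse inequality $k_\Omega \lesssim \sqrt{g_\Omega}$ with uniform constants). But for the purposes of this paper none of that is needed: the result is simply invoked, and the only work the paper does afterwards is the one-line deduction that $\sqrt{g_{\Omega,z}(v,v)} \leq \alpha\, k_\Omega(z;v) \leq \alpha \norm{v}/\delta_\Omega(z)$, exactly as in Corollary~\ref{cor:HRR}. If you are writing this up, the appropriate ``proof'' here is a bare citation to \cite{WY2017}.
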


Then arguing as in Corollary~\ref{cor:HRR} we have the following. 

\begin{corollary} If $\Omega \subset \Cb^d$ is a bounded domain and there exists a complete K{\"a}hler metric $g$ on $\Omega$ such that 
\begin{align*}
-a \leq H(g) \leq -b
\end{align*}
for some $a > b > 0$, then the K{\"a}hler-Einstein metric has property-$(BG)$. Moreover, we can choose the $\kappa$ and $A$ in the definition of property-$(BG)$ to depend only on $a$, $b$, and $d$. 
\end{corollary}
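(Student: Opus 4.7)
The plan is to mimic the proof of Corollary~\ref{cor:HRR} essentially verbatim, replacing the invocation of Theorem~\ref{thm:SKYeung} with the invocation of Theorem~\ref{thm:Wu_Yau}. Under the hypothesis that $\Omega$ admits a complete K{\"a}hler metric $g$ whose holomorphic sectional curvature satisfies $-a \leq H(g) \leq -b$, Wu-Yau produces a unique complete K{\"a}hler-Einstein metric $g_\Omega$ on $\Omega$ with Ricci curvature $-1$, together with constants $\alpha=\alpha(a,b,d)>1$ and $\kappa=\kappa(a,b,d)>0$ such that the sectional curvature of $g_\Omega$ is bounded in absolute value by $\kappa$, and $k_\Omega$ and $g_\Omega$ are $\alpha$-bi-Lipschitz.

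The first of these conclusions is exactly condition (1) in the definition of property-$(BG)$, with the promised dependence only on $a,b,d$. For condition (2), I would combine the bi-Lipschitz comparison with the standard upper bound on the Kobayashi metric recorded at the start of Section~\ref{sec:examples}, namely $k_\Omega(z;v) \leq \norm{v}/\delta_\Omega(z)$, to obtain
\begin{align*}
\sqrt{g_{\Omega,z}(v,v)} \leq \alpha\, k_\Omega(z;v) \leq \frac{\alpha \norm{v}}{\delta_\Omega(z)}
\end{align*}
for every $z \in \Omega$ and $v \in \Cb^d$. Thus $A := \alpha(a,b,d)$ works, and again depends only on $a,b,d$.

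There is no real obstacle: all of the analytic content (existence of $g_\Omega$, curvature bounds, comparison with $k_\Omega$) is packaged inside Theorem~\ref{thm:Wu_Yau}, and the only additional input is the elementary estimate for $k_\Omega$ from the observation in Section~\ref{sec:unit_disk}. The argument is therefore just a one-line bookkeeping check once Wu-Yau is quoted, exactly as in the HRR case.
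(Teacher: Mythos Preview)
Your proposal is correct and matches the paper's approach exactly: the paper simply says ``arguing as in Corollary~\ref{cor:HRR},'' and your write-up is precisely that argument with Theorem~\ref{thm:Wu_Yau} substituted for Theorem~\ref{thm:SKYeung}. The only slip is a harmless misreference: the elementary bound $k_\Omega(z;v)\le \norm{v}/\delta_\Omega(z)$ is recalled at the start of Section~\ref{sec:examples} (and originally stated as an Observation in the preliminaries), not in Section~\ref{sec:unit_disk}.
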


\appendix

\section{Some proofs}\label{sec:appendix}

In this section we prove prove Propositions~\ref{prop:unit_tangent_vs_tangent} and~\ref{prop:geod_spread}. In this section, if $(M,g)$ is a complete Riemannian manifold and $\sigma : [a,b] \rightarrow M$ is a smooth curve let $\ell_g(\sigma)$ denote the length of $\sigma$ in $(M,g)$. 

\subsection{Proof of Proposition~\ref{prop:unit_tangent_vs_tangent}}

For the rest of the subsection suppose that $(M,g)$ is a complete Riemannian manifold. Before proving the proposition we need three lemmas. 

\begin{lemma}\label{lem:obs1} If $X,Y \in TM$, then 
\begin{align*}
\abs{\norm{X}_g-\norm{Y}_g} \leq d_{TM}(X,Y).
\end{align*}
\end{lemma}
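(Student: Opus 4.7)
The plan is to show that the real-valued function $N : TM \to \Rb$, $N(X) = \norm{X}_g$, is $1$--Lipschitz with respect to the Sasaki distance $d_{TM}$, so that the inequality follows by definition of distance as an infimum of lengths.

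First I would take an arbitrary piecewise smooth curve $\Gamma : [a,b] \to TM$ with $\Gamma(a) = X$ and $\Gamma(b) = Y$, set $\alpha = \pi \circ \Gamma : [a,b] \to M$, and regard $\Gamma$ as a vector field along $\alpha$. By the definition of the connection map recalled in Section~\ref{sec:unit_tangent_bundle}, $K_{\Gamma(t)}(\Gamma^\prime(t)) = (\nabla_{\alpha^\prime(t)} \Gamma)(t)$. Using metric compatibility of the Levi--Civita connection, at any $t$ where $\norm{\Gamma(t)}_g > 0$,
\begin{align*}
\frac{d}{dt} \norm{\Gamma(t)}_g
= \frac{g_{\alpha(t)}\bigl(\nabla_{\alpha^\prime(t)} \Gamma, \Gamma(t)\bigr)}{\norm{\Gamma(t)}_g},
\end{align*}
and by Cauchy--Schwarz this is bounded in absolute value by $\norm{K_{\Gamma(t)}(\Gamma^\prime(t))}_g$. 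At points where $\Gamma(t) = 0$, the function $t \mapsto \norm{\Gamma(t)}_g$ is still Lipschitz (by the same estimate applied to one-sided difference quotients, or by approximation), so no singular behavior occurs.

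Next, the definition of the Sasaki metric immediately gives the pointwise bound
\begin{align*}
\norm{K_{\Gamma(t)}(\Gamma^\prime(t))}_g \leq \sqrt{h_{\Gamma(t)}(\Gamma^\prime(t), \Gamma^\prime(t))}.
\end{align*}
Combining the two estimates and integrating from $a$ to $b$,
\begin{align*}
\abs{ \norm{Y}_g - \norm{X}_g } \leq \int_a^b \abs{\tfrac{d}{dt} \norm{\Gamma(t)}_g}\, dt \leq \int_a^b \sqrt{h_{\Gamma(t)}(\Gamma^\prime(t), \Gamma^\prime(t))}\, dt = \ell_h(\Gamma).
\end{align*}
Taking the infimum over all such curves $\Gamma$ yields $\abs{\norm{X}_g - \norm{Y}_g} \leq d_{TM}(X,Y)$, which is the stated inequality.

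The only delicate point is the differentiability of $\norm{\Gamma(t)}_g$ at zeros of $\Gamma$, but as noted this is handled by the fact that the bound $\norm{K_{\Gamma(t)}(\Gamma^\prime(t))}_g$ makes the function absolutely continuous regardless, so the fundamental theorem of calculus still applies. Everything else is a direct unpacking of the definitions of the connection map and the Sasaki metric, so I expect no substantive obstacle.
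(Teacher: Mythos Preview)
Your proof is correct and follows essentially the same approach as the paper: differentiate $\norm{\Gamma(t)}_g$ via the Levi--Civita connection, bound by $\norm{K_{\Gamma(t)}(\Gamma'(t))}_g$ using Cauchy--Schwarz, observe this is at most the Sasaki length element, and integrate. The paper differs only in presentation: it works directly with a minimizing geodesic in $TM$ rather than an arbitrary curve, and it handles the possible zeros of $\norm{\sigma(t)}_g$ by an explicit case split (stopping at the first zero and using $\norm{X}_g \geq \norm{Y}_g$) instead of your absolute-continuity/approximation remark.
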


\begin{proof}If $\norm{X}_g = \norm{Y}_g = 0$ then the inequality is trivial. So by relabelling we can assume that $\norm{X}_g \geq \norm{Y}_g$ and $\norm{X}_g \neq 0$. Next let $\sigma:[0,1] \rightarrow TM$ be a geodesic in $TM$ with $\sigma(0)=X$ and $\sigma(1)=Y$.  

First consider the case when $\norm{\sigma(t)}_g \neq 0$ for all $t \in (0,1)$. Then by~\cite[Chapter 2, Corollary 3.3]{dC1992}
\begin{align*}
\frac{d}{dt} \norm{\sigma(t)}_g = \frac{d}{dt} \sqrt{g(\sigma(t), \sigma(t))} = \frac{1}{ \norm{\sigma(t)}_g} g(\sigma(t), \nabla_{\alpha^\prime(t)} \sigma(t))
\end{align*}
where $\alpha = \pi \circ \sigma$. So by Cauchy-Schwarz
\begin{align*}
\abs{\frac{d}{dt} \norm{\sigma(t)}_g} \leq \norm{ \nabla_{\alpha^\prime(t)} \sigma(t)}_g
\end{align*}
Then 
\begin{align*}
d_{TM}(X,Y) 
&= \int_0^1 \sqrt{h(\sigma^\prime(t), \sigma^\prime(t)) dt} = \int_0^1 \sqrt{ \norm{\alpha^\prime(t)}_g^2 +  \norm{ \nabla_{\alpha^\prime(t)} \sigma(t)}_g^2} dt \\ 
& \geq \int_0^1 \norm{ \nabla_{\alpha^\prime(t)} \sigma(t)}_g dt \geq \int_0^1 \abs{\frac{d}{dt} \norm{\sigma(t)}_g }dt\\
& \geq \abs{ \norm{X}_g-\norm{Y}_g}.
\end{align*}

Now consider the case when $\norm{\sigma(t)}_g =0$ for some $t \in (0,1)$. Define 
\begin{align*}
T = \min\{ t \in [0,1] : \norm{\sigma(t)}_g = 0 \}.
\end{align*}
Then $\norm{\sigma(t)}_g \neq 0$ for $t \in (0,T)$ so by the previous argument and using the fact that $\norm{X}_g \geq \norm{Y}_g$ we have
\begin{align*}
d_{TM}(X,Y) \geq d_{TM}(X, \sigma(T)) \geq \abs{\norm{X}_g-\norm{\sigma(T)}_g} = \norm{X}_g  \geq \abs{\norm{X}_g-\norm{Y}_g}.
\end{align*}

\end{proof}

\begin{lemma}\label{lem:obs2} Suppose $0 < \epsilon < 2$, $\gamma: [0,1] \rightarrow TM$ is a smooth path, and 
\begin{align*}
\epsilon \leq \norm{\gamma(t)}_g 
\end{align*}
 for all $t$. If $\sigma: [0,1] \rightarrow SM$ is the curve defined by
\begin{align*}
\sigma(t) = \frac{\gamma(t)}{\norm{\gamma(t)}_g},
\end{align*}
then 
\begin{align*}
\ell_h(\sigma) \leq \frac{2}{\epsilon}\ell_h(\gamma).
\end{align*}
\end{lemma}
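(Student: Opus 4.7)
The plan is a direct computation with the Sasaki metric. Writing $\alpha = \pi \circ \gamma : [0,1] \to M$, one has $\pi \circ \sigma = \alpha$ as well, so $d\pi \sigma' = \alpha' = d\pi \gamma'$; the only nontrivial piece is to compare the connection-map parts $K\gamma' = \nabla_{\alpha'}\gamma$ and $K\sigma' = \nabla_{\alpha'}\sigma$.

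First I would compute $\nabla_{\alpha'}\sigma$. Setting $f(t) = \norm{\gamma(t)}_g$, product-rule on $\sigma = f^{-1}\gamma$ gives
\begin{equation*}
\nabla_{\alpha'}\sigma = \frac{1}{f}\nabla_{\alpha'}\gamma - \frac{f'}{f^2}\gamma.
\end{equation*}
Just as in the proof of Lemma~\ref{lem:obs1}, the identity $f' = \frac{1}{f}g(\gamma,\nabla_{\alpha'}\gamma)$ together with Cauchy--Schwarz yields $\abs{f'} \le \norm{\nabla_{\alpha'}\gamma}_g$. Hence by the triangle inequality and the hypothesis $f \ge \epsilon$,
\begin{equation*}
\norm{\nabla_{\alpha'}\sigma}_g \le \frac{1}{f}\norm{\nabla_{\alpha'}\gamma}_g + \frac{\abs{f'}}{f} \le \frac{2}{f}\norm{\nabla_{\alpha'}\gamma}_g \le \frac{2}{\epsilon}\norm{\nabla_{\alpha'}\gamma}_g.
\end{equation*}

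Plugging this into the definition of the Sasaki metric and using that $2/\epsilon \ge 1$ (since $\epsilon < 2$) to absorb the horizontal term:
\begin{equation*}
h(\sigma',\sigma') = \norm{\alpha'}_g^2 + \norm{\nabla_{\alpha'}\sigma}_g^2 \le \frac{4}{\epsilon^2}\bigl(\norm{\alpha'}_g^2 + \norm{\nabla_{\alpha'}\gamma}_g^2\bigr) = \frac{4}{\epsilon^2} h(\gamma',\gamma').
\end{equation*}
Taking square roots and integrating over $[0,1]$ gives $\ell_h(\sigma) \le (2/\epsilon)\ell_h(\gamma)$. The only mild subtlety is the verification that $\abs{f'} \le \norm{\nabla_{\alpha'}\gamma}_g$ and the bookkeeping between vertical and horizontal components; everything else is algebraic. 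There is no genuine obstacle since $\gamma$ is bounded away from the zero section, so $\sigma$ is smooth and $f$ is differentiable with the stated derivative.
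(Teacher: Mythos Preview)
Your proof is correct and essentially identical to the paper's own argument: both compute $\nabla_{\alpha'}\sigma$ via the product rule, bound $\abs{f'}$ by $\norm{\nabla_{\alpha'}\gamma}_g$ using Cauchy--Schwarz, and then absorb the horizontal term using $2/\epsilon \ge 1$ before integrating. The only differences are notational, and you actually make explicit the role of the hypothesis $\epsilon < 2$ (ensuring $2/\epsilon \ge 1$) which the paper uses without comment.
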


\begin{proof} Let $\alpha = \pi \circ \sigma$. By~\cite[Chapter 2, Proposition 2.2]{dC1992} 
\begin{align*}
\nabla_{\alpha^\prime(t)} \sigma(t) = \frac{1}{\norm{\gamma(t)}_g}\nabla_{\alpha^\prime(t)} \gamma(t) + \frac{ \frac{d}{dt} \norm{\gamma(t)}_g}{\norm{\gamma(t)}_g^2} \gamma(t)
\end{align*}
and by~\cite[Chapter 2, Corollary 3.3]{dC1992}
\begin{align*}
\frac{d}{dt} \norm{\gamma(t)}_g = \frac{d}{dt} \sqrt{ g(\gamma(t), \gamma(t))} = \frac{ g( \nabla_{\alpha^\prime(t)} \gamma(t), \gamma(t))}{\norm{\gamma(t)}_g}.
\end{align*}
So by Cauchy-Schwarz
\begin{align*}
\abs{ \frac{d}{dt} \norm{\gamma(t)}_g} \leq \norm{ \nabla_{\alpha^\prime(t)} \gamma(t)}_g
\end{align*}
and so
\begin{align*}
\norm{\nabla_{\alpha^\prime(t)} \sigma(t) }_g \leq \frac{2}{\epsilon} \norm{ \nabla_{\alpha^\prime(t)} \gamma(t)}_g.
\end{align*}

Then 
\begin{align*}
\ell_h(\sigma) 
& = \int_0^1 \sqrt{ h_{\sigma(t)}(\sigma^\prime(t), \sigma^\prime(t))} dt= \int_0^1 \sqrt{ \norm{\alpha^\prime(t)}_g^2+ \norm{\nabla_{\alpha^\prime(t)} \sigma(t)}_g^2} dt \\
& \leq \frac{2}{\epsilon} \int_0^1\sqrt{ \norm{\alpha^\prime(t)}_g^2+ \norm{\nabla_{\alpha^\prime(t)} \gamma(t)}_g^2} dt  = \frac{2}{\epsilon} \ell_h(\gamma). 
\end{align*}
\end{proof}

\begin{lemma}\label{lem:obs3} If $X,Y \in T^1 M$, then 
\begin{align*}
d_{T^1M}(X,Y) \leq d_M(\pi(X), \pi(Y)) + \pi \leq d_{T M}(X,Y) + \pi. 
\end{align*}
\end{lemma}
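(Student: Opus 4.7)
The plan is to prove the two inequalities separately. The right-hand inequality $d_M(\pi(X), \pi(Y)) \leq d_{TM}(X,Y)$ is the easier one: I would show that the projection $\pi : (TM, h) \to (M, g)$ is $1$-Lipschitz. From the definition of the Sasaki metric,
\begin{align*}
h_X(\xi,\xi) = g_{\pi(X)}\bigl(d(\pi)_X \xi, d(\pi)_X \xi\bigr) + g_{\pi(X)}\bigl(K_X(\xi), K_X(\xi)\bigr) \geq g_{\pi(X)}\bigl(d(\pi)_X \xi, d(\pi)_X \xi\bigr),
\end{align*}
so for any smooth curve $\sigma : [0,1] \to TM$ joining $X$ to $Y$, the curve $\pi \circ \sigma$ satisfies $\ell_g(\pi \circ \sigma) \leq \ell_h(\sigma)$, whence $d_M(\pi(X),\pi(Y)) \leq d_{TM}(X,Y)$.

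For the left-hand inequality $d_{T^1M}(X,Y) \leq d_M(\pi(X), \pi(Y)) + \pi$, the plan is to construct an explicit path in $T^1 M$ from $X$ to $Y$ of length at most $d_M(\pi(X),\pi(Y)) + \pi$. Since $(M,g)$ is complete, Hopf--Rinow gives a unit-speed minimizing geodesic $\alpha : [0, T] \to M$ with $\alpha(0) = \pi(X)$ and $\alpha(T) = \pi(Y)$, where $T = d_M(\pi(X),\pi(Y))$. Let $\widetilde\alpha : [0,T] \to TM$ be the parallel transport of $X$ along $\alpha$, so $\widetilde\alpha(0) = X$ and $\nabla_{\alpha'(t)} \widetilde\alpha(t) = 0$ for all $t$. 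Parallel transport preserves $g$-norms, so $\widetilde\alpha$ takes values in $T^1 M$, and setting $X' := \widetilde\alpha(T) \in T^1_{\pi(Y)} M$ we have
\begin{align*}
\ell_h(\widetilde\alpha) = \int_0^T \sqrt{g(\alpha'(t),\alpha'(t)) + g(K_{\widetilde\alpha(t)}\widetilde\alpha'(t), K_{\widetilde\alpha(t)}\widetilde\alpha'(t))}\, dt = \int_0^T \norm{\alpha'(t)}_g\, dt = T,
\end{align*}
since $K_{\widetilde\alpha(t)} \widetilde\alpha'(t) = \nabla_{\alpha'(t)}\widetilde\alpha(t) = 0$. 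Thus the first leg of the path has $h$-length exactly $d_M(\pi(X),\pi(Y))$.

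It remains to join $X'$ to $Y$ inside the fiber $T^1_{\pi(Y)} M$. Any smooth curve $\sigma : [0,1] \to T^1_{\pi(Y)} M$ is a vertical curve, so $d(\pi) \sigma' \equiv 0$ and $K_{\sigma(t)} \sigma'(t)$ coincides with $\sigma'(t)$ viewed as a vector in $T_{\pi(Y)}M$ under the canonical identification of $V(\sigma(t))$ with $T_{\pi(Y)}M$. Hence $\ell_h(\sigma)$ equals the length of $\sigma$ in the round metric on the unit sphere of $(T_{\pi(Y)}M, g_{\pi(Y)})$, whose diameter is $\pi$. So $X'$ and $Y$, being unit vectors in the same tangent space, can be joined by a curve in $T^1_{\pi(Y)} M$ of $h$-length at most $\pi$. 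Concatenating the two pieces produces a path in $T^1 M$ from $X$ to $Y$ of length at most $d_M(\pi(X),\pi(Y)) + \pi$, which yields the required inequality. No step is a serious obstacle; the main subtlety is simply to check that parallel transport gives a horizontal curve of the correct length and that the vertical fiber $T^1_{\pi(Y)} M$ inherits the round metric from the Sasaki metric.
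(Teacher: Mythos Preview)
Your proof is correct and follows essentially the same approach as the paper: parallel transport of $X$ along a minimizing geodesic from $\pi(X)$ to $\pi(Y)$ gives a horizontal curve in $T^1M$ of length $d_M(\pi(X),\pi(Y))$, the remaining fiber has intrinsic diameter $\pi$, and the second inequality follows from $\pi$ being $1$-Lipschitz for the Sasaki metric. Your write-up is in fact more explicit than the paper's in verifying the length of the horizontal lift and the fiber metric.
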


\begin{proof}
Let $\gamma : [0,T] \rightarrow M$ be a unit speed geodesic joining $\pi(X)$ to $\pi(Y)$. Then let $P(t)$ be the parallel transport of $X$ along $\gamma$. Then, by the definition of the Sasaki metric,
\begin{align*}
d_M(\pi(X), \pi(Y))= \ell_g(\gamma) = \ell_h(P).
\end{align*}
Further, $\norm{P(T)}_g = \norm{X}_g = 1$ and so 
\begin{align*}
d_{T^1M}(P(T), Y) \leq  \pi.
\end{align*}
Thus 
 \begin{align*}
d_{T^1M}(X,Y) \leq d_M(\pi(X), \pi(Y)) + \pi.
\end{align*}

Next let $\alpha : [0,S] \rightarrow TM$ be a geodesic joining $X$ to $Y$. Then, by the definition of the Sasaki metric,
\begin{align*}
d_{TM}(X,Y)= \ell_h(\alpha) \geq \ell_g(\pi \circ \alpha) \geq d_M(\pi(X), \pi(Y)).
\end{align*}
\end{proof}

\begin{proof}[Proof of Proposition~\ref{prop:unit_tangent_vs_tangent}]
By Lemma~\ref{lem:obs3}
\begin{align*}
d_{T^1M}(X,Y) \leq d_M(\pi(X), \pi(Y)) + \pi \leq d_{T M}(X,Y) + \pi. 
\end{align*}
So if $d_{TM}(X,Y) \geq 1$, then 
\begin{align*}
d_{T^1M}(X,Y) \leq d_M(\pi(X), \pi(Y)) + \pi \leq (1+\pi)d_{T M}(X,Y).
\end{align*}

Suppose that $d_{TM}(X,Y) \leq 1$. Let $\gamma : [0,T] \rightarrow TM$ be a unit speed geodesic with $\gamma(0)=X$ and $\gamma(T) =Y$. By Lemma~\ref{lem:obs1}, we must have $\norm{\gamma(t)}_g \geq 1/2$ for all $t$. Let $\sigma(t) = \gamma(t)/\norm{\gamma(t)}_g$. Then by Lemma~\ref{lem:obs2} we have
\begin{align*}
d_{T^1M}(X,Y) \leq \ell_h(\sigma) \leq 4 \ell_h(\gamma) = 4d_{TM}(X,Y).
\end{align*}
\end{proof}

\subsection{Proof of Proposition~\ref{prop:geod_spread}}

To prove the proposition we estimate the growth rate of Jacobi fields. For the rest of the subsection, suppose that $(M,g)$ is a complete Riemannian manfiold with sectional curvature bounded in absolute value by $\kappa > 0$. 

Let $\gamma: \Rb \rightarrow M$ be a geodesic. Let $R$ denote the curvature tensor of $M$. Then for $t \in \Rb$, let $R_{\gamma(t)} : T_{\gamma(t)} M \rightarrow T_{\gamma(t)} M$ denote the map 
\begin{align*}
R_{\gamma(t)}Y = R(\gamma^\prime(t), Y) \gamma^\prime(t).
\end{align*}
Then $R_{\gamma(t)}$ is linear and symmetric relative to $g_{\gamma(t)}$. 

A vector field $J$ along $\gamma$ is called a \emph{Jacobi field} when 
\begin{align*}
\nabla_{\gamma^\prime(t)} \nabla_{\gamma^\prime(t)} J(t) + R_{\gamma(t)}  J(t)=0
\end{align*}
for all $t$. 

We will bound the growth rate of a Jacobi field:

\begin{proposition} If $J$ is a Jacobi field along a geodesic $\gamma$, then 
\begin{align*}
\sqrt{\norm{J(t)}_g^2 + \norm{\nabla_{\gamma^\prime(t)}  J(t)}_g^2} \leq \sqrt{\norm{J(0)}_g^2 + \norm{\nabla_{\gamma^\prime(t)}  J(0)}_g^2} \exp\left( \frac{\kappa+1}{2} t\right)
\end{align*}
for all $t \geq 0$. 
\end{proposition}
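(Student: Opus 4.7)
The plan is to introduce $f(t) = \norm{J(t)}_g^2 + \norm{\nabla_{\gamma^\prime(t)} J(t)}_g^2$ (the square of the quantity on the left) and derive a Grönwall-type differential inequality. Write $J^\prime := \nabla_{\gamma^\prime} J$ for brevity. Since the Levi--Civita connection is metric-compatible,
\[
f^\prime(t) = 2\, g(J, J^\prime) + 2\, g(J^\prime, J^{\prime\prime}),
\]
and the Jacobi equation $J^{\prime\prime} = -R_{\gamma} J$ converts this to $f^\prime(t) = 2\, g(J, J^\prime) - 2\, g(J^\prime, R_{\gamma} J)$.

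Next I would estimate the two terms separately. For the first, Cauchy--Schwarz and the AM--GM inequality give $\abs{2\, g(J, J^\prime)} \leq 2 \norm{J}_g \norm{J^\prime}_g \leq \norm{J}_g^2 + \norm{J^\prime}_g^2 = f$. For the second, I use that $R_{\gamma(t)}$ is symmetric with respect to $g_{\gamma(t)}$, so its operator norm equals $\sup_{\norm{v}_g = 1} \abs{g(R_{\gamma} v, v)}$. Assuming $\gamma$ is unit-speed (the setting in which the result will be applied), the definition of sectional curvature together with the curvature bound yields
\[
\abs{g(R_{\gamma} v, v)} = \abs{g(R(\gamma^\prime, v)\gamma^\prime, v)} \leq \kappa \bigl( \norm{\gamma^\prime}_g^2 \norm{v}_g^2 - g(\gamma^\prime, v)^2 \bigr) \leq \kappa \norm{v}_g^2,
\]
so $\norm{R_{\gamma} J}_g \leq \kappa \norm{J}_g$. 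Another application of Cauchy--Schwarz and AM--GM produces $\abs{2\, g(J^\prime, R_{\gamma} J)} \leq 2\kappa \norm{J}_g \norm{J^\prime}_g \leq \kappa f$.

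Combining the two estimates yields $\abs{f^\prime(t)} \leq (1+\kappa) f(t)$. Grönwall's inequality then gives $f(t) \leq f(0)\, e^{(1+\kappa) t}$ for $t \geq 0$, and taking square roots delivers the stated bound. (Where $f > 0$ one can equivalently observe $(\log f)^\prime \leq 1+\kappa$; the points where $f$ vanishes are handled by continuity, since if $f(t_0) = 0$ then both $J$ and $J^\prime$ vanish at $t_0$ and hence $J \equiv 0$, making the inequality trivial.)

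The only step requiring any care is the operator-norm bound $\norm{R_{\gamma} J}_g \leq \kappa \norm{J}_g$: it hinges on the symmetry of $R_\gamma$ (so that its operator norm is controlled by the quadratic form $v \mapsto g(R_\gamma v, v)$) and on the standard identification of that quadratic form with the sectional curvature of the plane spanned by $\gamma^\prime$ and $v$. Everything else is direct calculation.
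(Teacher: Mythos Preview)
Your proof is correct and follows essentially the same approach as the paper: define the quantity $\norm{J}_g^2 + \norm{J'}_g^2$, differentiate, use the Jacobi equation together with the operator-norm bound $\norm{R_\gamma}\leq\kappa$ (obtained from the symmetry of $R_\gamma$ and the sectional-curvature bound), and then apply Gr\"onwall. The only cosmetic difference is that the paper differentiates the square root directly to obtain $\abs{f'}\leq\frac{\kappa+1}{2}f$, whereas you work with the square and take the root at the end; these are equivalent, and your handling of the $f=0$ case via $J\equiv 0$ is in fact cleaner than the paper's implicit division by $f$.
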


\begin{proof} We begin by bounding the operator norm of $R_{\gamma(t)}$. Let $X \in T_{\gamma(t)} M$. Then we can write
\begin{align*}
X = a \gamma^\prime(t) + b Y
\end{align*}
where $a,b \in \Rb$, $Y$ is a unit vector, and $\gamma^\prime(t), Y$ are orthogonal. Then 
\begin{align*}
 g(R_{\gamma(t)}X,X) = b^2 g(R_{\gamma(t)} Y, Y) =  b^2 { \rm sec}(Y, \gamma^\prime(t)) 
 \end{align*}
 since $R_{\gamma(t)}$ is symmetric and $R_{\gamma(t)}\gamma^\prime(t)=0$. Thus
\begin{align*}
\norm{R_{\gamma(t)}X}_g \leq \kappa \norm{X}_g
\end{align*}
for all $X \in T_{\gamma(t)} M$.

Next define $f:\Rb \rightarrow \Rb$ by 
\begin{align*}
f(t) = \sqrt{\norm{J(t)}_g^2 + \norm{\nabla_{\gamma^\prime(t)}  J(t)}_g^2}.
\end{align*}
Then 
\begin{align*}
\abs{f^\prime(t)}
& = \abs{\frac{ \frac{d}{dt} \norm{J(t)}_g^2 + \frac{d}{dt}\norm{\nabla_{\gamma^\prime(t)}  J(t)}_g^2}{2f(t)}}\\
& = \abs{\frac{ \ip{J(t), \nabla_{\gamma^\prime(t)}  J(t)} +\ip{\nabla_{\gamma^\prime(t)}J(t), \nabla_{\gamma^\prime(t)}  \nabla_{\gamma^\prime(t)}J(t)}}{f(t)}}\\
& = \abs{\frac{ \ip{J(t)-R_{\gamma(t)}J(t),\nabla_{\gamma^\prime(t)}J(t) }}{f(t)}} \\
& \leq (\kappa+1)\frac{ \norm{J(t)}_g \norm{\nabla_{\gamma^\prime(t)}J(t)}_g}{f(t)} \\
& \leq \frac{\kappa+1}{2} f(t).
\end{align*}
Then by Gromwall's inequality 
\begin{align*}
f(t) \leq f(0)\exp\left( \frac{\kappa+1}{2}t \right)
\end{align*}
for all $t \geq 0$.
\end{proof}

\begin{proof}[Proof of Proposition~\ref{prop:geod_spread}] Let $\sigma: [0,T] \rightarrow T^1M$ be a unit speed geodesic with $\sigma(0)=\gamma_1^\prime(0)$ and $\sigma(T) = \gamma_2^\prime(0)$. Then consider the map 
\begin{align*}
F: [0,T] \times [0,\infty) \rightarrow T^1M
\end{align*}
given by $F(s,t) = g_t(\sigma(s))$. 

With the decomposition of $T_XTM$ into horizontal and vertical subspaces we then have 
\begin{align*}
\frac{d}{ds} F(s,t) = ( J_s(t), \nabla_{\gamma_s^\prime(t)} J_s(t))
\end{align*}
where $t \rightarrow J_s(t)$ is a Jacobi field along the geodesic $\gamma_s(t) =g_t(\sigma(s))$, see for instance~\cite[Lemma 1.40]{P1999}.

Then 
\begin{align*}
d_M(\gamma_1(t), \gamma_2(t)) 
& \leq d_{T^1M}(\gamma_1^\prime(t), \gamma_2^\prime(t)) \leq \int_{0}^T \norm{\frac{d}{ds} F(s,t)} dt \\
& \leq \exp\left( \frac{\kappa+1}{2}\abs{t} \right)\int_0^T \norm{\frac{d}{ds} F(s,0)}_h  ds \\
&  = \exp\left( \frac{\kappa+1}{2}\abs{t} \right) \int_0^T \norm{\sigma^\prime(s)}_h  ds \\
& = \exp\left( \frac{\kappa+1}{2}\abs{t} \right)  d_{T^1M}(\gamma_1^\prime(0), \gamma_2^\prime(0)).
\end{align*}
\end{proof}

\bibliographystyle{alpha}
\bibliography{complex_kob}

\end{document}